\newtheorem{thm}{Theorem}[section]
\newtheorem{proposition}[thm]{Proposition}
\newtheorem{lemma}[thm]{Lemma}
\newtheorem{corollary}[thm]{Corollary}
\newtheorem{definition}[thm]{Definition}
\numberwithin{equation}{section}
\begin{document}

\begin{center}
\begin{huge} 
The inverse Willmore flow \\
\end{huge} 
\vspace{15cm}
\begin{large}
Martin Mayer \\
diploma thesis\\
Mathematisches Institut der Universit\"at T\"ubingen \\
Prof. Dr. Reiner Sch\"atzle\\
2009
\thispagestyle{empty}
\end{large}
\end{center}
\newpage
\tableofcontents
\newpage
\begin{abstract}
Instead of investigating the Willmore flow for two-dimensional, closed immersed surfaces directly we turn to its inversion. We give a lower bound on the lifespan of this inverse Willmore flow, depending on the concentration of curvature in space and the extension of the initial surface, as  well as a characterization of its breakdown, which might occur in finite time.
\end{abstract}
\section{Introduction}
The Willmore functional of a closed, immersed and smooth surface
\\
$f_{0}:\Sigma \longrightarrow \mathbb{R}^{n}$
with induced area measure
$d\mu=d\mu_{\Sigma, f_{0}}$ is given by 
\begin{gather*}
W(f_{0}):=\frac{1}{4} \int_{\Sigma} \vert H_{f_{0}} \vert^{2} d\mu, \end{gather*}
where $H_{f_{0}}=g^{i,j}A_{i,j}$ denotes the mean curvature. Then the Willmore flow is the $L^{2}-$gradient flow of the Willmore functional, i.e. a solution
\begin{gather*}
\begin{split}
&f:\Sigma \times [0,T) \rightarrow \mathbb{R}^{n}\quad \text{ with} \quad 
f(\cdot,0)=f_{0}\quad \text{ of} \\ & \partial_{t}f=-\text{grad}_{L^{2}}W(f)=-\frac{1}{2}(\Delta H + Q(A^{0})H)
\end{split}
\end{gather*}
for some $T \in (0,\infty]$ and an arbitrary smooth initial surface $f_{0}$, see below.\\
If additionally $f:\Sigma \times [0,T) \longrightarrow \mathbb{R}^{n}\setminus \lbrace \mathbb{0} \rbrace,$ we may consider
the inverse flow 
\begin{gather*}
I_{\sharp}f:=\frac{f}{\Vert f \Vert^{2}}. 
\end{gather*}
Due to the fact, that the Willmore functional for surfaces is invariant under inversion,  
\begin{gather*}
W(f)=W(I_{\sharp}f),
\end{gather*}
 cf. proposition \ref{invariance}, the inverse flow satisfies 
\begin{gather*}
\partial_{t} f=-\frac{\Vert f \Vert^{8}}{2} (\Delta H + Q(A^{0})H).
\end{gather*}
This suggests to apply the techniques used by E. Kuwert and R. Sch\"atzle  \cite{ref1},  which we strongly recommend to read beforehand, since therein all the basic ideas and methods are performed with less calculations. \\
As our main result we give a lower bound on its lifespan, which depends on the concentration of curvature and the extension of the initial surface.

\begin{thm}\label{thm1.1}
For $0<R,\alpha<\infty$ and $n \in \mathbb{N}$ there exist 
\begin{gather*}
\delta=\delta(n),\; k=k(n),\;c=c(n,R,\alpha)>0
\end{gather*}
such that, if
\quad
$f_{0}:\Sigma \longrightarrow \mathbb{R}^{n} \setminus \lbrace \mathbb{0} \rbrace$ 
\quad
is a closed immersed surface 
satisfying
\begin{gather*}
\begin{split}
&
\sup_{x \in \mathbb{R}^{n}} \int_{B_{\rho}(x)} \vert A \vert^{2}d\mu \leq\delta, \\
&
\sup_{x \in \mathbb{R^{n}}}\rho^{4}\int_{B_{\rho}(x)} \vert \nabla^{2} A \vert^{2}d\mu  
\leq 
\alpha, \\
&
\rho^{-1} \Vert f_{0} \Vert_{L^{\infty}_{\mu}(\Sigma)}\leq R
\end{split}
\end{gather*}
for some $\rho>0$,
there exists an inverse Willmore flow
\begin{gather*}
f:\Sigma \times [0,T)\longrightarrow \mathbb{R}^{n} \setminus \lbrace \mathbb{0} \rbrace,\quad f(\cdot,0)=f_{0}
\end{gather*}
with $T > \rho^{-4}c$.
Moreover we have the estimates
\begin{gather*}
\begin{split}
&
\sup_{0 \leq t \leq\rho^{-4}c} \rho^{-1}\Vert f \Vert_{L^{\infty}_{\mu}(\Sigma)}
\leq 2R
\quad \text{and} \quad
\sup_{0 \leq t \leq \rho^{-4}c,\, x \in \mathbb{R}^{n}} \int_{B_{\rho}(x)} \vert A \vert^{2} d\mu\leq k  \delta.
\end{split}
\end{gather*}
\end{thm}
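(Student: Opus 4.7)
The plan is to adapt the scheme of Kuwert and Sch\"atzle \cite{ref1} to the inverse flow, treating the extra factor $\|f\|^8$ as a bounded coefficient via the a priori bound on the extension. First I would reduce to the scale-invariant case $\rho = 1$ by the parabolic rescaling $\tilde f(p, \tilde t) := \rho^{-1} f(p, \rho^{-4} \tilde t)$, under which all three hypotheses are invariant, $\tilde f$ is again an inverse Willmore flow, and the original lifespan satisfies $T = \rho^{-4} \tilde T$; it then suffices to produce a lower bound $\tilde T > c(n, R, \alpha)$. Short-time existence of a smooth solution on some interval $[0, \tau)$ is standard quasilinear parabolic theory, since $f_0(\Sigma)$ is compact and disjoint from the origin, so that $\|f\|^8$ is uniformly bounded above and below near $t = 0$ and the principal part is uniformly fourth-order elliptic.

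The heart of the argument is a continuity bootstrap. Set
\[T^\ast := \sup\bigl\{ t \in [0,\tau) : \|f(\cdot,s)\|_\infty \leq 2R \text{ and } \sup_x \textstyle\int_{B_1(x)} |A|^2\, d\mu \leq k\delta \text{ for all } s \leq t \bigr\}.\]
The goal is to choose $\delta$ small and $k$ large (both depending only on $n$) so that, as long as $T^\ast \leq c(n, R, \alpha)$, both inequalities improve strictly, forcing $T^\ast > c$. For the curvature concentration I would establish, for cut-offs $\gamma$ supported in balls of radius one, a differential inequality of the schematic form
\[\frac{d}{dt} \int \gamma^s |A|^2\, d\mu + \tfrac{1}{2} \int \gamma^s \|f\|^8 |\nabla^2 A|^2\, d\mu \leq C(R) \int_{\{\gamma > 0\}} |A|^2\, d\mu,\]
by computing $\partial_t$ via the flow equation, integrating by parts, and using the Michael--Simon Sobolev inequality -- valid thanks to the smallness $\delta$ of local $L^2$-curvature -- to absorb bad terms like $\int \gamma^s \|f\|^8 |A|^2 |\nabla^2 A|^2$ into the dissipation. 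The uniform bound $\|f\|^8 \leq (2R)^8$ is absorbed into $C(R)$. Time-integration and a covering argument then yield $\sup_x \int_{B_1(x)} |A|^2 d\mu < k\delta$ on $[0, c]$.

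For the pointwise bound on $\|f\|_\infty$ I would integrate the flow in time,
\[|f(p,t) - f_0(p)| \leq \tfrac{1}{2} \int_0^t \|f\|^8\, |\Delta H + Q(A^0) H|\, ds,\]
which requires pointwise control of $\Delta H + Q(A^0) H$. This is where the higher-regularity hypothesis enters essentially: one propagates an analogous differential inequality for $\int \gamma^s |\nabla^2 A|^2 d\mu$ (whose initial local value is bounded by $\alpha$) and applies a Sobolev embedding of Michael--Simon type on the $2$-surface to upgrade these local $L^2$-bounds on $\nabla^2 A$ to an $L^\infty$-bound on $\Delta H + Q(A^0) H$. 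Combined with $\|f\|^8 \leq (2R)^8$ this gives $|f(\cdot, t) - f_0| \leq C(R, \alpha)\, t$, keeping $\|f\|_\infty \leq 2R$ for $t \leq c$.

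The main obstacle is the propagation of the higher-order curvature estimates through the variable coefficient $\|f\|^8$: its derivatives generate terms of the form $\|f\|^7 \nabla f$ and iterates thereof, which break the clean integration-by-parts cancellations of the autonomous Willmore flow in \cite{ref1}. Careful bookkeeping is required to show that all such perturbation terms can be absorbed by the dissipation with constants depending only on $(n, R, \alpha)$; this is the delicate part of the argument and is precisely why $c$ must depend on $R$ and $\alpha$ rather than merely on $n$.
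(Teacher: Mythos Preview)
Your strategy is the paper's: rescale to $\rho=1$, run a continuity argument on the pair (local curvature concentration, extension bound), close the first via a localized energy inequality for $\int\gamma^s|A|^2$ under the smallness assumption, and close the second by controlling $\Delta H+Q(A^0)H$ pointwise via higher-order estimates. Two steps in your sketch need substantially more work than you indicate.

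First, the energy inequality for $\int\gamma^s|\nabla^2A|^2$ cannot be closed directly from the $m=0$ estimate. Its right-hand side (see Proposition~\ref{8.1}) requires as inputs the quantities $\sup_t\int|\nabla A|^2$, $\int_0^T\!\int\|f\|^8|\nabla^3A|^2$, and $\int_0^T\|\,\|f\|^4\nabla A\,\|_\infty^4\,dt$, none of which follow from the $m=0$ energy inequality alone. The paper therefore inserts an intermediate $m=1$ estimate (Proposition~\ref{7.1}), itself fed by the $m=0$ estimate through the interpolation Proposition~\ref{prop5.2}; the argument is a chain $m=0\to m=1\to m=2$ rather than a single jump, and each link requires its own Gronwall step with carefully tracked constants.

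Second, on a $2$-surface local $L^2$ control of $\nabla^2A$ does \emph{not} yield $L^\infty$ control of $\nabla^2A$; you need two more derivatives. What the $m=2$ energy inequality supplies is the dissipation $\int_0^T\!\int\|f\|^8|\nabla^4A|^2\gamma^s$, and combining this with Proposition~\ref{prop5.2} gives only $\int_0^T\|\,\|f\|^4\nabla^2A\,\|_\infty^4\,dt\leq c(n,R,\alpha)$, an $L^4$-in-time bound. The paper then uses H\"older in time to get
\[
|f(\sigma,t)-f_0(\sigma)|\;\leq\; cR^4\,t^{3/4}\Bigl(\int_0^t\|\,\|f\|^4\nabla^2A\,\|_\infty^4\Bigr)^{1/4}+c(n,R,\alpha)\,t,
\]
rather than your linear bound $C(R,\alpha)\,t$. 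This still forces $t_0\geq c(n,R,\alpha)$, so the conclusion survives, but a sup-in-time pointwise bound on $\Delta H+Q(A^0)H$ is not available under the hypotheses of Theorem~\ref{thm1.1}.
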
 

\begin{thm} \label{thm1.2}
For $n \in \mathbb{N}$ there exist 
\begin{gather*}
\delta=\delta(n),\; k=k(n),\;c=c(n)>0
\end{gather*}
such that, if 
\quad 
$ f:\Sigma \longrightarrow \mathbb{R}^{n}\setminus \lbrace \mathbb{0} \rbrace
$
\quad 
is a closed immersed surface,
\begin{gather*}
T_{f_{0},\mathbb{0}}>0
\end{gather*}
maximal with respect to the existence of an inverse Willmore flow
\begin{gather*}
f:\Sigma \times [0,T)\longrightarrow \mathbb{R}^{n} \setminus \lbrace \mathbb{0} \rbrace,\quad f(\cdot,0)=f_{0}
\end{gather*} 
and 
\begin{gather*}
\begin{split}
&
\sup_{x \in \mathbb{R}^{n}} \int_{B_{\rho}(x)} \vert A \vert^{2}d\mu\lfloor_{t=0} \;\leq \delta 
\quad \text{as well as} \;\;
\sup_{0 \leq t < \min \lbrace T_{f_{0},\mathbb{0}},c\frac{\rho^{-4}}{R^{8}+R^{4}} \rbrace} \rho^{-1}\Vert f \Vert_{L^{\infty}_{\mu}(\Sigma)}\leq R
\end{split}
\end{gather*}
for some $R,\rho>0$, we have $T_{f_{0},\mathbb{0}} > c\frac{\rho^{-4}}{R^{8}+R^{4}}$ and in addition
\begin{gather*}
\begin{split}
&
\sup_{0 \leq t \leq \frac{\rho^{-4}}{R^{8}+R^{4}},\, x \in \mathbb{R}^{n}} \int_{B_{\rho}(x)} \vert A \vert^{2} d\mu \leq k  \delta.
\end{split}
\end{gather*}
\end{thm}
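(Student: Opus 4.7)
The strategy is parallel to Theorem~\ref{thm1.1}: I would prove a localised energy estimate for the curvature concentration $\int_{B_\rho(x)}|A|^2\,d\mu$ along the inverse Willmore flow and combine it with a continuation argument, the only novelty being that the hypothesis $\rho^{-1}\|f\|_{L^\infty_\mu}\le R$ replaces the $\nabla^2A$-bound of Theorem~\ref{thm1.1} as the source of speed control. Since the equation of motion reads $\partial_t f=-\tfrac{1}{2}\|f\|^8(\Delta H+Q(A^0)H)$, the hypothesis bounds the normal speed by a constant times $R^8\rho^8$ multiplied by a fourth-order curvature expression, and this is the origin of the scale $\rho^{-4}/(R^8+R^4)$ appearing in the conclusion.

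First I would compute, following Kuwert-Sch\"atzle, the evolution of $\int|A|^2\gamma^s\,d\mu$ for a radial cutoff $\gamma$ of scale $\rho$ centred at an arbitrary $x_0\in\mathbb{R}^n$ and a sufficiently large exponent $s$. Pairing $\partial_t f$ with the relevant curvature test quantities produces a leading good term $-\int\|f\|^8|\nabla^2A|^2\gamma^s\,d\mu$, curvature polynomial terms weighted by $\|f\|^8$, and cutoff/weight error terms where derivatives fall on $\gamma$ or on $\|f\|^8$. Every derivative striking the weight produces a factor $\|f\|^7|\nabla f|\le R^7\rho^7$, so the most singular error term enters with effective weight $R^8\rho^4$, while the first subleading family (with only three out of four derivatives landing on $\|f\|^8$) enters with weight $R^4\rho^4$ after a Peter-Paul absorption. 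Using the smallness hypothesis together with the interpolation inequalities already developed for Theorem~\ref{thm1.1} to absorb the polynomial curvature terms, I obtain a differential inequality of the schematic form
\begin{gather*}
\frac{d}{dt}\sup_{x_0\in\mathbb{R}^n}\int_\Sigma|A|^2\gamma^s\,d\mu\le c(n)(R^8+R^4)\rho^4.
\end{gather*}

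With this estimate in hand I would run the bootstrap. Let $T^{\ast}$ be the supremum of those $t<\min\{T_{f_0,\mathbb{0}},\,c\rho^{-4}/(R^8+R^4)\}$ for which $\sup_{x\in\mathbb{R}^n}\int_{B_\rho(x)}|A|^2\,d\mu\le k\delta$; integrating the displayed inequality from $t=0$ preserves the bound $k\delta$ with room to spare provided $c=c(n)$ is chosen small, so $T^{\ast}$ equals the minimum above and the asserted curvature estimate holds throughout $[0,T^{\ast})$. It remains to rule out $T_{f_0,\mathbb{0}}<c\rho^{-4}/(R^8+R^4)$: on $[0,T_{f_0,\mathbb{0}})$ one has $\rho^{-1}\|f\|_{L^\infty_\mu}\le R$ and the curvature concentration remains below $k\delta$, and parabolic smoothing of the inverse Willmore flow supplies the $\nabla^2A$-control at a time close to $T_{f_0,\mathbb{0}}$ required to invoke Theorem~\ref{thm1.1}, which then extends the flow strictly past $T_{f_0,\mathbb{0}}$, contradicting maximality. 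The delicate point, and the main obstacle, is the first step: tracking the commutators of $\nabla$ with the non-constant weight $\|f\|^8$ so that the right-hand side of the energy estimate is genuinely of order $R^8+R^4$ and does not hide intermediate powers $R^7,R^6,\ldots$; achieving this requires grouping terms so that each derivative on $\|f\|^8$ is paired either with integration against $\gamma$ or with a curvature factor controlled by the smallness hypothesis.
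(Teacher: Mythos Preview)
Your strategy coincides with the paper's: the localised energy inequality you describe is exactly Proposition~\ref{6.1}, which (after rescaling to $\rho=1$) yields
\[
\frac{d}{dt}\int_\Sigma|A|^2\gamma^s\,d\mu \le c_0\bigl(\|f\|_{L^\infty}^8+\|f\|_{L^\infty}^4\bigr)\varepsilon_0,
\]
and the bootstrap you run is identical to the paper's continuity argument with $t_0=\sup\{t<\min(T,\lambda):\varepsilon(\tau)\le 3\Gamma\delta\}$ and $\lambda=c(n)/(R^8+R^4)$. One small correction: your heuristic $\|f\|^7|\nabla f|\le R^7\rho^7$ is off since $|\nabla f|=\sqrt{2}$ intrinsically; the intermediate powers $\|f\|^7,\|f\|^6,\ldots$ do arise but are absorbed into $\|f\|^8+\|f\|^4$ via Young's inequality, as in the proofs of Propositions~\ref{4.3} and~\ref{5.1}.

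The only genuine difference is the endgame at $t_0=T$. The paper does not invoke Theorem~\ref{thm1.1}; instead it applies Theorem~\ref{theoremI} with $\alpha_i:=\int_\Sigma|\nabla^iA|^2\,d\mu\big|_{t=0}$ (finite because $f_0$ is smooth on a closed surface) to obtain uniform $C^m$ bounds on $A$ up to $T$, and then repeats verbatim the smooth-extension argument from the proof of Theorem~\ref{thm2}, including the check that $f$ cannot reach $\mathbb{0}$. Your shortcut through Theorem~\ref{thm1.1} is legitimate---pick $t_1<T$, use smoothness of $f(\cdot,t_1)$ to supply the $\nabla^2A$ bound $\alpha$, and restart---but note that the ``parabolic smoothing'' you invoke is not a black box here: it is precisely the chain Propositions~\ref{6.1}--\ref{9.1} culminating in Theorem~\ref{theoremI}, so either way you are relying on the same machinery.
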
 \hspace{-22pt}
Therefore, if an inverse Willmore flow breaks down in finite time $T<\infty,$ then it diverges or  a quantum $\delta$ of the curvature concentrates in space, i.e.
\begin{gather*}
r(\tau):=\sup \lbrace \rho>0 \mid \sup_{x\in \mathbb{R}^{n}} \int_{B_{\rho}(x)} \vert A \vert^{2}d\mu \lfloor_{t=\tau} \leq \delta \rbrace \longrightarrow 0 \quad \text{as} \quad \tau \nearrow T.
\end{gather*}
\section{Basics}
Consider a smooth variation  with normal velocity, i.e.
\begin{gather*}
f: \Sigma \times 
[0,T) \rightarrow \mathbb{R}^{n} \;\text{ and }V=V^{\perp}=\partial_{t}f.
\end{gather*}
Let $X,Y$ be tangential vectorfields on $\Sigma$ independent of $t$ and 
$\Phi$ a normal valued $l$-form along $f$.
We collect some formulae.
\begin{equation} \label{formula1}
 (\nabla_{X}A)(Y,Z)=(\nabla_{Y}A)(X,Z), \quad \nabla H=-\nabla^{*}A = -2 \nabla^{*} A^{0}
\end{equation} 
\begin{equation} \label{formula2}
 K=\frac{1}{2}(\vert H \vert^{2}- \vert A \vert^{2})= \frac{1}{4}\vert H \vert^{2}-\frac{1}{2}\vert A^{0} \vert^{2}
\end{equation} 
\begin{equation*} 
\hspace{-10pt}
 R^{\perp}(X,Y)\Phi= A(e_{i},X)\langle A(e_{i}, Y), \Phi \rangle - A(e_{i},Y )\langle A(e_{i},X), \Phi \rangle 
\end{equation*}
\begin{equation} \label{formula3}
 \hspace{91pt}
 =A^{0}(e_{i},X)\langle A^{0}(e_{i}, Y), \Phi \rangle - A^{0}(e_{i},Y )\langle A^{0}(e_{i},X), \Phi \rangle
\end{equation} 
\begin{equation} \label{formula4}
 \partial_{t}P= -\partial_{t}P^{\perp}
= 
\langle \nabla_{e_{i}}V,\cdot \rangle e_{i}
+
\langle e_{i},\cdot \rangle \nabla_{e_{i}}V
\end{equation} 
\begin{equation} \label{formula5}
 P(\partial_{t}\Phi)
= 
-\langle \nabla_{e_{i}}V,\Phi\rangle e_{i}
\end{equation}
\begin{equation} \label{formula6}
\partial^{\perp}_{t} \nabla_{X} \Phi 
= 
\nabla_{X} \partial^{\perp}_{t} \Phi
+
A(X,e_{i}) \langle \nabla_{e_{i}}V,\Phi \rangle  
+ 
\nabla_{e_{i}}V \langle A(X,e_{i}),\Phi\rangle 
\end{equation} 
\begin{equation} \label{formula7}
 (\partial_{t}g)(X,Y)=-2 \langle A(X,Y),V \rangle
\end{equation} 
\begin{equation} \label{formula8}
  \partial_{t}(d\mu)=-\langle H,V \rangle d\mu
\end{equation}  
\begin{equation*} 
\partial_{t}(\nabla_{X}Y)
=
-\langle (\nabla_{e_{i}}A)(X,Y),V \rangle e_{i}
+
\langle A(X,Y), \nabla_{e_{i}}V \rangle e_{i}
\end{equation*}  
\begin{equation} \label{formula9} 
 \hspace{49pt}
 -\langle A(X,e_{i}),\nabla_{Y}V\rangle e_{i} 
 -\langle A(Y,e_{i}),\nabla_{X}V\rangle e_{i}
\end{equation} 
\begin{equation} \label{formula10}
 \partial^{\perp}_{t}A(X,Y)= \nabla^{2}_{X,Y}V-A(e_{i},X) \langle A(Y,e_{i}),V \rangle
\end{equation}  
\begin{equation} \label{formula11}
 \partial^{\perp}_{t}H = \Delta V + Q(A^{0})V + \frac{1}{2}H \langle H,V \rangle,
\end{equation}  
where by definition
\begin{equation} \label{formula12}
 A^{0}:=A-\frac{1}{2}gH 
\end{equation} 
is the tracefree part of the second fundamental form $A$,
\begin{equation} \label{formula13}
 Q(A_{0}) \Phi :=A^{0}(e_{i},e_{j}) \langle \ A^{0}(e_{i},e_{j}), \Phi \rangle
\end{equation} 
and $\nabla^{*}$ is the adjoint operator to the normal derivative $\nabla=D^{\perp}$, i.e.
\begin{equation*}
\nabla^{*}\Phi
=
-(\nabla_{e_{i}}\Phi)(e_{i},\ldots)
=
-g^{i,j}\nabla_{i}\Phi_{j,k_{2},\ldots,k_{l}}
\end{equation*}
for any normal valued $l$-form along $f$.
\begin{definition}
 Let $\Phi,\Psi$ be normal valued multilinear forms along $f$. \\
 Then let $\Phi * \Psi$ denote any normal or real valued multilinear form along $f$,\\
 depending only on $\Phi$ and $\Psi$ in a 
 universal, bi-linear way. 

 For a normal valued multilinear form $\Phi$ along $f$ we denote by
 $P^{m}_{r}(\Phi)$ 

any term of the type
 \begin{equation*}
  P^{m}_{r}(\Phi)=\sum_{i_{1}+\ldots+i_{r}=m}\nabla^{i_{1}}\Phi*\ldots*\nabla^{i_{r}}\Phi.
 \end{equation*}
\end{definition}
\section{The corresponding flow}
Let $f:\Sigma \times [0,T) \longrightarrow \mathbb{R}^{n} \setminus \lbrace \mathbb{0} \rbrace$ be a Willmore flow. \\
By (\ref{formula8}) and (\ref{formula11}) we derive for the variation of the Willmore functional
\begin{gather*}
\begin{split}
\delta W(g)(u)
=
\partial_{t}W(g+tu) \lfloor_{t=0}
=
\frac{1}{2}\int_{\Sigma} \langle \Delta H_{g} + Q(A_{g}^{0})H_{g},u \rangle d\mu_{\Sigma,g}.
\end{split}
\end{gather*}
Since for the inversion 
$I:\mathbb{R}^{n} \setminus \lbrace \mathbb{0} \rbrace \longrightarrow \mathbb{R}^{n} \setminus \lbrace \mathbb{0} \rbrace: x \longrightarrow \frac{x}{\Vert x \Vert^{2}}$ 
\begin{gather*}
\begin{split}
dI(x) \cdot dI^{T}(x)
= &
(\frac{1}{\Vert x \Vert^{2}}(\delta_{i}^{j}    -2\frac{x_{i}\delta^{j,n}x_{n}}{\Vert x \Vert^{2}}))\delta_{j,m}
(\frac{1}{\Vert x \Vert^{2}}(\delta_{k}^{m} -2\frac{x_{k}\delta^{m,r}x_{r}}{\Vert x \Vert^{2}}))\delta^{k,l} \\
= &
\frac{1}{\Vert x \Vert^{4}} \delta_{i}^{l} 
- \frac{2}{\Vert x \Vert^{6}}(\delta^{r}_{i} x_{k} \delta^{k,l}x_{r}
+ x_{i}x_{n}\delta^{n,l})
+\frac{4}{\Vert x \Vert^{8}} x_{i} x_{n} x_{r} x_{k} \delta^{n,r} \delta^{k,l}
\end{split}
\end{gather*}
\begin{equation} \label{eq0}
 \hspace{-197pt}
 =\frac{1}{\Vert x \Vert^{4}} id,
\end{equation} 
we obtain for the Jacobian of $I_{\sharp}f$, where $I_{\sharp}f(x):=I(f(x))$ for all $x \in \Sigma$, 
\begin{gather*}
 J(I_{\sharp}f) = \sqrt{ \text{det} (d(I_{\sharp}f)^{T}d(I_{\sharp}f))}=\sqrt{\text{det} \frac{df^{T}df}{\Vert f \Vert^{4}}}
 =\frac{1}{\Vert f \Vert^{4}} Jf, \\ \quad \text{i.e.} \quad
 d\mu_{I_{\sharp}f} = \Vert f \Vert^{-4} d\mu_{f}
\end{gather*}
and, since $I^{2}=id$, i.e. $id=d(I^{2})(x)=dI(I(x))\cdot dI(x)$ for all $x \in \mathbb{R}^{n}\setminus \lbrace \mathbb{0} \rbrace,$
\begin{gather*}
\Vert x \Vert^{-4} dI(I(x))=\Vert x \Vert^{-4} dI^{-1}(x)=dI^{T}(x).
\end{gather*}
By this and the  invariance under inversion of the Willmore functional,\\ cf. proposition \ref{invariance} we conclude for arbitrary
$u \in C^{\infty}_{0}(\Sigma,\mathbb{R}^{n})$
\begin{gather*}
\begin{split} 
\langle \partial_{t} I_{\sharp}f,u \rangle_{L^{2}_{\mu_{f}}(\Sigma)} 
= &
\langle dI(f)\cdot\partial_{t}f,u\rangle_{L^{2}_{\mu_{f}}(\Sigma)}
=
\langle \partial_{t} f,dI^{T}(f)u\rangle_{L^{2}_{\mu_{f}}(\Sigma)}\\
= &
\langle -grad_{L^{2}_{\mu_{f}}(\Sigma)}W(f),dI^{T}(f)u\rangle_{L^{2}_{\mu_{f}}(\Sigma)}
=
-\delta W(f)(dI^{T}(f)u) \\
= &
-\delta W(I^{2}_{\sharp}f)(\Vert f \Vert^{-4}dI(I_{\sharp} f)\cdot u)\\
= & 
-\partial_{t}W(I_{\sharp}(I_{\sharp}f+t\Vert f \Vert^{-4}u))\lfloor_{t=0} \\
= &
-\partial_{t}W(I_{\sharp}f+t\Vert f \Vert^{-4}u)\lfloor_{t=0}
= 
-\delta W(I_{\sharp}f)(\Vert f \Vert^{-4}u) \\
= &
-\frac{1}{2}\int_{\Sigma} \langle \Delta H_{I_{\sharp}f} + Q(A_{I_{\sharp}f}^{0})H_{I_{\sharp}f},\Vert f \Vert^{-4}u \rangle d\mu_{\Sigma,I_{\sharp}f}\\
= &
\langle -\frac{1}{2}\Vert I_{\sharp}f \Vert^{8} ( \Delta H_{I_{\sharp}f} + Q(A_{I_{\sharp}f}^{0})H_{I_{\sharp}f}),u \rangle_{L^{2}_{\mu_{f}}(\Sigma)}.
\end{split}
\end{gather*}
Passing from $I_{\sharp}f$ to $f$ we have to solve the quasi-linear, parabolic \\ forth-order evolution equation
\begin{equation} \label{eq1}
\partial_{t}f = - \frac{\Vert f \Vert^{8}}{2}(\Delta H + Q(A^{0})H),
\end{equation}
to whose solutions we will refer considering the inverse Willmore flow.
\section{Evolution of the curvature}
For any normal valued $l$-form $\Phi$ we define the curvature
\begin{equation*}
 R^{l}(X,Y) \Phi := R^{\perp}(X,Y)\Phi(X_{1},\ldots,X_{l})-\sum^{l}_{k=1}\Phi(X_{1},\ldots,R(X,Y)X_{k},\ldots,X_{l}).
\end{equation*} 
Using (\ref{formula3}) and the Gauss equation we deduce 
\begin{equation} \label{123}
 R^{l}(X,Y) \Phi=A*A*\Phi.
\end{equation} 
\begin{proposition}
 Let $\Phi$ be a normal valued l-form along $f$. Then we have
\begin{equation} \label{eq2}
 (\nabla \nabla^{*} - \nabla^{*} \nabla) \Phi = A * A * \Phi - \nabla^{*}T,
\end{equation} 
where $T(X_{0},\ldots,X_{l})= (\nabla_{X_{0}}\Phi)(X_{1},X_{2},\ldots,X_{l}) - (\nabla_{X_{1}}\Phi)(X_{0},X_{2},\ldots,X_{l}).$ 
\end{proposition}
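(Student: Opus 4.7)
The plan is to work in a local orthonormal frame $(e_j)$ on $\Sigma$, ideally at a point chosen so that $\nabla_X e_j = 0$ and one can ignore Christoffel corrections when evaluating covariant derivatives on tensor components. I would then expand both sides of the commutator $[\nabla,\nabla^*]\Phi$ as multilinear forms in arguments $X_0,X_1,\ldots,X_{l-1}$.

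First, by the definition of $\nabla^*$,
\begin{equation*}
(\nabla\nabla^*\Phi)(X_0,X_1,\ldots,X_{l-1}) = -(\nabla_{X_0}\nabla_{e_j}\Phi)(e^j,X_1,\ldots,X_{l-1}),
\end{equation*}
while, writing $(\nabla\Phi)(Y_0,Y_1,\ldots,Y_l)=(\nabla_{Y_0}\Phi)(Y_1,\ldots,Y_l)$ and contracting on the first slot,
\begin{equation*}
(\nabla^*\nabla\Phi)(X_0,X_1,\ldots,X_{l-1}) = -(\nabla_{e_j}\nabla_{e^j}\Phi)(X_0,X_1,\ldots,X_{l-1}).
\end{equation*}

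Next I would swap the two covariant derivatives in the first expression by the Ricci identity for the connection $\nabla=D^\perp$ acting on normal-valued $l$-forms, which produces exactly the curvature operator $R^l$ introduced above:
\begin{equation*}
(\nabla_{X_0}\nabla_{e_j}\Phi - \nabla_{e_j}\nabla_{X_0}\Phi)(e^j,X_1,\ldots,X_{l-1}) = \bigl(R^l(X_0,e_j)\Phi\bigr)(e^j,X_1,\ldots,X_{l-1}).
\end{equation*}
Invoking (\ref{123}) immediately converts this contribution into an $A*A*\Phi$ term. What remains from the commutator is
\begin{equation*}
-(\nabla_{e_j}\nabla_{X_0}\Phi)(e^j,X_1,\ldots,X_{l-1}) + (\nabla_{e_j}\nabla_{e^j}\Phi)(X_0,X_1,\ldots,X_{l-1}),
\end{equation*}
which, pulling the outer $\nabla_{e_j}$ out, equals $(\nabla_{e_j}\Xi)(e^j,X_0,X_1,\ldots,X_{l-1})$ for the auxiliary form $\Xi=T$ defined precisely as in the statement; by antisymmetry of $T$ in its first two slots this is $-(\nabla^*T)(X_0,X_1,\ldots,X_{l-1})$.

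The main obstacle is purely bookkeeping: making sure every sign, every index contraction, and every use of the antisymmetry of $T$ is consistent, and that the Ricci identity is applied with the same convention as that implicitly used in the definition of $R^l$ (so that the composition of tensorial $R^\perp$ on the normal value with the intrinsic curvature acting on the form slots both appear with the correct sign). Once these conventions are fixed and the computation is carried out at a point with $\nabla_X e_j=0$, the identity follows immediately.
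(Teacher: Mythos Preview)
Your proposal is correct and follows essentially the same route as the paper: both compute the commutator in components, insert the cross term $\nabla_{e_j}\nabla_{X_0}\Phi(e^j,\ldots)$ (you via the Ricci identity, the paper by explicitly adding and subtracting $g^{i,j}\nabla_i\nabla_{i_1}\Phi_{j,\ldots}$), identify one pair as the curvature $R^l=A*A*\Phi$ via (\ref{123}), and recognise the other pair as $-\nabla^*T$. One minor remark: the identification $(\nabla_{e_j}T)(e^j,X_0,\ldots)=-(\nabla^*T)(X_0,\ldots)$ follows directly from the definition of $\nabla^*$, so the appeal to antisymmetry of $T$ is unnecessary (though harmless).
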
 
\begin{proof}
\begin{gather*}
\begin{split}
((\nabla \nabla^{*}-\nabla^{*}\nabla)\Phi)_{i_{1},\ldots,i_{l}}
= &
-\nabla_{i_{1}}(g^{i,j}\nabla_{i}\Phi_{j,i_{2},\ldots,i_{l}})
+
g^{i,j}\nabla_{i}\nabla_{j}\Phi_{i_{1},\ldots,i_{l}}\\
= &
g^{i,j}\nabla_{i}\nabla_{j}\Phi_{i_{1},\ldots,i_{l}}
-
g^{i,j}\nabla_{i}\nabla_{i_{1}}\Phi_{j,i_{2},\ldots,i_{l}} \\
& +
g^{i,j}\nabla_{i}\nabla_{i_{1}}\Phi_{j,i_{2},\ldots,i_{l}}
-
g^{i,j}\nabla_{i_{1}}\nabla_{i}\Phi_{j,i_{2},\ldots,i_{l}}\\
= &
g^{i,j}\nabla_{i}
(
\nabla_{j}\Phi_{i_{1},i_{2},\ldots,i_{l}}
-
\nabla_{i_{1}}\Phi_{j,i_{2},\ldots,i_{l}}
)
+
g^{i,j}R^{l}_{i,i_{1}}\Phi_{j,i_{2},\ldots,i_{l}}\\
= &
-(\nabla^{*}T)_{i_{1},\ldots,i_{l}}+(A*A*\Phi)_{i_{1},\ldots,i_{l}},
\end{split}
\end{gather*}
where the last equality follows by (\ref{123}).
\end{proof}
Taking $\Phi=A$ in (\ref{eq2}), we get by (\ref{formula1})
\begin{equation} \label{eq3}
 \Delta A= \nabla^{2}H + A * A * A \quad (\Delta:= -\nabla^{*} \nabla).
\end{equation} 

Considering  $\nabla \Phi,$ where $\Phi$ is a normal valued ($l$-1)-form in (\ref{eq2}), we get
\begin{equation} \label{eq4}
 \Delta(\nabla \Phi) - \nabla (\Delta \Phi) = 
 (\nabla \nabla^{*} - \nabla^{*} \nabla) \nabla \Phi = A * A * \nabla \Phi + A * \nabla A * \Phi.
\end{equation} 
For inductive reasons we need the following
\begin{proposition}
 Let $\Phi$ be a normal valued (l-1)-form along a variation \\
 $f:\Sigma \times [0,T) \rightarrow \mathbb{R}^{n}$
 with normal velocity $V=\partial_{t}f$ and 
 $\theta:\Sigma \times [0,T) \rightarrow \mathbb{R}.$ 
 \\If $\partial^{\perp}_{t} \Phi + \theta\Delta^{2} \Phi = Y,$ 
 then $\Psi = \nabla \Phi$ satisfies 
\begin{equation*}
 \partial^{\perp}_{t} \Psi + \theta \Delta^{2} \Psi 
 = \nabla Y - \nabla \theta \Delta^{2}\Phi 
 +  \theta \sum_{i+j+k=3}\nabla^{i}A *\nabla^{j}A*\nabla^{k}\Phi 
\end{equation*}
\begin{equation} \label{inductionII}
 \hspace{3pt} 
 +\nabla A*V*\Phi + A*\nabla V*\Phi.
\end{equation}
\end{proposition}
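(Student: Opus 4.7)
The plan is to compute $\partial_{t}^{\perp}\Psi$ and $\theta\,\Delta^{2}\Psi$ separately in terms of $\Phi$, and then add them so that the undesired middle term $\pm\,\theta\,\nabla\Delta^{2}\Phi$ cancels and $\nabla Y$ survives, with only curvature and velocity error terms left.

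First I would apply (\ref{formula6}) to $\Psi=\nabla\Phi$. It produces $\nabla\partial_{t}^{\perp}\Phi$ modulo the corrections $A*\nabla V*\Phi$ coming from the time derivative of the normal projection. Because $\Psi$ has one more tangential slot than $\Phi$, I must also track the time derivative of the Christoffel symbols contracting those slots; by (\ref{formula9}) this contributes a $\nabla A*V*\Phi$-piece together with further $A*\nabla V*\Phi$-pieces that are absorbed into the previous ones. All in all,
\[\partial_{t}^{\perp}\Psi \;=\; \nabla\partial_{t}^{\perp}\Phi \;+\; A*\nabla V*\Phi \;+\; \nabla A*V*\Phi.\]
Substituting the hypothesis $\partial_{t}^{\perp}\Phi = Y - \theta\,\Delta^{2}\Phi$ and distributing $\nabla$ gives
\[\nabla\partial_{t}^{\perp}\Phi \;=\; \nabla Y \;-\; \nabla\theta \cdot \Delta^{2}\Phi \;-\; \theta\,\nabla\Delta^{2}\Phi.\]

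Next I would commute $\nabla$ past $\Delta^{2}$ via
\[[\Delta^{2},\nabla] \;=\; \Delta\,[\Delta,\nabla] \;+\; [\Delta,\nabla]\,\Delta.\]
Formula (\ref{eq4}) identifies $[\Delta,\nabla]\Phi = A*A*\nabla\Phi + A*\nabla A*\Phi$. Applying this to $\Delta\Phi$ produces the summands $A*A*\nabla\Delta\Phi$ and $A*\nabla A*\Delta\Phi$, both fitting the form $\nabla^{i}A*\nabla^{j}A*\nabla^{k}\Phi$ with $i+j+k=3$. Distributing the outer $\Delta$ over $A*A*\nabla\Phi + A*\nabla A*\Phi$ by the Leibniz rule gives terms of the shape $(\Delta A)*A*\nabla\Phi$, $\nabla A*\nabla A*\nabla\Phi$, $A*A*\nabla^{3}\Phi$, $A*\nabla^{2}A*\nabla\Phi$, and so on, each of which again has total weight three on the $\nabla$'s distributed across the three factors. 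Therefore $\theta\,\Delta^{2}\Psi = \theta\,\nabla\Delta^{2}\Phi + \theta\sum_{i+j+k=3}\nabla^{i}A*\nabla^{j}A*\nabla^{k}\Phi$.

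Adding the two contributions, the $\theta\,\nabla\Delta^{2}\Phi$-terms cancel and (\ref{inductionII}) is read off directly. The one step that needs attention, rather than ordinary difficulty, is the weight bookkeeping in the commutator: I must verify that the Leibniz expansion of $\Delta(A*A*\nabla\Phi + A*\nabla A*\Phi)$ only yields summands with precisely two $A$-factors, one $\Phi$-factor, and total derivative weight three. This works because the iterated covariant derivative $\Delta\nabla = \nabla^{3}$ may be left unexpanded and no further commutator identities (which would introduce an extra $A*A$-factor) need to be invoked.
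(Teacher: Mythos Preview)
Your proof is correct and follows essentially the same route as the paper: first obtain $\partial_{t}^{\perp}\Psi = \nabla\partial_{t}^{\perp}\Phi + A*\nabla V*\Phi + \nabla A*V*\Phi$ via (\ref{formula6}) and (\ref{formula9}), then handle $\theta\Delta^{2}(\nabla\Phi)-\nabla(\theta\Delta^{2}\Phi)$ by two applications of (\ref{eq4}). Your commutator identity $[\Delta^{2},\nabla]=\Delta[\Delta,\nabla]+[\Delta,\nabla]\Delta$ is exactly how the paper organizes the same computation, and your weight-counting remark correctly justifies why the Leibniz expansion of $\Delta(A*A*\nabla\Phi+A*\nabla A*\Phi)$ stays within $\sum_{i+j+k=3}\nabla^{i}A*\nabla^{j}A*\nabla^{k}\Phi$.
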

\begin{proof}
We use a local frame $e_{1},e_{2} \in T(0,1)$  independent of $t,$ for which \\
$\nabla e_{1}=\nabla e_{2}=0$ at a given point of $\Sigma \times [0,T).$ 
There we have for  $X_{k} \in \lbrace e_{1},e_{2} \rbrace$
\begin{gather*}
 \hspace{-135pt}
 (\partial^{\perp}_{t}\Psi)(X_{1},\ldots,X_{l})
 = \partial^{\perp}_{t}((\nabla_{X_{1}} \Phi)(X_{2},\ldots,X_{l})) \\
 \hspace{0pt}
 =\partial^{\perp}_{t} \nabla_{X_{1}}(\Phi(X_{2},\ldots,X_{l}))
 -\partial^{\perp}_{t} \sum^{l}_{k=2}\Phi(X_{2},\ldots,\nabla_{X_{1}}X_{k},\ldots,X_{l}).
\end{gather*}
For the first term we use (\ref{formula6}) to infer
\begin{gather*}
\begin{split}
\partial^{\perp}_{t} \nabla_{X_{1}}(\Phi(X_{2},\ldots,X_{l})) 
= &
\nabla_{X_{1}}\partial^{\perp}_{t} (\Phi (X_{2},\ldots,X_{l}))
+
(A \hspace{-0.3pt}*\nabla V \hspace{-0.3pt}* \Phi )(X_{1},\ldots,X_{l}) \\
= &
(\nabla \partial^{\perp}_{t} \Phi + A*\nabla V*\Phi)(X_{1},\ldots,X_{l}),
\end{split}
\end{gather*}
for the second one (\ref{formula9})
 \begin{gather*}
\begin{split}
\partial^{\perp}_{t} \hspace{-0.6pt}
\sum^{l}_{k=2}\Phi(X_{2},\ldots,\nabla_{X_{1}}X_{k},\ldots,X_{l}) 
= &
\sum^{l}_{k=2} P^{\perp} \Phi(X_{2},\ldots,\partial_{t} \nabla_{X_{1}}X_{k},\ldots,X_{l}) \\
& + 
\sum^{l}_{k=2} (\partial^{\perp}_{t} \Phi)(X_{2},\ldots,\nabla_{X_{1}}X_{k},\ldots,X_{l}) \\
= &
(\nabla A*V*\Phi + A*\nabla V*\Phi) (X_{1},\ldots,X_{l}).
\end{split}
 \end{gather*}
Therefore 
\begin{gather*}
  \theta \Delta^{2} \Psi +\partial^{\perp}_{t}\Psi - \nabla Y 
 =\theta \Delta^{2} (\nabla \Phi) - \nabla (\theta \Delta^{2} \Phi)
 +\nabla A*V*\Phi + A*\nabla V*\Phi.
\end{gather*}
By (\ref{eq4}) we have
\begin{gather*}
\begin{split}
\theta \Delta^{2} (\nabla \Phi)  
& - 
\nabla (\theta \Delta^{2} \Phi) \\
= &
\theta \Delta^{2} (\nabla \Phi) 
- 
\theta \nabla ( \Delta^{2} \Phi) 
- 
\nabla \theta   \Delta^{2} \Phi \\
= &\theta 
[ 
\Delta 
[
\Delta (\nabla \Phi) 
- 
\nabla (\Delta \Phi)
]
+
\Delta (\nabla (\Delta \Phi)) 
- 
\nabla (\Delta (\Delta \Phi))
] 
- 
\nabla \theta \Delta^{2} \Phi \\
= &
\theta 
[ 
\Delta 
(A * A * \nabla \Phi 
+ 
A * \nabla A * \Phi
) \\
&  \hspace{8pt} 
+
A * A * \nabla( \Delta \Phi) 
+ 
A * \nabla A *(\Delta \Phi) 
] 
-
\nabla \theta \Delta^{2} \Phi \\
= &
\theta \sum_{i+j+k=3}\nabla^{i}A* \nabla^{j}A*\nabla^{k}\Phi 
-
\nabla \theta \Delta^{2}\Phi.
\end{split}
\end{gather*}
Collecting terms the proposition follows.
\end{proof}
To state the main result of this section, we define abbreviatively
\begin{equation} \label{I(m)=}
 (i,j,k) \in I(m) :\Longleftrightarrow k \in \lbrace 1,3,5 \rbrace,\quad i \leq m+2 , \quad i+j+k=m+5.
\end{equation} 

\begin{proposition}
 Under the inverse Willmore flow, i.e. solutions of 
 $(\ref{eq1})$, we have the evolution equations
 \begin{equation} \label{evolutioneq}
 \partial^{\perp}_{t}(\nabla^{m}A) +\frac{\Vert f \Vert^{8}}{2} \Delta^{2}(\nabla^{m}A)
 = \sum_{(i,j,k) \in I(m),\, j<m+4} \nabla^{i}\Vert f \Vert^{8}* P^{j}_{k}(A).
\end{equation} 
\end{proposition}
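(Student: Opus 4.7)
The natural approach is induction on $m$, with the base case $m=0$ coming directly from the evolution of $A$ and the inductive step from the preceding proposition applied to $\Phi=\nabla^{m}A$ with $\theta=\tfrac{1}{2}\Vert f\Vert^{8}$ and $V=-\tfrac{1}{2}\Vert f\Vert^{8}(\Delta H+Q(A^{0})H)$.

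For the base case, formula (\ref{formula10}) reads $\partial^{\perp}_{t}A=\nabla^{2}V+A*A*V$. Substituting the inverse Willmore velocity and applying the Leibniz rule twice, the top-order contribution in $\nabla^{2}V$ is $-\tfrac{\Vert f\Vert^{8}}{2}\nabla^{2}\Delta H$, which by (\ref{eq3}) and the commutator identity (\ref{eq4}) iterated equals $-\tfrac{\Vert f\Vert^{8}}{2}\Delta^{2}A$ modulo terms lying in $\Vert f\Vert^{8}*(P^{2}_{3}(A)+P^{0}_{5}(A))$. Moving $\tfrac{\Vert f\Vert^{8}}{2}\Delta^{2}A$ to the left and absorbing $A*A*V$ (which, after writing $V=\Vert f\Vert^{8}*(P^{2}_{1}(A)+P^{0}_{3}(A))$, itself sits inside $\Vert f\Vert^{8}*(P^{2}_{3}(A)+P^{0}_{5}(A))$), the remaining Leibniz contributions are of the form $\nabla^{i}\Vert f\Vert^{8}*P^{j}_{k}(A)$ with $(i,j,k)\in I(0)$ and $j<4$.

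For the inductive step, assume the identity at level $m$, with right-hand side $Y_{m}$. Applying the preceding proposition with $\Phi=\nabla^{m}A$, $\theta=\tfrac{1}{2}\Vert f\Vert^{8}$, produces a right-hand side at level $m+1$ built from four pieces: $\nabla Y_{m}$; $-\tfrac{1}{2}\nabla\Vert f\Vert^{8}\cdot\Delta^{2}\nabla^{m}A$; $\tfrac{\Vert f\Vert^{8}}{2}\sum_{i+j+k=3}\nabla^{i}A*\nabla^{j}A*\nabla^{k+m}A$; and $\nabla A*V*\nabla^{m}A+A*\nabla V*\nabla^{m}A$. A single derivative applied to a summand $\nabla^{i}\Vert f\Vert^{8}*P^{j}_{k}(A)$ of $Y_{m}$ yields either $\nabla^{i+1}\Vert f\Vert^{8}*P^{j}_{k}(A)$ or $\nabla^{i}\Vert f\Vert^{8}*P^{j+1}_{k}(A)$, both admissible for $I(m+1)$; the Laplacian term is $\nabla\Vert f\Vert^{8}*P^{m+4}_{1}(A)$, matching $(1,m+4,1)\in I(m+1)$ with $j=m+4<m+5$; the cubic sum lies in $\Vert f\Vert^{8}*P^{m+3}_{3}(A)$; and expanding $V$ and $\nabla V$ via $V=\Vert f\Vert^{8}*(P^{2}_{1}(A)+P^{0}_{3}(A))$ produces summands $\nabla^{i}\Vert f\Vert^{8}*P^{j}_{k}(A)$ with $k\in\{3,5\}$, each landing in $I(m+1)$ with $j<m+5$.

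The main obstacle is purely combinatorial bookkeeping: one must verify simultaneously $k\in\{1,3,5\}$, $i\le m+3$, $i+j+k=m+6$, and $j<m+5$ for every term generated. The constraint $j<m+4$ at level $m$ is calibrated precisely so that after one derivative (which raises $i$ or $j$ by exactly one) it becomes $j<m+5$ at level $m+1$; the potentially offending leading term $\Vert f\Vert^{8}*\nabla^{m+5}A$ never appears because the only place a bare $\Vert f\Vert^{8}$ hits $\Delta^{2}\nabla^{m+1}A$ is on the left-hand side, having been moved there in the base case and preserved through the induction.
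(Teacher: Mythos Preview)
Your proof is correct and follows essentially the same route as the paper: the base case via (\ref{formula10}), (\ref{eq3}), (\ref{eq4}) and Leibniz, and the inductive step via (\ref{inductionII}) with $\Phi=\nabla^{m}A$, $\theta=\tfrac{1}{2}\Vert f\Vert^{8}$, checking that each of the four resulting pieces lands in the index set $I(m+1)$ with $j<m+5$. Your combinatorial bookkeeping is accurate, including the observation that the constraint $j<m+4$ at level $m$ is exactly what is needed so that a single derivative (which raises $i$ or $j$ by one) preserves the required bound at level $m+1$.
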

\begin{proof}
For $m=0$  we obtain by (\ref{formula10}) and (\ref{eq1})
\begin{gather*}
\begin{split}
\partial^{\perp}_{t}A 
=&
-\nabla^{2}(\frac{\Vert f \Vert^{8}}{2} (\Delta H + Q(A^{0})H)) -A*A*(\frac{\Vert f \Vert^{8}}{2} (\Delta H + Q(A^{0})H)) \\
=&
-\frac{\Vert f \Vert^{8}}{2} \nabla^{2}(\Delta H) 
  + \Vert f \Vert^{8} P^{2}_{3}(A)
  + \nabla \Vert f \Vert^{8}*[P^{3}_{1}(A)+ P^{1}_{3}(A)] \\
&
  +\nabla^{2}\Vert f \Vert^{8}*[P^{2}_{1}(A)+P^{0}_{3}(A)]
  +\Vert f \Vert^{8}[P^{2}_{3}(A) + P^{0}_{5}(A)]. 
\end{split}
\end{gather*}
Using (\ref{eq3}) and (\ref{eq4}) we derive
\begin{gather*}
\begin{split}
\nabla^{2}(\Delta H) 
=& 
\nabla (\nabla (\Delta H)) 
= \nabla [ \Delta (\nabla H) + A*A*\nabla H +A*\nabla A*H] \\
=&
\Delta (\nabla^{2}H) + A*A*\nabla^{2}H + A*\nabla A*\nabla H + P^{2}_{3}(A) \\
=&
\Delta (\Delta A  + A*A*A) + P^{2}_{3}(A) =\Delta^{2}A +P^{2}_{3}(A).
\end{split}
\end{gather*}
Consequently
\begin{gather*}
\begin{split}
\partial^{\perp}_{t}A 
=& 
-\frac{\Vert f \Vert^{8}}{2}\Delta^{2}A
+ \Vert f \Vert^{8} [P^{2}_{3}(A) + P^{0}_{5}(A)] \\
&+\nabla  \Vert f \Vert^{8}*[P^{3}_{1}(A) + P^{1}_{3}(A)] \\
&+\nabla^{2} \Vert f \Vert^{8}*[P^{2}_{1}(A) + P^{0}_{3}(A)],
\end{split}
\end{gather*}
which is the required form for $m=0.$ \\
By (\ref{eq1}) and (\ref{inductionII}) applied to 
$\Phi:=\nabla^{m}A, \quad \theta:=\frac{\Vert f \Vert^{8}}{2} $ we conclude inductively
\begin{gather*}
\begin{split}
\partial^{\perp}_{t}\nabla^{m+1}A & + \frac{\Vert f \Vert^{8}}{2} \Delta^{2}(\nabla^{m+1} A) \\
= &
\nabla [\sum_{\,(i,j,k) \in I(m),\,j<m+4}  \hspace{-5pt}\nabla^{i}\Vert f \Vert^{8}* P^{j}_{k}(A) \; ] 
-
\nabla \frac{\Vert f \Vert^{8}}{2}*\Delta^{2}(\nabla^{m}A) \\
& +
\frac{\Vert f \Vert^{8}}{2} \sum_{i+j+k=3}\nabla^{i}A*\nabla^{j}A*\nabla^{k}(\nabla^{m}A) \\
& +
\nabla A*( \frac{\Vert f \Vert^{8}}{2} (\Delta H + Q(A^{0})H))*\nabla^{m}A\\
& +
A*\nabla(\frac{\Vert f \Vert^{8}}{2} (\Delta H + Q(A^{0})H))*\nabla^{m}A \\ 
=&
\sum_{(i,j,k) \in I(m), \,j < m+4}  \nabla^{i+1}\Vert f \Vert^{8}*P^{j}_{k}(A)\\  
& +
\sum_{(i,j,k) \in I(m),\,j < m+4}  \nabla^{i}\Vert f \Vert^{8}* P^{j+1}_{k}(A) \\
& +
\Vert f \Vert^{8} [P^{m+3}_{3}(A) + P^{m+1}_{5}(A)] \\
& +
\nabla \Vert f \Vert^{8}*[P^{m+4}_{1}(A) + P^{m+2}_{3}(A) + P^{m}_{5}(A)] \\ 
=&
\sum_{(i,j,k) \in I(m+1),\,j < m+5}  \nabla^{i}\Vert f \Vert^{8}*P^{j}_{k}(A).
\end{split}
\end{gather*}
\end{proof}

\section{Energy type inequalities}

\begin{proposition}
Let 
\quad 
$
f:\Sigma \times [0,T) \rightarrow \mathbb{R}^{n}
\quad \text{with} \quad 
V=V^{\perp}=\partial_{t}f
$, \\
$\theta:\Sigma \times [0,T) \rightarrow \mathbb{R}$ \;and 
$\Phi$ a normal valued $l$-form along $f$, which satisfies 
\begin{gather*}
\partial^{\perp}_{t}\Phi + \theta\Delta^{2}\Phi = Y.
\end{gather*}
Then for any $\eta:\Sigma \times [0,T) \rightarrow \mathbb{R}$ we have
\begin{gather*}
\hspace{-105pt}
\frac{d}{dt}\int_{\Sigma}\frac{1}{2}  \eta \vert \Phi \vert^{2}d\mu 
+
\int_{\Sigma}\langle \Delta \Phi , \Delta (\theta \eta \Phi) \rangle d\mu 
-
\int_{\Sigma} \langle Y , \eta \Phi \rangle d\mu \\
\hspace{19pt}
=
\int_{\Sigma}\eta \sum^{l}_{k=1} \langle A(e_{i_{k}},e_{j}) , V \rangle 
\langle \Phi(e_{i_{1}},\ldots ,e_{i_{k}},\ldots ,e_{i_{l}}) , 
\Phi(e_{i_{1}},\ldots , e_{j},\ldots ,e_{i_{l}}) \rangle d\mu
\end{gather*} 
\begin{equation} \label{Lemma3.1}
\hspace{-127pt}
-
\int_{\Sigma} \frac{1}{2} \vert \Phi \vert^{2} \langle H , V \rangle \eta d\mu 
+
\int_{\Sigma} \frac{1}{2} \vert \Phi \vert^{2} \partial_{t} \eta d\mu.
\end{equation}
\end{proposition}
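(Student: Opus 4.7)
The plan is to differentiate $\int_{\Sigma}\tfrac{1}{2}\eta|\Phi|^{2}d\mu$ via the product rule and track three sources of $t$-dependence: the weight $\eta$, the pointwise norm $|\Phi|^{2}$, and the measure $d\mu$. The $\eta$-derivative immediately supplies $\int\tfrac{1}{2}|\Phi|^{2}\partial_{t}\eta\,d\mu$, while (\ref{formula8}) applied to $\partial_{t}(d\mu)=-\langle H,V\rangle d\mu$ reproduces the $-\int\tfrac{1}{2}|\Phi|^{2}\langle H,V\rangle \eta\,d\mu$ term. The remaining work is to evaluate $\partial_{t}|\Phi|^{2}$ and to convert the resulting $\langle\Delta^{2}\Phi,\Phi\rangle$ piece into the $\langle\Delta\Phi,\Delta(\theta\eta\Phi)\rangle$ piece via integration by parts.

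To compute $\partial_{t}|\Phi|^{2}$, I would write $|\Phi|^{2}=g^{i_{1}j_{1}}\cdots g^{i_{l}j_{l}}\langle\Phi_{i_{1}\ldots i_{l}},\Phi_{j_{1}\ldots j_{l}}\rangle$ in local coordinates and differentiate. From (\ref{formula7}) one has $\partial_{t}g_{ab}=-2\langle A(e_{a},e_{b}),V\rangle$, hence $\partial_{t}g^{ab}=2g^{ac}g^{bd}\langle A(e_{c},e_{d}),V\rangle$; summing over the $l$ contractions and passing to a $g$-orthonormal frame at the point produces exactly the combinatorial sum $2\sum_{k}\langle A(e_{i_{k}},e_{j}),V\rangle\langle\Phi(e_{i_{1}},\ldots,e_{i_{k}},\ldots,e_{i_{l}}),\Phi(e_{i_{1}},\ldots,e_{j},\ldots,e_{i_{l}})\rangle$ appearing on the right-hand side. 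The complementary contribution from $\partial_{t}$ hitting $\Phi$ itself is $2\langle\partial_{t}\Phi,\Phi\rangle$, which collapses to $2\langle\partial_{t}^{\perp}\Phi,\Phi\rangle$ because $\Phi$ is normal-valued and the ambient pairing annihilates any tangential component of $\partial_{t}\Phi$.

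Substituting the evolution hypothesis $\partial_{t}^{\perp}\Phi=Y-\theta\Delta^{2}\Phi$ then produces $\int\eta\langle Y,\Phi\rangle d\mu-\int\eta\theta\langle\Delta^{2}\Phi,\Phi\rangle d\mu$. On the closed surface $\Sigma$ the operators $\nabla$ and $\nabla^{*}$ are adjoint, so $\Delta=-\nabla^{*}\nabla$ is self-adjoint on normal-valued forms; applying this twice yields $\int\eta\theta\langle\Delta^{2}\Phi,\Phi\rangle d\mu=\int\langle\Delta\Phi,\Delta(\eta\theta\Phi)\rangle d\mu$. Moving the $Y$- and $\Delta$-terms to the left-hand side assembles precisely the stated identity.

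The main bookkeeping challenge is the metric contribution to $\partial_{t}|\Phi|^{2}$: the combinatorial sum over the $l$ index positions must be organised so that the factor of $2$ coming from $\partial_{t}g^{ab}$ cancels the $\tfrac{1}{2}$ in $\tfrac{1}{2}\eta|\Phi|^{2}$ and the inserted index lands on $\Phi$ rather than on $A$. All other steps are routine applications of the product rule, of (\ref{formula7}) and (\ref{formula8}), and of integration by parts on a boundaryless $\Sigma$.
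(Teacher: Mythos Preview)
Your proposal is correct and follows essentially the same route as the paper: differentiate under the integral, track the three sources of $t$-dependence via (\ref{formula7}) and (\ref{formula8}), reduce $\langle\partial_t\Phi,\Phi\rangle$ to $\langle\partial_t^{\perp}\Phi,\Phi\rangle$, substitute the evolution equation, and integrate $\Delta^{2}$ by parts. The only cosmetic difference is that you compute the metric contribution via $\partial_t g^{ab}$ in coordinates, whereas the paper writes $|\Phi|^{2}$ in a time-dependent orthonormal frame and recovers the same term from $g(\partial_t e_{i_k},e_j)$ together with a symmetry-in-$(i_k,j)$ argument.
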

\begin{proof}
Recalling  (\ref{formula8}) we have, using summation over repeated indices,
\begin{gather*}
\begin{split}
\partial_{t} \int_{\Sigma} & \frac{1}{2}\eta \vert \Phi \vert^{2}  d\mu
= 
\partial_{t} \int_{\Sigma}\frac{1}{2}\eta 
\langle \Phi(e_{i_{1}},\ldots,e_{i_{l}}),\Phi(e_{i_{l}},\ldots,e_{i_{l}}) \rangle d\mu \\
= &
\int_{\Sigma}\frac{1}{2} \partial_{t}\eta \vert \Phi \vert^{2} d\mu
- 
\int_{\Sigma} \frac{1}{2} \eta \vert \Phi \vert^{2} \langle H,V \rangle d\mu \\
& +
\int_{\Sigma}\eta 
\langle \Phi(e_{i_{1}},\ldots,e_{i_{l}})
,
\partial_{t}(\Phi(e_{i_{1}},\ldots,e_{i_{l}})) \rangle 
d\mu \\
= &
\int_{\Sigma} \frac{1}{2}(\partial_{t}\eta - \eta\langle H,V \rangle) \vert \Phi \vert^{2} d\mu \\
& +
\int_{\Sigma}\eta \langle \Phi(e_{i_{1}},\ldots,e_{i_{l}})
,
(\partial^{\perp}_{t}\Phi)(e_{i_{1}},\ldots,e_{i_{l}}) \rangle 
d\mu  \\
& +
\int_{\Sigma}\eta \sum^{l}_{k=1}
\langle \Phi(e_{i_{1}},\ldots,e_{i_{k}},\ldots,e_{i_{l}})
,
\Phi(e_{i_{1}},\ldots,\partial_{t}e_{i_{k}},\ldots,e_{i_{l}}) \rangle 
d\mu \\
= &
\int_{\Sigma} \frac{1}{2}(\partial_{t}\eta - \eta\langle H,V \rangle) \vert \Phi \vert^{2} d\mu 
+
\int_{\Sigma}\eta \langle \Phi ,- \theta \Delta^{2}\Phi +Y  \rangle d\mu \\  
& +
\int_{\Sigma}\eta \sum^{l}_{k=1} g(\partial_{t}e_{i_{k}},e_{j}) 
\langle \Phi(e_{i_{1}},\ldots,e_{i_{k}},\ldots,e_{i_{l}})
,
\Phi(e_{i_{1}},\ldots,e_{j},\ldots,e_{i_{l}}) \rangle 
d\mu.
\end{split}
\end{gather*}
The claim follows from (\ref{formula7}) and symmetry in $i_{k},j$.
\end{proof}
\newpage
\begin{proposition} \label{4.2}
Under the assumptions of the previous proposition let
\begin{gather*}
\theta=\varphi^{4},\;
\eta=\gamma^{s} 
\quad \text{with} \quad 
0 \leq \varphi,
\gamma :\Sigma \times [0,T) \rightarrow \mathbb{R}
\quad \text{and}\quad s \geq 4.
\end{gather*}
Then  we have for some $c>0$
\begin{gather*}
\begin{split}
\frac{d}{dt} \int_{\Sigma} \vert \Phi & \vert^{2} \gamma^{s} d\mu
+
\int_{\Sigma }\varphi^{4} \vert \nabla^{2} \Phi \vert^{2} \gamma^{s} d\mu
-
2\int_{\Sigma}  \langle Y , \Phi \rangle \gamma^{s} d\mu \\
\leq & 
\int_{\Sigma} \langle A*\Phi *\Phi , V \rangle \gamma^{s} d\mu
+
\int_{\Sigma} \vert \Phi \vert^{2} s \gamma^{s-1} \partial_{t} \gamma d\mu \\
& +
c\int_{\Sigma} \varphi^{4} 
(
s^{4}\vert \nabla \gamma \vert^{4} 
+
s^{2}\gamma^{2}\vert \nabla^{2} \gamma \vert^{2}
+\gamma^{4} 
[\;
\vert A \vert^{4} 
+ 
\vert \nabla A \vert ^{2}
\;]
\,)
\gamma^{s-4}\vert \Phi \vert^{2}d\mu \\
& +
c \int_{\Sigma} \vert \nabla \varphi \vert^{2}
(
\vert \nabla \varphi \vert^{2} \gamma^{2}  
+ 
s^{2}\varphi^{2} \vert \nabla \gamma \vert^{2}
)
\gamma^{s-2} \vert \Phi \vert^{2} d\mu 
\end{split}
\end{gather*}
\begin{equation} \label{Lemma 3.2}
 \hspace{-155pt}
 +c\int_{\Sigma} \vert \nabla^{2} \varphi \vert^{2} \varphi^{2} \gamma^{s} \vert \Phi \vert^{2} d\mu.
\end{equation} 
\end{proposition}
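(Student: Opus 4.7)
The plan is to apply the previous proposition with $\theta=\varphi^4$ and $\eta=\gamma^s$, multiply through by $2$, and reduce the only nontrivial left-hand side term $2\int\langle\Delta\Phi,\Delta(\varphi^4\gamma^s\Phi)\rangle\,d\mu$ to the single good term $\int\varphi^4\gamma^s|\nabla^2\Phi|^2\,d\mu$ plus lower-order pieces that then migrate to the right. The two curvature-velocity terms on the right of (\ref{Lemma3.1}), involving $\langle A,V\rangle$ and $\langle H,V\rangle$, are both of schematic type $\langle A*\Phi*\Phi,V\rangle\gamma^s$ and collapse into the first term on the right of (\ref{Lemma 3.2}); the $\partial_{t}\eta=s\gamma^{s-1}\partial_{t}\gamma$ contribution appears directly.

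I would then expand
\[
\Delta(\varphi^4\gamma^s\Phi)=\varphi^4\gamma^s\Delta\Phi+2\nabla(\varphi^4\gamma^s)\cdot\nabla\Phi+\Delta(\varphi^4\gamma^s)\Phi
\]
using $\nabla(\varphi^4\gamma^s)=4\varphi^3\gamma^s\nabla\varphi+s\varphi^4\gamma^{s-1}\nabla\gamma$ and the analogous formula for $\Delta(\varphi^4\gamma^s)$, which produces weights $\varphi^2|\nabla\varphi|^2$, $\varphi^3|\nabla^2\varphi|$, $s(s-1)\gamma^{s-2}|\nabla\gamma|^2$, $s\gamma^{s-1}|\nabla^2\gamma|$, and cross terms of type $s\varphi^3\gamma^{s-1}|\nabla\varphi||\nabla\gamma|$. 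Pairing against $\Delta\Phi$ and estimating by Young's inequality --- absorbing a small multiple of $\int\varphi^4\gamma^s|\Delta\Phi|^2$ on the left --- reproduces exactly the $|\nabla\varphi|^4$, $s^2\varphi^2|\nabla\varphi|^2|\nabla\gamma|^2$, $s^4|\nabla\gamma|^4$, $s^2\gamma^2|\nabla^2\gamma|^2$, and $\varphi^2|\nabla^2\varphi|^2$ weights times $|\Phi|^2$ appearing in (\ref{Lemma 3.2}).

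The heart of the argument is the main piece $\int\varphi^4\gamma^s|\Delta\Phi|^2\,d\mu$, which must become $\int\varphi^4\gamma^s|\nabla^2\Phi|^2\,d\mu$. Writing $\Delta=-\nabla^*\nabla$, integrating by parts, then commuting $\nabla$ past $\Delta$ by the Ricci-type identity (\ref{eq4}) in the form $\nabla\Delta\Phi=\Delta\nabla\Phi+A*A*\nabla\Phi+A*\nabla A*\Phi$, and integrating by parts once more yields $\int\varphi^4\gamma^s|\nabla^2\Phi|^2\,d\mu$ plus (i) cutoff corrections supported on $\nabla(\varphi^4\gamma^s)$ and (ii) curvature corrections of schematic form $\varphi^4\gamma^s\bigl(A*A*\nabla\Phi*\Phi+A*\nabla A*\Phi*\Phi+A*A*\nabla\Phi*\nabla\Phi\bigr)$. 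A further Young's inequality on the curvature corrections yields the $\varphi^4\gamma^s(|A|^4+|\nabla A|^2)|\Phi|^2$ term on the right of (\ref{Lemma 3.2}).

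The main obstacle is the Young-inequality bookkeeping, because several intermediate $|\nabla\Phi|^2$-type quantities are generated both by the cutoff product rule and by the $\nabla\Delta-\Delta\nabla$ commutator, whereas only $|\nabla^2\Phi|^2$ and $|\Phi|^2$ are allowed in the final statement. These are eliminated via the weighted interpolation
\[
\int\varphi^4\gamma^s|\nabla\Phi|^2\,d\mu\leq\varepsilon\int\varphi^4\gamma^s|\nabla^2\Phi|^2\,d\mu+C_\varepsilon\int(\text{weight})\,|\Phi|^2\,d\mu,
\]
obtained by one integration by parts $\int\varphi^4\gamma^s|\nabla\Phi|^2=\int\langle\Phi,\nabla^*(\varphi^4\gamma^s\nabla\Phi)\rangle$ followed by Young, with the ``weight'' collecting precisely the $s^4|\nabla\gamma|^4$, $s^2|\nabla^2\gamma|^2$, $|\nabla\varphi|^4$, $s^2\varphi^2|\nabla\varphi|^2|\nabla\gamma|^2$ factors claimed. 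Carefully tracking the powers of $s$ through each product rule --- each $\nabla\gamma^s$ contributes one factor of $s$, each $\nabla^2\gamma^s$ contributes $s$ or $s^2$ --- is the delicate step that makes the coefficients match exactly; here the hypothesis $s\geq 4$ is what keeps $\gamma^{s-4}$ well-defined. With all $|\nabla\Phi|^2$ pieces absorbed into $\frac{1}{2}\int\varphi^4\gamma^s|\nabla^2\Phi|^2\,d\mu$, the inequality (\ref{Lemma 3.2}) follows.
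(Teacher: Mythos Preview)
Your proposal is correct and uses essentially the same ingredients as the paper: the previous proposition, the product-rule expansion of the cutoff $\varphi^4\gamma^s$, the commutator identity (\ref{eq4}) to pass between $\Delta$ and $\nabla^2$, Young's inequality, and an integration-by-parts interpolation to eliminate intermediate $|\nabla\Phi|^2$ terms (this is exactly the paper's estimate (\ref{eq7})). The only organizational difference is the order of the two main moves: the paper first compares $\int\varphi^4\gamma^s|\nabla^2\Phi|^2$ with $\int\langle\nabla^2\Phi,\nabla^2(\varphi^4\gamma^s\Phi)\rangle$ via the product rule and then passes to $\int\langle\Delta\Phi,\Delta(\varphi^4\gamma^s\Phi)\rangle$ via the commutator, whereas you first strip the cutoff out of $\Delta(\varphi^4\gamma^s\Phi)$ to isolate $\int\varphi^4\gamma^s|\Delta\Phi|^2$ and then convert $|\Delta\Phi|^2$ to $|\nabla^2\Phi|^2$ by integrating by parts and commuting; the error terms produced are the same either way.
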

\begin{proof}
 From the previous proposition, (\ref{Lemma3.1}), we infer
\begin{gather*}
\begin{split}
\frac{d}{dt} \int_{\Sigma}\vert \Phi \vert^{2} \gamma^{s} d\mu 
+ 
2\int_{\Sigma}\langle \Delta \Phi , & \Delta (\varphi^{4} \gamma^{s} \Phi) \rangle d\mu 
- 
2\int_{\Sigma} \langle Y , \gamma^{s} \Phi \rangle d\mu \\
\leq & 
\int_{\Sigma}  \langle A* \Phi *\Phi , V \rangle \gamma^{s} d\mu 
+
\int_{\Sigma} \vert \Phi \vert^{2} s \gamma^{s-1} \partial_{t} \gamma d\mu,
\end{split}
\end{gather*}
so that it will be sufficient to prove
\begin{gather*}
\hspace{-105pt} 
\begin{split}
\int_{\Sigma }\varphi^{4} & \vert \nabla^{2} \Phi \vert^{2} \gamma^{s} d\mu \\
\leq & 
2 \int_{\Sigma}\langle \Delta \Phi , \Delta (\varphi^{4} \gamma^{s} \Phi) \rangle d\mu\\
& +
c\int_{\Sigma} \varphi^{4}
(
s^{4}\vert \nabla \gamma \vert^{4} 
+
s^{2}\gamma^{2}\vert \nabla^{2} \gamma \vert^{2}
)
\gamma^{s-4}\vert \Phi \vert^{2}d\mu \\
& +
c \int_{\Sigma} \vert \nabla \varphi \vert^{2}
(
\vert \nabla \varphi \vert^{2} \gamma^{2}  
+ 
s^{2}\varphi^{2} \vert \nabla \gamma \vert^{2}
)
\gamma^{s-2} \vert \Phi \vert^{2} d\mu 
\end{split}
\end{gather*}
\begin{equation} \label{eq5}
\hspace{16pt}
+
c\int_{\Sigma} \vert \nabla^{2} \varphi \vert^{2} \varphi^{2} \gamma^{s} \vert \Phi \vert^{2} d\mu
+
c\int_{\Sigma} \varphi^{4} 
(\;
\vert A \vert^{4} 
+
 \vert \nabla A \vert^{2}
\; ) 
 \gamma^{s} \vert \Phi \vert^{2} d\mu.   
\end{equation}
Using Young's inequality we get
\begin{gather*}
\begin{split}
\int_{\Sigma} \varphi^{4} \vert & \nabla^{2}  \Phi \vert^{2} \gamma^{s} d\mu 
\leq
\int_{\Sigma} 
\langle \nabla^{2} \Phi 
, 
\nabla^{2} (\varphi^{4} \gamma^{s} \Phi) \rangle 
d\mu \\
& \quad \quad \;+
2\int_{\Sigma} \vert \nabla^{2} \Phi \vert \vert \nabla (\varphi^{4} \gamma^{s}) \vert \vert \nabla \Phi \vert d\mu
+
\int_{\Sigma} \vert \nabla^{2} \Phi \vert \vert \Phi \vert \vert \nabla^{2} (\varphi^{4} \gamma^{s}) \vert d\mu \\
\leq &
\int_{\Sigma} 
\langle \nabla^{2} \Phi 
, 
\nabla^{2} (\varphi^{4} \gamma^{s} \Phi ) \rangle 
d\mu 
+
c\int_{\Sigma} \vert \nabla^{2} \Phi \vert \vert \nabla \varphi \vert \varphi^{3}\gamma^{s} \vert \nabla \Phi \vert d\mu \\
& +
cs\int_{\Sigma} \vert \nabla^{2} \Phi \vert \varphi^{4}  \vert \nabla \gamma \vert \gamma^{s-1} \vert \nabla \Phi \vert d\mu 
+
c\int_{\Sigma} \vert \nabla^{2} \Phi \vert \vert \Phi \vert \vert \nabla^{2} \varphi \vert \varphi^{3} \gamma^{s} d\mu \\
& +
c\int_{\Sigma} \vert \nabla^{2} \Phi \vert \vert \Phi \vert \vert \nabla \varphi \vert^{2} \varphi^{2} \gamma^{s} d\mu 
+
cs\int_{\Sigma} \vert \nabla^{2} \Phi \vert \vert \Phi \vert \vert \nabla \varphi \vert \varphi^{3} \vert \nabla \gamma \vert \gamma^{s-1}d\mu \\
& +
cs^{2}\int_{\Sigma} \vert \nabla^{2} \Phi \vert \vert \Phi \vert \varphi^{4} \vert \nabla \gamma \vert^{2} \gamma^{s-2} d\mu 
+
cs\int_{\Sigma} \vert \nabla^{2} \Phi \vert \vert \Phi \vert 
\varphi^{4} \vert \nabla^{2}  \gamma \vert \gamma^{s-1}d\mu \\ 
\leq&
\int_{\Sigma} \langle \nabla^{2} \Phi , \nabla^{2} (\varphi^{4} \gamma^{s} \Phi ) \rangle d\mu \\
& +
\varepsilon \int_{\Sigma} \varphi^{4} \vert \nabla^{2} \Phi \vert^{2} \gamma^{s} d\mu 
+
c(\varepsilon) \int_{\Sigma} \vert \nabla \varphi \vert^{2} \varphi^{2} \gamma^{s} \vert \nabla \Phi \vert^{2} d\mu \\
& +
c(\varepsilon)s^{2} \int_{\Sigma} \varphi^{4} \vert \nabla \gamma \vert^{2} \gamma^{s-2}\vert \nabla \Phi \vert^{2} d\mu 
+
c(\varepsilon) \int_{\Sigma} \vert \nabla^{2} \varphi \vert^{2} \varphi^{2} \gamma^{s} \vert \Phi \vert^{2} d\mu \\
&+
c(\varepsilon) \int_{\Sigma} \vert \nabla \varphi \vert^{4} \gamma^{s} \vert \Phi \vert^{2} d\mu 
+
c(\varepsilon) s^{2}\int_{\Sigma} \vert \nabla \varphi \vert^{2} \varphi^{2} \vert \nabla \gamma \vert^{2} \gamma^{s-2} \vert \Phi \vert^{2} d\mu \\
& +
c(\varepsilon)s^{4} \int_{\Sigma} \varphi^{4} \vert \nabla \gamma \vert^{4} \gamma^{s-4} \vert \Phi \vert^{2} d\mu 
+
c(\varepsilon) s^{2}\int_{\Sigma} \varphi^{4} \vert \nabla^{2} \gamma \vert^{2} \gamma^{s-2} \vert \Phi \vert^{2} d\mu.
\end{split}
\end{gather*}
Hence by absorption with $\varepsilon=\frac{1}{3}$
\begin{gather*}
\begin{split}
\int_{\Sigma} \varphi^{4} & \vert \nabla^{2} \Phi \vert^{2} \gamma^{s} d\mu \\ 
\leq &
\frac{3}{2}\int_{\Sigma} \langle \nabla^{2} \Phi , \nabla^{2} (\varphi^{4} \gamma^{s} \Phi ) \rangle d\mu 
+ 
c\int_{\Sigma} \varphi^{4} 
 (
 s^{4}\vert \nabla \gamma \vert^{4} 
 +
 s^{2}\gamma^{2}\vert \nabla^{2} \gamma \vert^{2}
 )
\gamma^{s-4}\vert \Phi \vert^{2}d\mu \\
& +
c \hspace{-1pt}\int_{\Sigma} \vert \nabla \varphi \vert^{2}
(
\vert \nabla \varphi \vert^{2} \gamma^{2}  
+ 
s^{2}\varphi^{2} \vert \nabla \gamma \vert^{2}
)
\gamma^{s-2} \vert \Phi \vert^{2} d\mu 
+
c
\hspace{-1pt}\int_{\Sigma} \vert \nabla^{2} \varphi \vert^{2} \varphi^{2} \gamma^{s} \vert \Phi \vert^{2} d\mu 
\end{split}
\end{gather*}
\begin{equation} \label{eq6}
\hspace{-100pt}
+
c\int_{\Sigma}\varphi^{2}
(
\vert \nabla \varphi \vert^{2}  \gamma^{2} 
+
s^{2}\varphi^{2} \vert \nabla \gamma \vert^{2}
) 
\gamma^{s-2} \vert \nabla \Phi \vert^{2} d\mu.
\end{equation} 
\hspace{-22pt}
In the last term we integrate by parts to get
\begin{gather*}
\hspace{-40pt}
\begin{split}
\int_{\Sigma}\varphi^{2} &
(
\vert \nabla \varphi \vert^{2}  \gamma ^{2} 
+ 
s^{2}\varphi^{2} \vert \nabla \gamma \vert^{2}
) 
\gamma^{s-2} \vert \nabla \Phi \vert^{2} d\mu \\
\leq &
-\int_{\Sigma}\varphi^{2}
(
\vert \nabla \varphi \vert^{2}  \gamma^{2} 
+
s^{2}\varphi^{2} \vert \nabla \gamma \vert^{2}
) 
\gamma^{s-2} \langle \Phi , \Delta \Phi \rangle d\mu \\
& + 
c\int_{\Sigma} \vert \nabla \Phi \vert \vert \Phi \vert
[
\varphi \vert \nabla \varphi \vert^{3} \gamma^{s} 
+
\varphi^{2} \vert \nabla \varphi \vert \vert \nabla^{2} \varphi \vert \gamma^{s} \\
& \hspace{78pt}
+
s\varphi^{2} \vert \nabla \varphi \vert^{2} \vert \nabla \gamma \vert \gamma^{s-1} 
+
s^{2}\varphi^{3} \vert \nabla \varphi \vert \vert \nabla \gamma \vert^{2} \gamma^{s-2} \\
& \hspace{78pt}
+
s^{2}\varphi^{4} \vert \nabla \gamma \vert \vert \nabla^{2} \gamma \vert \gamma^{s-2}
+
s^{3}\varphi^{4} \vert \nabla \gamma \vert^{3} \gamma^{s-3} 
]\;
d\mu \\
\leq &
\varepsilon \int_{\Sigma} \varphi^{4} \vert \nabla^{2} \Phi \vert^{2} \gamma^{s} d\mu
+
c(\varepsilon) \int_{\Sigma}
(
\vert \nabla \varphi \vert^{2}  \gamma^{2} 
+
s^{2}\varphi^{2} \vert \nabla \gamma \vert^{2})^{2} \gamma^{s-4} \vert \Phi \vert^{2} d\mu\\
& +
\varepsilon  \int_{\Sigma}\varphi^{2}(\vert \nabla \varphi \vert^{2}  \gamma^{2} 
+
s^{2}\varphi^{2} \vert \nabla \gamma \vert^{2}
) 
\gamma^{s-2}\vert \nabla \Phi \vert^{2} d\mu \\
& +
c(\varepsilon)
\hspace{-1pt}
\int_{\Sigma}  \vert \Phi \vert^{2}
[\,
\vert \nabla \varphi \vert^{4} \gamma^{s} 
+ 
\varphi^{2} \vert \nabla^{2} \varphi \vert^{2} \gamma^{s} 
+ 
s^{2}\varphi^{2} \vert \nabla \varphi \vert^{2} \vert \nabla \gamma \vert ^{2} \gamma^{s-2} \\
& \hspace{84pt}+ 
s^{2}\varphi^{4} \vert \nabla^{2} \gamma \vert^{2} \gamma^{s-2} 
+
s^{4}\varphi^{4} \vert \nabla \gamma \vert^{4} \gamma^{s-4} 
\,]\;
d\mu,
\end{split}
\end{gather*} 
so that we have by absorption
\begin{gather*}
\begin{split}
 \int_{\Sigma}\varphi^{2}
(
\vert \nabla \varphi \vert^{2}  \gamma^{2} 
& +
s^{2}\varphi^{2} \vert \nabla \gamma \vert^{2}
) 
\gamma^{s-2} \vert \nabla \Phi \vert^{2} d\mu \\
\leq &
\varepsilon \int_{\Sigma} \varphi^{4} \vert \nabla^{2} \Phi \vert^{2} \gamma^{s} d\mu  \\
& +
c(\varepsilon)\int_{\Sigma} \varphi^{4}  
(
s^{4}\vert \nabla \gamma \vert^{4} 
+
s^{2}\gamma^{2}\vert \nabla^{2} \gamma \vert^{2}
)
\gamma^{s-4}\vert \Phi \vert^{2}d\mu \\ 
& + 
c(\varepsilon) \int_{\Sigma} \vert \nabla \varphi \vert^{2}
(
\vert \nabla \varphi \vert^{2} \gamma^{2}  
+ 
s^{2}\varphi^{2} \vert \nabla \gamma \vert^{2}
)
\gamma^{s-2} \vert \Phi \vert^{2} d\mu
\end{split}
\end{gather*}
\begin{equation} \label{eq7}
\hspace{-32pt}
 +c(\varepsilon)\int_{\Sigma} \vert \nabla^{2} \varphi \vert^{2} \varphi^{2} 
\gamma^{s} \vert \Phi \vert^{2} d\mu. 
\end{equation}
Plugging (\ref{eq7}) into (\ref{eq6}) yields by absorption with  $\varepsilon=\frac{1}{7}$ yields

\begin{gather*}
\hspace{-40pt}
\begin{split}
\int_{\Sigma} \varphi^{4} \vert \nabla^{2} \Phi \vert^{2} \gamma^{s} d\mu  
\leq & 
\frac{7}{4}\int_{\Sigma} \langle \nabla^{2} \Phi , \nabla^{2}  (\varphi^{4} \gamma^{s} \Phi ) \rangle d\mu \\
&  +
c\int_{\Sigma} \varphi^{4} 
(
s^{4}\vert \nabla \gamma \vert^{4} 
+
s^{2}\gamma^{2}\vert \nabla^{2} \gamma \vert^{2}
)
\gamma^{s-4}\vert \Phi \vert^{2}d\mu \\
& +
c \int_{\Sigma} \vert \nabla \varphi \vert^{2}
(
\vert \nabla \varphi \vert^{2} \gamma^{2}  
+ 
s^{2}\varphi^{2} \vert \nabla \gamma \vert^{2}
)
\gamma^{s-2} \vert \Phi \vert^{2} d\mu
\end{split}
\end{gather*}
\begin{equation} \label{eq8}
\hspace{-46pt}
 +c\int_{\Sigma} \vert \nabla^{2} \varphi \vert^{2} \varphi^{2} \gamma^{s} \vert \Phi \vert^{2} d\mu. 
\end{equation} 
Next  we compute using (\ref{eq4})
\begin{gather*}
\hspace{-7pt}
\begin{split}
\int_{\Sigma} \langle   \nabla^{2} &  \Phi ,  \nabla^{2} (\varphi^{4} \gamma^{s} \Phi ) \rangle d\mu \\
= &
\int_{\Sigma} \langle \Delta \Phi , \Delta (\varphi^{4} \gamma^{s} \Phi) \rangle d\mu 
+
\int_{\Sigma} \langle A*A*\nabla \Phi + A*\nabla A*\Phi , \nabla (\varphi^{4} \gamma^{s} \Phi) \rangle d\mu \\
\leq &
\int_{\Sigma} \langle \Delta \Phi , \Delta (\varphi^{4} \gamma^{s} \Phi) \rangle d\mu \\
& +
c\int_{\Sigma} \varphi^{4} \vert A \vert^{2} \gamma^{s} \vert \nabla \Phi \vert^{2} 
+
cs\int_{\Sigma} \varphi^{4} \vert A \vert^{2} \vert \nabla \gamma \vert \gamma^{s-1} 
\vert \nabla \Phi \vert \vert \Phi \vert d\mu \\
& +
c\int_{\Sigma}\varphi^{3} \vert \nabla \varphi \vert \vert A \vert^{2} \gamma^{s} 
\vert \nabla \Phi \vert \vert \Phi \vert d\mu
+
c\int_{\Sigma} \varphi^{4} \vert A \vert \vert \nabla A \vert \gamma^{s} 
\vert \nabla \Phi \vert \vert \Phi \vert d\mu 
\end{split}
\end{gather*}
\begin{equation} \label{eq9}
\hspace{28pt}
 +cs\int_{\Sigma}\varphi^{4} \vert A \vert \vert \nabla A \vert \vert \nabla \gamma \vert \gamma^{s-1} 
  \vert \Phi \vert^{2} d\mu
 +c\int_{\Sigma}\varphi^{3} \vert \nabla \varphi \vert \vert A \vert \vert \nabla A \vert \gamma^{s} 
  \vert \Phi \vert^{2} d\mu.
\end{equation}
Treating the second summand by integration by parts 
\begin{gather*}
\begin{split}
\int_{\Sigma} \varphi^{4} \vert A \vert^{2} & \gamma^{s}  \vert \nabla \Phi \vert^{2} \\
\leq &
-\int_{\Sigma} \varphi^{4} \vert A \vert^{2} \gamma^{s} \langle \Phi , \Delta \Phi \rangle d\mu 
+
c\int_{\Sigma} \varphi^{4} \vert A \vert \vert \nabla A \vert \gamma^{s}
\vert \nabla \Phi \vert \vert \Phi \vert d\mu \\ 
& +
cs\int_{\Sigma} \varphi^{4} \vert A \vert^{2} \vert \nabla \gamma \vert \gamma^{s-1} 
\vert \nabla \Phi \vert \vert \Phi \vert d\mu 
+c\int_{\Sigma} \varphi^{3} \vert \nabla \varphi \vert \vert A \vert^{2} \gamma^{s} 
\vert \nabla \Phi \vert \vert \Phi \vert d\mu \\
\end{split}
\end{gather*}
\begin{gather*}
\begin{split}
\leq & 
\varepsilon \int_{\Sigma} \varphi^{4} \vert \nabla^{2} \Phi \vert^{2} \gamma^{s} d\mu
+
\varepsilon \int_{\Sigma} \varphi^{4} \vert A \vert^{2} \gamma^{s} \vert \nabla \Phi \vert^{2} d\mu\\
& +
c(\varepsilon) \int_{\Sigma} \varphi^{2}
(\vert \nabla \varphi \vert^{2} \gamma^{2} 
+
s^{2}\varphi^{2} \vert \nabla \gamma \vert^{2}
) 
\gamma^{s-2}\vert \nabla \Phi \vert^{2} d\mu \\
&+
c(\varepsilon) \int_{\Sigma} \varphi^{4} \vert A \vert^{4}  \gamma^{s} \vert \Phi \vert^{2} d\mu
+
c(\varepsilon)\int_{\Sigma} \varphi^{4} \vert \nabla A \vert^{2} \gamma^{s} \vert \Phi \vert^{2} d\mu
\end{split}
\end{gather*} 
we obtain by absorption
\begin{gather*}
\hspace{-20pt}
\begin{split}
\int_{\Sigma} \varphi^{4} \vert A \vert^{2} \gamma^{s} \vert \nabla \Phi \vert^{2} 
\leq & 
\varepsilon \int_{\Sigma} \varphi^{4} \vert \nabla^{2} \Phi \vert^{2} \gamma^{s} d\mu \\
& +
c(\varepsilon) \int_{\Sigma} \varphi^{2}
(
\vert \nabla \varphi \vert^{2}  \gamma^{2} 
+
s^{2}\varphi^{2} \vert \nabla \gamma \vert^{2}
) 
\gamma^{s-2} \vert \nabla \Phi \vert^{2} d\mu 
\end{split}
\end{gather*}
\begin{equation} \label{eq10}
\hspace{26pt}
+
c(\varepsilon) \int_{\Sigma} \varphi^{4}
[\;
\vert A \vert^{4}  
+
\vert \nabla A \vert^{2}
\;] 
\gamma^{s} \vert \Phi \vert^{2} d\mu. 
\end{equation} 
The remaining terms in (\ref{eq9}) are estimated to
\begin{gather*}
\begin{split}
s\int_{\Sigma} &  \varphi^{4} \vert A \vert^{2} \vert \nabla \gamma \vert \gamma^{s-1} 
\vert \nabla \Phi \vert \vert \Phi \vert d\mu 
+
\int_{\Sigma}\varphi^{3} \vert \nabla \varphi \vert \vert A \vert^{2} \gamma^{s} 
\vert \nabla \Phi \vert \vert \Phi \vert d\mu \\
&+
\int_{\Sigma} \varphi^{4} \vert A \vert \vert \nabla A \vert \gamma^{s } 
\vert \nabla \Phi \vert \vert \Phi \vert d\mu 
+
s\int_{\Sigma}\varphi^{4} \vert A \vert \vert \nabla A \vert \vert \nabla \gamma \vert \gamma^{s-1} 
\vert \Phi \vert^{2} d\mu\\
& +
\int_{\Sigma}\varphi^{3} \vert \nabla \varphi \vert \vert A \vert \vert \nabla A \vert \gamma^{s} 
\vert \Phi \vert^{2} d\mu \\
\leq &
\int_{\Sigma} \varphi^{4} \vert A \vert^{2} \gamma^{s} \vert \nabla \Phi \vert^{2} \\
& +
c \int_{\Sigma} \varphi^{2}
(
\vert \nabla \varphi \vert^{2}  \gamma^{2} 
+
s^{2}\varphi^{2} \vert \nabla \gamma \vert^{2}
) 
\gamma^{s-2}\vert \nabla \Phi \vert^{2} d\mu \\
& +
cs^{4}\int_{\Sigma}\varphi^{4} \vert \nabla \gamma \vert^{4} \gamma^{s-4} \vert \Phi \vert^{2} d\mu 
+
c\int_{\Sigma} \vert \nabla \varphi \vert^{4} \gamma^{s} \vert \Phi \vert^{2} d\mu 
\end{split} 
\end{gather*}
\begin{equation} \label{eq11} 
\hspace{-122pt}
+
c\int_{\Sigma} \varphi^{4} 
[\;
\vert A \vert^{4}
+
\vert \nabla A \vert^{2}
\;]
\gamma^{s} \vert \Phi \vert^{2} d\mu.
\end{equation} 
Now plugging (\ref{eq11}) into (\ref{eq9}), then using (\ref{eq10}) yields
\begin{gather*}
\begin{split}
\int_{\Sigma} \langle \nabla^{2} \Phi ,   \nabla^{2}  & (\varphi^{4} \gamma^{s} \Phi )  \rangle d\mu  \leq 
\varepsilon \int_{\Sigma} \varphi^{4} \vert \nabla^{2} \Phi \vert^{2} \gamma^{s} d\mu 
+ 
\int_{\Sigma} \langle \Delta \Phi , \Delta (\varphi^{4} \gamma^{s} \Phi) \rangle d\mu \\
& +
c(\varepsilon) \int_{\Sigma} \varphi^{2}
(
\vert \nabla \varphi \vert^{2}  \gamma^{2} 
+
s^{2}\varphi^{2} \vert \nabla \gamma \vert^{2}
) 
\gamma^{s-2} \vert \nabla \Phi \vert^{2} d\mu \\
& +
c\int_{\Sigma}\varphi^{4} s^{4}\vert \nabla \gamma \vert^{4} \gamma^{s-4} \vert \Phi \vert^{2} d\mu 
+
c\int_{\Sigma} \vert \nabla \varphi \vert^{4} \gamma^{s} \vert \Phi \vert^{2} d\mu
\\
& +
c(\varepsilon) \int_{\Sigma} \varphi^{4} 
(\;
\vert A \vert^{4}
+ 
\vert \nabla A \vert^{2} 
\;)
\gamma^{s} \vert \Phi \vert^{2} d\mu  \\
\leq &
\varepsilon \int_{\Sigma} \varphi^{4} \vert \nabla^{2} \Phi \vert^{2} \gamma^{s} d\mu 
+ 
\int_{\Sigma} \langle \Delta \Phi , \Delta (\varphi^{4} \gamma^{s} \Phi) \rangle d\mu \\
& +
c(\varepsilon)\int_{\Sigma} \varphi^{4} 
(
s^{4}\vert \nabla \gamma \vert^{4} 
+
s^{2}\gamma^{2}\vert \nabla^{2} \gamma \vert^{2}
)
\gamma^{s-4}\vert \Phi \vert^{2}d\mu \\
& +
c(\varepsilon) \int_{\Sigma} \vert \nabla \varphi \vert^{2}
(
\vert \nabla \varphi \vert^{2} \gamma^{2}  
+
s^{2} \varphi^{2} \vert \nabla \gamma \vert^{2}
)
\gamma^{s-2} \vert \Phi \vert^{2} d\mu\\
& +
c(\varepsilon)\int_{\Sigma} \vert \nabla^{2} \varphi \vert^{2} \varphi^{2} 
\gamma^{s} \vert \Phi \vert^{2} d\mu
\end{split}
\end{gather*}
\begin{equation} \label{eq12}
\hspace{-40pt}
+c(\varepsilon)
\int_{\Sigma} \varphi^{4} 
(\;
\vert A \vert^{4}
+
\vert \nabla A \vert^{2}
\;)
\gamma^{s} \vert \Phi \vert^{2} d\mu,
\end{equation} 
where (\ref{eq7}) was used in the last step.\\
Inserting the above inequality (\ref{eq12}) into (\ref{eq8}) verifies by absorption with $\varepsilon=\frac{1}{14}$ equation (\ref{eq5}), what completes the proof.
\end{proof} 
We  now apply the data of the inverse Willmore flow to this proposition. \\
Therefore let
$\gamma=\widetilde{\gamma} \circ f,$ 
where 
$0 \leq \widetilde{\gamma} \leq 1$ 
and 
$\Vert \widetilde{\gamma} \Vert_{C^{2}(\mathbb{R}^{n})} \leq \Lambda.$ \\ 
This implies 
$ \nabla \gamma = d \widetilde{\gamma}(f) \cdot \nabla f$ 
and
$ \nabla^{2} \gamma = d^{2}\widetilde{\gamma}(f) \cdot (\nabla f \otimes \nabla f) + d\widetilde{\gamma}(f)\cdot A$, \\
so that we have by this and (\ref{eq1}) the (in-)equalities 
\begin{gather*} 
\hspace{-30pt}
\vert \gamma \vert \leq \Lambda,\quad \vert \nabla \gamma \vert \leq \Lambda, \quad \vert \nabla^{2}\gamma \vert \leq \Lambda(1 + \vert A \vert) \;\text{ for }\; \Vert \widetilde{\gamma}\Vert_{C^{2}(\mathbb{R}^{n})}\leq \Lambda, 
\end{gather*}
\begin{equation} \label{eq13} \hspace{-12pt}
 V=\partial_{t} f = - \frac{\Vert f \Vert^{8}}{2}(\Delta H + Q(A^{0})H) = \Vert f \Vert^{8} (P^{2}_{1}(A) + P^{0}_{3}(A)),                        
\end{equation} 
\begin{gather*}
\hspace{5pt}
\partial_{t}\gamma=
d \widetilde{\gamma}(f)(- \frac{\Vert f \Vert^{8}}{2}(\Delta H + Q(A^{0})H))
=- \frac{\Vert f \Vert^{8}}{2}d \widetilde{\gamma}(f) (\Delta H + P^{0}_{3}(A)),
\end{gather*} 
to which we will refer as (\ref{eq13}).
\newpage
\begin{proposition} \label{4.3}
For $n,m,\Lambda>0$ and $s \geq 4$ there exists
\begin{gather*}
 c=c(n,m,s,\Lambda)>0
\end{gather*}
such that, if
\;
$f:\Sigma \times [0,T) \rightarrow \mathbb{R}^{n}\setminus \lbrace \mathbb{0} \rbrace$ \;
is an inverse Willmore flow and 
$\gamma=\widetilde{\gamma} \circ f$ as in (\ref{eq13}), 
we have
\begin{gather*}
\hspace{-60pt}
\begin{split}
\frac{d}{dt} \int_{\Sigma} \vert \nabla^{m} A \vert^{2} & \gamma^{s} d\mu
 + 
\int_{\Sigma} \frac{\Vert f \Vert^{8}}{4}
\vert \nabla^{m+2} A \vert^{2} \gamma^{s} d\mu \\
\leq & 
\int_{\Sigma}    \sum_{(i,j,k) \in I(m),\, j<m+4} \nabla^{i}
\Vert f \Vert^{8}* P^{j}_{k}(A)*\nabla^{m} A \gamma^{s} d\mu
\end{split}
\end{gather*}
\begin{equation} \label{eq14}
\hspace{-18pt}
+c
\int_{[\gamma>0]} 
[\;
\Vert f \Vert^{8} \gamma^{s-4} 
+
\Vert f \Vert^{4}\gamma^{s}
\;]\,
\vert \nabla^{m} A \vert^{2} d\mu.
\end{equation} 
\end{proposition}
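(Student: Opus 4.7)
The plan is to apply Proposition~\ref{4.2} with $\Phi := \nabla^{m}A$, $\varphi := 2^{-1/4}\|f\|^{2}$ (so that $\theta = \varphi^{4} = \tfrac{1}{2}\|f\|^{8}$) and $Y$ given by the right-hand side of the evolution equation (\ref{evolutioneq}); the hypothesis $\partial^{\perp}_{t}\Phi + \theta\Delta^{2}\Phi = Y$ of Proposition~\ref{4.2} is then precisely (\ref{evolutioneq}). The dissipation $\int\varphi^{4}|\nabla^{2}\Phi|^{2}\gamma^{s}d\mu = \int \tfrac{\|f\|^{8}}{2}|\nabla^{m+2}A|^{2}\gamma^{s}d\mu$ on the left of (\ref{Lemma 3.2}) supplies the $\tfrac{\|f\|^{8}}{4}|\nabla^{m+2}A|^{2}\gamma^{s}$ in (\ref{eq14}); the other quarter is held in reserve to Young-absorb $|\nabla^{m+2}A|$-pieces arising later from $\partial_{t}\gamma$.

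Before unfolding (\ref{Lemma 3.2}) I would collect the pointwise weight estimates. From $\nabla f = df$ and $\nabla^{2}f = A$ one finds $|\nabla\varphi|\leq c\|f\|$ and $|\nabla^{2}\varphi|\leq c(1+\|f\||A|)$, hence
\[
\varphi^{4}\leq c\|f\|^{8}, \qquad |\nabla\varphi|^{4}\leq c\|f\|^{4}, \qquad |\nabla^{2}\varphi|^{2}\varphi^{2}\leq c\|f\|^{4}+c\|f\|^{6}|A|^{2}.
\]
From (\ref{eq13}) I recall $\gamma\leq 1$, $|\nabla\gamma|\leq\Lambda$, $|\nabla^{2}\gamma|\leq\Lambda(1+|A|)$, and, using $V = \|f\|^{8}(P^{2}_{1}(A)+P^{0}_{3}(A))$, $|\partial_{t}\gamma|\leq c\|f\|^{8}(|\nabla^{2}A|+|A|^{3})$.

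The essence of the proof is then to route every error term on the right of (\ref{Lemma 3.2}) either into the $P^{j}_{k}$-sum of (\ref{eq14}) or into the simple error $[\|f\|^{8}\gamma^{s-4}+\|f\|^{4}\gamma^{s}]|\nabla^{m}A|^{2}$. The three key devices are (a) identifying the leading $\|f\|^{8-i}$-contribution of $\nabla^{i}\|f\|^{8}$ with a coefficient of the sum; (b) the trivial comparison $\gamma^{s-\ell}\leq\gamma^{s-4}$ for $0\leq\ell\leq 4$, coming from $\gamma\leq 1$; and (c) the Young inequalities $|A|^{2}\gamma^{2}\leq\tfrac{1}{2}(1+|A|^{4}\gamma^{4})$ and $|A|^{3}\leq 1+|A|^{4}$ to split mixed weights. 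Using (a)--(c), the $-2\int\langle Y,\Phi\rangle\gamma^{s}$ contribution is by construction the $P^{j}_{k}$-sum; $\int\langle A*\Phi*\Phi,V\rangle\gamma^{s}$ enters the sum at $(0,m+2,3)$ and $(0,m,5)\in I(m)$; the $\partial_{t}\gamma$-term splits, after Young-absorbing its $|\nabla^{2}A|$-piece into the reserved dissipation and applying (b),(c), into a $(0,m,5)$-contribution plus a simple-error piece; the $\varphi^{4}\gamma^{s}(|A|^{4}+|\nabla A|^{2})|\Phi|^{2}$ term lands at $(0,m,5)$ and $(0,m+2,3)$; the $\varphi^{4}s^{4}|\nabla\gamma|^{4}\gamma^{s-4}|\Phi|^{2}$ and $|\nabla\varphi|^{4}\gamma^{s}|\Phi|^{2}$ pieces give directly the two halves of the simple error; the $\|f\|^{6}|A|^{2}\gamma^{s}|\nabla^{m}A|^{2}$-piece from $|\nabla^{2}\varphi|^{2}\varphi^{2}$ matches $\nabla^{2}\|f\|^{8}*P^{m}_{3}(A)*\nabla^{m}A$ at $(2,m,3)$; and the $\|f\|^{8}|A|^{2}\gamma^{s-2}|\nabla^{m}A|^{2}$-piece from $|\nabla^{2}\gamma|^{2}$ splits via (c) into simple error plus a $(0,m,5)$-sum contribution.

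The principal obstacle is the sheer amount of bookkeeping: each of the many error terms must be individually decomposed and matched against a specific admissible triple $(i,j,k)\in I(m)$ with $j<m+4$ or against one of the two simple-error templates, and one must track carefully that every $|\nabla^{m+2}A|$-contribution (from $\partial_{t}\gamma$ and from the $\nabla^{2}$-terms acting on $\Phi$) fits within the reserved quarter of the dissipation under absorption with $\varepsilon = \tfrac{1}{4}$.
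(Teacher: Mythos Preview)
Your setup is right and most of the routing is fine, but there is a genuine gap in the treatment of the $\partial_{t}\gamma$-term for $m\ge 1$. From (\ref{eq13}) one has $\partial_{t}\gamma=-\tfrac{1}{2}\|f\|^{8}d\widetilde{\gamma}(f)(\Delta H+P^{0}_{3}(A))$, so the dangerous piece of $\int|\Phi|^{2}s\gamma^{s-1}\partial_{t}\gamma\,d\mu$ is
\[
s\int_{\Sigma}\|f\|^{8}\,|\nabla^{m}A|^{2}\,|\nabla^{2}A|\,\gamma^{s-1}\,d\mu,
\]
which carries a factor $|\nabla^{2}A|$, \emph{not} $|\nabla^{m+2}A|$. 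Your ``reserved dissipation'' is $\tfrac{1}{4}\int\|f\|^{8}|\nabla^{m+2}A|^{2}\gamma^{s}d\mu$, so for $m\ge 1$ there is nothing to absorb into. Nor can you push it into the $P^{j}_{k}$-sum: after Young on $\gamma$ one is left with $\int\|f\|^{8}|\nabla^{m}A|^{2}|\nabla^{2}A|^{2}\gamma^{s}d\mu$, which is of type $\nabla^{0}\|f\|^{8}\ast P^{m+4}_{3}(A)\ast\nabla^{m}A$ and violates both $j<m+4$ and $i+j+k=m+5$.

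The paper does not use the pointwise bound $|\partial_{t}\gamma|\le c\|f\|^{8}(|\nabla^{2}A|+|A|^{3})$ here. Instead it keeps the exact expression $d\widetilde{\gamma}(f)(\Delta H)$ and integrates by parts once to trade $\Delta H$ for $\nabla H$; expanding $\nabla(\|f\|^{8}|\Phi|^{2}\gamma^{s-1})$ then produces, besides admissible sum- and simple-error terms, an intermediate term $c\,s^{2}\Lambda^{2}\int\|f\|^{8}\gamma^{s-2}|\nabla\Phi|^{2}d\mu=c\,s^{2}\Lambda^{2}\int\|f\|^{8}\gamma^{s-2}|\nabla^{m+1}A|^{2}d\mu$. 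A second integration by parts (a standard interpolation) turns this into $\tfrac{1}{4}\int\|f\|^{8}|\nabla^{m+2}A|^{2}\gamma^{s}d\mu$ plus lower-order pieces, and \emph{that} is what gets absorbed into the dissipation. In short: you must first shift a derivative off $\Delta H$ to lift the order from $2$ to $m+1$ before any absorption into the $|\nabla^{m+2}A|^{2}$-term is possible.
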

\begin{proof}
Consider (\ref{Lemma 3.2}) with $\Phi=\nabla^{m}A.$
From (\ref{evolutioneq}) we infer 
\begin{gather*}
\varphi=2^{-\frac{1}{4}}\Vert f \Vert^{2}
\text{ and }
Y=\sum_{(i,j,k) \in I(m),\,j<m+4} \nabla^{i}\Vert f \Vert^{8}* P^{j}_{k}(A).
\end{gather*}
By (\ref{eq13}) we then have
\vspace{-0pt}
\begin{gather*}
\begin{split}
& \int_{\Sigma}  2 \langle  Y ,  \Phi \rangle  \gamma^{s} d\mu 
 + 
\int_{\Sigma} \langle A*\Phi *\Phi , V \rangle \gamma^{s} d\mu 
+
\int_{\Sigma} \vert \Phi \vert^{2} s \gamma^{s-1} \partial_{t} \gamma d\mu \\
& +
c\int_{\Sigma} \varphi^{4} 
(
s^{4}\vert \nabla \gamma \vert^{4} 
+s^{2}\gamma^{2}\vert \nabla^{2} \gamma \vert^{2}
+
\gamma^{4} [\,\vert A \vert^{4} + \vert \nabla A \vert ^{2}]\,
)
\gamma^{s-4}\vert \Phi \vert^{2}d\mu \\
& +
c \int_{\Sigma} \vert \nabla \varphi \vert^{2}
(
\vert \nabla \varphi \vert^{2} \gamma^{2}  
+
s^{2}\varphi^{2} \vert \nabla \gamma \vert^{2}
)
\gamma^{s-2} \vert \Phi \vert^{2} d\mu  
+
c\int_{\Sigma} \vert \nabla^{2} \varphi \vert^{2} \varphi^{2} \gamma^{s} \vert \Phi \vert^{2} d\mu\\
\leq &
\int_{\Sigma}\sum_{(i,j,k) \in I(m),\, j<m+4} 
\nabla^{i}\Vert f \Vert^{8}* P^{j}_{k}(A)*\Phi \gamma^{s} d\mu \\
& +
\int_{\Sigma} \Vert f \Vert^{8} A*\nabla^{m}A*(P^{2}_{1}(A) 
+
P^{0}_{3}(A))*\Phi \gamma^{s}d\mu \\
& -
\frac{s}{2}\int_{\Sigma} \Vert f \Vert^{8} \vert \Phi \vert^{2} \gamma^{s-1}d \widetilde{\gamma}(f)
(\Delta H + P^{0}_{3}(A)) d\mu \\
& + 
c\int_{[\gamma>0]} \Vert f \Vert^{8}
(
s^{4}\Lambda^{4}
+
s^{2} \Lambda^{2} \gamma^{2}(1 + \vert A \vert)^{2} 
+ 
\gamma^{4}(\vert A \vert^{4} + \vert \nabla A \vert^{2})
) 
\gamma^{s-4} \vert \Phi \vert^{2} d\mu \\
& +
c\int_{\Sigma} \vert \nabla \Vert f \Vert^{2} \vert^{2} 
(
\vert \nabla \Vert f \Vert^{2} \vert^{2} \gamma^{2} 
+ 
s^{2}\Lambda^{2}\Vert f \Vert^{4}
)
\gamma^{s-2} \vert \phi \vert^{2} d\mu \\
& +
c\int_{\Sigma} \Vert f \Vert^{4}\vert \nabla^{2} \Vert f \Vert^{2}    \vert^{2} \gamma^{s}\vert \Phi \vert^{2} d\mu 
\end{split}
\end{gather*}
\begin{gather*}
\begin{split}
\leq &
\int_{\Sigma}    \sum_{(i,j,k) \in I(m),\, j<m+4} \nabla^{i}\Vert f \Vert^{8}*   P^{j}_{k}(A)*\Phi \gamma^{s} d\mu \\
& \hspace{-2pt}-
\frac{s}{2}
\hspace{-2pt}
\int_{\Sigma} 
\hspace{-2pt}\Vert f \Vert^{8} \vert \Phi \vert^{2} \gamma^{s-1}d \widetilde{\gamma} (f) (\Delta H)\,d\mu  
\hspace{-2pt}
+
\hspace{-2pt}
c\hspace{-3pt}\int_{[\gamma>0]} 
\hspace{-1pt}
[\;
s^{4}\Lambda^{4}\Vert f \Vert^{8} \gamma^{s-4} 
\hspace{-2pt}
+
\hspace{-2pt}
\Vert f \Vert^{4} \gamma^{s}
\;]
\vert \Phi \vert^{2} d\mu,
\end{split}
\end{gather*}
where we made use of Young's inequality with $p=\frac{4}{3},\,q=\frac{1}{4}$ to estimate
\begin{gather*}
\begin{split}
-\frac{s}{2}\int_{\Sigma} \Vert f \Vert^{8} \vert \Phi \vert^{2} \gamma^{s-1} & d \widetilde{\gamma}(f)
( P^{0}_{3}(A)) d\mu
\leq
cs\Lambda\int_{\Sigma}\Vert f \Vert^{8}\vert A \vert^{3} \gamma^{s-1}\vert \Phi \vert^{2}d\mu \\
\leq &\int_{\Sigma}
\Vert f \Vert^{8}P^{m}_{5}(A)*\Phi \gamma^{s}d\mu
+cs^{4}\Lambda^{4}\int_{[\gamma>0]}\Vert f \Vert^{8}\gamma^{s-4}\vert \Phi \vert^{2}d\mu.
\end{split}
\end{gather*} 
For the second term we get by integration by parts and Young's inequality
\begin{gather*}
\begin{split}
-\frac{s}{2}\int _{\Sigma} \Vert f \Vert^{8} \vert &  \Phi \vert^{2} \gamma^{s-1} d \widetilde{\gamma}(f) (\Delta H) d\mu \\
\leq&
cs\Lambda \int_{\Sigma}\vert \nabla (\,\Vert f \Vert^{8}\vert \Phi \vert^{2} \gamma^{s-1}\,) \vert \vert \nabla A \vert d\mu
+
cs\Lambda\int_{\Sigma} \Vert f \Vert^{8}\vert \Phi \vert^{2} \gamma^{s-1}  \vert \nabla A \vert d\mu\\
& +
cs\Lambda\int_{\Sigma} \Vert f \Vert^{8}\vert \Phi \vert^{2} \gamma^{s-1}   \vert A \vert \vert \nabla A \vert d\mu\\
\leq&
cs\Lambda\int_{\Sigma} \Vert f \Vert^{7}\gamma^{s-1}\vert \nabla A \vert \vert \Phi \vert^{2}d\mu
+
cs\Lambda\int_{\Sigma}\Vert f\Vert^{8}\gamma^{s-1}\vert \nabla A \vert \vert \Phi \vert
\vert \nabla \Phi \vert d\mu\\
& +
cs^{2}\Lambda^{2}\int_{\Sigma}\Vert f\Vert^{8}\gamma^{s-2}\vert \nabla A \vert \vert \Phi\vert^{2}d\mu
+
cs\Lambda \int_{\Sigma} \Vert f \Vert^{8}\gamma^{s-1}\vert \nabla A \vert \vert \Phi \vert^{2}d\mu\\
& +
cs\Lambda\int_{\Sigma}\Vert f \Vert^{8}\gamma^{s-1}\vert A \vert \vert \nabla A \vert \vert \Phi \vert^{2} d\mu \\
\leq&
\int_{\Sigma}    \sum_{(i,j,k) \in I(m),\, j<m+4} \nabla^{i}\Vert f \Vert^{8}* P^{j}_{k}(A)*\Phi \gamma^{s} d\mu \\
& +
cs^{4}\Lambda^{4}\int_{[\gamma>0]} \Vert f \Vert^{8} \gamma^{s-4} \vert \Phi \vert^{2} d\mu \\
& + 
cs^{2}\Lambda^{2}\int_{\Sigma} \Vert f \Vert^{6} \gamma^{s-2} \vert \Phi \vert^{2} d\mu \\
&
+ 
cs^{2}\Lambda^{2}\int_{\Sigma } \Vert f \Vert^{8} \gamma^{s-2} \vert \Phi \vert^{2} d\mu \\
& +
cs^{2}\Lambda^{2}\int_{\Sigma} \Vert f \Vert^{8} \gamma^{s-2} \vert \nabla \Phi \vert^{2} d\mu ,
\end{split}
\end{gather*}
since for $\varphi:=\Vert f \Vert^{8}\vert \Phi \vert^{2} \gamma^{s-1} \geq 0$
\begin{gather*}
\begin{split}
0=&\int_{\Sigma}g^{i,j}\nabla_{i}(\varphi d\widetilde{\gamma}(f)(\nabla_{j}H))d\mu\\
=&
\int_{\Sigma}g^{i,j}\nabla_{i}\varphi\cdot d\widetilde{\gamma}(f)(\nabla_{j}H)d\mu
+\int_{\Sigma}g^{i,j}\varphi\nabla_{i}(d\widetilde{\gamma}(f)(\nabla_{j}H))d\mu\\
=&
\int_{\Sigma}g^{i,j}\nabla_{i}\varphi\cdot d\widetilde{\gamma}(f)(\nabla_{j}H)d\mu
+\int_{\Sigma}g^{i,j}\varphi\partial_{i}
(d\widetilde{\gamma}(f)(\nabla_{j}H))d\mu\\
&-\int_{\Sigma}g^{i,j}\varphi \Gamma^{k}_{i,j}
d\widetilde{\gamma}(f)(\nabla_{k}H)d\mu\\
=&
\int_{\Sigma}g^{i,j}\nabla_{i}\varphi\cdot d\widetilde{\gamma}(f)(\nabla_{j}H)d\mu
+\int_{\Sigma}g^{i,j}\varphi\, d^{2}\widetilde{\gamma}(f)(\partial_{i}f,\nabla_{j}H)d\mu\\
&-\int_{\Sigma}g^{i,j}\varphi 
d\widetilde{\gamma}(f)(\Gamma^{k}_{i,j}\nabla_{k}H)d\mu
+\int_{\Sigma}g^{i,j}\varphi \,
d\widetilde{\gamma}(f)(\partial_{i}\nabla_{j}H)d\mu,
\end{split}
 \end{gather*}
i.e. with $\nabla_{i}\nabla_{j}H=\partial_{i}\nabla_{j}H 
+ g^{n,m}\langle \nabla_{j}H,A_{i,n}\rangle \partial_{m}f-\Gamma^{k}_{i,j}\nabla_{k}H$ 
\begin{gather*}
\begin{split}
0 
=&
\int_{\Sigma}g^{i,j}\nabla_{i}\varphi\cdot d\widetilde{\gamma}(f)(\nabla_{j}H)d\mu
+\int_{\Sigma}g^{i,j}\varphi\, d^{2}\widetilde{\gamma}(f)(\partial_{i}f,\nabla_{j}H)d\mu\\
&+\int_{\Sigma}g^{i,j}\varphi \,
d\widetilde{\gamma}(f)(\nabla_{i}\nabla_{j}H)d\mu
-\int_{\Sigma}g^{i,j}\varphi \,
d\widetilde{\gamma}(f)(g^{n,m}\langle \nabla_{j} H ,A_{i,n}\rangle \partial_{m}f)d\mu,
\end{split}
\end{gather*}
and hence
\begin{gather*}
\begin{split}
-\int_{\Sigma}\varphi\,d\widetilde{\gamma}(f)(\Delta H)d\mu
\leq &
c \Lambda\int_{\Sigma}\vert \nabla \varphi \vert \vert \nabla A \vert d\mu
+
c\Lambda\int_{\Sigma} \varphi  \vert \nabla A \vert d\mu\\
& +
c\Lambda\int_{\Sigma}\varphi \vert \nabla A \vert \vert A \vert d\mu.
\end{split}
\end{gather*}
By proposition \ref{4.2}, $0 \leq \gamma \leq 1 \leq \Lambda $ and  Young's inequality it follows
\begin{gather*}
\begin{split}
\frac{d}{dt} \int_{\Sigma}  \vert \Phi  \vert & ^{2}   \gamma^{s} d\mu
+
\int_{\Sigma} \frac{\Vert f \Vert^{8}}{2}
\vert \nabla^{2} \Phi \vert^{2} \gamma^{s} d\mu \\
\leq &
\int_{\Sigma} \sum_{(i,j,k) \in I(m),\, j<m+4} 
\nabla^{i}\Vert f \Vert^{8}* P^{j}_{k}(A)*\Phi \gamma^{s} d\mu \\
& +
cs^{2}\Lambda^{2}
\hspace{-0.5pt}\int_{\Sigma} \Vert f \Vert^{8} \gamma^{s-2} 
\vert \nabla \Phi \vert^{2} d\mu
 +
c\int_{\Sigma}
[
s^{4}\Lambda^{4}\Vert f \Vert^{8} \gamma^{s-4}
+ 
\Vert f \Vert^{4} \gamma^{s}
]
\vert \Phi \vert^{2} d\mu.
\end{split}
\end{gather*}
By integration by parts one obtains
\begin{gather*}
\begin{split}
cs^{2}\Lambda^{2}&\int_{\Sigma}  \Vert f \Vert^{8}  \gamma^{s-2} \vert \nabla \Phi \vert^{2} d\mu \\
\leq &
\frac{1}{4}\int_{\Sigma} \Vert f \Vert^{8}\vert \nabla^{2} \Phi \vert^{2} \gamma^{s} d\mu 
+
c\int_{[\gamma>0]}
[\;
s^{4}\Lambda^{4}\Vert f \Vert^{8}\gamma^{s-4}
+
s^{2}\Lambda^{2}\Vert f \Vert^{6} \gamma^{s-2}
\;]
\vert \Phi \vert^{2}d\mu.
\end{split}
\end{gather*}
Plugging in and absorbing we conclude 
\begin{gather*}
\begin{split}
\frac{d}{dt} \int_{\Sigma} \vert \Phi \vert^{2} \gamma^{s} d\mu
+
\frac{1}{4} & \int_{\Sigma} 
\Vert f \Vert^{8}\vert \nabla^{2} \Phi \vert^{2} \gamma^{s} d\mu \\
\leq &
\int_{\Sigma} \sum_{(i,j,k) \in I(m),\, j<m+4} 
\nabla^{i}\Vert f \Vert^{8}* P^{j}_{k}(A)*\Phi \gamma^{s} d\mu \\
& +
c\int_{\Sigma} 
[\;
s^{4} \Lambda^{4} \Vert f \Vert^{8} \gamma^{s-4} 
+
s^{2}\Lambda^{2}\Vert f \Vert^{6} \gamma^{s-2} 
+
\Vert f \Vert^{4} \gamma^{s}
\;]
\vert \Phi \vert^{2} d\mu. 
\end{split}
\end{gather*}
The claim follows by Young's inequality.
\end{proof} 
\newpage
\section{Estimates on the right hand side}
In this section we estimate the second term on the right-hand side of (\ref{eq14}). This is unfortunately necessary, since
$\int_{[\gamma>0]}\Vert f \Vert^{8} \vert \nabla^{m} A \vert^{2} \gamma^{s-4}d\mu$
can not be controlled trivially, when using Gronwall's inequality.
\begin{proposition} \label{5.2}
For $\varepsilon,n,m,\Lambda > 0$ and $s \geq 2m+4$ there exists 
\begin{gather*}
c=c(\varepsilon,m,s,\Lambda)>0
\end{gather*}
such that, if \;
$f:\Sigma \rightarrow \mathbb{R}^{n}$ \;is a immersion and 
\; $\gamma=\widetilde{\gamma} \circ f$ \;as in (\ref{eq13}), \\
we have 
\begin{gather*}
\begin{split}
\int_{[\gamma>0]}
\Vert f \Vert^{8}
\vert \nabla^{m} & A \vert^{2}\gamma^{s-4}d\mu \\
\leq &
\varepsilon \int_{\Sigma} \Vert f \Vert^{8} \vert \nabla^{m+2} A \vert^{2} \gamma^{s} d\mu  
+
\varepsilon\sum^{m}_{i=0}\int_{\Sigma} \Vert f \Vert^{4} 
\vert \nabla^{i} A \vert^{2} \gamma^{s-2(m-i)} d\mu 
\end{split}
\end{gather*}
\begin{equation} \label{eq18} 
\hspace{-72pt}
+
c
\int_{[\gamma>0]}
\Vert f \Vert^{8} 
\vert A \vert^{2} \gamma^{s-2m-4}d\mu.
\end{equation}
\end{proposition}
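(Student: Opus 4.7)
The plan is induction on $m \geq 0$. The base case $m=0$ is immediate: with $s - 2m - 4 = s - 4$, the left-hand side is literally the third term on the right-hand side, so $c=1$ suffices (the $\varepsilon$-terms play no role).

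For the inductive step, write $w := \Vert f \Vert^{8} \gamma^{s-4}$ and assume the inequality holds for all orders less than $m$. The strategy is to perform two integrations by parts that transfer two derivatives across the inner product $\int w \langle \nabla^{m}A, \nabla^{m}A\rangle d\mu$. Concretely, for $m \geq 2$, I rewrite one copy of $\nabla^{m}A$ as $\nabla^{2}(\nabla^{m-2}A)$ and move both $\nabla$'s onto the remaining $w \nabla^{m}A$ using the adjoint $\nabla^{*}$. The leading term becomes $\int w \langle \nabla^{m-2} A, (\nabla^{*})^{2} \nabla^{m} A\rangle d\mu$; applying the commutator identity (\ref{eq4}) iteratively and the Simons-type formula (\ref{eq3}) ($\Delta A = \nabla^{2} H + A * A * A$, combined with $\nabla H = -\nabla^{*} A$ from (\ref{formula1})) identifies this with $\int w \langle \nabla^{m-2}A, \nabla^{m+2}A\rangle d\mu$ modulo schematic cubic remainders of type $\int w \cdot P^{m}_{3}(A) \cdot \nabla^{m-2}A \, d\mu$ and boundary integrals in which $\nabla w$ or $\nabla^{2}w$ are paired against $\nabla^{m+1}A$ or $\nabla^{m}A$. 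Factoring $w = (\Vert f\Vert^{8}\gamma^{s})^{1/2}(\Vert f\Vert^{8}\gamma^{s-8})^{1/2}$ and applying Young's inequality yields
\begin{equation*}
\int w |\nabla^{m-2}A|\,|\nabla^{m+2}A|\, d\mu \leq \varepsilon \int \Vert f\Vert^{8}\gamma^{s}|\nabla^{m+2}A|^{2} d\mu + C_{\varepsilon} \int \Vert f\Vert^{8}\gamma^{s-8}|\nabla^{m-2}A|^{2} d\mu.
\end{equation*}
The first term is already in the required form. The second, being $\int \Vert f\Vert^{8}|\nabla^{m-2}A|^{2}\gamma^{(s-4)-4}d\mu$, is exactly the LHS template with $(m,s)$ replaced by $(m-2,s-4)$; the induction hypothesis produces $\tilde\varepsilon\int w |\nabla^{m}A|^{2}$ (absorbed into the LHS) together with intermediate terms $\int \Vert f\Vert^{4}|\nabla^{i}A|^{2}\gamma^{s-2(m-i)}$ for $i \leq m-2$ (the exponent arithmetic $(s-4) - 2(m-2-i) = s - 2(m-i)$ and $(s-4) - 2(m-2) - 4 = s - 2m - 4$ verifies compatibility) and the tail $\int \Vert f\Vert^{8}|A|^{2}\gamma^{s-2m-4}$.

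It remains to absorb the weight-derivative and commutator pieces from the two integrations by parts. I estimate them using the pointwise bounds $|\nabla^{k}\Vert f\Vert^{8}| \leq C(\Vert f\Vert^{8-k} + \Vert f\Vert^{9-k}|A|)$ (from $\nabla_{i}f = e_{i}$ tangentially and $\nabla^{2}f = A$) together with $|\nabla^{k}\gamma| \leq C\Lambda(1+|A|)$ (from (\ref{eq13})), followed by Young's inequality on each product. Each resulting term is routed into one of three destinations: $\varepsilon\int \Vert f\Vert^{8}\gamma^{s}|\nabla^{m+2}A|^{2}$, an intermediate $\int \Vert f\Vert^{4}|\nabla^{i}A|^{2}\gamma^{s-2(m-i)}$ (now including the still-missing indices $i=m-1,m$), or the tail $\int \Vert f\Vert^{8}|A|^{2}\gamma^{s-2m-4}$. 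The base case $m=1$ is handled similarly: a single integration by parts converts $\int w |\nabla A|^{2}$ into $\int w \langle A, \Delta A\rangle$, from which one iterates the one-step interpolation $\int w |\nabla^{k}A|^{2} \leq \varepsilon\int w |\nabla^{k+1}A|^{2} + C\int w|\nabla^{k-1}A|^{2}$ twice to produce the $\nabla^{3}A$ term with the $\nabla A$ residue absorbed on the left.

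The main obstacle is weight bookkeeping. Every Young split and every derivative that lands on $\Vert f\Vert^{8}\gamma^{s-4}$ must end up in exactly one of the three prescribed templates; in particular, the $\Vert f\Vert^{4}$ (rather than $\Vert f\Vert^{8}$) appearing in the intermediate sum is produced by balancing a Young factor $(\Vert f\Vert^{8}\gamma^{s})$ against the derivative-stripped factor $\Vert f\Vert^{8-k}$, leaving $\Vert f\Vert^{16-(8-k)-k\cdot 2}$ type residuals which work out to $\Vert f\Vert^{4}$ in the generic case. The hypothesis $s \geq 2m + 4$ is what guarantees nonnegativity of $\gamma^{s-2m-4}$ after full descent; and the restriction $\int_{[\gamma > 0]}$ on the tail is needed precisely because $s - 2m - 4$ can vanish.
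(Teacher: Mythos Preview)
Your overall architecture (induction on $m$, integration by parts, Young, descent) is right, but there is a genuine misstep in the core computation. You invoke the Simons identity (\ref{eq3}) and the commutator (\ref{eq4}) to rewrite $(\nabla^{*})^{2}\nabla^{m}A$ as ``$\nabla^{m+2}A$ plus cubic remainders $P^{m}_{3}(A)$''. Those cubic remainders are the problem: after pairing with $\nabla^{m-2}A$ and applying Young, they produce quartic-in-$A$ integrands such as $\int_{\Sigma}\Vert f\Vert^{8}\gamma^{s-4}\,|A|^{4}\,|\nabla^{m-2}A|^{2}\,d\mu$, and there is no way to route such a term into any of the three templates in (\ref{eq18}), all of which are \emph{quadratic} in $A$ and its derivatives. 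No $L^{\infty}$ bound on $A$ is available here, and no Sobolev step will turn a quartic expression into a quadratic one with the correct weights. So the ``routing'' you describe cannot be carried out.

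The fix is that the Simons/commutator machinery is entirely unnecessary. The operator arising after two integrations by parts is a double \emph{trace} of $\nabla^{2}(\nabla^{m}A)=\nabla^{m+2}A$, hence satisfies the pointwise bound $|(\nabla^{*})^{2}\nabla^{m}A|\leq c\,|\nabla^{m+2}A|$ directly, with no curvature error terms. The paper's proof packages this even more cleanly: it uses Corollary~\ref{A7} (the one-step interpolation $\int w|\nabla\Phi|^{2}\leq \varepsilon\int w'|\nabla^{2}\Phi|^{2}+C_{\varepsilon}\int w''|\Phi|^{2}$, which is a single integration by parts together with the trivial bound $|\Delta\Phi|\leq c|\nabla^{2}\Phi|$) twice with $\Phi=\nabla^{m-1}A$, obtaining $\nabla^{m+2}A$ on one side and $\nabla^{m-1}A$ on the other; then it descends by \emph{one} in $m$ (with $s\mapsto s-2$) via the induction hypothesis, and a final application of Corollary~\ref{A7} disposes of the residual $\int\Vert f\Vert^{8}|\nabla^{m+1}A|^{2}\gamma^{s-2}$ term. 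No curvature algebra enters anywhere, and the weight bookkeeping stays quadratic throughout.
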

\begin{proof}
The case $m=0$ is trivial. 
Let $m \geq 1$.
Applying corollary \ref{A7} with
$p=2,\,k=8,\;u=0,\;v=1$ we obtain 
\begin{gather*}
\begin{split}
\int_{\Sigma} \Vert f \Vert^{8}\vert \nabla \Phi \vert^{2} \gamma^{s-4}d\mu 
\leq &
\varepsilon \int_{\Sigma} \Vert f \Vert^{8} \vert \nabla^{2} \Phi \vert^{2} \gamma^{s-2}d\mu \\
& +
c(\varepsilon,s, \Lambda)\int_{[\gamma>0]}
(
\Vert f \Vert^{6}\vert \Phi \vert^{2} \gamma^{s-4}
+
\Vert f \Vert^{8} \vert \Phi \vert^{2} \gamma^{s-6}
)d\mu \\
\leq &
\varepsilon \int_{\Sigma} \Vert f \Vert^{8} \vert \nabla^{3} A \vert^{2} \gamma^{s}d\mu \\
& +
\varepsilon \int_{\Sigma} (\Vert f \Vert^{6}\vert \nabla \Phi \vert^{2}\gamma^{s-2}
+
\int_{\Sigma} \Vert f \Vert^{8} \vert \nabla \Phi \vert^{2} \gamma^{s-4}
)
d\mu \\
& +
c(\varepsilon,s, \Lambda)\int_{[\gamma>0]}
(
\Vert f \Vert^{6}\vert \Phi \vert^{2} \gamma^{s-4}
+
\Vert f \Vert^{8} \vert \Phi \vert^{2} \gamma^{s-6}
)d\mu.
\end{split}
\end{gather*}
By Young's inequality and absorption we derive
\begin{gather*}
\begin{split}
\int_{\Sigma} \Vert f \Vert^{8} \vert \nabla\Phi \vert^{2} \gamma^{s-4}d\mu 
\leq &
\varepsilon \int_{\Sigma} \Vert f \Vert^{8} \vert \nabla^{3} \Phi \vert^{2} \gamma^{s}d\mu \\
& +
\varepsilon \int_{\Sigma} 
[\;
\Vert f \Vert^{4} \vert \nabla \Phi \vert ^{2} \gamma^{s}
+
\Vert f \Vert^{4} \vert \Phi \vert^{2} \gamma^{s-2}
\;]
d\mu \\
& +
c(\varepsilon,s, \Lambda)\int_{[\gamma>0]}\Vert f \Vert^{8} \vert \Phi \vert^{2} \gamma^{s-6}d\mu.
\end{split}
\end{gather*}
For $m=1$ choose $\Phi = A.$ For $m \geq 2 $ we argue by induction via
\begin{gather*}
\begin{split}
\int_{\Sigma}\Vert f \Vert^{8} \vert \nabla^{m} A \vert^{2} \gamma^{s-4}d\mu
\leq &
\varepsilon \int_{\Sigma} \Vert f \Vert^{8} \vert \nabla^{m+2} A \vert^{2} \gamma^{s}d\mu \\
& +
\varepsilon \int_{\Sigma}
[\;
\Vert f \Vert^{4} \vert \nabla^{m} A \vert^{2} \gamma^{s} 
+
\Vert f \Vert^{4} \vert \nabla^{m-1} A \vert^{2} \gamma^{s-2}
\;]
d\mu \\
& +
c(\varepsilon,s,\Lambda)\int_{\Sigma} \Vert f \Vert^{8} \vert \nabla^{m-1} A \vert^{2} \gamma^{s-6}d\mu\\
\leq &
\varepsilon \int_{\Sigma} \hspace{-0.8pt} \Vert f \Vert^{8} \vert \nabla^{m+2} A \vert^{2} \gamma^{s}d\mu
+
\varepsilon \int_{\Sigma} \Vert f \Vert^{8} \vert \nabla^{m+1} A \vert^{2} \gamma^{s-2} d\mu \\
& +
\varepsilon \sum^{m}_{i=0}\int_{\Sigma} \Vert f \Vert^{4} \vert \nabla^{i} A \vert^{2} \gamma^{s-2(m-1)}d\mu\\
& +
c(\varepsilon,s,\Lambda)\int_{[\gamma>0]} \Vert f \Vert^{8} \vert A \vert^{2}\gamma^{s-2m-4}d\mu.
\end{split}
\end{gather*}
Applying  lemma \ref{A7} we derive
\begin{gather*}
\begin{split}
\int_{\Sigma} \Vert f \Vert^{8} \vert \nabla^{m+1} A \vert^{2} \gamma^{s-2}d\mu 
\leq &
\int_{\Sigma} \Vert f \Vert^{8} \vert \nabla^{m+2} A \vert^{2} \gamma^{s}d\mu \\
& +
c(s,\Lambda)\int_{\Sigma}
[\;
\Vert f \Vert^{6}\gamma^{s-2}
+
\Vert f \Vert^{8}\gamma^{s-4}
\;]
\vert \nabla^{m} A \vert^{2} d\mu \\
\leq &
\int_{\Sigma} \Vert f \Vert^{8} \vert \nabla^{m+2} A \vert^{2} \gamma^{s}d\mu \\
& +
c(s,\Lambda)\int_{\Sigma} 
[\;
\Vert f \Vert^{8}  \gamma^{s-4}  
+
\Vert f \Vert^{4}  \gamma^{s}
\;] \vert \nabla^{m} A \vert^{2}d\mu.
\end{split}
\end{gather*}
The claim follows.
\end{proof}
\section{Estimates by smallness assumption, m=0}
\begin{proposition}\label{5.1}
For $n,\Lambda>0$ and $s\geq 4$ there exist 
\begin{gather*}
c'=c'(n)\, ,\, c=c(n,s,\Lambda)>0 
\end{gather*}
such that, if \;
$f: \Sigma \times [0,T) \rightarrow \mathbb{R}^{n}\setminus \lbrace \mathbb{0} \rbrace$ \;is an inverse Willmore flow and \;$\gamma=\widetilde{\gamma} \circ f$\;  as in $(\ref{eq13})$, we have 
\begin{gather*}
\begin{split}
\frac{d}{dt} \int_{\Sigma} & \vert A \vert^{2} \gamma^{s} d\mu 
+
\int_{\Sigma}\frac{\Vert f \Vert^{8}}{8} \vert \nabla^{2} A \vert^{2} \gamma^{s} d\mu \\
& \hspace{-19pt}\leq 
c'\int_{\Sigma}\Vert f \Vert^{8} \vert  A \vert^{6} \gamma^{s}d\mu
+
c \int_{[\gamma>0]} [\;\Vert f \Vert^{8}\gamma^{s-4} 
+
\Vert f \Vert^{4}\gamma^{s}\;]\vert A \vert^{2} d\mu.
\end{split}
\end{gather*}
\end{proposition}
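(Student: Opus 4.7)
The plan is to invoke Proposition~\ref{4.3} with $m=0$: this already produces $\frac{d}{dt}\int\vert A\vert^{2}\gamma^{s}\,d\mu + \int \frac{\Vert f\Vert^{8}}{4}\vert\nabla^{2}A\vert^{2}\gamma^{s}\,d\mu$ on the left, and the claimed boundary term $c\int_{[\gamma>0]}[\Vert f\Vert^{8}\gamma^{s-4}+\Vert f\Vert^{4}\gamma^{s}]\vert A\vert^{2}\,d\mu$ on the right, with a reserve of $\tfrac{1}{8}\Vert f\Vert^{8}\vert\nabla^{2}A\vert^{2}\gamma^{s}$ left for absorption. The only work is to control
\[
\int_{\Sigma}\sum_{(i,j,k)\in I(0),\,j<4}\nabla^{i}\Vert f\Vert^{8}* P^{j}_{k}(A)*A\,\gamma^{s}\,d\mu.
\]
Unpacking $I(0)$ under $j<4$ leaves exactly the six triples $(1,3,1),(2,2,1),(0,2,3),(1,1,3),(2,0,3),(0,0,5)$. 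The last produces $\Vert f\Vert^{8}*P^{0}_{5}(A)*A=\Vert f\Vert^{8}*A^{6}$, which is absorbed directly into $c'\int\Vert f\Vert^{8}\vert A\vert^{6}\gamma^{s}\,d\mu$ and fixes the universal constant $c'=c'(n)$.

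For the remaining five triples I would use the pointwise bounds
\[
\vert\nabla\Vert f\Vert^{8}\vert\leq C\Vert f\Vert^{7},\qquad \vert\nabla^{2}\Vert f\Vert^{8}\vert\leq C\bigl(\Vert f\Vert^{6}+\Vert f\Vert^{7}\vert A\vert\bigr),
\]
which follow from the chain rule applied to $\Vert f\Vert^{8}=(\Vert f\Vert^{2})^{4}$ together with $\nabla\Vert f\Vert^{2}=2\langle\nabla f,f\rangle$ and $\nabla^{2}\Vert f\Vert^{2}=2g+2\langle A,f\rangle$. The triple $(1,3,1)$ contains $\nabla^{3}A$, so I would first integrate by parts once to shift a derivative onto $\nabla\Vert f\Vert^{8}$, $A$, or $\gamma^{s}$; this reduces the integrand to monomials of the form $\Vert f\Vert^{a}\vert A\vert^{b}\vert\nabla A\vert^{c}\vert\nabla^{2}A\vert^{d}\gamma^{s-e}$ with $c+d\leq 2$. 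Each such monomial is then dispatched by Young's inequality aimed at the four target shapes $\Vert f\Vert^{8}\vert A\vert^{6}\gamma^{s}$, $\Vert f\Vert^{8}\vert\nabla^{2}A\vert^{2}\gamma^{s}$, $\Vert f\Vert^{8}\vert A\vert^{2}\gamma^{s-4}$, $\Vert f\Vert^{4}\vert A\vert^{2}\gamma^{s}$, the key identities being the geometric means
\[
\Vert f\Vert^{6}\vert A\vert^{4}\leq\tfrac{\varepsilon}{2}\Vert f\Vert^{8}\vert A\vert^{6}+\tfrac{1}{2\varepsilon}\Vert f\Vert^{4}\vert A\vert^{2},\qquad \Vert f\Vert^{6}\gamma^{s-2}\leq\tfrac{1}{2}\Vert f\Vert^{8}\gamma^{s-4}+\tfrac{1}{2}\Vert f\Vert^{4}\gamma^{s}.
\]

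The $\vert\nabla A\vert^{2}$ factors that survive the first Young step (arising for instance from $\Vert f\Vert^{7}\vert\nabla A\vert\vert\nabla^{2}A\vert\gamma^{s}$) are handled by one further integration by parts,
\[
\int\Vert f\Vert^{a}\gamma^{b}\vert\nabla A\vert^{2}\,d\mu=-\int\nabla(\Vert f\Vert^{a}\gamma^{b})*\nabla A*A\,d\mu-\int\Vert f\Vert^{a}\gamma^{b}\langle\Delta A,A\rangle\,d\mu,
\]
which trades them for $\vert\Delta A\vert\vert A\vert\leq\varepsilon\vert\nabla^{2}A\vert^{2}+c\vert A\vert^{2}$-type expressions and another round of the same Young identities. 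After absorbing the $\vert\nabla^{2}A\vert^{2}$ contributions into the $\tfrac{1}{8}\Vert f\Vert^{8}\vert\nabla^{2}A\vert^{2}\gamma^{s}$ reserve and choosing $\varepsilon$ small relative to $c'$, every term is redistributed onto the four targets and the inequality closes. The main obstacle is purely combinatorial: for each of the five non-trivial triples one must verify that the monomials produced by Cauchy--Schwarz actually lie on the convex hull, in log-exponents, of the four target shapes, so that the above geometric-mean identities apply. Once this check is carried out, the remaining manipulations reduce to the absorption loops already used in the proof of Proposition~\ref{4.2}.
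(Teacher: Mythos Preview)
Your proposal is correct and follows essentially the same route as the paper: both start from Proposition~\ref{4.3} at $m=0$, unpack the six admissible $(i,j,k)$-triples, integrate by parts once to kill the $\nabla^{3}A$ from $(1,3,1)$, then run Young's inequality against the four target shapes, and finish with a second integration by parts on the residual $\vert\nabla A\vert^{2}$-weighted integrals before absorbing into the $\tfrac18\Vert f\Vert^{8}\vert\nabla^{2}A\vert^{2}\gamma^{s}$ reserve. One small point of phrasing: $\varepsilon$ is fixed first (depending only on $n$, small enough for absorption) and $c'=c'(n)$ then arises as $c(n,\varepsilon)$, not the other way round.
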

\begin{proof}
Recalling (\ref{I(m)=}), (\ref{eq13}) and observing
\begin{gather*}
\begin{split}
\int_{\Sigma}\nabla \Vert f \Vert^{8}*\nabla^{3}A*A\gamma^{s}d\mu
\leq \;&
c(n)\int_{\Sigma}\vert \nabla^{2}\Vert f \Vert^{8}\vert \vert \nabla^{2} A \vert \vert A \vert \gamma^{s}d\mu\\
&+
c(n)\int_{\Sigma}\vert \nabla \Vert f\Vert^{8}\vert \vert \nabla^{2}A\vert\vert \nabla A \vert \gamma^{s}d\mu \\
& +
c(n)s\Lambda\int_{\Sigma}\vert \nabla \Vert f\Vert^{8}\vert \vert \nabla^{2}A\vert\vert  A \vert \gamma^{s-1}d\mu
\end{split}
\end{gather*}
we obtain 
\begin{gather*}
\hspace{-68pt}
\begin{split}
\int_{\Sigma} \sum_{(i,j,k) \in I(0),\,j<4} &\nabla^{i} \Vert f \Vert^{8} *P^{j}_{k}(A) * A \gamma^{s}d\mu\\
= &
\int_{\Sigma}\Vert f \Vert^{8}
[
\nabla A * \nabla A * A * A 
+
\nabla^{2}A*A*A*A \\
& \hspace{44pt} +
A*A*A*A*A*A
]
\gamma^{s} \\
& \quad +
\nabla \Vert f \Vert^{8}*[\nabla^{3}A*A 
+
\nabla A*A*A*A] \gamma^{s} \\
& \quad +
\nabla^{2} \Vert f \Vert^{8}*
[
\nabla^{2}A*A 
+
A*A*A*A
]
\gamma^{s}\;d\mu \\
\leq & \;
c(n) \int_{\Sigma}
\vert \nabla^{2} A \vert 
[\;
\Vert f \Vert^{8} \gamma^{s} \vert A \vert^{3} 
+
\Vert f \Vert^{7} \gamma^{s} \vert \nabla A \vert  \\
& \hspace{54pt} +
s \Lambda\Vert f \Vert^{7} \gamma^{s-1} \vert  A \vert 
+
\Vert f \Vert^{7} \gamma^{s} \vert  A \vert^{2} 
+\Vert f \Vert^{6} \gamma^{s} \vert  A \vert 
\;]  \\
& \quad + 
\vert \nabla A \vert^{2} \Vert f \Vert^{8} \gamma^{s} 
\vert A \vert^{2} 
+ 
\vert \nabla A \vert \Vert f \Vert^{7} \gamma^{s} \vert A \vert^{3}
\end{split}
\end{gather*}
\begin{equation} \label{eq16}
\hspace{-87pt} 
+\Vert f \Vert^{8} \gamma^{s} \vert A \vert^{6} + \Vert f \Vert^{7} \gamma^{s}\vert A \vert^{5} 
+ \Vert f \Vert^{6} \gamma^{s} \vert A \vert^{4} \;d\mu.
\end{equation} 
For the first summand in (\ref{eq16}) Young's inequality yields
\begin{gather*}
\hspace{-30pt}
\begin{split}
\int_{\Sigma}\vert \nabla^{2} A \vert 
[ & \;
\Vert f \Vert^{8} \gamma^{s} \vert A \vert^{3} 
 +
\Vert f \Vert^{7} \gamma^{s} \vert \nabla A \vert \\
& +
s \Lambda\Vert f \Vert^{7} \gamma^{s-1} \vert  A \vert 
+
\Vert f \Vert^{7} \gamma^{s} \vert  A \vert^{2} 
+
\Vert f \Vert^{6} \gamma^{s} \vert  A \vert 
\;]\,d\mu \\
\leq &
\varepsilon \int_{\Sigma} \Vert f \Vert^{8}  \vert \nabla^{2} A \vert^{2} \gamma^{s} d\mu \\
& +
c(\varepsilon) \int_{\Sigma} 
\vert \nabla A \vert^{2} \Vert f \Vert^{6} \gamma^{s} d\mu \\
& +
c(\varepsilon) \int_{\Sigma} 
[\;
\Vert f \Vert^{8} \gamma^{s} \vert A \vert^{6} 
+
s^{2}\Lambda^{2}\Vert f \Vert^{6} \gamma^{s-2} \vert A \vert^{2} \\ 
& \hspace{122.5pt} + 
\Vert f \Vert^{6} \gamma^{s} \vert A \vert^{4} 
+ \Vert f \Vert^{4} \gamma^{s} \vert A \vert^{2}\,]\, d\mu.
\end{split}
\end{gather*}
By analogous estimate of the third summand, i.e.
\begin{gather*}
 \int_{\Sigma} \vert \nabla A \vert \Vert f \Vert^{7} \gamma^{s} \vert A \vert^{3}  d\mu
 \leq
 c \int_{\Sigma} \vert \nabla A \vert^{2} \Vert f \Vert^{6} \gamma^{s} d\mu
 +c \int_{\Sigma} \Vert f \Vert^{8} \gamma^{s} \vert A \vert^{6} d\mu,
\end{gather*}
and inserting in (\ref{eq16}) we obtain
\begin{gather*}
\begin{split}
\int_{\Sigma}\sum_{(i,j,k) \in I(0),\,j<4} 
\hspace{-13pt} & \hspace{13pt} 
\nabla^{i} \Vert f \Vert^{8}* P^{j}_{k}(A) * A \gamma^{s} d\mu\\
\leq &
\varepsilon \int_{\Sigma} \Vert f \Vert^{8}  
\vert \nabla^{2} A \vert^{2} \gamma^{s} d\mu \\
& +
c(n,\varepsilon)\int_{\Sigma} \vert \nabla A \vert^{2} 
[\; 
\Vert f \Vert^{6} \gamma^{s} 
+ 
\Vert f\Vert^{8} \gamma^{s} \vert A \vert^{2}
\;] 
d\mu \\
& +
c(n,\varepsilon) \int_{\Sigma} 
[\;
\Vert f \Vert^{8} \gamma^{s} \vert A \vert^{6} 
+
\Vert f \Vert^{7} \gamma^{s} \vert A \vert^{5}
+
s^{2}\Lambda^{2}\Vert f \Vert^{6} \gamma^{s-2} \vert A \vert^{2} \\
& \hspace{54pt}+
\Vert f \Vert^{6} \gamma^{s} \vert A \vert^{4} 
+
\Vert f \Vert^{4} \gamma^{s} \vert A \vert^{2} 
\;]
d\mu \\
\leq &
\varepsilon \int_{\Sigma} \Vert f \Vert^{8}  
\vert \nabla^{2} A \vert^{2} \gamma^{s} d\mu 
+
c(n,\varepsilon) \int_{\Sigma}  \Vert f \Vert^{8}  \gamma^{s} \vert A \vert^{6} d\mu \\
& +
c(n,\varepsilon ,s,\Lambda)\int_{[\gamma>0]} 
[\;
\Vert f \Vert^{8}\gamma^{s-4} 
+
\Vert f \Vert^{4} \gamma^{s}
\;]
\vert A \vert^{2} d\mu
\end{split}
\end{gather*}
\begin{equation} \label{eq17}
\hspace{-8pt} +
c(n,\varepsilon)\int_{\Sigma} \vert \nabla A \vert^{2} 
[\;
\Vert f \Vert^{6} \gamma^{s} 
+
\Vert f\Vert^{8} \gamma^{s} \vert A \vert^{2}
] 
d\mu .
\end{equation}
Next we come to estimate the fourth summand above. 
\begin{gather*}
\begin{split}
\int_{\Sigma} \vert \nabla A \vert^{2} &
[\;
\Vert f \Vert^{6} \gamma^{s} + \Vert f\Vert^{8} \gamma^{s} \vert A \vert^{2}] d\mu \\
= & 
-\int_{\Sigma} \langle  \nabla A , A \rangle *
\nabla  
[\; 
\Vert f \Vert^{6} \gamma^{s} 
+
\Vert f\Vert^{8} \gamma^{s} \vert A \vert^{2}] d\mu \\
& -
\int_{\Sigma} \langle  \Delta A , A \rangle   
[\; 
\Vert f \Vert^{6} \gamma^{s} 
+ 
\Vert f\Vert^{8} \gamma^{s} \vert A \vert^{2}
] 
d\mu \\
\leq &
c \int_{\Sigma} \vert \nabla A \vert \vert A \vert 
[\;
\Vert f \Vert^{5} \gamma^{s} 
+
s \Lambda \Vert f \Vert^{6} \gamma^{s-1} \\
& \hspace{105pt}+
\Vert f \Vert^{7} \gamma^{s} \vert A \vert^{2}
+ 
s \Lambda\Vert f \Vert^{8} \gamma^{s-1}\vert A \vert^{2}]  d\mu \\
& -
2\int_{\Sigma} \Vert f \Vert^{8} \gamma^{s} \langle \nabla A , A \rangle^{2} d\mu \\
& +
\int_{\Sigma} \vert \nabla^{2} A \vert \vert A \vert 
[\;
\Vert f \Vert^{6} \gamma^{s} 
+
\Vert f\Vert^{8} \gamma^{s} \vert A \vert^{2}
]
d\mu 
\\
\leq &
\varepsilon \int_{\Sigma} \Vert f \Vert^{8} 
\vert \nabla^{2} A \vert^{2} \gamma^{s} d\mu 
+ 
\varepsilon \int_{\Sigma} \vert \nabla A \vert^{2}
[\;
\Vert f \Vert^{6} \gamma^{s} 
+
\Vert f\Vert^{8} \gamma^{s} \vert A \vert^{2}
] 
d\mu \\
& +
c(\varepsilon) \int_{\Sigma} 
[\;
\Vert f \Vert^{4} \gamma^{s} \vert A \vert^{2} 
+
s^{2}\Lambda^{2}\Vert f \Vert^{6} \gamma^{s-2} \vert A \vert^{2} \\
& \hspace{50pt} +
\Vert f \Vert^{8} \gamma^{s} \vert A \vert^{6} 
+
s^{2}\Lambda^{2}\Vert f \Vert^{8} \gamma^{s-2} \vert A \vert^{4}
]
d\mu \\
\leq &
\varepsilon \int_{\Sigma} \Vert f \Vert^{8} 
\vert \nabla^{2} A \vert^{2} \gamma^{s} d\mu 
+ 
\varepsilon \int_{\Sigma} \vert \nabla A \vert^{2}
[\;
\Vert f \Vert^{6} \gamma^{s} 
+ 
\Vert f\Vert^{8} \gamma^{s} \vert A \vert^{2}
] 
d\mu \\
& +
c(\varepsilon) 
\hspace{-2pt}
\int_{\Sigma} \Vert f \Vert^{8}  \vert A \vert^{6} \gamma^{s}d\mu \\
& +
c(\varepsilon,s,\Lambda) 
\int_{[\gamma>0]} 
[\,
\Vert f \Vert^{8}\gamma^{s-4} 
+
\Vert f \Vert^{4}\gamma^{s}
]
\vert A \vert^{2} d\mu.
\end{split}
\end{gather*}
Absorbing and plugging into (\ref{eq17}) we conclude
\begin{gather*}
\begin{split}
\int_{\Sigma}\sum_{(i,j,k) \in I(0),\,j<4} &\nabla^{i} \Vert f \Vert^{8}* P^{j}_{k}(A) * A \gamma^{s} d\mu\\
\leq &
\varepsilon \int_{\Sigma} \Vert f \Vert^{8}  \vert \nabla^{2} A \vert^{2} \gamma^{s} d\mu
+
c(n,\varepsilon) \int_{\Sigma}  \Vert f \Vert^{8}  \gamma^{s} \vert A \vert^{6} d\mu\\ & +
c(n,\varepsilon,s,\Lambda) \int_{[\gamma>0]} 
[\,
\Vert f \Vert^{8}\gamma^{s-4} +\Vert f \Vert^{4}\gamma^{s}
]
\vert A \vert^{2} d\mu.
\end{split}
\end{gather*}
Applying this inequality to proposition \ref{4.3}, (\ref{eq14})  proves the claim.
\end{proof}

\begin{proposition} \label{6.1}
For $n,\Lambda>0$ and $s \geq 4$ there exist 
\begin{gather*}
\varepsilon_{0}=\varepsilon_{0}(n),\;c_{0}=c_{0}(n,s,\Lambda)>0 
\end{gather*}
such that, if
\quad $f: \Sigma \times [0,T) \rightarrow \mathbb{R}^{n}\setminus \lbrace \mathbb{0} \rbrace$ \quad
is an inverse Willmore flow, \;$\gamma=\widetilde{\gamma} \circ f$\; as in $(\ref{eq13})$ and 
\begin{gather*}
\sup_{0\leq t < T}\int_{[\gamma>0]}\vert A \vert^{2}d\mu\leq \varepsilon_{0},
\end{gather*}
we have 
\begin{gather*}
\begin{split}
\int_{\Sigma}\vert A \vert^{2} \gamma^{s}d\mu
& + 
\int^{t}_{0} 
\int_{\Sigma}
\frac{\Vert f \Vert^{8}}{16}\vert \nabla^{2} A \vert^{2}\gamma^{s}d\mu \;dt 
\leq 
\int_{\Sigma}\vert A \vert^{2} \gamma^{s} d\mu\lfloor_{t=0} \\
& + 
c_{0}
\sup_{0 \leq t <T}
(
\Vert f \Vert^{8}_{L^{\infty}_{\mu}([\gamma>0])}
+
\Vert f \Vert^{4}_{L^{\infty}_{\mu}([\gamma>0])}
)
\cdot t .
\end{split}
\end{gather*}
\end{proposition}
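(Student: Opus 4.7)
The plan is to integrate the differential inequality of proposition~\ref{5.1} in time after absorbing its sextic curvature term $c'\int_{\Sigma}\Vert f\Vert^{8}|A|^{6}\gamma^{s}d\mu$ into the dissipative $\Vert f\Vert^{8}|\nabla^{2}A|^{2}$ term on the left-hand side, exploiting the smallness of $\int_{[\gamma>0]}|A|^{2}d\mu$.

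The key step is a weighted interpolation estimate of the form
\begin{equation*}
\int_{\Sigma}\Vert f\Vert^{8}|A|^{6}\gamma^{s}d\mu\leq C(n)\Bigl(\int_{[\gamma>0]}|A|^{2}d\mu\Bigr)\int_{\Sigma}\Vert f\Vert^{8}|\nabla^{2}A|^{2}\gamma^{s}d\mu+\widetilde{\mathcal{R}},
\end{equation*}
where $\widetilde{\mathcal{R}}$ consists of terms of the same type as the error term $c\int_{[\gamma>0]}[\Vert f\Vert^{8}\gamma^{s-4}+\Vert f\Vert^{4}\gamma^{s}]|A|^{2}d\mu$ already present on the right-hand side of proposition~\ref{5.1}. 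I would derive this by applying the Michael--Simon Sobolev inequality on the $2$-surface $\Sigma$ to a weighted test function such as $u=\Vert f\Vert^{4}|A|^{3}\gamma^{s/2}$, for which $u^{2}$ recovers the integrand on the left, and then using H\"older's inequality to pull out the factor $\int_{[\gamma>0]}|A|^{2}d\mu$. Derivatives of the weight $\Vert f\Vert^{4}$ are controlled pointwise via
\begin{equation*}
|\nabla\Vert f\Vert^{2}|\leq c\Vert f\Vert,\qquad \nabla^{2}\Vert f\Vert^{2}=2(g+\langle f,A\rangle),
\end{equation*}
so that the extra terms produced by the Leibniz rule either replicate powers of $\Vert f\Vert$ compatible with the right-hand side or can be absorbed by Young's inequality, using proposition~\ref{5.2} to trade intermediate derivatives of $A$ for $\int_{\Sigma}\Vert f\Vert^{8}|\nabla^{2}A|^{2}\gamma^{s}d\mu$ plus terms of the allowed form.

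Given this interpolation, I would choose $\varepsilon_{0}=\varepsilon_{0}(n)>0$ so small that $c'C(n)\varepsilon_{0}\leq\tfrac{1}{16}$. Under the smallness hypothesis, the sextic term is bounded by $\tfrac{1}{16}\int_{\Sigma}\Vert f\Vert^{8}|\nabla^{2}A|^{2}\gamma^{s}d\mu+\widetilde{\mathcal{R}}$; absorbing the first half into the left-hand side of proposition~\ref{5.1} and bounding every remaining error by the smallness together with $0\leq\gamma\leq\Lambda$ leads to
\begin{equation*}
\frac{d}{dt}\int_{\Sigma}|A|^{2}\gamma^{s}d\mu+\int_{\Sigma}\frac{\Vert f\Vert^{8}}{16}|\nabla^{2}A|^{2}\gamma^{s}d\mu\leq c_{0}\bigl(\Vert f\Vert^{8}_{L^{\infty}_{\mu}([\gamma>0])}+\Vert f\Vert^{4}_{L^{\infty}_{\mu}([\gamma>0])}\bigr)
\end{equation*}
with $c_{0}=c_{0}(n,s,\Lambda)$. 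Integrating over $[0,t]$ by the fundamental theorem of calculus produces the claimed inequality.

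The main obstacle is the weighted interpolation. Its unweighted counterpart $\int_{\Sigma}|A|^{6}\gamma^{s}d\mu\leq C(\int_{[\gamma>0]}|A|^{2}d\mu)\int_{\Sigma}|\nabla^{2}A|^{2}\gamma^{s}d\mu+\text{l.o.t.}$ is the standard Kuwert--Sch\"atzle estimate from \cite{ref1}, but the naive bound $\Vert f\Vert^{8}\leq\Vert f\Vert^{8}_{\infty}$ loses the weight on the right-hand side, where it cannot be reinserted. Embedding $\Vert f\Vert^{2}$ inside the Sobolev test function forces its derivatives into the computation, and keeping the resulting terms within the scheme dictated by the right-hand side of proposition~\ref{5.1} is the one genuinely delicate piece of bookkeeping; the rest is Young's inequality and absorption.
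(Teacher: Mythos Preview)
Your proposal is correct and follows essentially the same route as the paper: apply the Michael--Simon inequality (lemma~\ref{A4}) to $u=\Vert f\Vert^{4}|A|^{3}\gamma^{s/2}$, pull out the factor $\int_{[\gamma>0]}|A|^{2}d\mu$ by H\"older, reduce the resulting $\Vert f\Vert^{8}|A|^{2}|\nabla A|^{2}$ term to $\Vert f\Vert^{8}|\nabla^{2}A|^{2}$ plus admissible errors, absorb under smallness to obtain (\ref{eq20}), feed this into proposition~\ref{5.1} with $\varepsilon=\tfrac{1}{16c'}$, and integrate. The only deviation is that the paper handles the intermediate first-derivative term by a direct integration by parts rather than invoking proposition~\ref{5.2}, but this is a cosmetic difference.
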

\begin{proof}
We estimate by lemma \ref{A4} 
\begin{gather*}
\hspace{-66pt}
\begin{split}
\int_{\Sigma}  \Vert  f \Vert^{8} &\vert A \vert^{6}  \gamma^{s} d\mu
= 
\int_{\Sigma}(\Vert f \Vert^{4}\vert A \vert^{3}\gamma^{\frac{s}{2}})^{2}d\mu\\
\leq &
c[\int_{\Sigma}
\Vert f \Vert^{3}\vert A \vert^{3}\gamma^{\frac{s}{2}}d\mu
+
\int_{\Sigma}\Vert f \Vert^{4} \vert A \vert^{2}\vert \nabla A \vert \gamma^{\frac{s}{2}} d\mu\\
& \quad \quad +
s \Lambda \int_{\Sigma}\Vert f \Vert^{4}\vert A \vert^{3}\gamma^{\frac{s-2}{2}}d\mu
+
\int_{\Sigma}\Vert f \Vert^{4}\vert A \vert^{4}\gamma^{\frac{s}{2}}\;d\mu
\;]^{2}\\
\leq &
c\int_{[\gamma>0]}\vert A \vert^{2}d\mu
[\int_{\Sigma}\Vert f \Vert^{6}\vert A \vert^{4}\gamma^{s} d\mu
+
\int_{\Sigma} \Vert f \Vert^{8} \vert A \vert^{2}\vert \nabla A \vert^{2} \gamma^{s} d\mu\\
\end{split}
\end{gather*}
\begin{equation} \label{eq19}
\hspace{100pt}
 +
s^{2} \Lambda^{2}\int_{\Sigma}\Vert f \Vert^{8}\vert A \vert^{4}\gamma^{s-2} d\mu
+
\int_{\Sigma} \Vert f \Vert^{8}\vert A \vert^{6}\gamma^{s}d\mu\,].
\end{equation}
By integration by parts we get using $\Vert f \Vert^{8}\langle A,\nabla A \rangle^{2}\gamma^{s}\geq 0,$ 

\begin{gather*}
\begin{split}
\int_{\Sigma}\Vert f \Vert^{8}\vert A \vert^{2}\vert \nabla A \vert^{2}\gamma^{s}d\mu 
\leq &
c\int_{\Sigma} \Vert f \Vert^{7}\vert A \vert^{3}\vert \nabla A \vert \gamma^{s}d\mu
+
c\int_{\Sigma}\Vert f \Vert^{8} \vert A \vert^{3}\vert \nabla^{2} A \vert \gamma^{s}d\mu \\
& +
cs\Lambda\int_{\Sigma}\Vert f \Vert^{8}\vert A \vert^{3}\vert \nabla A \vert \gamma^{s-1}d\mu \\
\leq &
\varepsilon \int_{\Sigma}\Vert f \Vert^{8}\vert A \vert^{2}\vert \nabla A \vert^{2}\gamma^{s}d\mu
+
c(\varepsilon) \int_{\Sigma}\Vert f \Vert^{6}\vert A \vert^{4}\gamma^{s}d\mu \\
& +
c\int_{\Sigma}\Vert f \Vert^{8} \vert \nabla^{2} A \vert^{2} \gamma^{s}d\mu
+
c\int_{\Sigma} \Vert f \Vert^{8} \vert A \vert^{6}\gamma^{s}d\mu \\
& +
c(\varepsilon)s^{2}\Lambda^{2}\int_{\Sigma}\Vert f \Vert^{8} \vert A \vert^{4}\gamma^{s-2}d\mu,
\end{split}
\end{gather*}
i.e. by absorption and Young's inequality
\begin{gather*}
\begin{split}
\int_{\Sigma}\Vert f \Vert^{8}\vert A \vert^{2}\vert \nabla A \vert^{2}\gamma^{s}d\mu 
\leq &
c \int_{\Sigma} \Vert f \Vert^{8} \vert \nabla^{2} A \vert^{2} \gamma^{s} d\mu
+
c \int_{\Sigma} \Vert f \Vert^{8} \vert A \vert^{6} \gamma^{s} d\mu\\
& +
c(s,\Lambda)
\int_{[\gamma>0]}
[\;\Vert f \Vert^{8} \gamma^{s-4}
+
\Vert f \Vert^{4}\gamma^{s}
\;]\vert A \vert^{2}d\mu.
\end{split}
\end{gather*}
For the remaining summands in (\ref{eq19}) we have
\begin{gather*}
\begin{split}
\int_{\Sigma}\Vert f \Vert^{6} \vert A \vert^{4} & \gamma^{s} d\mu 
 +
s^{2} \Lambda^{2}\int_{\Sigma} \Vert f \Vert^{8} \vert A \vert^{4} \gamma^{s-2}d\mu 
+
\int_{\Sigma} \Vert f \Vert^{8} \vert A \vert^{6} \gamma^{s} d\mu\\
\leq &
c \int_{\Sigma} \Vert f \Vert^{8} \vert A \vert^{6}\gamma^{s}d\mu 
+
c(s,\Lambda)
\int_{[\gamma>0]}
[\;\Vert f \Vert^{8} \gamma^{s-4}
+
\Vert f \Vert^{4}\gamma^{s}
\;]\vert A \vert^{2}d\mu.
\end{split}
\end{gather*}
Applying these inequalities to (\ref{eq19}) we derive
\begin{gather*}
\begin{split}
\int_{\Sigma}\Vert f \Vert^{8}  \vert A \vert^{6}\gamma^{s}d\mu 
\leq &
c\int_{[\gamma>0]}\vert A \vert^{2}d\mu\\
&
[\,
\int_{\Sigma}\Vert f \Vert^{8} \vert \nabla^{2} A \vert^{2} \gamma^{s}d\mu
+
\int_{\Sigma} \Vert f \Vert^{8} \vert A \vert^{6}\gamma^{s}d\mu \\
& \hspace{2pt}+
c(s,\Lambda)
\int_{[\gamma>0]}
[\;\Vert f \Vert^{8} \gamma^{s-4}
+
\Vert f \Vert^{4}\gamma^{s}
\;]\vert A \vert^{2}d\mu
\,].
\end{split}
\end{gather*}
Hence 
\begin{gather*}
\hspace{-168pt}
\begin{split}
\int_{\Sigma}\Vert f \Vert^{8}  \vert A \vert^{6}\gamma^{s}d\mu 
\leq &
\varepsilon \int_{\Sigma}\Vert f \Vert^{8} \vert \nabla^{2} A \vert^{2} \gamma^{s}d\mu
\end{split}
\end{gather*}
\begin{equation} \label{eq20}
\hspace{78pt}
+
c(\varepsilon,s,\Lambda)
\int_{[\gamma>0]}
[\;\Vert f \Vert^{8} \gamma^{s-4}
+
\Vert f \Vert^{4}\gamma^{s}
\;]
\vert A \vert^{2}d\mu,
\end{equation}
provided, that $\int_{[\gamma>0]}\vert A \vert^{2}d\mu \leq \delta(\varepsilon)$ is small enough. \\ 
Applying this inequality to proposition \ref{5.1} with $\varepsilon=\frac{1}{16c'}$ we conclude
\begin{gather*}
\begin{split}
\frac{d}{dt} \int_{\Sigma} \vert A \vert^{2} \gamma^{s} d\mu 
& +
\int_{\Sigma}\frac{\Vert f \Vert^{8}}{16} \vert \nabla^{2} A \vert^{2} \gamma^{s} d\mu\\
\leq &
c(n,s,\Lambda)
\int_{[\gamma>0]}
[\;\Vert f \Vert^{8} \gamma^{s-4}
+
\Vert f \Vert^{4}\gamma^{s}
\;]\vert A \vert^{2}d\mu.
\end{split}
\end{gather*}
The proposition follows by integration.
\end{proof}

\section{Estimates by smallness assumption, m=1}
\begin{proposition} \label{7.1}
For $n,\Lambda,R,d,\tau >0$ and $s\geq 6$ there exist 
\begin{gather*}
\varepsilon_{1}=\varepsilon_{1}(n),\;c_{1}=c_{1}(n,s,\Lambda,R,d,\tau )>0,
\end{gather*}
such that, if \quad
$
f: \Sigma \times [0,T) \rightarrow \mathbb{R}^{n}\setminus \lbrace \mathbb{0} \rbrace,\quad 0<T\leq \tau,
$ \\
is an inverse Willmore flow,
$\gamma=\widetilde{\gamma} \circ f$ as in $(\ref{eq13})$ and
\begin{gather*}
\begin{split}
&\sup_{0\leq t < T}\int_{[\gamma>0]} \vert A \vert^{2}d\mu \leq \varepsilon_{1},\\
&
\sup_{0 \leq t <T}\Vert f \Vert_{L^{\infty}_{\mu}([\gamma>0])} \leq R,\\
&
\int^{T}_{0}\int_{[\gamma>0]} \Vert f \Vert^{8} \vert \nabla^{2} A \vert^{2} d\mu\,dt 
\leq 
d, \\
&
\int^{T}_{0}\Vert \; \Vert f \Vert^{4} A \Vert^{4}_{L^{\infty}_{\mu}([\gamma>0])}dt 
\leq
d,
\end{split}
\end{gather*}
we have 
\begin{gather*}
\begin{split}
\sup_{0 \leq t < T}\int_{\Sigma}\vert \nabla A \vert^{2} \gamma^{s}d\mu
+ 
\int^{T}_{0} \hspace{-4pt}
\int_{\Sigma}
\Vert f \Vert^{8}\vert \nabla^{3} A \vert^{2}\gamma^{s}d\mu \;dt 
\leq 
c_{1}(1+\int_{\Sigma} \vert \nabla A \vert^{2} \gamma^{s} d\mu \lfloor_{t=0}).
\end{split}
\end{gather*} 
\end{proposition}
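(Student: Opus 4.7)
The plan is to start from the energy identity of Proposition \ref{4.3} with $m=1$, which after setting $\Phi=\nabla A$ yields
\begin{gather*}
\frac{d}{dt}\int_{\Sigma}\vert\nabla A\vert^{2}\gamma^{s}d\mu+\frac{1}{4}\int_{\Sigma}\Vert f\Vert^{8}\vert\nabla^{3}A\vert^{2}\gamma^{s}d\mu\leq J_{1}+J_{2},
\end{gather*}
where $J_{1}=\int_{\Sigma}\sum_{(i,j,k)\in I(1),\,j<5}\nabla^{i}\Vert f\Vert^{8}*P^{j}_{k}(A)*\nabla A\,\gamma^{s}d\mu$ and $J_{2}=c\int_{[\gamma>0]}[\Vert f\Vert^{8}\gamma^{s-4}+\Vert f\Vert^{4}\gamma^{s}]\vert\nabla A\vert^{2}d\mu$. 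The lower-order remainder $J_{2}$ is treated by Proposition \ref{5.2} applied with $m=1$, which trades $\vert\nabla A\vert^{2}$ for a small multiple of the dissipation term (to be absorbed into the left-hand side) plus an integral of $\vert A\vert^{2}$ with suitable $\Vert f\Vert$-weights, controlled using the $L^{\infty}$ bound $\Vert f\Vert\leq R$ on $[\gamma>0]$ and the smallness hypothesis on $\int_{[\gamma>0]}\vert A\vert^{2}d\mu$.

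For $J_{1}$ the index set $I(1)$ with $j<5$ contains only finitely many triples (those with $k\in\{1,3,5\}$, $i\leq 3$ and $i+j+k=6$). Using $\nabla\Vert f\Vert^{2}=2\langle f,\cdot\rangle$ together with the Gauss relation $\nabla^{2}f=A$ (normally) and iterated differentiation, each factor $\nabla^{i}\Vert f\Vert^{8}$ is pointwise bounded on $[\gamma>0]$ by $c(R)$ times a polynomial of degree at most $i$ in $\vert A\vert,\vert\nabla A\vert,\dots$ By repeated integration by parts (shifting derivatives from $\nabla A$ onto the weights or onto the $A$-factors), Young's inequality, and further interpolation via Proposition \ref{5.2}, each resulting term is reduced either to $\varepsilon\int\Vert f\Vert^{8}\vert\nabla^{3}A\vert^{2}\gamma^{s}d\mu$, to $\varepsilon\int\vert\nabla A\vert^{2}\gamma^{s}d\mu$, or to polynomial-in-$A$ integrals of the schematic form $\int\Vert f\Vert^{p}\vert A\vert^{q}\vert\nabla A\vert^{r}\gamma^{s'}d\mu$ and $\int\Vert f\Vert^{p}\vert A\vert^{q}\vert\nabla^{2}A\vert^{2}\gamma^{s'}d\mu$, of the same kind encountered in Proposition \ref{6.1}.

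These polynomial tails are controlled by the three remaining hypotheses. The smallness $\int_{[\gamma>0]}\vert A\vert^{2}d\mu\leq\varepsilon_{1}$ handles quartic and sextic-in-$A$ contributions exactly as in the proof of Proposition \ref{6.1}. The $L^{4}_{t}L^{\infty}_{x}$ bound $\int_{0}^{T}\Vert\,\Vert f\Vert^{4}A\Vert_{L^{\infty}_{\mu}([\gamma>0])}^{4}dt\leq d$ absorbs the quartic-in-$A$ mixed terms after pulling out a factor $(\Vert f\Vert^{4}\vert A\vert)^{2}$ in $L^{\infty}_{x}$ and keeping $\vert\nabla A\vert^{2}$ under the spatial integral, the remaining time-integrated factor being controlled by Cauchy--Schwarz with $d$. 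The space-time bound $\int_{0}^{T}\int_{[\gamma>0]}\Vert f\Vert^{8}\vert\nabla^{2}A\vert^{2}d\mu\,dt\leq d$ absorbs the $\vert\nabla^{2}A\vert^{2}$ contributions produced by the intermediate interpolation. After all absorptions one arrives at
\begin{gather*}
\frac{d}{dt}\int_{\Sigma}\vert\nabla A\vert^{2}\gamma^{s}d\mu+\frac{1}{8}\int_{\Sigma}\Vert f\Vert^{8}\vert\nabla^{3}A\vert^{2}\gamma^{s}d\mu\leq h(t)\Bigl(1+\int_{\Sigma}\vert\nabla A\vert^{2}\gamma^{s}d\mu\Bigr),
\end{gather*}
with $\int_{0}^{T}h(t)\,dt\leq C(n,s,\Lambda,R,d,\tau)$; Gronwall's inequality yields the $\sup_{t}$-bound, and a final integration over $[0,T]$ gives the space-time estimate on $\int\Vert f\Vert^{8}\vert\nabla^{3}A\vert^{2}\gamma^{s}d\mu$.

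The main obstacle is the bookkeeping in $J_{1}$: although every individual term is handled by routine integration by parts and Young's inequality, the several admissible triples $(i,j,k)$ generate expressions of the same critical scaling, each of which must be matched with exactly one of the four hypotheses (smallness, the $R$-bound, the $L^{4}_{t}L^{\infty}_{x}$ bound on $\Vert f\Vert^{4}A$, and the $L^{2}_{t,x}$ bound on $\Vert f\Vert^{8}\vert\nabla^{2}A\vert^{2}$). The $L^{4}_{t}L^{\infty}_{x}$ hypothesis is tailored precisely to the quartic-in-$A$ terms that survive after integration by parts has reduced derivatives on $A$ to at most second order, and the delicate part of the verification is checking that no leftover mixed term of higher polynomial order in $A$ remains without a matching time-integrated bound.
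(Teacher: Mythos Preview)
Your overall scheme is the paper's: start from Proposition~\ref{4.3} with $m=1$, handle the lower-order remainder $J_{2}$ via Proposition~\ref{5.2}, reduce the reaction term $J_{1}$ case by case to a differential inequality with a time-integrable coefficient, and apply Gronwall. The paper organises the entire case analysis around one pivotal estimate (its (\ref{m=1,6})),
\[
\int_{\Sigma}\Vert f\Vert^{8}\vert A\vert^{4}\vert\nabla A\vert^{2}\gamma^{s}d\mu
\leq\varepsilon\int_{\Sigma}\Vert f\Vert^{8}\vert\nabla^{3}A\vert^{2}\gamma^{s}d\mu
+c\Bigl(1+\!\int_{[\gamma>0]}\!\Vert f\Vert^{8}\vert\nabla^{2}A\vert^{2}d\mu+\Vert\,\Vert f\Vert^{4}A\Vert^{4}_{L^{\infty}}\Bigr)\Bigl(1+\!\int_{\Sigma}\!\vert\nabla A\vert^{2}\gamma^{s}d\mu\Bigr),
\]
valid once $\int_{[\gamma>0]}\vert A\vert^{2}d\mu\leq\delta(\varepsilon)$; every other triple $(i,j,k)\in I(1)$ is then reduced to this one by Young's inequality or a short integration by parts.

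The point your outline misses is \emph{how} this pivotal estimate is obtained: it does not follow from integration by parts and Proposition~\ref{5.2}, but from repeated use of the Michael--Simon Sobolev inequality (Lemma~\ref{A4}), applied in turn to $(\Vert f\Vert^{4}\vert A\vert^{2}\vert\nabla A\vert\gamma^{s/2})^{2}$, to $\Vert f\Vert^{8}\vert\nabla A\vert^{4}\gamma^{s}$ (after an integration by parts), and to $(\Vert f\Vert^{4}\vert A\vert\vert\nabla^{2}A\vert\gamma^{s/2})^{2}$. Each application produces a leading factor $\int_{[\gamma>0]}\vert A\vert^{2}d\mu$, and it is this that makes the smallness hypothesis do the principal absorption work. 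Your proposal instead assigns the $L^{4}_{t}L^{\infty}_{x}$ hypothesis the role of an $L^{\infty}$ pull-out on the main integrand; but pulling $(\Vert f\Vert^{4}\vert A\vert)^{2}$ out of $\int\Vert f\Vert^{8}\vert A\vert^{4}\vert\nabla A\vert^{2}\gamma^{s}$ leaves $\int\vert A\vert^{2}\vert\nabla A\vert^{2}\gamma^{s}$, which none of the four hypotheses controls directly. In the paper the $L^{\infty}$ bound on $\Vert f\Vert^{4}A$ enters only at the very end, to dispose of the zeroth-order remnant $\int_{[\gamma>0]}\Vert f\Vert^{8}\vert A\vert^{4}d\mu\leq c\Vert\,\Vert f\Vert^{4}A\Vert^{4}_{L^{\infty}}+c\varepsilon_{1}^{2}$ left over after the Michael--Simon chain. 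So the architecture is right, but the engine driving the $J_{1}$ estimates is Lemma~\ref{A4}, not Proposition~\ref{5.2}, and the roles you assign to smallness and to the $L^{\infty}$ hypothesis are essentially swapped.
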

\begin{proof}
For abbreviative reasons let
$c_{1}=c_{1}(n,s,\Lambda,R)$.
We will show 
\begin{gather*} 
\begin{split}
\sum_{(i,j,k) \in I(1),\,j<5}  \int_{\Sigma}  \nabla^{i}\Vert f \Vert^{8} & *P^{j}_{k}(A)*\nabla A \,\gamma^{s}d\mu \\ 
\leq & 
\int_{\Sigma} \frac{\Vert f \Vert^{8}}{16} \vert \nabla^{3} A \vert^{2}d\mu \\
& +
c_{1}
(
1
+
\int_{[\gamma>0]}\Vert f \Vert^{8}\vert \nabla^{2} A \vert^{2} d\mu
+
\Vert\,\Vert f \Vert^{4} A \Vert^{4}_{L^{\infty}_{\mu}([\gamma>0])}
)
\end{split}
\end{gather*}
\begin{equation} \label{m=1,0}
\hspace{10pt}
(1+\int_{\Sigma} \vert \nabla A \vert^{2}\gamma^{s} d\mu).
\end{equation}
Applying this inequality and proposition \ref{5.2} with $\varepsilon=\frac{1}{8c}$ to proposition \ref{4.3} we obtain
\begin{gather*}
\begin{split}
\frac{d}{dt} \int_{\Sigma} \vert & \nabla A  \vert^{2}  \gamma^{s}d\mu 
+
\int_{\Sigma}\frac{\Vert f \Vert^{8}}{16} \vert \nabla^{3} A \vert^{2} \gamma^{s}d\mu \\
\leq  &
c_{1}
(
1
+
\int_{[\gamma>0]}\Vert f \Vert^{8}\vert \nabla^{2} A \vert^{2} d\mu
+
\Vert\,\Vert f \Vert^{4} A \Vert^{4}_{L^{\infty}_{\mu}([\gamma>0])}
)
(1+\int_{\Sigma} \vert \nabla A \vert^{2}\gamma^{s} d\mu),
\end{split}
\end{gather*}
what proves the proposition using Gronwall's inequality, cf. lemma \ref{Gronwall's inequality}.\newpage
\hspace{-22pt}
For the reader's convenience we perform this argument.
First we have
\begin{gather*}
\begin{split}
\frac{d}{dt}(1 + & \int_{\Sigma}\vert \nabla A \vert^{2}\gamma^{s}d\mu) \\
\leq &
c_{1}
(
1
+
\int_{[\gamma>0]}\Vert f \Vert^{8}\vert \nabla^{2} A \vert^{2} d\mu
+
\Vert\,\Vert f \Vert^{4} A \Vert^{4}_{L^{\infty}_{\mu}([\gamma>0])}
)
(1+\int_{\Sigma} \vert \nabla A \vert^{2}\gamma^{s} d\mu).
\end{split}
\end{gather*}
Integrating and applying  Gronwall's inequality we obtain
\begin{gather*}
\begin{split}
1 +  \int_{\Sigma}\vert & \nabla  A \vert^{2}\gamma^{s}d\mu \\
& \leq 
(1 + \int_{\Sigma}\vert \nabla A \vert^{2}\gamma^{s}d\mu \lfloor_{t=0}) 
e^
{
c_{1}
\int^{t}_{0}
(
1
+ \hspace{-1pt}
\int_{[\gamma>0]}\Vert f \Vert^{8}\vert \nabla^{2} A \vert^{2} d\mu
+
\Vert\,\Vert f \Vert^{4} A \Vert^{4}_{L^{\infty}_{\mu}([\gamma>0])}
)
dt
}
\\
& \leq 
(1 + \int_{\Sigma}\vert \nabla A \vert^{2}\gamma^{s}d\mu \lfloor_{t=0}) 
e^
{
c_{1}(\tau+2d)
}.
\end{split}
\end{gather*}
It follows
\begin{gather*}
\sup_{0 \leq t < T}  \int_{\Sigma}\vert  \nabla  A \vert^{2}\gamma^{s}d\mu 
\leq 
c(n,s,\Lambda,R,d,\tau)
(1 + \int_{\Sigma}\vert \nabla A \vert^{2}\gamma^{s}d\mu \lfloor_{t=0}).
\end{gather*}
Secondly we derive by integration and the already proven estimate
\begin{gather*}
\begin{split}
\int^{T}_{0}\int_{\Sigma}&\frac{\Vert f \Vert^{8}}{16} \vert \nabla^{3}  A \vert^{2}  \gamma^{s}d\mu \,dt
\leq 
\int_{\Sigma}\vert \nabla A \vert^{2}\gamma^{s}d\mu \lfloor_{t=0} \\
& +
c_{1}(1+\sup_{0 \leq t < T}  \int_{\Sigma}\vert \nabla  A \vert^{2}\gamma^{s}d\mu) \\
& \hspace{10pt}\int^{T}_{0}
(
1
+
\int_{[\gamma>0]}\Vert f \Vert^{8}\vert \nabla^{2} A \vert^{2} d\mu
+
\Vert\,\Vert f \Vert^{4} A \Vert^{4}_{L^{\infty}_{\mu}([\gamma>0])}
)
\,dt\\
\leq &
\int_{\Sigma}\vert \nabla A \vert^{2}\gamma^{s}d\mu \lfloor_{t=0}
+
c(n,s,\Lambda,R)
c(n,s,\Lambda,R,d,\tau)\\
& \hspace{106pt}
(1 + \int_{\Sigma}\vert \nabla A \vert^{2}\gamma^{s}d\mu \lfloor_{t=0})(\tau+2d) \\
\leq &
c(n,s,\Lambda,R,d,\tau)
(1 + \int_{\Sigma}\vert \nabla A \vert^{2}\gamma^{s}d\mu \lfloor_{t=0}).
\end{split}
\end{gather*}
This establishes the required estimate, provided, that (\ref{m=1,0})
holds true.

To show (\ref{m=1,0}) we give adequate estimates for each term of the sum on

the left hand side in (\ref{m=1,0}).
\begin{itemize}
\item 
$k=5,j=1,i=0$
\end{itemize}
For this case we start estimating by lemma \ref{A4} and H\"older's inequality
\begin{gather*} 
\hspace{-26pt}
\begin{split}
\int_{\Sigma} \Vert f&  \Vert^{8}  
\vert A \vert^{4} \vert \nabla A \vert^{2} \gamma^{s} d\mu
=  \int_{\Sigma} (\Vert f \Vert^{4} \vert A \vert^{2} \vert \nabla A \vert \gamma^{\frac{s}{2}})^{2}d\mu \\
\leq &
c
[
\int_{\Sigma} \Vert f \Vert^{3} \vert A \vert^{2} \vert \nabla A \vert \gamma^{\frac{s}{2}}d\mu
+
\int_{\Sigma} \Vert f \Vert^{4} \vert A \vert \vert \nabla A \vert^{2} \gamma^{\frac{s}{2}} d\mu \\
& \hspace{22pt}+
\int_{\Sigma} \Vert f \Vert^{4} \vert A \vert^{2} \vert \nabla^{2} A \vert \gamma^{\frac{s}{2}}d\mu
+
s \Lambda \int_{\Sigma} \Vert f \Vert^{4} \vert A \vert^{2} \vert \nabla A \vert \gamma^{\frac{s-2}{2}}d\mu \\
& \hspace{22pt}+
\int_{\Sigma} \Vert f \Vert^{4} \vert A \vert^{3} \vert \nabla A \vert \gamma^{\frac{s}{2}}d\mu\;
]^{2} \\
\leq &
c\int_{[\gamma>0]} \vert A \vert^{2}d\mu\\
&[
\int_{\Sigma} \Vert f \Vert^{6} \vert A \vert^{2} \vert \nabla A \vert^{2} \gamma^{s}d\mu
+
\int_{\Sigma}\Vert f \Vert^{8} \vert \nabla A \vert^{4} \gamma^{s}d\mu \\
& +
\int_{\Sigma} \Vert f \Vert^{8} \vert A \vert^{2} \vert \nabla^{2} A \vert^{2} \gamma^{s} d\mu
+
s^{2}\Lambda^{2}\int_{\Sigma}\Vert f \Vert^{8} \vert A \vert^{2} \vert \nabla A \vert^{2} \gamma^{s-2}d\mu \\
& +
\int_{\Sigma} \Vert f \Vert^{8} 
\vert A \vert^{4} \vert \nabla A \vert^{2} \gamma^{s} d\mu\;
] \\
\leq &
c\int_{[\gamma>0]} \vert A \vert^{2}d\mu\\
&[
\int_{\Sigma} \Vert f \Vert^{8} \vert A \vert^{4} \vert \nabla A \vert^{2} \gamma^{s} d\mu
+
\int_{\Sigma} \Vert f \Vert^{4} \vert \nabla A \vert^{2} \gamma^{s}d\mu 
+
\int_{\Sigma} \Vert f \Vert^{8} \vert \nabla A \vert^{4}\gamma^{s}d\mu
\\
& +
s^{4}\Lambda^{4}\int_{[\gamma>0]}\Vert f \Vert^{8} \vert A \vert^{4} d\mu
+
\int_{\Sigma} \Vert f \Vert^{8} \vert A \vert^{2} \vert \nabla^{2} A \vert^{2} \gamma^{s} d\mu\;
].
\end{split}
\end{gather*}
By integration by parts 
\begin{gather*}
\begin{split}
\int_{\Sigma} \Vert f \Vert^{8} & \vert \nabla A \vert^{4} \gamma^{s}d\mu \\
\leq &
c\int_{\Sigma} \Vert f \Vert^{7} \vert A \vert \vert \nabla A \vert^{3} \gamma^{s}d\mu 
+
c \int_{\Sigma} \Vert f \Vert^{8} \vert A \vert \vert \nabla A \vert^{2} \vert \nabla^{2} A \vert \gamma^{s}d\mu \\
& +
cs\Lambda \int_{\Sigma} \Vert f \Vert^{8} \vert A \vert \vert \nabla A \vert^{3} \gamma^{s-1} d\mu 
\end{split}
\end{gather*}
\begin{gather*}
\begin{split}
\leq &
\varepsilon \int_{\Sigma} \Vert f \Vert^{8} \vert \nabla A \vert^{4} \gamma^{s}d\mu 
+ 
c(\varepsilon) \int_{\Sigma} \Vert f \Vert^{6} \vert A \vert^{2}  \vert \nabla A \vert^{2} \gamma^{s}d\mu \\
& +
c(\varepsilon) \int_{\Sigma} \Vert f \Vert^{8} \vert A \vert^{2}\vert \nabla^{2} A \vert^{2} \gamma^{s} d\mu 
+
c(\varepsilon)s^{4}\Lambda^{4}\int_{[\gamma>0]} \Vert f \Vert^{8} \vert A \vert^{4}  d\mu,
\end{split}
\end{gather*}
i.e. by absorption and Young's inequality 
\begin{gather*}
\hspace{-86pt}
\begin{split}
\int_{\Sigma} \Vert f \Vert^{8} & \vert \nabla A \vert^{4} \gamma^{s}d\mu \\
\leq &
c \int_{\Sigma} \Vert f \Vert^{8} \vert A \vert^{4} \vert \nabla A \vert^{2} \gamma^{s}d\mu
+
c\int_{\Sigma} \Vert f \Vert^{4} \vert \nabla A \vert^{2} \gamma^{s}d\mu 
\end{split}
\end{gather*}
\begin{equation} \label{m=1,1}
\hspace{-16pt}
 +
cs^{4}\Lambda^{4}\int_{[\gamma>0]}\Vert f \Vert^{8} \vert A \vert^{4} d\mu
+
c\int_{\Sigma} \Vert f \Vert^{8} \vert A \vert^{2} \vert \nabla^{2} A \vert^{2} \gamma^{s}d\mu.
\end{equation}
Inserting this inequality yields
\begin{gather*}
\hspace{-22pt}
\begin{split}
\int_{\Sigma}  \Vert f & \Vert^{8}  
\vert A \vert^{4} \vert \nabla A \vert^{2}  \gamma^{s} d\mu \\
\leq & 
c\int_{[\gamma>0]} \vert A \vert^{2}d\mu 
[
\int_{\Sigma} \Vert f \Vert^{8} \vert A \vert^{4} \vert \nabla A \vert^{2} \gamma^{s} d\mu
+
s^{4}\Lambda^{4}\int_{[\gamma>0]}\Vert f \Vert^{8} \vert A \vert^{4} d\mu
\end{split}
\end{gather*}
\begin{equation} \label{m=1,2}
\hspace{112pt}
+
\int_{\Sigma} \Vert f \Vert^{4} \vert \nabla A \vert^{2} \gamma^{s}d\mu 
+
\int_{\Sigma} \Vert f \Vert^{8} \vert A \vert^{2} \vert \nabla^{2} A \vert^{2} \gamma^{s} d\mu\;
].
\end{equation}
Next we have
\begin{gather*}
\begin{split}
\int_{\Sigma}\Vert f \Vert^{8}& \vert A \vert^{2} \vert \nabla^{2} A \vert^{2} \gamma^{s} d\mu 
=
\int_{\Sigma}(\Vert f \Vert^{4} \vert A \vert \vert \nabla^{2} A \vert \gamma^{\frac{s}{2}})^{2}d\mu \\
\leq &
c
[
\int_{\Sigma} \Vert f \Vert^{3} \vert A \vert \vert \nabla^{2} A \vert \gamma^{\frac{s}{2}}d\mu
+
\int_{\Sigma} \Vert f \Vert^{4} \vert \nabla A \vert \vert \nabla^{2}A \vert \gamma^{\frac{s}{2}}d\mu \\
& \hspace{22pt}+
\int_{\Sigma}\Vert f \Vert^{4} \vert A \vert \vert \nabla^{3} A \vert \gamma^{\frac{s}{2}}d\mu
+
s\Lambda\int_{\Sigma} \Vert f \Vert^{4} \vert A \vert \vert \nabla^{2} A \vert \gamma^{\frac{s-2}{2}}d\mu \\
& \hspace{22pt}+
\int_{\Sigma} \Vert f \Vert^{4} \vert A \vert^{2} \vert \nabla^{2} A \vert \gamma^{\frac{s}{2}}d\mu \;
]^{2} \\
\leq &
c\int_{[\gamma>0]}\vert A \vert^{2}d\mu 
[
\int_{\Sigma} \Vert f \Vert^{8} \vert \nabla^{3} A \vert^{2} \gamma^{s}d\mu
+
s^{2}\Lambda^{2}\int_{[\gamma>0]}\Vert f \Vert^{8} \vert \nabla^{2} A \vert^{2} d\mu \\
& \hspace{90pt}+
\int_{\Sigma}\Vert f \Vert^{6} \vert \nabla^{2} A \vert^{2} \gamma^{s}d\mu
+
\int_{\Sigma} \Vert f \Vert^{8} \vert A \vert^{2} \vert \nabla^{2} A \vert^{2} \gamma^{s} d\mu \;
]  
\end{split}
\end{gather*} 
\begin{equation} \label{m=1,3}
\hspace{-100pt}
+
c\int_{[\gamma>0]}\Vert f \Vert^{8} \vert \nabla^{2} A \vert^{2} d\mu \int_{\Sigma} \vert \nabla A \vert^{2} \gamma^{s}d\mu.
\end{equation}
\newpage
\hspace{-22pt}
By integration by parts we have
\begin{gather*}
\hspace{-10pt}
\begin{split}
\int_{\Sigma} 
\Vert f \Vert^{6} \vert \nabla^{2} A \vert^{2} \gamma^{s}d\mu 
\leq &
c\int_{\Sigma} \Vert f \Vert^{5} \vert \nabla A \vert \vert \nabla^{2} A \vert \gamma^{s} d\mu
+
c\int_{\Sigma} \Vert f \Vert^{6} \vert \nabla A \vert \vert \nabla^{3} A \vert \gamma^{s}d\mu \\
& +
cs \Lambda \int_{\Sigma} \Vert f \Vert^{6} \vert \nabla A \vert \vert \nabla^{2} A \vert \gamma^{s-1}d\mu \\
\leq &
\varepsilon \int_{\Sigma} \Vert f \Vert^{6} \vert \nabla^{2} A \vert^{2} \gamma^{s}d\mu 
+
cs^{2}\Lambda^{2}\int_{[\gamma>0]} \Vert f \Vert^{8} \vert \nabla^{2} A \vert^{2}d\mu\\
& +
c\int_{\Sigma} \Vert f \Vert^{8} \vert \nabla^{3} A \vert^{2} \gamma^{s} d\mu
\end{split}
\end{gather*}
\begin{equation} \label{m=1,4}
\hspace{-29pt}
+
c(\varepsilon) \int_{\Sigma} \Vert f \Vert^{4} \vert \nabla A \vert^{2} \gamma^{s} d\mu
.
\end{equation}
Absorbing and inserting (\ref{m=1,4}) in (\ref{m=1,3}) we obtain, since $\int_{[\gamma>0]}\vert A \vert^{2} d\mu \leq \varepsilon_{1},$
\begin{gather*}
\hspace{-30pt}
\begin{split}
\int_{\Sigma} \Vert f \Vert^{8}  \vert A \vert^{2} \vert \nabla^{2} A \vert^{2} \gamma^{s} d\mu 
\leq &
c\int_{[\gamma>0]}\vert A \vert^{2}d\mu\int_{\Sigma} \Vert f \Vert^{8} \vert \nabla^{3} A \vert^{2} \gamma^{s} d\mu \\
& +
c\int_{\Sigma} \Vert f \Vert^{4} \vert \nabla A \vert^{2} \gamma^{s} d\mu \\
& +
c\int_{[\gamma>0]} \Vert f \Vert^{8} \vert \nabla^{2} A \vert^{2} d\mu 
\int_{\Sigma} \vert \nabla A \vert^{2} \gamma^{s}d\mu
\end{split}
\end{gather*}
\begin{equation} \label{m=1,5}
\hspace{56pt}
+
cs^{2}\Lambda^{2}\int_{[\gamma>0]} \Vert f \Vert^{8} \vert \nabla^{2} A \vert^{2} d\mu.
\end{equation}
Inserting finally (\ref{m=1,5}) in (\ref{m=1,2}) we derive
\begin{gather*}
\begin{split}
\int_{\Sigma} \Vert f  \Vert^{8} 
\vert A \vert^{4} \vert \nabla A \vert^{2}&  \gamma^{s} d\mu \\
\leq & 
c\int_{[\gamma>0]} \vert A \vert^{2}d\mu  \\
& [
\int_{\Sigma} \Vert f \Vert^{8} \vert A \vert^{4} \vert \nabla A \vert^{2} \gamma^{s}d\mu 
+
\int_{\Sigma} \Vert f \Vert^{8} \vert \nabla^{3} A \vert^{2} \gamma^{s} d\mu \\
& +
\int_{\Sigma} \Vert f \Vert^{4} \vert \nabla A \vert^{2} \gamma^{s} d\mu
+
\int_{[\gamma>0]} \Vert f \Vert^{8} \vert \nabla^{2} A \vert^{2} d\mu 
\int_{\Sigma} \vert \nabla A \vert^{2} \gamma^{s}d\mu\\
&+
s^{2}\Lambda^{2}\int_{[\gamma>0]} \Vert f \Vert^{8} \vert \nabla^{2} A \vert^{2} d\mu
+
s^{4}\Lambda^{4}\int_{[\gamma>0]} \Vert f \Vert^{8} \vert A \vert^{4} d\mu\,].
\end{split}
\end{gather*}
Since 
$\int_{[\gamma>0]}\Vert f \Vert^{8} \vert A \vert^{4} d\mu
\leq 
c\Vert \,\Vert f \Vert^{4} A \Vert^{4}_{L^{\infty}_{\mu}([\gamma>0])}
+
c(\int_{[\gamma>0]}\vert A \vert^{2}d\mu)^{2}$, 
we conclude
\begin{gather*}
\begin{split}
\int_{\Sigma} \Vert f  \Vert & ^{8} 
\vert A \vert^{4} \vert \nabla A \vert^{2}  \gamma^{s} d\mu \\
\leq & 
c\int_{[\gamma>0]} \vert A \vert^{2}d\mu  
[
\int_{\Sigma} \Vert f \Vert^{8} \vert A \vert^{4} \vert \nabla A \vert^{2} \gamma^{s}d\mu 
+
\int_{\Sigma} \Vert f \Vert^{8} \vert \nabla^{3} A \vert^{2} \gamma^{s} d\mu\;
] \\
& +
c_{1}
(
1
+
\int_{[\gamma>0]}\Vert f \Vert^{8}\vert \nabla^{2} A \vert^{2} d\mu
+
\Vert\,\Vert f \Vert^{4} A \Vert^{4}_{L^{\infty}_{\mu}[\gamma>0]}
)(1+\int_{\Sigma} \vert \nabla A \vert^{2}\gamma^{s} d\mu).
\end{split}
\end{gather*}
Consequently
\begin{gather*}
\begin{split}
\int_{\Sigma} \Vert f  \Vert^{8} 
\vert A \vert^{4} & \vert \nabla A  \vert^{2}  \gamma^{s} d\mu \\
\leq &
\varepsilon\int_{\Sigma} \Vert f \Vert^{8}  \vert \nabla^{3} A \vert^{2} \gamma^{s}d\mu \\
& +
c(\varepsilon,c_{1}) 
(
1
+
\int_{[\gamma>0]}\Vert f \Vert^{8}\vert \nabla^{2} A \vert^{2} d\mu
+
\Vert\,\Vert f \Vert^{4} A \Vert^{4}_{L^{\infty}_{\mu}([\gamma>0])}
)
\end{split}
\end{gather*}
\begin{equation} \label{m=1,6}
\hspace{-22pt}
(1+\int_{\Sigma} \vert \nabla A \vert^{2}\gamma^{s} d\mu),
\end{equation}
provided, that 
$\int_{[\gamma>0]}\vert A \vert^{2}d\mu
\leq \delta(\varepsilon).
$
\begin{itemize}
\item 
$k=5,j=0,i=1$
\end{itemize}
By Young's inequality  we have
\begin{gather*} 
\hspace{-32pt}
\begin{split}
\int_{\Sigma} \Vert f \Vert^{7} \vert A \vert^{5} \vert \nabla A \vert \gamma^{s}d\mu 
\leq &
c\int_{\Sigma} \Vert f \Vert^{8} \vert A \vert^{4} \vert \nabla A \vert^{2} \gamma^{s}d\mu
+
c\int_{\Sigma}\Vert f \Vert^{6} \vert A \vert^{6} \gamma^{s}d\mu.
\end{split}
\end{gather*}
From lemma \ref{A4} we infer 
\begin{gather*}
\begin{split}
\int_{\Sigma} \Vert f \Vert^{6} & \vert A  \vert^{6} \gamma^{s}d\mu
= 
\int_{\Sigma} (\Vert f \Vert^{3}\vert A \vert^{3}\gamma^{\frac{s}{2}})^{2}d\mu \\
\leq &
c
[
\int_{\Sigma} \Vert f \Vert^{2} \vert A \vert^{3} \gamma^{\frac{s}{2}}d\mu
+
\int_{\Sigma} \Vert f \Vert^{3} \vert A \vert^{2} \vert \nabla A \vert \gamma^{\frac{s}{2}}d\mu\\
&\hspace{24pt} +
s\Lambda\int_{\Sigma} \Vert f \Vert^{3} \vert A \vert^{3}\gamma^{\frac{s-2}{2}}d\mu
+
\int_{\Sigma} \Vert f \Vert^{3} \vert A \vert^{4} \gamma^{\frac{s}{2}}d\mu
]^{2}\\
\leq &
c\int_{[\gamma>0]}\vert A \vert^{2}d\mu
[
\int_{\Sigma} \Vert f \Vert^{6} \vert A \vert^{6}\gamma^{s}d\mu
+
\int_{[\gamma>0]}
(
\Vert f \Vert^{2}
+ 
s^{4}\Lambda^{4}\Vert f \Vert^{6}
)
\vert A \vert^{2}d\mu
\\
& \hspace{88pt}+
\int_{\Sigma}\Vert f \Vert^{4} \vert \nabla A \vert^{2}\gamma^{s}d\mu
+
\int_{\Sigma} \Vert f \Vert^{8} \vert A \vert^{4} \vert \nabla A \vert^{2} \gamma^{s}d\mu
].
\end{split}
\end{gather*}
Since 
$\int_{[\gamma>0]}\vert A \vert^{2}d\mu
\leq
\varepsilon_{1}$ we obtain by absorption
\begin{equation} \label{m=1,6.5}
\hspace{-8pt}
\int_{\Sigma} \Vert f \Vert^{6} \vert A \vert^{6} \gamma^{s} d\mu
\leq 
c \int_{\Sigma} \Vert f \Vert^{8} \vert A \vert^{4} \vert \nabla A \vert^{2} \gamma^{s}d\mu
+
c_{1}(1+\int_{\Sigma}\vert \nabla A \vert^{2} \gamma^{s} d\mu).
\end{equation}
Reinserting we conclude by (\ref{m=1,6})
\begin{gather*}
\begin{split}
\int_{\Sigma} \Vert f \Vert^{7} \vert A \vert^{5} \vert \nabla A \vert & \gamma^{s}d\mu 
\leq 
c\int_{\Sigma} \Vert f \Vert^{8} \vert A \vert^{4} \vert \nabla A \vert^{2} \gamma^{s}d\mu
+
c_{1}(1+\int_{\Sigma} \vert \nabla A \vert^{2} \gamma^{s}d\mu) \\
\leq &
\varepsilon\int_{\Sigma} \Vert f \Vert^{8}  \vert \nabla^{3} A \vert^{2} \gamma^{s}d\mu \\
& +
c(\varepsilon,c_{1}) 
(
1
+
\int_{[\gamma>0]}\Vert f \Vert^{8}\vert \nabla^{2} A \vert^{2} d\mu
+
\Vert\,\Vert f \Vert^{4} A \Vert^{4}_{L^{\infty}_{\mu}([\gamma>0])}
) \\
& \hspace{61pt}(
1
+
\int_{\Sigma} \vert \nabla A \vert^{2}\gamma^{s} d\mu
),
\end{split}
\end{gather*}
provided, that
$\int_{[\gamma>0]}\vert A \vert^{2}d\mu
\leq \delta(\varepsilon).$

\begin{itemize}
\item 
$k=3,j=3,i=0$
\end{itemize}
For the first term we have
\begin{gather*} 
\begin{split}
\int_{\Sigma} \Vert f \Vert^{8} \vert A \vert^{2}  \vert \nabla A \vert \vert  \nabla^{3} A \vert & \gamma^{s} d\mu\\
\leq &
\varepsilon \int_{\Sigma} \Vert f \Vert^{8} \vert \nabla^{3} A \vert^{2} \gamma^{s}d\mu
+
c(\varepsilon)\int_{\Sigma} \Vert f \Vert^{8} \vert A \vert^{4} \vert \nabla A \vert^{2} \gamma^{s}d\mu.
\end{split}
\end{gather*}
Apply (\ref{m=1,6}) to obtain a suitable estimate. Next 
\begin{gather*}
\begin{split}
\int_{\Sigma} 
\hspace{-1pt}
\Vert f \Vert^{8} \vert A \vert \vert \nabla A \vert^{2} \vert \nabla^{2} A \vert  \gamma^{s}d\mu 
\leq &
c
\hspace{-3pt}
\int_{\Sigma} \Vert f \Vert^{8} \vert \nabla A \vert^{4} \gamma^{s} d\mu
+
c
\hspace{-3pt}
\int_{\Sigma} \Vert f \Vert^{8} \vert A \vert^{2} \vert \nabla^{2} A \vert^{2} \gamma^{s} d\mu.
\end{split}
\end{gather*}
Using (\ref{m=1,1}) in a first step, then applying (\ref{m=1,5}) we derive 
\begin{gather*}
\hspace{-18pt}
\begin{split}
c\int_{\Sigma} \Vert f \Vert^{8} & \vert  \nabla  A \vert^{4} \gamma^{s} d\mu
+
c
\int_{\Sigma} \Vert f \Vert^{8} \vert A \vert^{2} \vert \nabla^{2} A \vert^{2} \gamma^{s} d\mu
\\
\leq &
c\int_{[\gamma>0]}\vert A \vert^{2}d\mu\int_{\Sigma} \Vert f \Vert^{8} \vert \nabla^{3} A \vert^{2} \gamma^{s} d\mu  
+
c \int_{\Sigma} \Vert f \Vert^{8} \vert A \vert^{4} \vert \nabla A \vert^{2} \gamma^{s}d\mu \\
& +
c\int_{\Sigma} \Vert f \Vert^{4} \vert \nabla A \vert^{2} \gamma^{s}d\mu  
+
c\int_{[\gamma>0]} \Vert f \Vert^{8} \vert \nabla^{2} A \vert^{2} d\mu 
\int_{\Sigma} \vert \nabla A \vert^{2} \gamma^{s}d\mu 
\end{split}
\end{gather*}
\begin{equation} \label{m=1,7}
\hspace{24pt}
+
cs^{4}\Lambda^{4}
\int_{[\gamma>0]}\Vert f \Vert^{8} \vert A \vert^{4}  d\mu 
+
cs^{2}\Lambda^{2}
\int_{[\gamma>0]} \Vert f \Vert^{8} \vert \nabla^{2} A \vert^{2} d\mu,
\end{equation}
\newpage \hspace{-22pt}
hence by (\ref{m=1,6}) and 
$\int_{[\gamma>0]}\Vert f \Vert^{8} \vert A \vert^{4} d\mu
\leq 
c\Vert \,\Vert f \Vert^{4} A \Vert^{4}_{L^{\infty}_{\mu}([\gamma>0])}
+
c(\int_{[\gamma>0]}\vert A \vert^{2}d\mu)^{2}$ 
\begin{gather*}
\begin{split}
\int_{\Sigma}\Vert f \Vert^{8} \vert A \vert  \vert \nabla A \vert^{2} & \vert \nabla^{2} A \vert  \gamma^{s}d\mu \\
\leq &
\varepsilon\int_{\Sigma} \Vert f \Vert^{8}  \vert \nabla^{3} A \vert^{2} \gamma^{s}d\mu \\
& +
c(\varepsilon,c_{1}) 
(
1
+
\int_{[\gamma>0]}\Vert f \Vert^{8}\vert \nabla^{2} A \vert^{2} d\mu
+
\Vert\,\Vert f \Vert^{4} A \Vert^{4}_{L^{\infty}_{\mu}([\gamma>0])}
) \\
& \hspace{50pt}
(
1
+
\int_{\Sigma} \vert \nabla A \vert^{2}\gamma^{s} d\mu
),
\end{split}
\end{gather*}
provided, that
$\int_{[\gamma>0]}\vert A \vert^{2}d\mu
\leq \delta(\varepsilon).$ \\
For the last term of this case, i.e.
$
\int_{\Sigma} \Vert f \Vert^{8} \vert \nabla A \vert^{4} \gamma^{s} d\mu,
$
apply (\ref{m=1,7}) and (\ref{m=1,6}).

\begin{itemize}
\item 
$k=3,j=2,i=1$
\end{itemize}
By Young's inequality we have
\begin{gather*} 
\begin{split}
\int_{\Sigma} \Vert f \Vert^{7} \vert A \vert^{2} \vert \nabla A \vert \vert \nabla^{2} A \vert &\gamma^{s} d\mu
+
\int_{\Sigma} \Vert f \Vert^{7} \vert A \vert \vert \nabla A \vert^{3} \gamma^{s} d\mu\\
\leq &
c \int_{\Sigma}\Vert f \Vert^{6} \vert A \vert^{2} \vert \nabla A \vert^{2}
\gamma^{s}d\mu 
+c\int_{\Sigma} \Vert f \Vert^{8} \vert \nabla A \vert^{4} \gamma^{s} d\mu
 \\
& +
c\int_{\Sigma} \Vert f \Vert^{8} \vert A \vert^{2} \vert \nabla^{2} A
 \vert^{2} \gamma^{s}d\mu
\\
\leq &
c\int_{\Sigma} \Vert f \Vert^{4} \vert \nabla A \vert^{2} \gamma^{s} d\mu
+
c\int_{\Sigma} \Vert f \Vert^{8} \vert A \vert^{4} \vert \nabla A \vert^{2} \gamma^{s}d\mu\\
& +
c\int_{\Sigma} \Vert f \Vert^{8} \vert A \vert^{2} \vert \nabla^{2} A
 \vert^{2} \gamma^{s}d\mu 
+
c\int_{\Sigma} \Vert f \Vert^{8} \vert \nabla A \vert^{4} \gamma^{s} d\mu.
\end{split}
\end{gather*}
Apply (\ref{m=1,7}) and (\ref{m=1,6}).

\begin{itemize}
\item 
$k=3,j=1,i=2$
\end{itemize}
Apply (\ref{m=1,6}) to
\begin{gather*} 
\begin{split}
\int_{\Sigma}
(
\Vert f \Vert^{6}
 +
\Vert f \Vert^{7} \vert A \vert 
) &
\vert A \vert^{2} \vert \nabla A \vert^{2} \gamma^{s} d\mu \\
\leq &
c\int_{\Sigma} \Vert f \Vert^{8} \vert A \vert^{4} \vert \nabla A \vert^{2} \gamma^{s} d\mu
+
c\int_{\Sigma} \Vert f \Vert^{4} \vert \nabla A \vert^{2} \gamma^{s} d\mu.
\end{split}
\end{gather*}

\begin{itemize}
\item 
$k=3,j=0,i=3$
\end{itemize}
By Young's inequality we have
\begin{gather*} 
\begin{split}
\int_{\Sigma}(
\Vert f \Vert^{5}+
\Vert f \Vert^{6} \vert A \vert
& +
\Vert f \Vert^{7} \vert A \vert^{2}
+
\Vert f \Vert^{7} \vert \nabla A \vert
)
\vert A \vert^{3} \vert \nabla A \vert \gamma^{s} d\mu \\
\leq &
c\int_{\Sigma} \Vert f \Vert^{8} \vert A \vert^{4} \vert \nabla A \vert^{2} \gamma^{s} d\mu
+
c\int_{\Sigma} \Vert f \Vert^{4} \vert \nabla A \vert^{2} \gamma^{s}d\mu \\
& +
c\int_{[\gamma>0]} \Vert f \Vert^{2} \vert A \vert^{2} d\mu 
+
c\int_{\Sigma} \Vert f \Vert^{6} \vert A \vert^{6} \gamma^{s} d\mu.
\end{split}
\end{gather*}
Apply (\ref{m=1,6.5})  and (\ref{m=1,6}).
\begin{itemize}
\item 
$k=1,j=4,i=1$
\end{itemize}
We use integration by parts to derive
\begin{gather*} 
\begin{split}
\int_{\Sigma} \nabla \Vert f \Vert^{8} & *\nabla^{4} A *  \nabla A \gamma^{s}d\mu \\
= &
-\int_{\Sigma} \nabla^{2} \Vert f \Vert^{8}*\nabla^{3} A * \nabla A \gamma^{s}d\mu
-
\int_{\Sigma} \nabla \Vert f \Vert^{8}*\nabla^{3} A * \nabla^{2} A \gamma^{s}d\mu\\
& -
s\int_{\Sigma} \nabla \Vert f \Vert^{8}*\nabla^{3} A * \nabla A*\nabla \gamma\, \gamma^{s-1}d\mu \\
\leq &
c(n)\int_{\Sigma} 
(
\Vert f \Vert^{6} 
+
\Vert f \Vert^{7} \vert A \vert
)
\vert \nabla A \vert\vert \nabla^{3} A \vert  \gamma^{s}d\mu \\
& +
c(n)\int_{\Sigma} \Vert f \Vert^{7} \vert \nabla^{2} A \vert \vert \nabla^{3} A \vert \gamma^{s}d\mu\\
& +
c(n)s \Lambda \int_{\Sigma} \Vert f \Vert^{7} \vert \nabla A \vert \vert \nabla^{3} A \vert \gamma^{s-1}d\mu \\
\leq &
\varepsilon \int_{\Sigma} \Vert f \Vert^{8} \vert \nabla^{3} A \vert^{2} \gamma^{s} d\mu
+
c(n,\varepsilon) \int_{\Sigma} \Vert f \Vert^{4} \vert \nabla A \vert^{2} \gamma^{s}d\mu\\
& +
c(n,\varepsilon)\int_{\Sigma} \Vert f \Vert^{6} \vert A \vert^{2} \vert \nabla A \vert^{2} \gamma^{s}d\mu
+
c(n,\varepsilon) \int_{\Sigma} \Vert f \Vert^{6} \vert \nabla^{2} A \vert^{2} \gamma^{s} d\mu\\
& +
c(n,\varepsilon)s^{2} \Lambda^{2} \int_{\Sigma} \Vert f \Vert^{6} \vert \nabla A \vert^{2} \gamma^{s-2}d\mu.
\end{split}
\end{gather*}
By Young's inequality we have
\begin{gather*}
\begin{split}
\int_{\Sigma} \Vert f \Vert^{6} \vert A \vert^{2} \vert \nabla A \vert^{2} \gamma^{s}d\mu
\leq &
c \int_{\Sigma} \Vert f \Vert^{8} \vert A \vert^{4} \vert \nabla A \vert^{2} \gamma^{s}d\mu
+
c\int_{\Sigma} \Vert f \Vert^{4} \vert \nabla A \vert^{2} \gamma^{s} d\mu.
\end{split}
\end{gather*}
Furthermore we use corollary \ref{A7} to derive
\begin{gather*}
\begin{split}
\int_{\Sigma} \Vert f \Vert^{6} \vert \nabla^{2} & A \vert^{2} \gamma^{s} d\mu
 +
s^{2} \Lambda^{2} \int_{\Sigma} \Vert f \Vert^{6} \vert \nabla A \vert^{2} \gamma^{s-2}d\mu \\
\leq &
\varepsilon \int_{\Sigma} \Vert f \Vert^{8} \vert \nabla^{3} A \vert^{2} \gamma^{s} d\mu
+
c(\varepsilon)\int_{\Sigma} \Vert f \Vert^{4} \vert \nabla A \vert^{2} \gamma^{s}d\mu \\
& +
c(\varepsilon,s,\Lambda)\int_{\Sigma} \Vert f \Vert^{6} \vert \nabla A \vert^{2} \gamma^{s-2}d\mu \\
\leq &
\varepsilon \int_{\Sigma} \Vert f \Vert^{8} \vert \nabla^{3} A \vert^{2} \gamma^{s} d\mu
+
c(\varepsilon)\int_{\Sigma} \Vert f \Vert^{4} \vert \nabla A \vert^{2} \gamma^{s}d\mu \\
& +
\delta \int_{\Sigma} \Vert f \Vert^{6} \vert \nabla^{2} A \vert^{2} \gamma^{s} d\mu \\
& +
c(\varepsilon,\delta,s,\Lambda) \int_{[\gamma>0]}(\Vert f \Vert^{6}+\Vert f \Vert^{4})\vert A \vert^{2}d\mu,
\end{split}
\end{gather*}
i.e. by absorption
\begin{gather*}
\begin{split}
\int_{\Sigma} \Vert f \Vert^{6} \vert \nabla^{2} & A \vert^{2} \gamma^{s} d\mu
 +
s^{2} \Lambda^{2} \int_{\Sigma} \Vert f \Vert^{6} \vert \nabla A \vert^{2} \gamma^{s-2}d\mu \\
\leq &
\varepsilon \int_{\Sigma} \Vert f \Vert^{8} \vert \nabla^{3} A \vert^{2} \gamma^{s} d\mu
+
c(\varepsilon)\int_{\Sigma} \Vert f \Vert^{4} \vert \nabla A \vert^{2} \gamma^{s}d\mu 
\end{split}
\end{gather*}
\begin{equation} \label{m=1,8}
\hspace{8pt}
+c(\varepsilon,s,\Lambda) \int_{[\gamma>0]}(\Vert f \Vert^{6}+\Vert f \Vert^{4})\vert A \vert^{2}d\mu.
\end{equation}
Inserting yields
\begin{gather*}
\begin{split}
\int_{\Sigma} \nabla \Vert f \Vert^{8} & *\nabla^{4} A *  \nabla A \gamma^{s}d\mu\\
\leq &
\varepsilon \int_{\Sigma} \Vert f \Vert^{8} \vert \nabla^{3} A \vert^{2} \gamma^{s} d\mu
+
c(n,\varepsilon)\int_{\Sigma} \Vert f \Vert^{8} \vert A \vert^{4} \vert \nabla A \vert^{2} \gamma^{s}d\mu
\\
& +
c(n,\varepsilon) \int_{\Sigma} \Vert f \Vert^{4} \vert \nabla A \vert^{2} \gamma^{s}d\mu \\
& +
c(n,\varepsilon,s,\Lambda)\int_{[\gamma>0]}(\Vert f \Vert^{6}+\Vert f \Vert^{4})\vert A \vert^{2}d\mu.
\end{split}
\end{gather*}
Apply (\ref{m=1,6}).
\newpage
\begin{itemize}
\item 
$k=1,j=3,i=2$
\end{itemize}
By Young's inequality we have
\begin{gather*} 
\begin{split}
\int_{\Sigma}
(
\Vert f \Vert^{6}
+
\Vert f \Vert^{7} & \vert A \vert
)
\vert \nabla A \vert \vert \nabla^{3} A \vert \gamma^{s}d\mu\\
\leq &
\varepsilon \int_{\Sigma} \Vert f \Vert^{8} \vert \nabla^{3} A \vert^{2} \gamma^{s}d\mu
+
c(\varepsilon)\int_{\Sigma}\Vert f \Vert^{4} \vert \nabla A \vert^{2}\gamma^{s} d\mu\\
& +
c(\varepsilon)\int_{\Sigma}\Vert f \Vert^{6} \vert A \vert^{2} \vert \nabla A \vert^{2} \gamma^{s}d\mu \\
\leq &
\varepsilon \int_{\Sigma} \Vert f \Vert^{8} \vert \nabla^{3} A \vert^{2} \gamma^{s}d\mu
+
c(\varepsilon)\int_{\Sigma} \Vert f \Vert^{8} \vert A \vert^{4} \vert \nabla A \vert^{2} \gamma^{s}
d\mu\\
& +
c(\varepsilon)\int_{\Sigma}\Vert f \Vert^{4} \vert \nabla A \vert^{2}\gamma^{s} d\mu.
\end{split}
\end{gather*}
Apply (\ref{m=1,6}).
\begin{itemize}
\item 
$k=1,j=2,i=3$
\end{itemize}
\begin{gather*} 
\begin{split}
\int_{\Sigma}
(
\Vert f \Vert^{5}
+
\Vert f \Vert^{6} \vert A \vert
& +
\Vert f \Vert^{7} \vert A \vert^{2}
+
\Vert f \Vert^{7} \vert \nabla A \vert
)
\vert \nabla A \vert \vert \nabla^{2} A \vert \gamma^{s} d\mu \\
\leq &
c\int_{[\gamma>0]} \Vert f \Vert^{8} \vert \nabla^{2} A \vert^{2} d\mu
+
c\int_{\Sigma} 
(
\Vert f \Vert^{2}
+
\Vert f \Vert^{4}
) 
\vert \nabla A \vert^{2} \gamma^{s}d\mu\\
&+
c\int_{\Sigma} \Vert f \Vert^{8} \vert A \vert^{2} \vert \nabla^{2} A \vert^{2} \gamma^{s} d\mu
+
c\int_{\Sigma} \Vert f \Vert^{8} \vert A \vert^{4} \vert \nabla A \vert^{2}\gamma^{s}d\mu\\
& +
c\int_{\Sigma} \Vert f \Vert^{8} \vert \nabla A \vert^{4} \gamma^{s} d\mu
+
c\int_{\Sigma} \Vert f \Vert^{6}\vert \nabla^{2} A \vert^{2} \gamma^{s}d\mu.
\end{split}
\end{gather*}
Apply (\ref{m=1,7}), (\ref{m=1,6}) and (\ref{m=1,8}).
Collecting terms we conclude 
\begin{gather*}
\begin{split}
\hspace{8pt}
\sum_{(i,j,k) \in I(1),\,j<5}  \int_{\Sigma}  & \nabla^{i}  \Vert f \Vert^{8}   *P^{j}_{k}(A)*\nabla A \,\gamma^{s}d\mu \\ 
\leq & 
\varepsilon \int_{\Sigma} \Vert f \Vert^{8} \vert \nabla^{3} A \vert^{2}d\mu \\
& +
c(\varepsilon,c_{1})
(
1
+
\int_{[\gamma>0]}\Vert f \Vert^{8}\vert \nabla^{2} A \vert^{2} d\mu
+
\Vert\,\Vert f \Vert^{4} A \Vert^{4}_{L^{\infty}_{\mu}([\gamma>0])}
)\\
& \hspace{49pt}
(
1
+
\int_{\Sigma} \vert \nabla A \vert^{2}\gamma^{s} d\mu
)
\end{split}
\end{gather*}
for $\int_{[\gamma>0]}\vert A \vert^{2}d\mu \leq \delta(\varepsilon,n)$ small enough.
Choosing $\varepsilon=\frac{1}{16}$ proves (\ref{m=1,0}).
\end{proof}
\section{Estimates by smallness assumption, m=2}

\begin{proposition} \label{8.1}
For $n,\Lambda,R,d,\tau >0$ and $s\geq 8$ there exist
\begin{gather*}
\varepsilon_{2}=\varepsilon_{2}(n),\;c_{2}=c_{2}(n,s,\Lambda,R,d,\tau )>0,
\end{gather*}
such that, if \quad
$
f: \Sigma \times [0,T) \rightarrow \mathbb{R}^{n}\setminus \lbrace \mathbb{0} \rbrace,\quad 0<T\leq \tau,
$ \\
is an inverse Willmore flow,
$\gamma=\widetilde{\gamma} \circ f$ as in $(\ref{eq13})$ and
\begin{gather*}
\begin{split}
&
\sup_{0\leq t < T} \int_{[\gamma>0]}\vert A \vert^{2}d\mu
\leq \varepsilon_{2}, \\
&
\sup_{0 \leq t <T}\Vert f \Vert_{L^{\infty}_{\mu}([\gamma>0])}\leq R,\\
&
\sup_{0 \leq t <T}\int_{[\gamma>0]}\vert \nabla A \vert^{2}d\mu 
\leq 
d
,\\
&
\int^{T}_{0}\int_{[\gamma>0]}\Vert f \Vert^{8}\vert \nabla^{k} A \vert^{2} d\mu\; dt
\leq
d
\quad \text{for} \quad k=2,3 \quad
\text{and}\\
&
\int^{T}_{0}\Vert\,\Vert f \Vert^{4} \nabla^{k} A \Vert^{4}_{L^{\infty}_{\mu}([\gamma>0])}dt
\leq 
d
\quad \text{ for } k=0,1,
\end{split}
\end{gather*}
we have 
\begin{gather*}
\begin{split}
\sup_{0 \leq t <T}\int_{\Sigma}\vert \nabla^{2} A \vert^{2} \gamma^{s}d\mu
& + 
\hspace{-2pt}
\int^{T}_{0} \hspace{-7pt}
\int_{\Sigma} \hspace{-0.2pt}
\Vert f \Vert^{8}\vert \nabla^{4} A \vert^{2}\gamma^{s}d\mu\,dt
\leq 
c_{2}
(
1
+
\hspace{-3pt}
\int_{\Sigma} \vert \nabla^{2} A \vert^{2} \gamma^{s} d\mu \lfloor_{t=0}
).
\end{split}
\end{gather*} 
\end{proposition}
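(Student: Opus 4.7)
The strategy is a direct parallel to the proof of proposition \ref{7.1}, but now with $\Phi=\nabla^{2}A$. Applying proposition \ref{4.3} with $m=2$ yields
\begin{gather*}
\frac{d}{dt}\int_{\Sigma}|\nabla^{2}A|^{2}\gamma^{s}d\mu+\int_{\Sigma}\frac{\Vert f\Vert^{8}}{4}|\nabla^{4}A|^{2}\gamma^{s}d\mu\leq \sum_{(i,j,k)\in I(2),\,j<6}\int_{\Sigma}\nabla^{i}\Vert f\Vert^{8}*P^{j}_{k}(A)*\nabla^{2}A\,\gamma^{s}d\mu\\
+c\int_{[\gamma>0]}[\Vert f\Vert^{8}\gamma^{s-4}+\Vert f\Vert^{4}\gamma^{s}]|\nabla^{2}A|^{2}d\mu.
\end{gather*}
The second integral on the right is handled by proposition \ref{5.2} with $m=2$ (using $s\geq 8$): it is bounded by $\varepsilon\int\Vert f\Vert^{8}|\nabla^{4}A|^{2}\gamma^{s}d\mu$ plus sums $\varepsilon\sum_{i=0}^{2}\int\Vert f\Vert^{4}|\nabla^{i}A|^{2}\gamma^{s-2(2-i)}d\mu$, plus an error of the form $c\int_{[\gamma>0]}\Vert f\Vert^{8}|A|^{2}\gamma^{s-8}d\mu$, all of which are harmless given $\Vert f\Vert\leq R$ and the assumed bounds on $|A|^{2}$ and $|\nabla A|^{2}$ (the $i=1,2$ contributions will be absorbed through the Gronwall step).

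The central task is to prove the analogue of (\ref{m=1,0}), namely
\begin{gather*}
\sum_{(i,j,k)\in I(2),\,j<6}\int_{\Sigma}\nabla^{i}\Vert f\Vert^{8}*P^{j}_{k}(A)*\nabla^{2}A\,\gamma^{s}d\mu\leq \tfrac{1}{16}\int_{\Sigma}\Vert f\Vert^{8}|\nabla^{4}A|^{2}\gamma^{s}d\mu+c_{2}\,\Theta(t)\,\bigl(1+\int_{\Sigma}|\nabla^{2}A|^{2}\gamma^{s}d\mu\bigr),
\end{gather*}
where $\Theta(t):=1+\int_{[\gamma>0]}\Vert f\Vert^{8}(|\nabla^{2}A|^{2}+|\nabla^{3}A|^{2})d\mu+\Vert\,\Vert f\Vert^{4}A\Vert^{4}_{L^{\infty}_{\mu}([\gamma>0])}+\Vert\,\Vert f\Vert^{4}\nabla A\Vert^{4}_{L^{\infty}_{\mu}([\gamma>0])}$. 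Once this is in hand, Gronwall's inequality (cf.\ lemma \ref{Gronwall's inequality}) applied exactly as in proposition \ref{7.1} yields first the supremum bound on $\int|\nabla^{2}A|^{2}\gamma^{s}d\mu$, since $\int_{0}^{T}\Theta\,dt\leq c(\tau,d)$ by assumption; reinserting this bound on the right then produces the integrated $|\nabla^{4}A|^{2}$ estimate.

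To verify the key inequality I would go through each admissible triple individually. For $m=2$ the index set $I(2)$ with $j<6$ has $i+j+k=7$, $i\leq 4$, $k\in\{1,3,5\}$, yielding three cases for $k=5$, five cases for $k=3$, and four cases for $k=1$. The scheme is: Young's inequality to distribute factors, integration by parts to trade $\nabla^{i}\Vert f\Vert^{8}$ for lower-order $\nabla$-factors of $\Vert f\Vert^{2}$ (which produce at worst $\Vert f\Vert^{r}$ and $\Vert f\Vert^{r-1}|A|$ because $\nabla\Vert f\Vert^{2}$ contains no $A$ but $\nabla^{2}\Vert f\Vert^{2}$ does through $A$), and the Michael-Simon Sobolev inequality (lemma \ref{A4}) applied to quantities of the shape $(\Vert f\Vert^{4}|\nabla^{p}A|\,|\nabla^{q}A|\,\gamma^{s/2})^{2}$, exactly as (\ref{m=1,2}), (\ref{m=1,3}), (\ref{m=1,4}) were used in proposition \ref{7.1}. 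The smallness hypothesis $\int_{[\gamma>0]}|A|^{2}d\mu\leq\varepsilon_{2}$ provides the constant in front of Sobolev that allows absorption into the $|\nabla^{4}A|^{2}$ term.

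The main obstacle will be the top-order cases, in particular $(i,j,k)=(0,4,3)$ and $(0,2,5)$, which produce terms such as $\int\Vert f\Vert^{8}|\nabla^{2}A|^{2}|\nabla^{2}A|^{2}\gamma^{s}d\mu$, $\int\Vert f\Vert^{8}|A|^{2}|\nabla^{3}A|\,|\nabla^{2}A|\gamma^{s}d\mu$ and $\int\Vert f\Vert^{8}|A|^{4}|\nabla^{2}A|^{2}\gamma^{s}d\mu$. Writing each as a square and invoking lemma \ref{A4} generates one ``dangerous'' term containing $\nabla^{4}A$, which is absorbed into the parabolic left-hand side, together with lower-order terms that either involve only $\nabla^{k}A$ for $k\leq 3$ on compact support of $\gamma$ (controlled by the hypotheses on $\int_{[\gamma>0]}\Vert f\Vert^{8}|\nabla^{k}A|^{2}d\mu$ for $k=2,3$) or reduce via the $L^{\infty}$ bounds on $\Vert f\Vert^{4}\nabla^{k}A$ for $k=0,1$. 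The bookkeeping is heavier than in the $m=1$ case but proceeds along the same template; no conceptually new ingredient is required beyond what is already used in the $m=0,1$ propositions.
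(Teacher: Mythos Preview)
Your proposal is correct and follows essentially the same route as the paper: reduce via proposition~\ref{4.3} and proposition~\ref{5.2} to the key inequality (the paper's (\ref{m=2,1}), your $\Theta(t)$-bound), then establish that inequality by the twelve-case enumeration you describe, using Young, integration by parts, lemma~\ref{A4} applied to squares such as $(\Vert f\Vert^{4}\vert A\vert^{2}\vert\nabla^{2}A\vert\gamma^{s/2})^{2}$, and absorption via the smallness of $\int_{[\gamma>0]}\vert A\vert^{2}d\mu$. The only (harmless) slip is in your list of representative top-order terms: for $(i,j,k)=(0,4,3)$ the summands of $P^{4}_{3}(A)*\nabla^{2}A$ are $\vert A\vert^{2}\vert\nabla^{4}A\vert\vert\nabla^{2}A\vert$, $\vert A\vert\vert\nabla A\vert\vert\nabla^{3}A\vert\vert\nabla^{2}A\vert$, $\vert A\vert\vert\nabla^{2}A\vert^{3}$ and $\vert\nabla A\vert^{2}\vert\nabla^{2}A\vert^{2}$, and the cubic $\vert A\vert\vert\nabla^{2}A\vert^{3}$ term forces one to pass through $\int\Vert f\Vert^{12}\vert\nabla^{2}A\vert^{4}\gamma^{s}d\mu$ and $\int\Vert f\Vert^{12}\vert\nabla A\vert^{2}\vert\nabla^{3}A\vert^{2}\gamma^{s}d\mu$ (the paper's (\ref{m=2,9})--(\ref{m=2,11})); this is the one chain that is slightly longer than anything in the $m=1$ proof, but as you say it needs no new idea.
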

\begin{proof}
For abbreviative reasons let
$
c_{2}=c_{2}(n,s,\Lambda,R,d).
$
We will show 
\begin{gather*}
\begin{split}
\sum_{(i,j,k) \in I(2),\,j<6} & \int_{\Sigma}  \nabla^{i}\Vert f \Vert^{8}  *P^{j}_{k}(A)*\nabla A \,\gamma^{s}d\mu \\ 
\leq & 
\int_{\Sigma} \frac{\Vert f \Vert^{8}}{16} \vert \nabla^{4} A \vert^{2}d\mu \\
& +
c_{2}
(
1
+
\int_{[\gamma>0]}\Vert f \Vert^{8}\vert \nabla^{2} A \vert^{2} d\mu
+
\int_{[\gamma>0]}\Vert f \Vert^{8}\vert \nabla^{3} A \vert^{2} d\mu\\
& \hspace{35pt}+
\Vert\,\Vert f \Vert^{4} A \Vert^{4}_{L^{\infty}_{\mu}([\gamma>0])}
+
\Vert\,\Vert f \Vert^{4} \nabla A \Vert^{4}_{L^{\infty}_{\mu}([\gamma>0])}
)
\end{split}
\end{gather*}
\begin{equation} \label{m=2,1}
\hspace{-49pt}
(1+\int_{\Sigma} \vert \nabla^{2} A \vert^{2}\gamma^{s} d\mu).
\end{equation}
Applying this inequality and proposition \ref{5.2} with $\varepsilon=\frac{1}{8c}$ to proposition \ref{4.3} prove the claim using Gronwall's inequality, cf. lemma \ref{Gronwall's inequality}.

To show (\ref{m=2,1}) we give adequate estimates for each term of the sum.

\begin{itemize}
\item 
$k=5 ,j=2, i=0 $
\end{itemize}
By lemma \ref{A4} we estimate
\begin{gather*}
\begin{split}
\int_{\Sigma}\Vert f \Vert^{8}  \vert A \vert^{4} & \vert \nabla^{2} A \vert^{2} \gamma^{s} d\mu
=
\int_{\Sigma}(\Vert f \Vert^{4} \vert A \vert^{2} \vert \nabla^{2} A \vert \gamma^{\frac{s}{2}})^{2} d\mu\\
\leq &
c
[
\int_{\Sigma} \Vert f \Vert^{3} \vert A \vert^{2} \vert \nabla^{2} A \vert \gamma^{\frac{s}{2}}d\mu
+
\int_{\Sigma} \Vert f \Vert^{4} \vert A \vert \vert \nabla A \vert \vert \nabla^{2} A \vert \gamma^{\frac{s}{2}}d\mu\\
& \hspace{22pt}+
\int_{\Sigma} \Vert f \Vert^{4} \vert A \vert^{2} \vert \nabla^{3} A \vert \gamma^{\frac{s}{2}}d\mu
+
s\Lambda\int_{\Sigma} \Vert f \Vert^{4} \vert A \vert^{2} \vert \nabla^{2} A \vert \gamma^{\frac{s-2}{2}}d\mu\\
& \hspace{22pt}+
\int_{\Sigma} \Vert f \Vert^{4} \vert A \vert^{3} \vert \nabla^{2} A \vert \gamma^{\frac{s}{2}}d\mu\;
]^{2} \\
\leq &
c\int_{[\gamma>0]} \vert A \vert^{2} d\mu \\
& 
[
\int_{\Sigma} \Vert f \Vert^{6} \vert A \vert^{2} \vert \nabla^{2} A \vert^{2} \gamma^{s} d\mu
+
\int_{\Sigma} \Vert f \Vert^{8} \vert A \vert^{2} \vert \nabla^{3} A \vert^{2} \gamma^{s} d\mu \\
& \quad + 
s^{2} \Lambda^{2}\int_{\Sigma} \Vert f \Vert^{8} \vert A \vert^{2} \vert \nabla^{2} A \vert^{2} \gamma^{s-2}d\mu
+
\int_{\Sigma} \Vert f \Vert^{8} \vert A \vert^{4} \vert \nabla^{2} A \vert^{2} \gamma^{s}d\mu
\;]\\
& +
c\int_{[\gamma>0]} \Vert f \Vert^{8} \vert A \vert^{2} \vert \nabla A \vert^{2} d\mu \int_{\Sigma} \vert \nabla^{2} A \vert^{2} \gamma^{s} d\mu \\
\leq &
c\int_{[\gamma>0]} \vert A \vert^{2} d\mu \\
& 
[
\int_{\Sigma} \Vert f \Vert^{8} \vert A \vert^{4} \vert \nabla^{2} A \vert^{2} \gamma^{s}d\mu
+
\int_{\Sigma} \Vert f \Vert^{8} \vert A \vert^{2} \vert \nabla^{3} A \vert^{2} \gamma^{s} d\mu\\
& \quad +
\int_{\Sigma} \Vert f \Vert^{4} \vert \nabla^{2} A \vert^{2} \gamma^{s} d\mu
+
s^{4}\Lambda^{4}\int_{[\gamma>0]} \Vert f \Vert^{8} \vert \nabla^{2} A \vert^{2}d\mu
\;]\\
& +
c
[\;
\Vert \, \Vert f \Vert^{4} A \Vert^{4}_{L^{\infty}_{\mu}([\gamma>0])}
+
(\int_{[\gamma>0]}\vert \nabla A \vert^{2} d\mu)^{2}
\;]
\int_{\Sigma} \vert \nabla^{2} A \vert^{2} \gamma^{s} d\mu.
\end{split}
\end{gather*}
Since $\int_{[\gamma>0]} \vert A \vert^{2} d\mu \leq \varepsilon_{2}$, we obtain by absorption \newpage
\begin{gather*}
\begin{split}
\int_{\Sigma}\Vert f \Vert^{8}  \vert A \vert^{4}  \vert \nabla^{2} A \vert^{2} \gamma^{s} d\mu 
\leq &
c\int_{[\gamma>0]} \vert A \vert^{2} d\mu
\int_{\Sigma} \Vert f \Vert^{8} \vert A \vert^{2} \vert \nabla^{3} A \vert^{2} \gamma^{s} d\mu\\
& +
c_{2}
(
1
+
\int_{[\gamma>0]}\Vert f \Vert^{8}\vert \nabla^{2} A \vert^{2} d\mu
+
\Vert\,\Vert f \Vert^{4} A \Vert^{4}_{L^{\infty}_{\mu}([\gamma>0])}
)
\end{split}
\end{gather*}
\begin{equation} \label{m=2,2}
\hspace{30pt}
(1+\int_{\Sigma} \vert \nabla^{2} A \vert^{2}\gamma^{s} d\mu).
\end{equation}
Next we have again by lemma \ref{A4}
\begin{gather*}
\begin{split}
\int_{\Sigma} \Vert f & \Vert^{8} \vert A \vert^{2}  \vert \nabla^{3} A \vert^{2} \gamma^{s} d\mu
=
\int_{\Sigma} (\Vert f \Vert^{4} \vert A \vert \vert \nabla^{3} A \vert \gamma^{\frac{s}{2}})^{2} d\mu \\
\leq &
c \int_{[\gamma>0]} \vert A \vert^{2} d\mu 
[
\int_{\Sigma} \Vert f \Vert^{6} \vert \nabla^{3} A \vert^{2} \gamma^{s} d\mu
+
\int_{\Sigma} \Vert f \Vert^{8} \vert \nabla^{4} A \vert^{2} d\mu \\
& \hspace{76pt}
+
s^{2}\Lambda^{2}\int_{[\gamma>0]} \Vert f \Vert^{8} \vert \nabla^{3} A \vert^{2} d\mu
+
\int_{\Sigma} \Vert f \Vert^{8} \vert A \vert^{2} \vert \nabla^{3} A \vert^{2} \gamma^{s} d\mu
\;]
\end{split}
\end{gather*}
\vspace{-6pt}
\begin{equation} \label{m=2,3}
\hspace{-134pt}
+
c\int_{[\gamma>0]} \Vert f \Vert^{8} \vert \nabla^{3} A \vert^{2} d\mu
\int_{[\gamma>0]} \vert \nabla A \vert^{2} d\mu.
\end{equation}
By corollary \ref{A7} 
\begin{gather*}
\begin{split}
\int_{\Sigma} \Vert f \Vert^{6} \vert \nabla^{3} A \vert^{2} \gamma^{s}d\mu
\leq &
\varepsilon \int_{\Sigma} \Vert f \Vert^{8} \vert \nabla^{4} A \vert^{2} \gamma^{s} d\mu
+
c(\varepsilon)\int_{\Sigma} \Vert f \Vert^{4} \vert \nabla^{2} A \vert^{2} \gamma^{s}d\mu \\
& +
c(\varepsilon)s^{2}\Lambda^{2} \int_{\Sigma} \Vert f \Vert^{6} \vert \nabla^{2} A \vert^{2} \gamma^{s-2}d\mu\\
\leq & 
\varepsilon \int_{\Sigma} \Vert f \Vert^{8} \vert \nabla^{4} A \vert^{2} \gamma^{s} d\mu
+
c(\varepsilon) \int_{\Sigma} \Vert f \Vert^{4} \vert \nabla^{2} A \vert^{2} \gamma^{s}d\mu 
\end{split}
\end{gather*}
\begin{equation} \label{m=2,4}
\hspace{29pt}
+
c(\varepsilon)s^{4} \Lambda^{4} \int_{[\gamma>0]} \Vert f \Vert^{8} \vert \nabla^{2} A \vert^{2} d\mu.
\end{equation}
Applying this inequality to (\ref{m=2,3}) we derive by absorption
\begin{gather*}
\hspace{-94pt}
\int_{\Sigma} 
\Vert f \Vert^{8} \vert A \vert^{2} \vert \nabla^{3} A \vert^{2} \gamma^{s} d\mu 
\leq 
c \int_{[\gamma>0]} \vert A \vert^{2} d\mu \int_{\Sigma} \Vert f \Vert^{8} \vert \nabla^{4} A \vert^{2} d\mu 
\end{gather*}
\begin{equation} \label{m=2,5}
+
c_{2}
(
1 
+ 
\int_{[\gamma>0]}
\Vert f \Vert^{8}\vert \nabla^{2} A \vert^{2} d\mu
+ 
\int_{[\gamma>0]}\Vert f \Vert^{8}\vert \nabla^{3} A \vert^{2} d\mu)
(1+\int_{\Sigma} \vert \nabla^{2} A \vert^{2}\gamma^{s} d\mu).
\end{equation}
\newpage \hspace{-22pt}
Inserting (\ref{m=2,5}) in (\ref{m=2,2}) we conclude
\begin{gather*}
\begin{split}
\int_{\Sigma}\Vert f \Vert^{8}  \vert A \vert^{4} & \vert \nabla^{2} A \vert^{2} \gamma^{s} d\mu \\
\leq &
\varepsilon\int_{\Sigma}\Vert f \Vert^{8} \vert \nabla^{4} A \vert^{2}d\mu \\
& +
c(\varepsilon,c_{2})
(
1
+
\int_{[\gamma>0]}\Vert f \Vert^{8}\vert \nabla^{2} A \vert^{2} d\mu
+
\int_{[\gamma>0]}\Vert f \Vert^{8}\vert \nabla^{3} A \vert^{2} d\mu\\
& \hspace{176pt} +
\Vert\,\Vert f \Vert^{4} A \Vert^{4}_{L^{\infty}_{\mu}([\gamma>0])}
)
\end{split}
\end{gather*}
\begin{equation} \label{m=2,6}
\hspace{-16pt}
(1+\int_{\Sigma} \vert \nabla^{2} A \vert^{2}\gamma^{s} d\mu),
\end{equation}
provided, that $\int_{[\gamma>0]} \vert A \vert^{2} d\mu\leq \delta(\varepsilon).$\\
For the next term to be considered we have by Young's inequality
\begin{gather*}
\begin{split}
\int_{\Sigma} \Vert f \Vert^{8} \vert A \vert^{3} \vert \nabla A \vert^{2} & \vert \nabla^{2} A \vert \gamma^{s} d\mu \\
\leq &
c \int_{\Sigma} \Vert f \Vert^{8} \vert A \vert^{4} \vert \nabla^{2} A \vert^{2} \gamma^{s}d\mu
+
c\int_{\Sigma} \Vert f \Vert^{8} \vert A \vert^{2} \vert \nabla A \vert^{4} \gamma^{s}d\mu.
\end{split}
\end{gather*}
We estimate by lemma \ref{A4}
\begin{gather*}
\begin{split}
\int_{\Sigma} \Vert f \Vert^{8} \vert A \vert^{2} & \vert \nabla A \vert^{4} \gamma^{s}d\mu
=
\int_{\Sigma} (\Vert f \Vert^{4} \vert A \vert \vert \nabla A \vert^{2} \gamma^{\frac{s}{2}})^{2}d\mu\\
\leq &
c
[
\int_{\Sigma} \Vert f \Vert^{3} \vert A\vert  \vert \nabla A \vert^{2} \gamma^{\frac{s}{2}}d\mu
+
\int_{\Sigma} \Vert f \Vert^{4}  \vert \nabla A \vert^{3} \gamma^{\frac{s}{2}}d\mu \\
& \hspace{22pt} +
\int_{\Sigma} \Vert f \Vert^{4} \vert A \vert  \vert \nabla A \vert \vert \nabla^{2} A \vert \gamma^{\frac{s}{2}}d\mu
+
s\Lambda\int_{\Sigma} \Vert f \Vert^{4} \vert A \vert  \vert \nabla A \vert^{2} \gamma^{\frac{s-2}{2}}d\mu \\
& \hspace{22pt} +
\int_{\Sigma} \Vert f \Vert^{4} \vert A \vert^{2}  \vert \nabla A \vert^{2} \gamma^{\frac{s}{2}} d\mu
\;]^{2}\\
\leq &
c\int_{[\gamma>0]} \vert \nabla A \vert^{2} d\mu \\
& [
\int_{\Sigma} \Vert f \Vert^{6} \vert A\vert^{2} \vert \nabla A \vert^{2} \gamma^{s} d\mu
+
\int_{\Sigma} \Vert f \Vert^{8}  \vert \nabla A \vert^{4} \gamma^{s} d\mu\\
& \quad +
\int_{\Sigma} \Vert f \Vert^{8} \vert A \vert^{2} \vert \nabla^{2} A \vert^{2} \gamma^{s} d\mu
+
s^{2} \Lambda^{2} \int_{\Sigma} \Vert f \Vert^{8} \vert A \vert^{2} \vert \nabla A \vert^{2}   \gamma^{s-2} d\mu\; ] \\
& +
c\int_{[\gamma>0]} \vert A \vert^{2} d\mu 
\int_{\Sigma} \Vert f \Vert^{8} \vert A \vert^{2} \vert \nabla A \vert^{4} \gamma^{s} d\mu.
\end{split}
\end{gather*}
Hence by Young's inequality 
\begin{gather*}
\begin{split}
\int_{\Sigma} \Vert f \Vert^{8} \vert A  \vert^{2}  \vert \nabla A & \vert^{4}  \gamma^{s}d\mu
\leq 
(
\varepsilon
+
c\int_{[\gamma>0]} \vert A \vert^{2} d\mu
)
\int_{\Sigma} \Vert f \Vert^{8} \vert A \vert^{2} \vert \nabla A \vert^{4} \gamma^{s} d\mu\\
& +
c \int_{\Sigma} \Vert f \Vert^{8} \vert A \vert^{4} \vert \nabla^{2} A \vert^{2} \gamma^{s} d\mu\\
& +
c(\varepsilon,s,\Lambda)(1+(\int_{[\gamma>0]} \vert \nabla A \vert^{2} d\mu)^{2})\\
& \hspace{58pt}
[
\int_{[\gamma>0]} \Vert f \Vert^{4} \vert A \vert^{2} d\mu
+
\int_{[\gamma>0]} \Vert f \Vert^{8} \vert \nabla A \vert^{4} d\mu \\
& \hspace{60pt}+
\int_{[\gamma>0]} \Vert f \Vert^{8} \vert \nabla^{2} A \vert^{2} d\mu
+
\int_{[\gamma>0]} \Vert f \Vert^{8} \vert A \vert^{2} d\mu\;
].
\end{split}
\end{gather*}
Absorbing, 
$\int_{[\gamma>0]} \Vert f \Vert^{8} \vert \nabla A \vert^{4} d\mu
\leq
c\Vert \, \Vert f \Vert^{4} \nabla A \Vert^{4}_{L^{\infty}_{\mu}([\gamma>0])}
+
c(\int_{[\gamma>0]}\vert \nabla A \vert^{2}d\mu)^{2}$ 
and (\ref{m=2,6}) yield  an adequate estimate. 

\begin{itemize}
\item 
$k=5 ,j=1, i=1 $
\end{itemize}
Clearly
\begin{gather*}
\begin{split}
\int_{\Sigma} \Vert f \Vert^{7} \vert A \vert^{4} \vert \nabla A \vert &  \vert \nabla^{2} A \vert \gamma^{s} d\mu\\
\leq &
c\int_{\Sigma} \Vert f \Vert^{8} \vert A \vert^{4} \vert \nabla^{2} A \vert^{2} \gamma^{s} d\mu
+
c\int_{\Sigma} \Vert f \Vert^{6} \vert A \vert^{4} \vert \nabla A \vert^{2} \gamma^{s} d\mu.
\end{split}
\end{gather*}
By lemma \ref{A4}  we derive
\begin{gather*}
\begin{split}
\int_{\Sigma} \Vert f & \Vert^{6} \vert A \vert^{4}  \vert \nabla A \vert^{2} \gamma^{s} d\mu 
= 
\int_{\Sigma}( \Vert f \Vert^{3} \vert A \vert^{2} \vert \nabla A \vert \gamma^{\frac{s}{2}})^{2} d\mu\\
\leq &
c \int_{[\gamma>0]} \vert A \vert^{2}d\mu \\
&
[
\int_{\Sigma} \Vert f \Vert^{6} \vert A \vert^{4} \vert \nabla A \vert^{2} \gamma^{s} d\mu
+
\int_{[\gamma>0]} \Vert f \Vert^{2} \vert \nabla A \vert^{2}  d\mu\\
& +
\int_{\Sigma} \Vert f \Vert^{6} \vert A \vert^{2} \vert \nabla^{2} A \vert^{2} \gamma^{s} d\mu
+
s^{4} \Lambda^{4}\int_{[\gamma>0]} \Vert f \Vert^{6} \vert \nabla A \vert^{2} d\mu
] \\
& +
\varepsilon\int_{\Sigma} \Vert f \Vert^{6} \vert A \vert^{4} \vert \nabla A \vert^{2} \gamma^{s}d\mu \\
& +
c(\varepsilon)(\int_{[\gamma>0]} \vert \nabla A \vert^{2}d\mu)^{2} 
\int_{\Sigma}\Vert f \Vert^{6} \vert \nabla A \vert^{2} \gamma^{s}d\mu.
\end{split}
\end{gather*}
Absorbing and reinserting we conclude by Young's inequality
\begin{gather*}
\begin{split}
\int_{\Sigma} \Vert f \Vert^{7} \vert A \vert^{4} & \vert \nabla A \vert \vert \nabla^{2} A \vert\gamma^{s}d\mu \\
\leq &
c\int_{\Sigma} \Vert f \Vert^{8} \vert A \vert^{4} \vert \nabla^{2} A \vert^{2} \gamma^{s} d\mu
+
c_{2}(1+\int_{\Sigma}  \vert \nabla^{2} A \vert^{2} \gamma^{s} d\mu).
\end{split}
\end{gather*}
Apply (\ref{m=2,6}).

\begin{itemize}
\item 
$k=5 ,j=0, i=2 $
\end{itemize}
First we have
\begin{gather*}
\begin{split}
\int_{\Sigma} 
(\Vert f \Vert^{6} 
& +  
\Vert f \Vert^{7}\vert A \vert
)
\vert A \vert^{5}  \vert \nabla^{2} A \vert \gamma^{s} d\mu \\
\leq &
c\int_{\Sigma} \Vert f \Vert^{8} \vert A \vert^{4} \vert \nabla^{2} A \vert^{2} \gamma^{s} d\mu
+
c\int_{\Sigma} \Vert f \Vert^{4} \vert A\vert^{6} \gamma^{s} d\mu
+
c\int_{\Sigma} \Vert f \Vert^{6} \vert A \vert^{8} \gamma^{s} d\mu.
\end{split}
\end{gather*} 
By lemma \ref{A4} we estimate

\begin{gather*}
\begin{split}
\int_{\Sigma} \Vert f \Vert^{4} \vert A& \vert^{6} \gamma^{s} d\mu 
= 
\int_{\Sigma} ( \Vert f \Vert^{2} \vert A\vert^{3} \gamma^{\frac{s}{2}})^{2} d\mu \\
\leq &
c
[
\int_{\Sigma} \Vert f \Vert \vert A \vert^{3} \gamma^{\frac{s}{2}} d\mu
+
\int_{\Sigma} \Vert f \Vert^{2} \vert A \vert^{2} \vert \nabla A \vert \gamma^{\frac{s}{2}} d\mu\\
& \hspace{22pt} +
s \Lambda\int_{\Sigma} \Vert f \Vert^{2} \vert A \vert^{3} \gamma^{\frac{s-2}{2}} d\mu
+
\int_{\Sigma} \Vert f \Vert^{2} \vert A \vert^{4} \gamma^{\frac{s}{2}} d\mu\;
]^{2} \\
\leq &
c\int_{[\gamma>0]} \vert \nabla A \vert^{2} d\mu \int_{\Sigma} \Vert f \Vert^{4} \vert A \vert^{4}\gamma^{s}d\mu \\
& +
c \int_{[\gamma>0]} \vert A \vert^{2} d\mu \\
& \quad 
[
\int_{\Sigma} \Vert f \Vert^{2} \vert A \vert^{4} \gamma^{s} d\mu
+
s^{2} \Lambda^{2} \int_{\Sigma} \Vert f \Vert^{4} \vert A \vert^{4} \gamma^{s-2} d\mu
+
\int_{\Sigma} \Vert f \Vert^{4} \vert A \vert^{6} \gamma^{s} d\mu\;
] \\
\leq &
(
\varepsilon
+
c \int_{[\gamma>0]} \vert A \vert^{2} d\mu 
)
\int_{\Sigma} \Vert f \Vert^{4} \vert A \vert^{6} \gamma^{s} d\mu
+
c(\int_{[\gamma>0]}  \vert A \vert^{2} d\mu)^{2}\\
& +
c(\varepsilon)
[\;
(\int_{[\gamma>0]}\vert \nabla A \vert^{2} d\mu)^{2}
+
s^{4}\Lambda^{4}
\;]
\int_{[\gamma>0]} \Vert f \Vert^{4} \vert A \vert^{2}  d\mu,
\end{split}
\end{gather*}
i.e. 
$\int_{\Sigma} \Vert f \Vert^{4} \vert A \vert^{6} \gamma^{s} d\mu \leq c_{2}$
by absorption. 

\hspace{-22pt} Similarly
\begin{gather*}
\begin{split}
\int_{\Sigma} \Vert f \Vert^{6} \vert A & \vert^{8} \gamma^{s} d\mu
= 
\int_{\Sigma} (\Vert f \Vert^{3} \vert A \vert^{4} \gamma^{\frac{s}{2}})^{2} d\mu\\
\leq &
c
[
\int_{\Sigma} \Vert f \Vert^{2} \vert A \vert^{4} \gamma^{\frac{s}{2}} d\mu
+
\int_{\Sigma} \Vert f \Vert^{3} \vert A \vert^{3} \vert \nabla A \vert \gamma^{\frac{s}{2}} d\mu\\
& \hspace{22pt} +
s \Lambda \int_{\Sigma} \Vert f \Vert^{3} \vert A \vert^{4} \gamma^{\frac{s-2}{2}} d\mu
+
\int_{\Sigma} \Vert f \Vert^{3} \vert A \vert ^{5}\gamma^{\frac{s}{2}} d\mu\;
]^{2}\\
\leq &
c \int_{[\gamma>0]} \vert \nabla A \vert^{2} d\mu 
\int_{\Sigma} \Vert f \Vert^{6} \vert A \vert^{6} \gamma^{s} d\mu\\
& +
c \int_{[\gamma>0]} \vert A \vert^{2} d\mu \\
&
[
\int_{\Sigma} \Vert f \Vert^{4} \vert A \vert^{6} \gamma^{s} d\mu
+
s^{2} \Lambda^{2}\int_{\Sigma} \Vert f \Vert^{6} \vert A \vert^{6} \gamma^{s-2} d\mu 
+
\int_{\Sigma} \Vert f \Vert^{6} \vert A \vert^{8} \gamma^{s} d\mu\;
]\\
\leq &
c
(
\varepsilon
+
\int_{[\gamma>0]} \vert A \vert^{2} d\mu 
)
\int_{\Sigma} \Vert f \Vert^{6} \vert A \vert^{8} \gamma^{s} d\mu
+c(\int_{[\gamma>0]} \vert A \vert^{2}d\mu)^{2}
\\
& +
c(\varepsilon,s,\Lambda)
[\;
(\int_{[\gamma>0]}\vert \nabla A \vert^{2}d\mu)^{3}
+
s^{6}\Lambda^{6}
\;]
\int_{[\gamma>0]} \Vert f \Vert^{6} \vert A \vert^{2}  d\mu,
\end{split}
\end{gather*}
where Young's inequality with $p=\frac{3}{2},\,q=3$ and $6=\frac{16}{3}+\frac{2}{3}$ was used in the last step. Hence we conclude
\begin{equation} \label{m=2,7}
\int_{\Sigma}\Vert f \Vert^{4} \vert A \vert^{6} \gamma^{s} d\mu
+
\int_{\Sigma}\Vert f \Vert^{6} \vert A \vert^{8}\gamma^{s} d\mu
\leq c_{2}
\end{equation}
for
$
\int_{[\gamma>0]}\vert A \vert^{2}d\mu \leq \varepsilon_{2}
$
sufficiently small. \\Inserting and applying (\ref{m=2,6}) yields the required result for this case.
\begin{itemize}
\item 
$k=3 ,j=4, i=0 $
\end{itemize}
First we have
\begin{gather*}
\begin{split}
\int_{\Sigma} \Vert f \Vert^{8} \vert A \vert^{2} \vert \nabla^{2} A \vert  \vert \nabla^{4} & A \vert \gamma^{s} d\mu \\
\leq &
\varepsilon \int_{\Sigma} \Vert f \Vert^{8} \vert \nabla^{4} A \vert^{2} \gamma^{s} d\mu
+
c(\varepsilon) \int_{\Sigma} \Vert f \Vert^{8} \vert A \vert^{4} \vert \nabla^{2} A \vert^{2} \gamma^{s}d\mu.
\end{split}
\end{gather*}
Apply (\ref{m=2,6}). Next we estimate
\begin{gather*}
\begin{split}
\int_{\Sigma} \Vert f \Vert^{8}  & \vert A \vert  \vert \nabla A \vert  \vert \nabla^{2} A \vert \vert \nabla^{3} A \vert \gamma^{s} d\mu \\
\leq &
c \int_{\Sigma} \Vert f \Vert^{8} \vert A \vert^{2} \vert \nabla^{3} A \vert^{2} \gamma^{s} d\mu
+
c\int_{\Sigma} \Vert f \Vert^{8} \vert \nabla A \vert^{2} \vert \nabla^{2} A \vert^{2} \gamma^{s} d\mu \\
\leq &
c \int_{\Sigma} \Vert f \Vert^{8} \vert A \vert^{2} \vert \nabla^{3} A \vert^{2} \gamma^{s} d\mu
+
c(1+\Vert \, \Vert f \Vert^{4} \nabla A \Vert^{4}_{L^{\infty}_{\mu}([\gamma>0])})
\hspace{-2pt}
\int_{\Sigma}\vert \nabla^{2} A \vert^{2} \gamma^{s} d\mu.
\end{split}
\end{gather*} 
Apply (\ref{m=2,5}).  For the third term in this case we have
\begin{gather*}
\hspace{-250pt} 
\begin{split}
\int_{\Sigma} \Vert f \Vert^{8} \vert A \vert \vert \nabla^{2} A \vert^{3} \gamma^{s} d\mu
\end{split}
\end{gather*}
\begin{equation} \label{m=2,8}
\leq
\varepsilon \int_{\Sigma} \Vert f \Vert^{12} \vert \nabla^{2} A \vert^{4} \gamma^{s} d\mu
+
c(\varepsilon) \int_{\Sigma} \Vert f \Vert^{4} \vert A \vert^{2} \vert \nabla^{2} A \vert^{2} \gamma^{s} d\mu.
\end{equation}
For the first term of the sum above we estimate by integration by parts
\begin{gather*}
\begin{split}
\int_{\Sigma} \Vert f \Vert^{12} \vert  & \nabla^{2} A \vert^{4} \gamma^{s} d\mu\\
\leq &
c\int_{\Sigma} \Vert f \Vert^{11} \vert \nabla A \vert \vert \nabla^{2} A \vert^{3} \gamma^{s}d\mu
+
c\int_{\Sigma} \Vert f \Vert^{12} \vert \nabla A \vert \vert \nabla^{2} A \vert^{2} \vert \nabla^{3} A \vert \gamma^{s}d\mu \\
& +
cs\Lambda \int_{\Sigma} \Vert f \Vert^{12} \vert \nabla A \vert \vert \nabla^{2} A \vert^{3} \gamma^{s-1} d\mu\\
\leq &
\varepsilon \int_{\Sigma} \Vert f \Vert^{12} \vert \nabla^{2} A \vert^{4} \gamma^{s} d\mu
+
c(\varepsilon) \int_{[\gamma>0]} \Vert f \Vert^{8} \vert \nabla A \vert^{4}  d\mu \\
& +
c(\varepsilon) \int_{\Sigma} \Vert f \Vert^{12} \vert \nabla A \vert^{2} \vert \nabla^{3} A \vert^{2} \gamma^{s} d\mu \\
& +
c(\varepsilon)s^{4} \Lambda^{4} \int_{[\gamma>0]} \Vert f \Vert^{12} \vert \nabla A \vert^{4} d\mu,
\end{split}
\end{gather*}
i.e. by absorption 
\begin{gather*}
\hspace{-270pt}
\begin{split}
\int_{\Sigma} \Vert f \Vert^{12} \vert \nabla^{2} &A \vert^{4} \gamma^{s} d\mu
\end{split}
\end{gather*}
\begin{equation} \label{m=2,9}
\leq 
c \int_{\Sigma} \Vert f \Vert^{12} \vert \nabla A \vert^{2} \vert \nabla^{3} A \vert^{2} \gamma^{s} d\mu
+
c_{2}(1+\Vert \, \Vert f \Vert^{4} \nabla A \Vert^{4}_{L^{\infty}_{\mu}([\gamma>0])}).
\end{equation}
We proceed using lemma \ref{A4} to obtain
\begin{gather*}
\begin{split}
\int_{\Sigma} \Vert f \Vert^{12} \vert \nabla A \vert^{2} & \vert \nabla^{3} A \vert^{2} \gamma^{s} d\mu
=  
\int_{\Sigma}( \Vert f \Vert^{6} \vert \nabla A \vert \vert \nabla^{3} A \vert \gamma^{\frac{s}{2}})^{2} d\mu \\
\leq &
c
[
\int_{\Sigma} \Vert f \Vert^{5} \vert \nabla A \vert \vert \nabla^{3} A \vert \gamma^{\frac{s}{2}}d\mu
+
\int_{\Sigma} \Vert f \Vert^{6} \vert \nabla^{2} A \vert \vert \nabla^{3} A \vert \gamma^{\frac{s}{2}}d\mu\\
& \hspace{22pt}+
\int_{\Sigma} \Vert f \Vert^{6} \vert \nabla A \vert \vert \nabla^{4} A \vert \gamma^{\frac{s}{2}}d\mu
+
s\Lambda
\hspace{-2,2pt}
\int_{\Sigma} \Vert f \Vert^{6} \vert \nabla A \vert \vert \nabla^{3} A \vert \gamma^{\frac{s-2}{2}}d\mu\\
& \hspace{22pt} +
\int_{\Sigma} \Vert f \Vert^{6} \vert A \vert \vert \nabla A \vert \vert \nabla^{3} A \vert \gamma^{\frac{s}{2}}d\mu
\,]^{2}\\
\leq &
c \int_{[\gamma>0]} \Vert f \Vert^{2} \vert \nabla A \vert^{2} d\mu 
\int_{[\gamma>0]} \Vert f \Vert^{8} \vert \nabla^{3 }A \vert^{2} d\mu \\
& +
c \int_{[\gamma>0]} \Vert f \Vert^{8} \vert \nabla^{3} A \vert^{2} d\mu 
\int_{\Sigma} \Vert f \Vert^{4} \vert \nabla^{2}A \vert^{2} \gamma^{s} d\mu \\
& +
c \int_{[\gamma>0]} \Vert f \Vert^{4} \vert \nabla A \vert^{2} d\mu 
\int_{\Sigma} \Vert f \Vert^{8} \vert \nabla^{4} A \vert^{2} \gamma^{s} d\mu \\
& +
c s^{2} \Lambda^{2} \int_{[\gamma>0]} \Vert f \Vert^{4} \vert \nabla A \vert^{2} d\mu 
\int_{[\gamma>0]} \Vert f \Vert^{8} \vert \nabla^{3} A \vert^{2} d\mu \\
& +
c \int_{[\gamma>0]}  \vert A \vert^{2} d\mu 
\int_{\Sigma} \Vert f \Vert^{12} \vert \nabla A \vert^{2} \vert \nabla^{3} A \vert^{2} \gamma^{s}d\mu,
\end{split}
\end{gather*}
\vspace{-3pt}
i.e. by absorption, since $ \int_{[\gamma>0]}  \vert A \vert^{2} d\mu \leq \varepsilon_{2},$
\begin{gather*}
\hspace{-186pt}
\begin{split}
\int_{\Sigma} \Vert f \Vert^{12} \vert \nabla A \vert^{2} &  \vert \nabla^{3} A \vert^{2} \gamma^{s}d\mu\\
\leq &
c_{2}  \int_{\Sigma} \Vert f \Vert^{8} \vert \nabla^{4} A \vert^{2} \gamma^{s} d\mu 
\end{split}
\end{gather*}
\vspace{-3pt}
\begin{equation} \label{m=2,10}
\hspace{40pt} 
+
c_{2} \int_{[\gamma>0]} \Vert f \Vert^{8} \vert \nabla^{3} A \vert^{2} d\mu 
\;(
1 + 
\int_{\Sigma}  \vert \nabla^{2}A \vert^{2} \gamma^{s} d\mu 
).
\end{equation}
Now inserting (\ref{m=2,10}) in (\ref{m=2,9}) we derive
\begin{gather*}
\begin{split}
\int_{\Sigma} \Vert f \Vert^{12} \vert \nabla^{2} A \vert^{4} \gamma^{s} d\mu
\leq &
c_{2}  \int_{\Sigma} \Vert f \Vert^{8} \vert \nabla^{4} A \vert^{2} \gamma^{s} d\mu \\
& +
c_{2} 
(
1 + 
\Vert \, \Vert f \Vert^{4} \nabla A \Vert^{4}_{L^{\infty}_{\mu}([\gamma>0])}
+
\int_{[\gamma>0]} \Vert f \Vert^{8} \vert \nabla^{3} A \vert^{2} d\mu 
)
\end{split}
\end{gather*}
\begin{equation} \label{m=2,11}
\hspace{3pt} 
(
1 + 
\int_{\Sigma}  \vert \nabla^{2}A \vert^{2} \gamma^{s} d\mu 
).
\end{equation}
\newpage \hspace{-22pt}
Applying (\ref{m=2,11}) to (\ref{m=2,8}) we finally obtain  using Young's inequality
\begin{gather*}
\begin{split}
\int_{\Sigma} \Vert f \Vert^{8} \vert A \vert \vert \nabla^{2} A \vert^{3} \gamma^{s} d\mu
\leq &
\varepsilon  \int_{\Sigma} \Vert f \Vert^{8} \vert \nabla^{4} A \vert^{2} \gamma^{s} d\mu \\
& +
\hspace{-1.2pt}
c_{2} 
(
1 + 
\hspace{-1pt}
\Vert \, \Vert f \Vert^{4} \nabla A \Vert^{4}_{L^{\infty}_{\mu}([\gamma>0])}
+
\hspace{-2pt}
\int_{[\gamma>0]} \Vert f \Vert^{8} \vert \nabla^{3} A \vert^{2} d\mu 
)\\
& \hspace{24pt}
(
1 + 
\int_{\Sigma}  \vert \nabla^{2}A \vert^{2} \gamma^{s} d\mu 
)\\
& +
c(\varepsilon,c_{2}) \int_{\Sigma} \Vert f \Vert^{8} \vert A \vert^{4} \vert \nabla^{2} A \vert^{2} \gamma^{s} d\mu.
\end{split}
\end{gather*}
Hence (\ref{m=2,6}) yields an estimate realizing  the full structure of (\ref{m=2,1}).\\
It remains to estimate
\begin{gather*}
\begin{split}
\int_{\Sigma} \Vert f \Vert^{8} \vert \nabla A \vert^{2} \vert \nabla^{2} A \vert^{2} \gamma^{s} d\mu
\leq &
c
(
1
+
\Vert \, \Vert f \Vert^{4} \nabla A \Vert^{4}_{L^{\infty}_{\mu}([\gamma>0])}
)
\int_{\Sigma} \vert \nabla^{2} A \vert^{2} \gamma^{s} d\mu.
\end{split}
\end{gather*}

\begin{itemize}
\item 
$k=3 ,j=3, i=1 $
\end{itemize}
We start estimating
\begin{gather*}
\begin{split}
\int_{\Sigma}  \Vert f \Vert^{7} \vert A \vert^{2} \vert \nabla^{2} A \vert & \vert \nabla^{3} A \vert \gamma^{s} d\mu \\
\leq &
c \int_{\Sigma} \Vert f \Vert^{6} \vert \nabla^{3} A \vert^{2} \gamma^{s} d\mu
+
c \int_{\Sigma} \Vert f \Vert^{8} \vert A \vert^{4} \vert \nabla^{2} A \vert^{2} \gamma^{s} d\mu. 
\end{split}
\end{gather*}
Apply  (\ref{m=2,4}) and (\ref{m=2,6}). Next we have
\begin{gather*}
\begin{split}
\int_{\Sigma} \Vert f \Vert^{7} \vert A \vert \vert \nabla A \vert & \vert \nabla^{2} A \vert^{2} \gamma^{s} d\mu \\
\leq &
c
(
1
+
\Vert \, \Vert f \Vert^{4} \nabla A \Vert^{4}_{L^{\infty}_{\mu}([\gamma>0])}
)
\int_{\Sigma} \vert \nabla^{2} A \vert^{2} \gamma^{s} d\mu\\
& +
c\int_{\Sigma} \Vert f \Vert^{8} \vert A \vert^{4} \vert \nabla^{2} A \vert^{2} \gamma^{s} d\mu
+
c\int_{\Sigma} \Vert f \Vert^{4} \vert \nabla^{2} A \vert^{2} \gamma^{s} d\mu.
\end{split}
\end{gather*}
Apply (\ref{m=2,6}). Finally 
\begin{gather*}
\begin{split}
\int_{\Sigma} \Vert f \Vert^{7} \vert \nabla A \vert^{3} \vert \nabla^{2} A \vert \gamma^{s} d\mu
\leq &
c
(
1
+
\Vert \, \Vert f \Vert^{4} \nabla A \Vert^{4}_{L^{\infty}_{\mu}([\gamma>0])}
)
\int_{\Sigma} \vert \nabla^{2} A \vert^{2} \gamma^{s} d\mu\\
& +
c\int_{\Sigma} \Vert f \Vert^{6} \vert \nabla A \vert^{4} \gamma^{s} d\mu,
\end{split}
\end{gather*}
where we have by lemma \ref{A4}
\begin{gather*}
\begin{split}
\int_{\Sigma} \Vert f \Vert^{6} \vert \nabla A \vert^{4} &  \gamma^{s} d\mu 
= 
\int_{\Sigma} (\Vert f \Vert^{3} \vert \nabla A \vert^{2} \gamma^{\frac{s}{2}})^{2} d\mu \\
\leq &
c
[
\int_{\Sigma} \Vert f \Vert^{2} \vert \nabla A \vert^{2} \gamma^{\frac{s}{2}}d\mu
+
\int_{\Sigma} \Vert f \Vert^{3} \vert \nabla A \vert \vert \nabla^{2} A \vert \gamma^{\frac{s}{2}}d\mu\\
& \hspace{22pt} +
s \Lambda 
\int_{\Sigma} \Vert f \Vert^{3} \vert \nabla A \vert^{2} \gamma^{\frac{s-2}{2}}d\mu
+
\int_{\Sigma} \Vert f \Vert^{3} \vert A \vert \vert \nabla A \vert^{2} \gamma^{\frac{s}{2}}d\mu \;
]^{2} \\
\leq &
c_{2}(1+\int_{\Sigma}\vert \nabla^{2} A \vert^{2} \gamma^{s}d\mu)
+
c \int_{[\gamma>0]}\vert A \vert^{2} d\mu
\int_{\Sigma} \Vert f \Vert^{6} \vert \nabla A \vert^{4}  \gamma^{s} d\mu,
\end{split}
\end{gather*}
i.e. by absorption
\begin{equation} \label{m=2,12}
\int_{\Sigma} \Vert f \Vert^{6} \vert \nabla A \vert^{4}  \gamma^{s} d\mu 
\leq
c_{2}(1+\int_{\Sigma}\vert \nabla^{2} A \vert^{2} \gamma^{s}d\mu).
\end{equation}

\begin{itemize}
\item 
$k=3 ,j=2, i=2 $
\end{itemize}
By Young's inequality we derive
\begin{gather*}
\begin{split}
\int_{\Sigma} 
(
\Vert f \Vert^{6}
+ 
\Vert f \Vert^{7} \vert A \vert
)
\vert A \vert^{2} & \vert \nabla^{2} A \vert^{2} \gamma^{s} d\mu \\
\leq &
c\int_{\Sigma} \Vert f \Vert^{8} \vert A \vert^{4} \vert \nabla^{2} A \vert^{2} \gamma^{s}d\mu
+
c\int_{\Sigma} \Vert f \Vert^{4} \vert \nabla^{2} A \vert^{2} \gamma^{s} d\mu.
\end{split}
\end{gather*}
Apply (\ref{m=2,6}). For the second and last term of this case likewise
\begin{gather*}
\begin{split}
\int_{\Sigma} 
(
\Vert f \Vert^{6}
+ 
\Vert f \Vert^{7} \vert A \vert
)
\vert A \vert & \vert \nabla A \vert^{2} \vert \nabla^{2} A \vert \gamma^{s} d\mu \\
\leq &
c\int_{\Sigma} \Vert f \Vert^{8} \vert A \vert^{4}  \vert \nabla^{2} A \vert^{2} \gamma^{s} d\mu
+
c\int_{\Sigma} \Vert f \Vert^{6} \vert \nabla A \vert^{4} \gamma^{s} d\mu \\
& +
c\int_{\Sigma} \Vert f \Vert^{4} \vert \nabla^{2} A \vert^{2} \gamma^{s} d\mu.
\end{split}
\end{gather*}
Apply (\ref{m=2,6}) and (\ref{m=2,12}).
\newpage
\begin{itemize}
\item 
$k=3 ,j=1, i=3 $
\end{itemize}
Estimating by Young's inequality we have
\begin{gather*}
\begin{split}
\int_{\Sigma} 
(
\Vert f \Vert^{5} 
& +
\Vert f \Vert^{6} \vert A \vert
+
\Vert f \Vert^{7} \vert A \vert^{2}
+
\Vert f \Vert^{7} \vert \nabla A \vert
)
\vert A \vert^{2} \vert \nabla A \vert \vert \nabla^{2} A \vert \gamma^{s} d\mu \\
\leq &
c\int_{\Sigma} \Vert f \Vert^{8} \vert A \vert^{4}  \vert \nabla^{2} A \vert^{2} \gamma^{s} d\mu
+
c \int_{[\gamma>0]} \Vert f \Vert^{2} \vert \nabla A \vert^{2} d\mu \\
& +
c\int_{\Sigma} \Vert f \Vert^{4} \vert A \vert^{2} \vert \nabla A \vert^{2} \gamma^{s} d\mu
+
c\int_{\Sigma} \Vert f \Vert^{6} \vert A \vert^{4} \vert \nabla A \vert^{2} \gamma^{s}d\mu \\
& +
c\int_{\Sigma} \Vert f \Vert^{6} \vert \nabla A \vert^{4} \gamma^{s} d\mu \\
\leq &
c\int_{\Sigma} \Vert f \Vert^{8} \vert A \vert^{4}  \vert \nabla^{2} A \vert^{2} \gamma^{s} d\mu
+
c \int_{[\gamma>0]} \Vert f \Vert^{2} \vert \nabla A \vert^{2} d\mu \\
& +
c\int_{[\gamma>0]} \vert A \vert^{2} d\mu
+
c\int_{\Sigma} \Vert f \Vert^{4}\vert A \vert^{6}\gamma^{s} d\mu
+
c\int_{\Sigma} \Vert f \Vert^{6} \vert A \vert^{8} \gamma^{s}d\mu
\\
& +
c\int_{\Sigma} \Vert f \Vert^{6} \vert \nabla A \vert^{4} \gamma^{s} d\mu.
\end{split}
\end{gather*}
Apply (\ref{m=2,6}),\,(\ref{m=2,7}) and (\ref{m=2,12}).
\begin{itemize}
\item 
$k=3 ,j=0, i=4 $
\end{itemize}
\vspace{-10pt}
\begin{gather*}
\begin{split}
\int_{\Sigma} 
(
\Vert f \Vert^{4} 
& +
\Vert f \Vert^{5} \vert A \vert
+
\Vert f \Vert^{6} \vert A \vert^{2}
+
\Vert f \Vert^{6} \vert \nabla A \vert \\
& +
\Vert f \Vert^{7} \vert A \vert^{3}
+
\Vert f \Vert^{7} \vert A \vert \vert \nabla A \vert
+
\Vert f \Vert^{7} \vert \nabla^{2} A \vert
)
\vert A \vert^{3} \vert \nabla^{2} A \vert \gamma^{s} d\mu\\
\leq &
c\int_{\Sigma} \Vert f \Vert^{8} \vert A \vert^{4}  \vert \nabla^{2} A \vert^{2} \gamma^{s} d\mu
+
c\int_{[\gamma>0]} \vert A \vert^{2} d\mu 
+
c\int_{\Sigma} \Vert f \Vert^{2} \vert A \vert^{4} \gamma^{s} d\mu \\
& +
c\int_{\Sigma} \Vert f \Vert^{4} \vert A \vert^{6} \gamma^{s} d\mu
+
c\int_{\Sigma} \Vert f \Vert^{6} \vert A \vert^{8} \gamma^{s} d\mu
+
c\int_{\Sigma} \Vert f \Vert^{4} \vert A \vert^{2} \vert \nabla A \vert^{2} \gamma^{s} d\mu \\
& +
c\int_{\Sigma} \Vert f \Vert^{6} \vert A \vert^{4} \vert \nabla A \vert^{2} \gamma^{s} d\mu
+
c\int_{\Sigma} \Vert f \Vert^{6} \vert A \vert^{2} \vert \nabla^{2} A \vert^{2} \gamma^{s} d\mu\\
\leq &
c\int_{\Sigma} \Vert f \Vert^{8} \vert A \vert^{4}  \vert \nabla^{2} A \vert^{2} \gamma^{s} d\mu
+
c\int_{[\gamma>0]} \vert A \vert^{2} d\mu \\
& +
c\int_{\Sigma} \Vert f \Vert^{4} \vert A \vert^{6} \gamma^{s} d\mu
+
c\int_{\Sigma} \Vert f \Vert^{6} \vert A \vert^{8} \gamma^{s} d\mu
+
c\int_{\Sigma} \Vert f \Vert^{6}  \vert \nabla A \vert^{4} \gamma^{s} d\mu\\
& +
c\int_{\Sigma} \Vert f \Vert^{4} \vert \nabla^{2} A \vert^{2} \gamma^{s} d\mu.
\end{split}
\end{gather*}
Apply (\ref{m=2,6}), (\ref{m=2,7}) and (\ref{m=2,12}).
\begin{itemize}
\item 
$k=1 ,j=5, i=1 $
\end{itemize}
By integration by parts and Young's inequality we estimate
\begin{gather*}
\begin{split}
\int_{\Sigma} \nabla \Vert f \Vert^{8}* & \nabla^{5} A*\nabla^{2} A \gamma^{s} d\mu \\
\leq &
c(n)
\hspace{-1.5pt}
\int_{\Sigma} 
(
\Vert f \Vert^{6}
+
\Vert f \Vert^{7}\vert A \vert
)
 \vert \nabla^{2} A \vert \vert \nabla^{4} A \vert \gamma^{s}d\mu \\
& +
c(n)
\hspace{-1.5pt}
\int_{\Sigma} \Vert f \Vert^{7} \vert \nabla^{3} A \vert \vert \nabla^{4} A \vert \gamma^{s}d\mu\\
& +
c(n)s\Lambda \int_{\Sigma} \Vert f \Vert^{7} \vert \nabla^{2} A \vert \vert \nabla^{4} A \vert \gamma^{s-1}d\mu\\
\leq &
\varepsilon \int_{\Sigma} \Vert f \Vert^{8} \vert \nabla^{4} A \vert^{2} \gamma^{s} d\mu
+
c(n,\varepsilon)\int_{\Sigma} \Vert f \Vert^{4} \vert \nabla^{2} A \vert^{2} \gamma^{s}d\mu\\
& +
c(n,\varepsilon)\int_{\Sigma} \Vert f \Vert^{6} \vert A \vert^{2} \vert \nabla^{2} A \vert^{2} \gamma^{s}d\mu
+
c(n,\varepsilon)\int_{\Sigma} \Vert f \Vert^{6} \vert \nabla^{3} A \vert^{2} \gamma^{s}d\mu\\
& +
c(n,\varepsilon)s^{2}\Lambda^{2}\int_{\Sigma} \Vert f \Vert^{6} \vert \nabla^{2} A \vert^{2} \gamma^{s-2}d\mu\\
\leq &
\varepsilon \int_{\Sigma} \Vert f \Vert^{8} \vert \nabla^{4} A \vert^{2} \gamma^{s} d\mu
+
c(n,\varepsilon)\int_{\Sigma} \Vert f \Vert^{8} \vert A \vert^{4} \vert \nabla^{2} A\vert^{2} \gamma^{s} d\mu\\
& +
c(n,\varepsilon)\int_{\Sigma} \Vert f \Vert^{4} \vert \nabla^{2} A \vert^{2} \gamma^{s}d\mu
+
c(n,\varepsilon,s,\Lambda)\int_{[\gamma>0]} \Vert f \Vert^{8} \vert \nabla^{2} A \vert^{2} d\mu\\
& +
c(n,\varepsilon)\int_{\Sigma} \Vert f \Vert^{6} \vert \nabla^{3} A \vert^{2} \gamma^{s}d\mu.
\end{split}
\end{gather*}
Apply (\ref{m=2,6}) and (\ref{m=2,4}).
\begin{itemize}
\item 
$k=1 ,j=4, i= 2$
\end{itemize}
Clearly
\begin{gather*}
\begin{split}
\int_{\Sigma} 
(
\Vert f \Vert^{6}
+
\Vert f \Vert^{7}\vert A \vert
) &
\vert \nabla^{2} A \vert \vert \nabla^{4} A \vert \gamma^{s} d\mu\\
\leq &
\varepsilon \int_{\Sigma} \Vert f \Vert^{8} \vert \nabla^{4} A \vert^{2} \gamma^{s} d\mu 
+
c(\varepsilon)\int_{\Sigma} \Vert f \Vert^{4} \vert \nabla^{2} A \vert^{2} \gamma^{s} d\mu \\
& +
c(\varepsilon)\int_{\Sigma} \Vert f \Vert^{8} \vert A \vert^{4} \vert \nabla^{2} A \vert^{2} \gamma^{s} d\mu.
\end{split}
\end{gather*}
Apply (\ref{m=2,6}).
\newpage
\begin{itemize}
\item 
$k=1 ,j=3, i=3 $
\end{itemize}
By Young's inequality we have
\begin{gather*}
\begin{split}
\int_{\Sigma} 
(
\Vert f \Vert^{5}
+
\Vert f \Vert^{6}\vert A \vert
& +
\Vert f \Vert^{7} \vert A \vert^{2}
+
\Vert f \Vert^{7} \vert \nabla A \vert
)
\vert \nabla^{2} A \vert \vert \nabla^{3} A \vert \gamma^{s} d\mu\\
\leq &
c\int_{\Sigma} \Vert f \Vert^{6} \vert \nabla^{3} A \vert^{2} \gamma^{s} d\mu
+
c\int_{\Sigma} \Vert f \Vert^{4} \vert \nabla^{2} A \vert^{2} \gamma^{s} d\mu\\
& +
c\int_{\Sigma} \Vert f \Vert^{8} \vert A \vert^{4} \vert \nabla^{2} A \vert^{2} \gamma^{s} d\mu \\
& +
c
(
1
+
\Vert \, \Vert f \Vert^{4} \nabla A \Vert^{4}_{L^{\infty}_{\mu}([\gamma>0])}
)
\int_{\Sigma}  \vert \nabla^{2} A \vert^{2} \gamma^{s} d\mu.
\end{split}
\end{gather*}
Apply (\ref{m=2,4}) and (\ref{m=2,6}).

\begin{itemize}
\item 
$k=1 ,j=2, i=4 $
\end{itemize}
Finally 
\begin{gather*}
\begin{split}
\int_{\Sigma}
(
\Vert f \Vert^{4} 
& +
\Vert f \Vert^{5} \vert  A \vert
+
\Vert f \Vert^{6} \vert A \vert^{2}
+
\Vert f \Vert^{6} \vert \nabla A \vert\\
& +
\Vert f \Vert^{7}\vert A \vert^{3}
+
\Vert f \Vert^{7} \vert A \vert \vert \nabla A \vert
+
\Vert f \Vert^{7} \vert \nabla^{2} A \vert
)
\vert \nabla^{2} A \vert^{2} \gamma^{s} d\mu \\
\leq &
c \int_{\Sigma} \Vert f \Vert^{8} \vert A \vert^{4} \vert \nabla^{2} A\vert^{2} \gamma^{s} d\mu
+
c \int_{\Sigma} \Vert f \Vert^{4} \vert \nabla^{2} A \vert^{2} \gamma^{s} d\mu\\
& + 
c\Vert \, \Vert f \Vert^{4} \nabla A \Vert_{L^{\infty}_{\mu}([\gamma>0])}\;
\int_{\Sigma} \Vert f \Vert^{2} \vert \nabla^{2} A \vert^{2} \gamma^{s}  
+
\Vert f \Vert^{3} \vert A \vert \vert \nabla^{2} A \vert^{2} \gamma^{s} d\mu
\\
& +
\varepsilon \int_{\Sigma} \Vert f \Vert^{12} \vert \nabla^{2} A \vert^{4} \gamma^{s} d\mu
+
c(\varepsilon) \int_{\Sigma} \Vert f \Vert^{2} \vert \nabla^{2} A \vert^{2} \gamma^{s} d\mu\\
\leq &
c \int_{\Sigma} \Vert f \Vert^{8} \vert A \vert^{4} \vert \nabla^{2} A\vert^{2} \gamma^{s} d\mu
+
c \int_{\Sigma} \Vert f \Vert^{4} \vert \nabla^{2} A \vert^{2} \gamma^{s} d\mu\\
& + 
c_{2}
(
1
+
\Vert \, \Vert f \Vert^{4} \nabla A \Vert^{4}_{L^{\infty}_{\mu}([\gamma>0])}
)
\int_{\Sigma} \vert \nabla^{2} A \vert^{2} \gamma^{s} d\mu\\
& +
\varepsilon \int_{\Sigma} \Vert f \Vert^{12} \vert \nabla^{2} A \vert^{4} \gamma^{s} d\mu
+
c(\varepsilon) \int_{\Sigma} \Vert f \Vert^{2} \vert \nabla^{2} A \vert^{2} \gamma^{s} d\mu.
\end{split}
\end{gather*}
Apply (\ref{m=2,6}) and (\ref{m=2,11}).
\end{proof}

\section{Further estimates}
\begin{proposition} \label{9.1}
For $n,\Lambda,R,d,\tau >0$,\;$m \geq 3$ and $ s\geq 2m+4$ there exists 
\begin{gather*}
c_{3}=c_{3}(n,m,s,\Lambda,R,d,\tau )>0,
\end{gather*}
such that, if \quad
$
f: \Sigma \times [0,T) \rightarrow \mathbb{R}^{n}\setminus \lbrace \mathbb{0} \rbrace,\quad 0<T\leq \tau,
$ \\
is an inverse Willmore flow,
$\gamma=\widetilde{\gamma} \circ f$ as in $(\ref{eq13})$ and
\begin{gather*}
\begin{split}
& 
\sup_{0 \leq t <T}\Vert f \Vert_{L^{\infty}_{\mu}([\gamma>0])} \leq R,
 \\
&
\sup_{0 \leq t < T}\Vert A \Vert_{W^{m-3,\infty}_{\mu}([\gamma>0])} 
\leq
d,
\\
& 
\int^{T}_{0}\Vert\,\Vert f \Vert^{4} \nabla^{k} A \Vert^{4}_{L^{\infty}_{\mu}([\gamma>0])}dt
\leq 
d 
\quad \text{ for } k=m-2,m-1,\\
&
\sup_{0 \leq t <T} \Vert A \Vert_{W^{m-1,2}_{\mu}([\gamma>0])}
\leq
d
\quad \text{ and }\\
&
\int^{T}_{0}\int_{[\gamma>0]}\Vert f \Vert^{8}\vert \nabla^{k} A \vert^{2} d\mu\; dt
\leq
d 
\quad \text{ for } k=m,m+1,
\end{split}
\end{gather*}
we have 
\begin{gather*}
\begin{split}
\sup_{0 \leq t <T}\int_{\Sigma}\vert \nabla^{m} A \vert^{2} \gamma^{s}d\mu
 + 
\int^{T}_{0} 
\int_{\Sigma}
\Vert f \Vert^{8}\vert \nabla^{m+2} A & \vert^{2}\gamma^{s}d\mu \;dt \\
\leq &
c_{3}
(
1
+
\int_{\Sigma} \vert \nabla^{m} A \vert^{2} \gamma^{s} d\mu \lfloor_{t=0}
).
\end{split}
\end{gather*} 
\end{proposition}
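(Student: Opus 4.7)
The strategy follows closely that of propositions \ref{7.1} and \ref{8.1}. I would apply proposition \ref{4.3} with $\Phi=\nabla^{m}A$ to obtain an evolution inequality for $\int_{\Sigma}\vert\nabla^{m}A\vert^{2}\gamma^{s}d\mu$, and then invoke proposition \ref{5.2} with a sufficiently small $\varepsilon$ to absorb the last term on the right-hand side of (\ref{eq14}) into the dissipation term $\frac{1}{4}\int_{\Sigma}\Vert f\Vert^{8}\vert\nabla^{m+2}A\vert^{2}\gamma^{s}d\mu$, leaving behind only lower-order pieces of the form $\int_{\Sigma}\Vert f\Vert^{4}\vert\nabla^{i}A\vert^{2}\gamma^{s-2(m-i)}d\mu$ with $0\leq i\leq m$. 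These last expressions are controlled by $\Vert f\Vert_{L^{\infty}_{\mu}([\gamma>0])}\leq R$ together with the hypothesis $\Vert A\Vert_{W^{m-1,2}_{\mu}([\gamma>0])}\leq d$.

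The core of the argument is then to show
\begin{equation*}
\sum_{(i,j,k)\in I(m),\,j<m+4}\int_{\Sigma}\nabla^{i}\Vert f\Vert^{8}*P^{j}_{k}(A)*\nabla^{m}A\,\gamma^{s}d\mu\leq\tfrac{1}{16}\int_{\Sigma}\Vert f\Vert^{8}\vert\nabla^{m+2}A\vert^{2}\gamma^{s}d\mu+c_{3}\Phi(t)\bigl(1+\int_{\Sigma}\vert\nabla^{m}A\vert^{2}\gamma^{s}d\mu\bigr),
\end{equation*}
where $\Phi(t):=1+\int_{[\gamma>0]}\Vert f\Vert^{8}(\vert\nabla^{m}A\vert^{2}+\vert\nabla^{m+1}A\vert^{2})d\mu+\Vert\,\Vert f\Vert^{4}\nabla^{m-2}A\Vert^{4}_{L^{\infty}_{\mu}([\gamma>0])}+\Vert\,\Vert f\Vert^{4}\nabla^{m-1}A\Vert^{4}_{L^{\infty}_{\mu}([\gamma>0])}$ is integrable on $[0,\tau]$ by hypothesis. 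To do this I would expand each factor $\nabla^{i}\Vert f\Vert^{8}$ as a polynomial in $\Vert f\Vert^{8-i}$ times products of $A$-derivatives of total order $\leq i-2$, and each $P^{j}_{k}(A)$ into its $k$-fold monomials $\nabla^{i_{1}}A*\cdots*\nabla^{i_{k}}A$ with $i_{1}+\cdots+i_{k}=j\leq m+3$, noting that at most one factor in any such monomial may carry $\geq m+1$ derivatives of $A$.

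For each resulting integrand I would classify the $A$-derivatives by order. Factors $\nabla^{l}A$ with $l\leq m-3$ are placed in $L^{\infty}$ via the hypothesis $\Vert A\Vert_{W^{m-3,\infty}_{\mu}([\gamma>0])}\leq d$. Factors of order $m-2$ or $m-1$ absorb a $\Vert f\Vert^{4}$ weight and contribute to $\Phi(t)$ in $L^{\infty}$. Factors of order $m$ or $m+1$ are placed in $L^{2}$ with weight $\Vert f\Vert^{8}$, again contributing to $\Phi(t)$. Whenever the monomial forces a $\nabla^{m+2}A$ factor, it is paired via Cauchy–Schwarz with the weight $\Vert f\Vert^{8}\gamma^{s}$ and absorbed into the leading dissipation by Young's inequality. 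The one remaining combinatorial case is $k=1$ with a $\nabla^{m+3}A$ factor (forced by $i=1$, $j=m+3$): here one integration by parts transfers a derivative onto $\nabla^{m}A$ or onto $\nabla\Vert f\Vert^{8}$, reducing the effective order back to $m+2$ at the price of generating a new low-order factor handled by the previous classification. Corollary \ref{A7} supplies the interpolation inequalities needed to convert stray weighted $L^{2}$-norms of $\nabla^{m+1}A$ into weighted $L^{2}$-norms of $\nabla^{m+2}A$ plus terms absorbed by $\Phi(t)$.

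The main obstacle is, as in propositions \ref{7.1} and \ref{8.1}, the combinatorial bookkeeping over all triples $(i,j,k)\in I(m)$ with $j<m+4$; the novelty at $m\geq 3$ is only that some monomials now contain two or more "middle order" derivatives ($m-2\leq l\leq m+1$) simultaneously, which is precisely why the hypotheses single out these orders. Once the displayed inequality is established, the conclusion is obtained exactly as at the start of the proof of proposition \ref{7.1}: applying Gronwall's inequality, cf. lemma \ref{Gronwall's inequality}, to $1+\int_{\Sigma}\vert\nabla^{m}A\vert^{2}\gamma^{s}d\mu$ using the time-integrability of $\Phi$ on $[0,\tau]$ yields the supremum bound, and integrating the resulting evolution inequality once more furnishes the $L^{2}$-in-time bound on $\nabla^{m+2}A$ with weight $\Vert f\Vert^{8}\gamma^{s}$.
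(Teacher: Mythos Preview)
Your proposal is correct and follows essentially the same route as the paper: reduce to the key inequality on $\sum_{(i,j,k)\in I(m)}\int\nabla^{i}\Vert f\Vert^{8}*P^{j}_{k}(A)*\nabla^{m}A\,\gamma^{s}d\mu$ with the same integrable factor $\Phi(t)$, then combine Propositions~\ref{4.3} and~\ref{5.2} and close by Gronwall. The paper organizes the case analysis slightly differently---it observes up front that the $W^{m-3,\infty}$ bound reduces the work to the ranges $m-2\leq j<m+4$ or $m\leq i\leq m+2$ and then treats each $(i,j,k)$ explicitly---whereas you phrase it as a uniform classification by derivative order; but the underlying mechanism (put $\nabla^{l}A$ with $l\leq m-3$ in $L^{\infty}$, one middle-order factor into $\Vert\,\Vert f\Vert^{4}\nabla^{l}A\Vert_{L^{\infty}}^{4}$ after Young, the remaining middle-order factor into $L^{2}$ via the $W^{m-1,2}$ bound, high-order factors into the weighted $L^{2}$ pieces of $\Phi$, integrate $\nabla^{m+3}A$ by parts, and interpolate $\nabla^{m+1}A$ via Corollary~\ref{A7}) is identical.

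One small point worth sharpening in your write-up: your sentence ``factors of order $m-2$ or $m-1$ absorb a $\Vert f\Vert^{4}$ weight and contribute to $\Phi(t)$ in $L^{\infty}$'' is not quite how the bookkeeping works when two such factors appear simultaneously (which does happen for small $m$). After Young's inequality against $\vert\nabla^{m}A\vert^{2}$ the residual carries $\Vert f\Vert^{16}$, and only \emph{one} middle-order factor (raised to the fourth power) absorbs this as $\Vert\,\Vert f\Vert^{4}\nabla^{l}A\Vert_{L^{\infty}}^{4}$; any second middle-order factor must instead be placed in $L^{2}$ using the $W^{m-1,2}$ hypothesis. The paper also needs a couple of auxiliary integration-by-parts identities beyond Corollary~\ref{A7}, e.g.\ $\int\vert\nabla A\vert^{4}\gamma^{s}d\mu\leq c_{m}$ and a bound on $\int\Vert f\Vert^{6}\vert\nabla A\vert^{2}\vert\nabla^{m-1}A\vert^{2}\gamma^{s}d\mu$, which you should expect to encounter when you carry out the estimates in detail.
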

\begin{proof}
For abbreviative reasons let
$
c_{m}=c_{m}
(
n,m,s,\Lambda, R,d
).
$
We will show 
\begin{gather*}
\begin{split}
\sum_{(i,j,k) \in I(m),\,j<m+4} & \int_{\Sigma}  \nabla^{i}\Vert f \Vert^{8}  *P^{j}_{k}(A)*\nabla^{m} A \,\gamma^{s}d\mu \\ 
\leq & 
\int_{\Sigma} \frac{\Vert f \Vert^{8}}{16} \vert \nabla^{m+2} A \vert^{2}\gamma^{s} d\mu \\
& +
c_{m}
(
1
+
\int_{[\gamma>0]}\Vert f \Vert^{8}\vert \nabla^{m} A \vert^{2} d\mu
+
\int_{[\gamma>0]}\Vert f \Vert^{8}\vert \nabla^{m+1} A \vert^{2} d\mu\\
& \hspace{38pt}+
\Vert\,\Vert f \Vert^{4} \nabla^{m-2} A \Vert^{4}_{L^{\infty}_{\mu}([\gamma>0])}
+
\Vert\,\Vert f \Vert^{4} \nabla^{m-1} A \Vert^{4}_{L^{\infty}_{\mu}([\gamma>0])}
)
\end{split}
\end{gather*}
\begin{equation} \label{m>=3,1}
\hspace{-58pt}
(1+\int_{\Sigma} \vert \nabla^{m} A \vert^{2}\gamma^{s} d\mu).
\end{equation}
Applying this inequality and proposition \ref{5.2} with $\varepsilon=\frac{1}{8c}$ to proposition \ref{4.3} prove the claim using Gronwall's inequality, cf. lemma \ref{Gronwall's inequality}.\\
To show (\ref{m>=3,1}) we give adequate estimates for each term of the sum, but due to the $W^{m-3,\infty}$-bounds we only have to examine the cases 
\begin{gather*}
m-2 \leq j < m+4 \text{ or }m \leq i \leq m+2.
\end{gather*}

\begin{itemize}
\item 
$k=5,j=m,i=0$
\end{itemize}
We only have to estimate the following four terms, since
\begin{gather*}
\sup_{0 \leq t < T}\Vert A \Vert_{W^{m-3,\infty}_{\mu}([\gamma>0])} 
\leq
d
\end{gather*}
by assumption. First
\begin{gather*}
\begin{split}
\int_{\Sigma} \Vert f \Vert^{8} \vert A \vert^{4} \vert \nabla^{m} A \vert^{2} \gamma^{s} d\mu 
\leq &
\Vert f \Vert^{8}_{L^{\infty}_{\mu}([\gamma>0])}\Vert A \Vert^{4}_{L^{\infty}_{\mu}([\gamma>0])}\int_{\Sigma} \vert \nabla^{m} A \vert^{2} \gamma^{s} d\mu \\
\leq & 
c_{m} \int_{\Sigma} \vert \nabla^{m} A \vert^{2} \gamma^{s} d\mu.
\end{split}
\end{gather*}
Next
\begin{gather*}
\begin{split}
\int_{\Sigma} \Vert f \Vert^{8}\vert A \vert^{3}&\vert \nabla A \vert \vert \nabla^{m-1}
A \vert \vert \nabla^{m}A \vert \gamma^{s}d\mu \\
\leq &
c \int_{\Sigma} \vert \nabla^{m} A \vert^{2} \gamma^{s} d\mu \\
& +
c\Vert f \Vert^{8}_{L^{\infty}_{\mu}([\gamma>0])}\Vert A \Vert^{6}_{L^{\infty}_{\mu}([\gamma>0])}
(
1
+
\Vert \; \Vert f \Vert^{4} \nabla A \Vert^{4}_{L^{\infty}_{\mu}([\gamma>0])}
)\\
& \hspace{80pt}
\int_{[\gamma>0]}\vert \nabla^{m-1} A \vert^{2} d\mu\\
\leq &
c_{m}
+
c_{m}
\Vert \; \Vert f \Vert^{4} \nabla A \Vert^{4}_{L^{\infty}_{\mu}([\gamma>0])}
+
c\int_{\Sigma}\vert \nabla^{m} A \vert^{2} \gamma^{s}d\mu 
\end{split}
\end{gather*}
and analogously
\begin{gather*}
\begin{split}
\int_{\Sigma} \Vert f \Vert^{8} \vert A \vert^{3}  \vert \nabla^{2} A \vert & \vert \nabla^{m-2} A \vert \vert \nabla^{m} A \vert \gamma^{s} d\mu \\ 
\leq &
c_{m}
+
c_{m}\Vert \, \Vert f \Vert^{4} \nabla^{2} A \Vert^{4}_{L^{\infty}_{\mu}([\gamma>0])}
+
c\int_{\Sigma} \vert \nabla^{m} A \vert^{2} \gamma^{s} d\mu.
\end{split}
\end{gather*}
Finally
\begin{gather*}
\begin{split}
\int_{\Sigma} \Vert f \Vert^{8} \vert A \vert^{2}& \vert \nabla A \vert^{2} \vert \nabla^{m-2} A \vert \vert \nabla^{m} A \vert \gamma^{s} d\mu \\
\leq & 
c \int_{\Sigma} \vert \nabla^{m} A \vert^{2} \gamma^{s} d\mu \\
& +
c \Vert A \Vert^{4}_{L^{\infty}_{\mu}([\gamma>0])}
\Vert \; \Vert f \Vert^{4} \nabla A \Vert^{4}_{L^{\infty}_{\mu}([\gamma>0])}
\int_{[\gamma>0]} \vert \nabla^{m-2} A \vert^{2} d\mu\\
\leq &
c_{m}\Vert \; \Vert f \Vert^{4} \nabla A \Vert^{4}_{L^{\infty}_{\mu}([\gamma>0])}
+
c \int_{\Sigma} \vert \nabla^{m} A \vert^{2} \gamma^{s} d\mu.
\end{split}
\end{gather*}

\begin{itemize}
\item 
$k=5,j=m-1,i=1$
\end{itemize}
Clearly we have
\begin{gather*}
\begin{split}
\int_{\Sigma} \Vert f \Vert^{7} \vert A \vert^{4} & \vert \nabla^{m-1} A \vert \vert \nabla^{m} A \vert \gamma^{s} d\mu \\
\leq &
c \int_{\Sigma} \vert \nabla^{m} A \vert^{2} \gamma^{s}d\mu
+
c\Vert f \Vert^{14}_{L^{\infty}_{\mu}([\gamma>0])} \Vert A \Vert^{8}_{L^{\infty}_{\mu}([\gamma>0])}\int_{[\gamma>0]}\vert \nabla^{m-1} A \vert^{2} d\mu\\
\leq &
c_{m}
+
c \int_{\Sigma} \vert \nabla^{m} A \vert \gamma^{s}d\mu.
\end{split}
\end{gather*}
The second term to be considered is estimated via
\begin{gather*}
\begin{split}
\int_{\Sigma} \Vert f \Vert^{7} \vert A \vert^{3} & \vert \nabla A \vert \vert \nabla^{m-2} A \vert \vert \nabla^{m} A \vert \gamma^{s} d\mu \\
\leq &
c \int_{\Sigma} \vert \nabla^{m} A \vert^{2} \gamma^{s}d\mu\\
& +
c\Vert f \Vert^{6}_{L^{\infty}_{\mu}([\gamma>0])}\Vert A \Vert^{6}_{L^{\infty}_{\mu}([\gamma>0])}
(
1
+
\Vert \; \Vert f \Vert^{4} \nabla A \Vert^{4}_{L^{\infty}_{\mu}([\gamma>0])}
)\\
& \hspace{80pt}
\int_{[\gamma>0]} \vert \nabla^{m-2} A \vert^{2} d\mu \\
\leq &
c_{m}
+
c_{m}\Vert \; \Vert f \Vert^{4} \nabla A \Vert^{4}_{L^{\infty}_{\mu}([\gamma>0])}
+
c \int_{\Sigma} \vert \nabla^{m} A \vert \gamma^{s}d\mu.
\end{split}
\end{gather*}

\newpage
\begin{itemize}
\item 
$k=5,j=m-2,i=2$
\end{itemize}
Clearly
\begin{gather*}
\begin{split}
\int_{\Sigma} 
(
\Vert f \Vert^{6}
+ &
\Vert f \Vert^{7}\vert A \vert
)
\vert A \vert^{4} \vert \nabla^{m-2} A \vert \vert \nabla^{m} A \vert \gamma^{s}
d\mu\\
\leq &
c \int_{\Sigma} \vert \nabla^{m} A \vert^{2} \gamma^{s} d\mu \\
& +
c
(
1
+
\Vert f\Vert^{14}_{L^{\infty}_{\mu}([\gamma>0])}
)
(
1
+
\Vert A \Vert^{10}_{L^{\infty}_{\mu}([\gamma>0])}
)
\int_{[\gamma>0]}\vert \nabla^{m-2} A \vert^{2} d\mu \\
\leq &
c_{m}
+
c \int_{\Sigma} \vert \nabla^{m} A \vert^{2} \gamma^{s} d\mu.
\end{split}
\end{gather*}

\begin{itemize}
\item 
$k=5,j=0,i=m$
\end{itemize}
Note, that it suffices to estimate, cf. corollary \ref{tabel},
\begin{gather*}
\begin{split}
\int_{\Sigma} \Vert f \Vert^{7} & \vert \nabla^{m-2} A \vert \vert A \vert^{5} \vert \nabla^{m} A \vert \gamma^{s} d\mu \\
\leq &
c \int_{\Sigma} \vert \nabla^{m} A \vert^{2} \gamma^{s} d\mu
+
c\Vert f \Vert^{14}_{L^{\infty}_{\mu}([\gamma>0])}
\Vert A \Vert^{10}_{L^{\infty}_{\mu}([\gamma>0])}
\int_{[\gamma>0]}\vert \nabla^{m-2} A \vert^{2} d\mu\\
\leq &
c \int_{\Sigma} \vert \nabla^{m} A \vert^{2} \gamma^{s} d\mu
+
c_{m}.
\end{split}
\end{gather*}

\begin{itemize}
\item 
$k=3,j=m+2,i=0$
\end{itemize}
In this case the following five terms have to be considered.
\begin{gather*}
\begin{split}
\int_{\Sigma} \Vert f  \Vert ^{8}  \vert A \vert^{2} \vert  \nabla^{m+2} A \vert \vert \nabla^{m} A & \vert \gamma^{s} d\mu \\
\leq &
\varepsilon \int_{\Sigma} \Vert f \Vert^{8} \vert \nabla^{m+2} A \vert^{2} \gamma^{s} d\mu \\
& +
c(\varepsilon)\Vert f \Vert^{8}_{L^{\infty}_{\mu}([\gamma>0])}
\Vert A \Vert^{4}_{L^{\infty}_{\mu}([\gamma>0])} \hspace{-1pt}
\int_{\Sigma} \vert \nabla^{m} A \vert^{2} \gamma^{s} d\mu.
\end{split}
\end{gather*}

\begin{gather*}
\hspace{-55pt}
\begin{split}
\int_{\Sigma} \Vert f \Vert^{8} & \vert A \vert \vert \nabla A \vert \vert \nabla^{m+1} A \vert \vert \nabla^{m} A \vert \gamma^{s}d\mu\\
\leq &
c \int_{[\gamma>0]}\Vert f \Vert^{8} \vert \nabla^{m+1} A \vert^{2} d\mu\\
& +
c\Vert A \Vert^{2}_{L^{\infty}_{\mu}([\gamma>0])}
(1
+
\Vert \;\Vert f \Vert^{4} \nabla A \Vert^{4}_{L^{\infty}_{\mu}([\gamma>0])}
)
\int_{\Sigma} \vert \nabla^{m} A \vert^{2} \gamma^{s}d\mu.
\end{split}
\end{gather*}

\begin{gather*}
\hspace{-12pt}
\begin{split}
\int_{\Sigma} \Vert f \Vert^{8} 
& (
\vert A \vert \vert \nabla^{2} A \vert
+
\vert \nabla A \vert^{2}
)
\vert \nabla^{m} A \vert^{2} \gamma^{s} d\mu\\
\leq &
c
(
1
+
\Vert f \Vert^{4}_{L^{\infty}_{\mu}([\gamma>0])}
)
(
1
+
\Vert A \Vert_{L^{\infty}_{\mu}([\gamma>0])}
) \\
& \hspace{6pt}
(
1
+
\Vert\;\Vert f \Vert^{4} \nabla A \Vert^{4}_{L^{\infty}_{\mu}([\gamma>0])}
+
\Vert\; \Vert f \Vert^{4} \nabla^{2} A \Vert^{4}_{L^{\infty}_{\mu}([\gamma>0])}
)
\int_{\Sigma} \vert \nabla^{m} A \vert^{2} \gamma^{s} d\mu.
\end{split}
\end{gather*}

\begin{gather*}
\hspace{-24pt}
\begin{split}
\int_{\Sigma} \Vert f \Vert^{8} 
(
\vert A \vert \vert \nabla^{3} A \vert
& +
\vert \nabla A \vert \vert \nabla^{2} A \vert
)
\vert \nabla^{m-1} A \vert \vert \nabla^{m} A \vert \gamma^{s} d\mu \\
\leq &
c\Vert f \Vert^{4}_{L^{\infty}_{\mu}([\gamma>0])}\Vert A \Vert_{L^{\infty}_{\mu}([\gamma>0])}
(
\Vert\;\Vert f \Vert^{4} \nabla^{m-1} A \Vert^{4}_{L^{\infty}_{\mu}([\gamma>0])}
) \\
& \hspace{6pt}
(
\int_{\Sigma} \vert \nabla^{3} A \vert^{2} \gamma^{s} d\mu
+
\int_{\Sigma} \vert \nabla^{m} A \vert^{2} \gamma^{s} d\mu
) \\
& +
c
(
1
+
\Vert \;\Vert f \Vert^{4} \nabla A \Vert^{4}_{L^{\infty}_{\mu}([\gamma>0])}
+
\Vert \; \Vert f \Vert^{4}\nabla^{2} A \Vert^{4}_{L^{\infty}_{\mu}([\gamma>0])}
)\\
& \hspace{20pt}
(
\int_{[\gamma>0]} \vert \nabla^{m-1} A \vert^{2}  d\mu
+
\int_{\Sigma} \vert \nabla^{m} A \vert^{2} \gamma^{s} d\mu
).
\end{split}
\end{gather*}

\begin{gather*}
\hspace{-66pt}
\begin{split}
\int_{\Sigma} \Vert f \Vert^{8}
(
\vert A \vert \vert \nabla^{4} A \vert
& +
\vert \nabla A \vert \vert \nabla^{3} A \vert
+
\vert \nabla^{2} A \vert^{2}
)
\vert \nabla^{m-2} A \vert \vert \nabla^{m} A \vert \gamma^{s} d\mu \\
\leq &
c (1+\Vert \;\Vert f \Vert^{4} \nabla^{m-2} A\Vert^{4}_{L^{\infty}_{\mu}([\gamma>0])}) \int_{\Sigma} \vert \nabla^{m} A \vert^{2} \gamma^{s} d\mu \\
& +
c \Vert A \Vert^{2}_{L^{\infty}_{\mu}([\gamma>0])} \int_{[\gamma>0]} \Vert f \Vert^{8} \vert \nabla^{4} A \vert^{2} d\mu\\
& +
c 
(
1
+
\Vert \;\Vert f \Vert^{4} \nabla A \Vert^{4}_{L^{\infty}_{\mu}([\gamma>0])}
)
\int_{\Sigma} \vert \nabla^{3} A \vert^{2} \gamma^{s}d\mu \\
& +
c
(
1
+
\Vert \;\Vert f \Vert^{4} \nabla^{2} A \Vert^{4}_{L^{\infty}_{\mu}([\gamma>0])}
)
\int_{[\gamma>0]} \vert \nabla^{2} A \vert^{2} d\mu.
\end{split}
\end{gather*}
These estimates suffice to establish (\ref{m>=3,1}), since $m \geq 3.$

\begin{itemize}
\item 
$k=3,j=m+1,i=1$
\end{itemize}
For the first three terms we easily derive
\begin{gather*}
\begin{split}
\int_{\Sigma} \Vert f \Vert^{7} & \vert A \vert^{2} \vert \nabla^{m+1 }A \vert \vert \nabla^{m} A \vert \gamma^{s}d\mu \\
\leq &
c \int_{\Sigma} \vert \nabla^{m} A \vert^{2} \gamma^{s} d\mu
+
c \Vert f \Vert^{6}_{L^{\infty}_{\mu}([\gamma>0])} \Vert A \Vert^{4}_{L^{\infty}_{\mu}([\gamma>0])} \int_{[\gamma>0]} \Vert f \Vert^{8} \vert \nabla^{m+1} A \vert^{2} d\mu.
\end{split}
\end{gather*}

\begin{gather*}
\hspace{-10pt}
\begin{split}
\int_{\Sigma} \Vert f \Vert^{7} & \vert A \vert \vert \nabla A \vert \vert \nabla^{m} A \vert^{2} \gamma^{s}d\mu \\
\leq &
c \Vert f \Vert^{3}_{L^{\infty}_{\mu}([\gamma>0])} \Vert A \Vert_{L^{\infty}_{\mu}([\gamma>0])}
(1+\Vert\; \Vert f \Vert^{4} \nabla A  \Vert^{4}_{L^{\infty}_{\mu}([\gamma>0])}) \int_{\Sigma} \vert \nabla^{m} A \vert^{2} \gamma^{s} d\mu. 
\end{split}
\end{gather*}

\begin{gather*}
\hspace{-116pt}
\begin{split}
\int_{\Sigma} \Vert f \Vert^{7} & \vert A \vert \vert \nabla^{2} A \vert \vert \nabla^{m-1} A \vert \vert \nabla^{m} A \vert\gamma^{s}d\mu \\
\leq &
c
(
1
+
\Vert \; \Vert f \Vert^{4} \nabla^{m-1} A\Vert^{4}_{L^{\infty}_{\mu}([\gamma>0])}
)
\int_{\Sigma} \vert \nabla^{m} A \vert^{2} \gamma^{s} d\mu\\
& +
c \Vert f \Vert^{6}_{L^{\infty}_{\mu}([\gamma>0])}
\Vert A \Vert^{2}_{L^{\infty}_{\mu}([\gamma>0])} 
\int_{[\gamma>0]} \vert \nabla^{2} A \vert^{2} d\mu.
\end{split}
\end{gather*}
For the fourth term we  use Young's inequality to obtain
\begin{gather*}
\begin{split}
\int_{\Sigma} \Vert f \Vert^{7} &  \vert \nabla A \vert^{2} \vert \nabla^{m-1}A \vert \vert \nabla^{m} A \vert \gamma^{s}d\mu \\
\leq &
c 
\hspace{-1pt}
\int_{\Sigma} \vert \nabla^{m} A \vert^{2} \gamma^{s}d \mu
+
c\Vert\; \Vert f \Vert^{4} \nabla A  \Vert^{2}_{L^{\infty}_{\mu}([\gamma>0])}
\int_{\Sigma} \Vert f \Vert^{6} \vert \nabla A \vert^{2} \vert \nabla^{m-1} A \vert^{2} \gamma^{s} d\mu.
\end{split}
\end{gather*}
To estimate the second summand above we use integration by parts to derive
\begin{gather*}
\begin{split}
\int_{\Sigma} \Vert f \Vert^{6} & \vert \nabla A \vert^{2} \vert \nabla^{m-1} A \vert^{2} \gamma^{s} d\mu \\
\leq &
c \int_{\Sigma} \Vert f \Vert^{5} \vert A \vert \vert \nabla  A \vert \vert \nabla^{m-1} A \vert^{2} \gamma^{s} d\mu
+
c\int_{\Sigma} \Vert f \Vert^{6} \vert A \vert \vert \nabla^{2} A \vert \vert \nabla^{m-1} A \vert^{2} \gamma^{s} d\mu\\
& +
c \int_{\Sigma} \Vert f \Vert^{6} \vert A \vert \vert \nabla A \vert \vert \nabla^{m-1} A \vert \vert \nabla^{m} A \vert \gamma^{s} d\mu \\
& +
cs\Lambda \int_{\Sigma} \Vert f \Vert^{6} \vert A \vert \vert \nabla A \vert \vert \nabla^{m-1} A \vert^{2} \gamma^{s-1} d\mu\\
\leq &
\varepsilon \int_{\Sigma} \Vert f \Vert^{6} \vert \nabla A \vert^{2} \vert \nabla^{m-1} A \vert^{2} \gamma^{s} d\mu 
+
c(\varepsilon) \int_{[\gamma>0]} \Vert f \Vert^{4} \vert A \vert^{2} \vert \nabla^{m-1} A \vert^{2} d\mu \\
& +
c \int_{[\gamma>0]} \Vert f \Vert^{8} \vert \nabla^{2} A \vert^{2} \vert \nabla^{m-1} A \vert^{2} d\mu
+
c(\varepsilon) \int_{\Sigma} \Vert f \Vert^{6} \vert A \vert^{2} \vert \nabla^{m} A \vert^{2} \gamma^{s} d\mu\\
& +
c(\varepsilon)s^{2} \Lambda^{2} \int_{[\gamma>0]} \Vert f \Vert^{6} \vert A \vert^{2} \vert \nabla^{m-1}A \vert^{2} d\mu,
\end{split}
\end{gather*}
i.e. by absorption and $m \geq 3$
\begin{gather*}
\begin{split}
\int_{\Sigma} \Vert f \Vert^{6} & \vert \nabla A \vert^{2} \vert \nabla^{m-1} A \vert^{2} \gamma^{s} d\mu \\
\leq &
c_{m}
+
c_{m} \Vert \;\Vert f \Vert^{4} \nabla^{m-1} A \Vert^{2}_{L^{\infty}_{\mu}([\gamma>0])}
+
c_{m} \int_{\Sigma} \vert \nabla^{m} A \vert^{2} \gamma^{s} d\mu.
\end{split}
\end{gather*}
Inserting proves a correct estimate. \\
For the fifth term to be considered we clearly have
\begin{gather*}
\begin{split}
\int_{\Sigma} \Vert f \Vert^{7} & \vert  A \vert \vert \nabla^{3} A \vert \vert \nabla^{m-2}A \vert \vert \nabla^{m} A \vert \gamma^{s}d\mu \\
\leq &
c_{m}
(
1
+
\Vert \;\Vert f \Vert^{4} \nabla^{m-2} A \Vert^{4}_{L^{\infty}_{\mu}([\gamma>0])}
)
\int_{\Sigma} \vert \nabla^{m}A \vert^{2} \gamma^{s} d\mu
+
c\int_{\Sigma} \vert \nabla^{3} A \vert^{2} \gamma^{s} d\mu.
\end{split}
\end{gather*}
For the sixth and last one we estimate
\begin{gather*}
\begin{split}
\int_{\Sigma} \Vert f \Vert& ^{7}  \vert \nabla A \vert \vert \nabla^{2} A \vert \vert \nabla^{m-2}A \vert \vert \nabla^{m} A \vert \gamma^{s}d\mu \\
\leq &
c 
\hspace{-2pt}
\int_{\Sigma} \vert \nabla^{m}A \vert^{2} \gamma^{s} d\mu 
+
c\Vert \; \Vert f \Vert^{4} \nabla^{2} A \Vert^{2}_{L^{\infty}_{\mu}([\gamma>0])} \hspace{-2pt}
\int_{\Sigma} \Vert f \Vert^{6} \vert \nabla A \vert^{2} \vert \nabla^{m-2} A \vert^{2} \gamma^{s} d\mu.
\end{split}
\end{gather*}
As above with $m-2$ instead of $m-1$ we obtain by integration by parts
\begin{gather*}
\begin{split}
\int_{\Sigma} \Vert f \Vert^{6} & \vert \nabla A \vert^{2} \vert \nabla^{m-2} A \vert^{2} \gamma^{s} d\mu \\
 \leq &
c_{m}
+
c_{m} \Vert \;\Vert f \Vert^{4} \nabla^{m-2} A \Vert^{2}_{L^{\infty}_{\mu}([\gamma>0])}
+
c_{m} \int_{[\gamma>0]} \vert \nabla^{m-1} A \vert^{2}  d\mu.
\end{split}
\end{gather*}
Inserting yields the required result.

\begin{itemize}
\item 
$k=3,j=m,i=2$
\end{itemize}
Clearly
\begin{gather*}
\begin{split}
\int_{\Sigma} 
(
\Vert f \Vert^{6}
+
\Vert f \Vert^{7}\vert A \vert 
)
\vert A \vert^{2}
\vert \nabla^{m} A \vert^{2} \gamma^{s}d\mu
\leq c_{m}\int_{\Sigma} \vert \nabla^{m} A \vert^{2} \gamma^{s} d\mu.
\end{split}
\end{gather*}
Next
\begin{gather*}
\begin{split}
\int_{\Sigma} 
(
\Vert f \Vert^{6}
& +
\Vert f \Vert^{7}\vert A \vert 
)
\vert A \vert \vert \nabla A \vert\vert \nabla^{m-1} A \vert \vert \nabla^{m} A \vert \gamma^{s}
d\mu \\
\leq &
c \int_{\Sigma} \vert \nabla^{m} A \vert^{2} \gamma^{s} d\mu
+
c_{m} 
(
1
+
\Vert \; \Vert f \Vert^{4} \nabla^{m-1} A \Vert^{4}_{L^{\infty}_{\mu}([\gamma>0])}
)
\int_{[\gamma>0]} \vert \nabla A \vert^{2} d\mu.
\end{split}
\end{gather*}
Likewise
\begin{gather*}
\begin{split}
\int_{\Sigma} 
(
\Vert f \Vert^{6}
& +
\Vert f \Vert^{7}\vert A \vert 
)
\vert A \vert  \vert \nabla^{2} A \vert \vert \nabla^{m-2} A \vert \vert \nabla^{m} A \vert \gamma^{s} d\mu \\
\leq &
c \int_{\Sigma} \vert \nabla^{m} A \vert^{2} \gamma^{s} d\mu
+
c_{m} 
(
1
+
\Vert \; \Vert f \Vert^{4} \nabla^{m-2} A \Vert^{4}_{L^{\infty}_{\mu}([\gamma>0])}
)
\int_{[\gamma>0]} \vert \nabla^{2} A \vert^{2} d\mu.
\end{split}
\end{gather*}
Finally we have
\begin{gather*}
\begin{split}
\int_{\Sigma} 
(
\Vert f \Vert^{6}
& +
\Vert f \Vert^{7}\vert A \vert 
)
\vert \nabla A \vert^{2} \vert \nabla^{m-2} A \vert \vert \nabla^{m} A \vert \gamma^{s} 
d\mu \\
\leq &
c 
\hspace{-2.1pt}
\int_{\Sigma} \vert \nabla^{m} A \vert^{2} \gamma^{s} d\mu
+
c_{m} 
(
1
+
\Vert \Vert f \Vert^{4}\nabla^{m-2} A \Vert^{4}_{L^{\infty}_{\mu}([\gamma>0])}) \hspace{-2pt}
\int_{\Sigma}  \vert \nabla A \vert^{4} \gamma^{s} d\mu
\end{split}
\end{gather*}
and by integration by parts
\begin{gather*}
\begin{split}
\int_{\Sigma} \vert \nabla A \vert^{4} \gamma^{s}d\mu
\leq &
c\int_{\Sigma} \vert A \vert \vert \nabla A \vert^{2} \vert \nabla^{2} A \vert \gamma^{s}d\mu
+
cs\Lambda\int_{\Sigma} \vert A \vert \vert \nabla A \vert^{3} \gamma^{s-1} d\mu\\
\leq &
\varepsilon \int_{\Sigma} \vert \nabla A \vert^{4} \gamma^{s}d\mu
+c(\varepsilon,c_{m}),
 \end{split}
\end{gather*}
i.e. by absorption
\begin{equation} \label{m>=3,2} 
\sup_{0 \leq t <T}\int_{\Sigma}  \vert \nabla A \vert^{4} \gamma^{s} d\mu \leq c_{m}.
\end{equation}
\begin{itemize}
\item 
$k=3,j=m-1,i=3$
\end{itemize}
\vspace{-4pt}
\begin{gather*}
\begin{split}
\int_{\Sigma} 
(
\Vert f \Vert^{5}
& +
\Vert f \Vert^{6} \vert A \vert
+
\Vert f \Vert^{7} \vert A \vert^{2}
+
\Vert f \Vert^{7} \vert \nabla A \vert
)
\vert A \vert^{2} \vert \nabla^{m-1} A \vert \vert \nabla^{m} A \vert \gamma^{s} 
d\mu \\
\leq &
c \int_{\Sigma} \vert \nabla^{m} A \vert^{2} \gamma^{s} d\mu
+
c_{m}
(
1
+
\Vert \;\Vert f \Vert^{4} \nabla A \Vert^{4}_{L^{\infty}_{\mu}([\gamma>0])}
)
\int_{[\gamma>0]} \vert \nabla^{m-1} A \vert^{2} d\mu,
\end{split}
\end{gather*}
and by (\ref{m>=3,2})
\begin{gather*}
\begin{split}
\int_{\Sigma}
(
\Vert f \Vert^{5}
& +
\Vert f \Vert^{6} \vert A \vert
+
\Vert f \Vert^{7} \vert A \vert^{2}
+
\Vert f \Vert^{7} \vert \nabla A \vert
)
\vert A \vert \vert \nabla A \vert \vert \nabla^{m-2} A \vert \nabla^{m} A \vert \gamma^{s} d\mu \\
\leq &
c \int_{\Sigma} \vert \nabla^{m} A \vert^{2} \gamma^{s} d\mu \\
& +
c_{m}
(
1
+
\Vert \; \Vert f \Vert^{4} \nabla^{m-2} A \Vert^{4}_{L^{\infty}_{\mu}([\gamma>0])}
)
(
\int_{[\gamma>0]} \vert \nabla A \vert^{2} d\mu
+
\int_{\Sigma} \vert \nabla A \vert^{4} \gamma^{s}d\mu
)\\
\leq &
c \int_{\Sigma} \vert \nabla^{m} A \vert^{2} \gamma^{s} d\mu \\
& +
c_{m}
(
1
+
\Vert \; \Vert f \Vert^{4} \nabla^{m-2} A \Vert^{4}_{L^{\infty}_{\mu}([\gamma>0])}
),
\end{split}
\end{gather*}

\newpage

\begin{itemize}
\item 
$k=3,j=m-2,i=4$
\end{itemize}
Clearly
\begin{gather*}
\begin{split}
\int_{\Sigma} 
(
\Vert f \Vert^{4} 
& +
\Vert f \Vert^{5} \vert A \vert 
+
\Vert f \Vert^{6} \vert  A \vert^{2}
+
\Vert f \Vert^{6} \vert \nabla A \vert \\
&+
\Vert f \Vert^{7} \vert A \vert^{3}
+
\Vert f \Vert^{7} \vert A \vert \vert \nabla A \vert
+
\Vert f \Vert^{7} \vert \nabla^{2} A \vert
)
\vert A \vert^{2} \vert \nabla^{m-2} A \vert \vert \nabla^{m} A \vert \gamma^{s}
d\mu  \\
\leq &
c \int_{\Sigma} \vert \nabla^{m} A \vert^{2} \gamma^{s}d\mu
+
c_{m}
(
1
+
\Vert \; \Vert f \Vert^{4} \nabla^{m-2} A\Vert^{4}_{L^{\infty}_{\mu}([\gamma>0])}
).
\end{split}
\end{gather*}

\begin{itemize}
\item 
$k=3,j=0,i=m+2$
\end{itemize}
It suffices to estimate
\begin{gather*}
\begin{split}
\int_{\Sigma}
(\,
\Vert f \Vert^{7}\vert \nabla^{m} A \vert
& +
\Vert f \Vert^{7} \vert A \vert \vert \nabla^{m-1} A \vert
+
\Vert f \Vert^{7} 
[\,
\vert \nabla A \vert
+
\vert A \vert^{2}
\,] 
\vert \nabla^{m-2} A \vert \\
& +
\Vert f \Vert^{6} \vert \nabla^{m-1} A \vert 
+
\Vert f \Vert^{6} \vert A \vert \vert \nabla^{m-2} A \vert \\
& \hspace{83pt}+
\Vert f \Vert^{5} \vert \nabla^{m-2} A \vert
\,)\;
\vert A \vert^{3} \vert \nabla^{m} A \vert \gamma^{s}
 d\mu \\
\leq &
c_{m}
+
c_{m} 
(
1
+
\Vert \, \Vert f \Vert^{4} \nabla A \Vert^{4}_{L^{\infty}_{\mu}([\gamma>0])}
)
\int_{\Sigma} \vert \nabla^{m} A \vert^{2} \gamma^{s} d\mu.
\end{split}
\end{gather*}

\begin{itemize}
\item 
$k=3,j=1,i=m+1$
\end{itemize}
Similarly
\begin{gather*}
\begin{split}
\int_{\Sigma} 
(
\Vert f \Vert^{7} & \vert \nabla^{m-1} A \vert
+
\Vert f \Vert^{7} \vert A \vert \vert \nabla^{m-2} A \vert
+
\Vert f \Vert^{6} \vert \nabla^{m-2} A \vert
)
\vert A \vert^{2} \vert \nabla A \vert \vert \nabla^{m} A \vert \gamma^{s}
d\mu \\
\leq &
c \int_{\Sigma} \vert \nabla^{m} A \vert^{2} \gamma^{s}d\mu \\
& +
c_{m}
(
1
+
\Vert \;\Vert f \Vert^{4} \nabla^{m-2} A\Vert^{4}_{L^{\infty}_{\mu}([\gamma>0])}
+
\Vert \; \Vert f \Vert^{4} \nabla^{m-1} A \Vert^{4}_{L^{\infty}_{\mu}([\gamma>0])}
).
\end{split}
\end{gather*}

\begin{itemize}
\item 
$k=3,j=2,i=m$
\end{itemize}
By Young's inequality and (\ref{m>=3,2}) we derive
\begin{gather*}
\begin{split}
\int_{\Sigma} \Vert f \Vert^{7} & \vert \nabla^{m-2} A \vert
(
\vert A \vert^{2} \vert \nabla^{2} A \vert
+
\vert A \vert \vert \nabla A \vert^{2}
)
\vert \nabla^{m} A \vert \gamma^{s}
d\mu \\
\leq &
c \int_{\Sigma} \vert \nabla^{m} A \vert^{2} \gamma^{s} d\mu \\
& +
c_{m}
(
1
+
\Vert\;\Vert f \Vert^{4} \nabla^{m-2} A \Vert^{4}_{L^{\infty}_{\mu}([\gamma>0])}
)
(
\int_{[\gamma>0]}\vert \nabla^{2} A \vert^{2} d\mu
+
\int_{\Sigma} \vert \nabla A \vert^{4} \gamma^{s}d\mu. 
) \\
\leq &
c \int_{\Sigma} \vert \nabla^{m} A \vert^{2} \gamma^{s} d\mu 
+
c_{m}
(
1
+
\Vert\;\Vert f \Vert^{4} \nabla^{m-2} A \Vert^{4}_{L^{\infty}_{\mu}([\gamma>0])}
).
\end{split}
\end{gather*}

\begin{itemize}
\item 
$k=1,j=m+3,i=1$
\end{itemize}
First we have by integration by parts
\begin{gather*}
\begin{split}
\int_{\Sigma} \nabla \Vert & f  \Vert^{8}  * \nabla^{m+3} A *\nabla^{m} A \gamma^{s} d\mu \\
\leq &
c(n)\int_{\Sigma} 
(
\Vert f \Vert^{6}
+
\Vert f \Vert^{7} \vert A \vert
)
\vert \nabla^{m+2} A \vert \vert \nabla^{m} A \vert \gamma^{s} d\mu \\
& +
c(n) \int_{\Sigma} \Vert f \Vert^{7} \vert \nabla^{m+2} A \vert \vert \nabla^{m+1} A \vert \gamma^{s}d\mu \\
& + 
c(n)s\Lambda \int_{\Sigma} \Vert f \Vert^{7} \vert \nabla^{m+2} A \vert \vert \nabla^{m} A \vert \gamma^{s-1} d\mu \\
\leq &
\varepsilon \int_{\Sigma} \Vert f \Vert^{8} \vert \nabla^{m+2} A \vert^{2} \gamma^{s} d\mu
+
c(\varepsilon,c_{m})\int_{\Sigma} \vert \nabla^{m} A \vert^{2} \gamma^{s} d\mu\\
& +
c(n,\varepsilon) \int_{\Sigma} \Vert f \Vert^{6} \vert \nabla^{m+1} A \vert^{2} \gamma^{s}d\mu
+
c(\varepsilon,c_{m}) \int_{[\gamma>0]} \Vert f \Vert^{8} \vert \nabla^{m} A \vert^{2} d\mu.
\end{split}
\end{gather*}
Corollary \ref{A7} shows
\begin{gather*}
\begin{split}
\int_{\Sigma} \Vert f \Vert^{6} \vert \nabla^{m+1} A \vert^{2} \gamma^{s} & d\mu 
\leq 
\varepsilon \int_{\Sigma} \Vert f \Vert^{8} \vert \nabla^{m+2} A \vert^{2} \gamma^{s} d\mu
+
c(\varepsilon) \int_{\Sigma} \Vert f \Vert^{4} \vert \nabla^{m} A \vert^{2} \gamma^{s} d\mu\\
& \hspace{26pt}+
c(\varepsilon)s^{2}\Lambda^{2} \int_{\Sigma} \Vert f \Vert^{6} \vert \nabla^{m} A \vert^{2} \gamma^{s-2}d\mu\\
\leq &
\varepsilon \int_{\Sigma} \Vert f \Vert^{8} \vert \nabla^{m+2} A \vert^{2} \gamma^{s} d\mu
+
c(\varepsilon,c_{m}) \int_{\Sigma} \vert \nabla^{m} A \vert^{2} \gamma^{s}d\mu
\end{split} 
\end{gather*}
\begin{equation} \label{m>=3,3}
\hspace{-32pt} +
c(\varepsilon,c_{m})\int_{[\gamma>0]} \Vert f \Vert^{8} \vert \nabla^{m} A \vert^{2} d\mu.
\end{equation}
\newpage \hspace{-22pt}
Inserting we conclude
\begin{gather*}
\begin{split}
\int_{\Sigma} \nabla \Vert f \Vert^{8} & * \nabla^{m+3} A *\nabla^{m} A \gamma^{s} d\mu \\
\leq &
\varepsilon \int_{\Sigma} \Vert f \Vert^{8} \vert \nabla^{m+2} A \vert \gamma^{s} d\mu
+
c(\varepsilon,c_{m}) \int_{\Sigma} \vert \nabla^{m} A \vert^{2} \gamma^{s}d\mu\\
& +
c(\varepsilon,c_{m})\int_{[\gamma>0]} \Vert f \Vert^{8} \vert \nabla^{m} A \vert^{2} d\mu,
\end{split}
\end{gather*}
which is the required result.

\begin{itemize}
\item 
$k=1,j=m+2,i=2$
\end{itemize}
We simply have
\begin{gather*}
\begin{split}
\int_{\Sigma} 
(
\Vert f \Vert^{6}
& +
\Vert f \Vert^{7}\vert A \vert
)
\vert \nabla^{m+2} A \vert \vert \nabla^{m} A \vert \gamma^{s} d\mu\\
\leq &
\varepsilon \int_{\Sigma} \Vert f \Vert^{8} \vert \nabla^{m+2} A \vert \gamma^{s} d\mu 
+
c(\varepsilon,c_{m})\int_{\Sigma} \vert \nabla^{m} A \vert^{2} \gamma^{s} d\mu.
\end{split}
\end{gather*}

\begin{itemize}
\item 
$k=1,j=m+1,i=3$
\end{itemize}
Apply (\ref{m>=3,3}) to
\begin{gather*}
\begin{split}
\int_{\Sigma} 
(
\Vert f \Vert^{5}
& +
\Vert f \Vert^{6} \vert A \vert
+
\Vert f \Vert^{7}\vert A \vert^{2}
+
\Vert f \Vert^{7} \vert \nabla A \vert
)
\vert \nabla^{m+1} A \vert \vert \nabla^{m} A \vert \gamma^{s} d\mu
\\
\leq &
c \int_{\Sigma} \Vert f \Vert^{6} \vert \nabla^{m+1} A \vert^{2} \gamma^{s} d\mu \\
& +
c_{m}(1+\Vert \;\Vert f \Vert^{4} \nabla A \Vert^{4}_{L^{\infty}_{\mu}([\gamma>0])})
\int_{\Sigma} \vert \nabla^{m} A \vert^{2} \gamma^{s} d\mu.
\end{split}
\end{gather*}

\begin{itemize}
\item 
$k=1,j=m,i=4$
\end{itemize}
Clearly
\begin{gather*}
\begin{split}
\int_{\Sigma} 
(
\Vert f \Vert^{4} 
& +
\Vert f \Vert^{5} \vert A \vert
+
\Vert f \Vert^{6} \vert A \vert^{2}
+
\Vert f \Vert^{6} \vert \nabla A \vert \\
& +
\Vert f \Vert^{7} \vert A \vert^{3}
+
\Vert f \Vert^{7} \vert A \vert \vert \nabla A \vert
+
\Vert f \Vert^{7} \vert \nabla^{2} A \vert
)
\vert \nabla^{m} A \vert^{2} \gamma^{s}
d\mu \\
\leq &
c_{m}
(
1
+
\Vert \;\Vert f \Vert^{4} \nabla A \Vert^{4}_{L^{\infty}_{\mu}([\gamma>0])}
+
\Vert \;\Vert f \Vert^{4} \nabla^{2} A\Vert^{4}_{L^{\infty}_{\mu}([\gamma>0])}
) 
\int_{\Sigma} \vert \nabla^{m} A \vert^{2} \gamma^{s} d\mu.
\end{split}
\end{gather*}

\begin{itemize}
\item 
$k=1,j=m-1,i=5$
\end{itemize}
Since $m\geq 3$ and by (\ref{m>=3,2}) we have
\begin{gather*}
\begin{split}
\int_{\Sigma} 
(
\Vert f \Vert^{3} 
& +
\Vert f \Vert^{4} \vert A \vert
+
\Vert f \Vert^{5} \vert A \vert^{2}
+
\Vert f \Vert^{5} \vert \nabla A \vert \\
& +
\Vert f \Vert^{6} \vert A \vert^{3}
+
\Vert f \Vert^{6} \vert A \vert \vert \nabla A \vert
+
\Vert f \Vert^{6} \vert \nabla^{2} A \vert \\
& +
\Vert f \Vert^{7} \vert A \vert^{4}
+
\Vert f \Vert^{7} \vert A \vert^{2}\vert \nabla A \vert
+
\Vert f \Vert^{7} \vert \nabla A \vert^{2}\\
& +
\Vert f \Vert^{7} \vert A \vert \vert \nabla^{2} A \vert
+
\Vert f \Vert^{7} \vert \nabla^{3} A \vert
)
\vert \nabla^{m-1} A \vert \vert \nabla^{m}A \vert \gamma^{s}
d\mu \\
\leq &
c \int_{\Sigma} \vert \nabla^{m} A \vert^{2} \gamma^{s} d\mu
+
c_{m} \int_{[\gamma>0]} \vert \nabla^{m-1} A \vert^{2} d\mu\\
& +
c_{m}
(1
+
\Vert \;\Vert f \Vert^{4} \nabla^{m-1} A \Vert^{4}_{L^{\infty}_{\mu}([\gamma>0])}
) \\
& \hspace{28pt}
(
\int_{[\gamma>0]} \vert \nabla A \vert^{2} d\mu
+
\int_{[\gamma>0]} \vert \nabla^{2} A \vert^{2} d\mu\\
& \hspace{106pt}+
\int_{\Sigma} \vert \nabla A \vert^{4} \gamma^{s} d\mu
+
\int_{\Sigma} \vert \nabla^{3} A \vert^{2} \gamma^{s}d\mu
) \\
\leq &
c_{m}
(
1
+
\Vert \;\Vert f \Vert^{4} \nabla^{m-1} A \Vert^{4}_{L^{\infty}_{\mu}([\gamma>0])}
)
(
1
+
\int_{\Sigma} \vert \nabla^{m} A \vert^{2} \gamma^{s} d\mu
).
\end{split}
\end{gather*}

\begin{itemize}
\item 
$k=1,j=m-2,i=6$
\end{itemize}
Recalling $i \leq m+2$ from (\ref{I(m)=}) we may assume $m \geq 4$ for this case.  Hence
\begin{gather*}
\begin{split}
\int_{\Sigma}
(&
\Vert f \Vert^{2} 
+
\Vert f \Vert^{3} \vert A \vert
+
\Vert f \Vert^{4} \vert A \vert^{2}
+
\Vert f \Vert^{4} \vert \nabla A \vert\\
& +
\Vert f \Vert^{5} \vert A \vert^{3} 
+
\Vert f \Vert^{5} \vert A \vert \vert \nabla A \vert
+
\Vert f \Vert^{5} \vert \nabla^{2} A \vert \\
& +
\Vert f \Vert^{6} \vert A \vert^{4}
+
\Vert f \Vert^{6}\vert A \vert^{2} \vert \nabla A \vert
+
\Vert f \Vert^{6} \vert \nabla A \vert^{2} \\
& +
\Vert f \Vert^{6} \vert A \vert \vert \nabla^{2} A \vert
+
\Vert f \Vert^{6} \vert \nabla^{3} A \vert \\
& +
\Vert f \Vert^{7} \vert A \vert^{5}
+
\Vert f \Vert^{7} \vert A \vert^{3} \vert \nabla A \vert
+
\Vert f \Vert^{7} \vert A \vert \vert \nabla A \vert^{2}
+
\Vert f \Vert^{7}\vert A \vert^{2} \vert \nabla^{2} A \vert \\
& +
\Vert f \Vert^{7} \vert \nabla A \vert \vert \nabla^{2} A \vert
+
\Vert f \Vert^{7} \vert A \vert \vert \nabla^{3} A \vert
+
\Vert f \Vert^{7} \vert \nabla^{4} A \vert
\;)\;
\vert \nabla^{m-2} A \vert \vert\nabla^{m} A \vert \gamma^{s}
d\mu \\
\leq &
c \int_{\Sigma} \vert \nabla^{m} A \vert^{2} \gamma^{s} d\mu
+
c_{m} \int_{[\gamma>0]} \vert \nabla^{m-2} A \vert^{2} \gamma^{s} d\mu\\
& +
c_{m}
(
1
+
\Vert \;\Vert f \Vert^{4} \nabla^{m-2} A \Vert^{4}_{L^{\infty}_{\mu}([\gamma>0])}
) \\
& \hspace{28pt}
(
\int_{[\gamma>0]} \vert \nabla^{2} A \vert^{2} d\mu
+
\int_{[\gamma>0]} \vert \nabla^{3} A \vert^{2} d\mu
+
\int_{\Sigma} \vert \nabla^{4} A \vert^{2} \gamma^{s}d\mu
) \\
\leq &
c_{m}
(
1
+
\Vert \;\Vert f \Vert^{4} \nabla^{m-2} A \Vert^{4}_{L^{\infty}_{\mu}([\gamma>0])}
)
(
1
+
\int_{\Sigma} \vert \nabla^{m} A \vert^{2} \gamma^{s} d\mu
),
\end{split}
\end{gather*}
where 
$
\sup_{0 \leq t <T}\Vert \nabla A \Vert_{L^{\infty}_{\mu}([\gamma>0])}\, ,\,
\sup_{0 \leq t <T}\int_{[\gamma>0]} \vert \nabla^{3} A \vert^{2}d\mu\leq d
$
was used.
\begin{itemize}
\item 
$k=1,j=2,i=m+2$
\end{itemize}
Here we simply estimate by Young's inequality
\begin{gather*}
\begin{split}
\int_{\Sigma} 
(&
\Vert f \Vert^{7} \vert \nabla^{m} A \vert
+
\Vert f \Vert^{7} \vert A \vert \vert \nabla^{m-1} A \vert
+
\Vert f \Vert^{7} 
(
\vert A \vert^{2}
+
\vert \nabla A \vert
)
\vert \nabla^{m-2} A \vert \\
& +
\Vert f \Vert^{6} \vert \nabla^{m-1} A \vert
+
\Vert f \Vert^{6} \vert A \vert \vert \nabla^{m-2} A \vert  
+
\Vert f \Vert^{5} \vert \nabla^{m-2} A \vert
\;)
\vert \nabla^{2} A \vert \vert \nabla^{m} A \vert \gamma^{s} d\mu \\
\leq &
c\int_{\Sigma} \Vert f \Vert^{6} \vert \nabla A \vert^{2} \vert \nabla^{m-2} A \vert^{2} \gamma^{s} d\mu \\
& +
c_{m}
+
c_{m} 
(
1
+
\Vert \; \Vert f \Vert^{4} \nabla^{2} A \Vert^{4}_{L^{\infty}_{\mu}([\gamma>0])}
)\int_{\Sigma} \vert \nabla^{m} A \vert^{2} \gamma^{s} d\mu.
\end{split}
\end{gather*}
If $m=3,$ apply (\ref{m>=3,2}). If $m \geq 4,$ then $\int_{\Sigma} \Vert f \Vert^{6} \vert \nabla A \vert^{2} \vert \nabla^{m-2} A \vert^{2} \gamma^{s} d\mu\leq c_{m},$ since $\sup_{0 \leq t<T} \Vert \nabla A \Vert_{L^{\infty}_{\mu}([\gamma>0])}
 \leq d$ by assumption.

\begin{itemize}
\item 
$k=1,j=3,i=m+1$
\end{itemize}
Similarly 
\begin{gather*}
\begin{split}
\int_{\Sigma} 
(
\Vert f \Vert^{7}  & \vert \nabla^{m-1} A \vert
+
\Vert f \Vert^{7} \vert A \vert \vert \nabla^{m-2}A \vert
+
\Vert f \Vert^{6} \vert \nabla^{m-2} A \vert
)
\vert \nabla^{3} A \vert \vert \nabla^{m} A \vert \gamma^{s} d\mu
\\
\leq &
c \int_{\Sigma} \vert \nabla^{m} A \vert^{2} \gamma^{s}d\mu \\
& +
c_{m}
(
1
+
\Vert \;\Vert f \Vert^{4} \nabla^{m-2} A\Vert^{4}_{L^{\infty}_{\mu}([\gamma>0])}
+
\Vert \; \Vert f \Vert^{4} \nabla^{m-1} A \Vert^{4}_{L^{\infty}_{\mu}([\gamma>0])}
) \\
& \hspace{28pt}
\int_{\Sigma} \vert \nabla^{3} A \vert^{2} \gamma^{s} d\mu.
\end{split}
\end{gather*}
Recall $m\geq 3.$
\begin{itemize}
\item 
$k=1,j=4,i=m$
\end{itemize}
Finally, and this time really finally,
\begin{gather*}
\begin{split}
\int_{\Sigma} 
\Vert f \Vert&^{7}  \vert \nabla^{m-2} A \vert 
\vert \nabla^{4} A \vert
\vert \nabla^{m} A \vert \gamma^{s}d\mu \\
\leq &
c \int_{\Sigma} \Vert f \Vert^{6} \vert \nabla^{4} A \vert^{2} \gamma^{s}d\mu
+
c
(
1
+
\Vert \; \Vert f \Vert^{4} \nabla^{m-2} A \Vert^{4}_{L^{\infty}_{\mu}([\gamma>0])}
)
\hspace{-1pt}
\int_{\Sigma} \vert \nabla^{m} A \vert^{2} \gamma^{s}d\mu.
\end{split}
\end{gather*}
If $m \geq 4,$ we have
$\int_{\Sigma} \Vert f \Vert^{6} \vert \nabla^{4} A \vert^{2} \gamma^{s}d\mu \leq c_{m}+c_{m}\int_{\Sigma} \vert \nabla^{m}A \vert^{2} \gamma^{s} d\mu.$\\
If $m=3, $ then $4=m+1$ and (\ref{m>=3,3}) yields a correct result.
\end{proof}
\newpage
\section{Interpolation inequalities}
We come to prove some interpolation inequalities, which will enable us to join the previous four sections to an induction. 
\begin{lemma} \label{lemma5.1}
For $k \in \lbrace 0,\mathbb{R}_{\geq 2} \rbrace,\;s \geq 4$ and $\Lambda>0$ there exists 
\begin{gather*}
c=c(s,k,\Lambda)>0, 
\end{gather*}
such that, if 
\quad $f:\Sigma \rightarrow \mathbb{R}^{n}$\quad 
is  a closed, immersed surface,\\
$\Phi$ a normal valued $l$-linear form along $f$ and $\gamma $ as in (\ref{eq13}), we have
\begin{gather*}
\begin{split}
\Vert \;\Vert f \Vert^{k} \Phi &\Vert^{4}_{L^{\infty}_{\mu}([\gamma=1])}\\
\leq &
c\int_{[\gamma>0]} \Vert f \Vert^{2k} \vert \Phi \vert^{2}d\mu \,
(\;
\int_{\Sigma} \Vert f \Vert^{2k}\vert \nabla^{2} \Phi \vert^{2} \gamma^{s}d\mu
+
\int_{\Sigma} \Vert f \Vert^{2k} \vert A \vert^{4}
\vert \Phi \vert^{2} \gamma^{s} d\mu \\
\end{split}
\end{gather*}
\begin{equation} \label{eq23}
 \hspace{144pt}+
\int_{[\gamma>0]}
[\;
\Vert f \Vert^{2k}
+
k\,\Vert f \Vert^{2k-4}
\;]
\vert \Phi \vert^{2}d\mu
\;).
\end{equation} 
\end{lemma}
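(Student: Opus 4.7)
The plan is to derive (\ref{eq23}) from a Michael-Simon Sobolev-type $L^\infty$ interpolation on the $2$-surface $\Sigma$: for any nonnegative compactly supported $v \in C^2(\Sigma)$,
\[
\sup_\Sigma v^2 \;\leq\; c\int_\Sigma v\,d\mu\cdot\int_\Sigma\bigl(|\nabla^2 v|+|A|^2 v\bigr)d\mu,
\]
a standard consequence of the Michael-Simon inequality (in the spirit of \cite{ref1}). I apply this with $v := \|f\|^{2k}|\Phi|^2\gamma^s$. Since $\gamma\equiv 1$ on $[\gamma=1]$,
\[
\sup_\Sigma v^2 \;\geq\; \bigl(\sup_{[\gamma=1]}\|f\|^{2k}|\Phi|^2\bigr)^2 \;=\; \|\,\|f\|^k\Phi\|^4_{L^\infty_\mu([\gamma=1])},
\]
and $\int_\Sigma v\,d\mu \leq \int_{[\gamma>0]}\|f\|^{2k}|\Phi|^2\,d\mu$, which matches the first factor on the right-hand side of (\ref{eq23}). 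It then suffices to bound $\int_\Sigma(|\nabla^2 v|+|A|^2 v)d\mu$ by the bracketed expression on the right of (\ref{eq23}).

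Expanding $\nabla^2 v$ by the product rule on the three factors of $v$, I use the identities $\nabla\|f\|^2 = 2f^T$ and $\nabla^2\|f\|^2 = 2g - 2\langle A, f\rangle$ together with the chain rule for $(\|f\|^2)^k$ to obtain
\[
|\nabla\|f\|^{2k}|\leq 2k\|f\|^{2k-1}, \qquad |\nabla^2\|f\|^{2k}| \leq c\,k^2\bigl(\|f\|^{2k-2}+\|f\|^{2k-1}|A|\bigr),
\]
both vanishing when $k=0$; this is the origin of the coefficient $k$ in front of $\|f\|^{2k-4}$ in (\ref{eq23}), and it is precisely why the hypothesis $k\in\{0\}\cup[2,\infty)$ avoids any negative power of $\|f\|$. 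For $\gamma$, (\ref{eq13}) supplies $|\nabla\gamma|\leq\Lambda$ and $|\nabla^2\gamma|\leq\Lambda(1+|A|)$. For $|\Phi|^2$, the smooth identity $|\nabla^2|\Phi|^2|\leq 2|\nabla\Phi|^2 + 2|\Phi||\nabla^2\Phi|$ applies. Substituting, each monomial appearing in $|\nabla^2 v|$ takes the form $\|f\|^{2k-j}|A|^p|\Phi|^{q_1}|\nabla\Phi|^{q_2}|\nabla^2\Phi|^{q_3}\gamma^{s-l}$ with $q_1+q_2+q_3=2$, $p\in\{0,1\}$, $j\in\{0,1,2\}$, $l\in\{0,1,2\}$, carrying an explicit $k$-factor whenever $j>0$.

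These monomials are then absorbed into the three admissible categories by Young's inequality. The $|\Phi||\nabla^2\Phi|$-contributions split into $\|f\|^{2k}|\nabla^2\Phi|^2\gamma^s$ plus $\|f\|^{2k}|\Phi|^2$-weight; the $|A|$-contributions (including the $|A|^2 v$-term) split into $\|f\|^{2k}|A|^4|\Phi|^2\gamma^s$ plus $\|f\|^{2k}|\Phi|^2$-weight; and the $|\nabla\Phi|^2$-contribution is reduced to the previous categories by one integration by parts,
\[
\int\|f\|^{2k}|\nabla\Phi|^2\gamma^s\,d\mu \;\leq\; \int\|f\|^{2k}|\Phi||\nabla^2\Phi|\gamma^s\,d\mu + \text{lower-order weight terms},
\]
which feeds back into the earlier splits. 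The remaining pure-weight terms have the form $\|f\|^{2k-j}|\Phi|^2$ with $j\leq 2$, and a further Young split $k^2\|f\|^{2k-2}\leq \tfrac12\|f\|^{2k}+\tfrac12 k^4\|f\|^{2k-4}$ produces exactly the required $k\|f\|^{2k-4}|\Phi|^2$-term (the higher $k$-power is absorbed in $c=c(s,k,\Lambda)$). The cutoff assumption $s\geq 4$ guarantees $\gamma^{s-l}\leq 1$ on $[\gamma>0]$ for $l\leq 4$, so all boundary powers of $\gamma$ are harmless. The principal obstacle is the algebraic bookkeeping of these splittings: verifying that every mixed-weight monomial reduces to one of the three admissible forms on the right of (\ref{eq23}), and that no negative power of $\|f\|$ ever appears under the condition $k\in\{0\}\cup[2,\infty)$.
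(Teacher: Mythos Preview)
Your starting Sobolev inequality
\[
\sup_\Sigma v^2 \;\leq\; c\int_\Sigma v\,d\mu\cdot\int_\Sigma\bigl(|\nabla^2 v|+|A|^2 v\bigr)d\mu
\]
is false, and this breaks the argument at the very first step. Test it on a round sphere of radius $R$ with $v\equiv 1$: the left side is $1$, while the right side is $c\cdot 4\pi R^{2}\cdot 8\pi = 32\pi^{2}cR^{2}\to 0$ as $R\to 0$. More generally, under rescaling $g\mapsto r^{2}g$ the left side is invariant but the right side picks up a factor $r^{2}$, so no universal constant $c$ can exist. This is not a typo you can fix by trading $|A|^{2}$ for $|H|^{2}$; the mismatch is in the integrability exponents: an $L^{1}$--$L^{1}$ product on the right is off by one power of length.

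The scale-correct inequality --- the one the paper actually uses --- is
\[
\|\Psi\|_{L^\infty}^4 \;\leq\; c\,\|\Psi\|_{L^2}^2\Bigl(\|\nabla^2\Psi\|_{L^2}^2 + \int_\Sigma |H|^4|\Psi|^2\,d\mu\Bigr),
\]
obtained by combining Lemma~\ref{A5} (with $m=2$, $p=4$, $\alpha=\tfrac23$) and the interpolation $\|\nabla\Psi\|_{L^4}^2\leq c\,\|\Psi\|_{L^\infty}\|\nabla^2\Psi\|_{L^2}$ from Proposition~\ref{A6}, then absorbing the $\|\Psi\|_{L^\infty}$ factor. One applies this to $\Psi=\|f\|^{k}|\Phi|\gamma^{s/2}$ (not to $v=\Psi^{2}$). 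From that point on your outline is essentially what the paper does: expand $\nabla^{2}\Psi$ via the product rule, use $|\nabla\|f\|^{2k}|\leq ck\|f\|^{2k-1}$, $|\nabla^{2}\|f\|^{2k}|\leq ck^{2}(\|f\|^{2k-2}+\|f\|^{2k-1}|A|)$ and $|\nabla^{2}\gamma|\leq\Lambda(1+|A|)$, integrate by parts to eliminate the $|\nabla\Phi|^{2}$ terms, and collapse everything with Young's inequality into the three admissible bins. So the downstream bookkeeping you describe is fine; only the interpolation inequality at the top needs to be replaced by its $L^{2}$--$L^{2}$ version.
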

\begin{proof}
We define 
$\Psi:=\Vert f \Vert^{k}\vert\Phi\vert \gamma^{\frac{s}{2}}.$ \\
Applying lemma \ref{A5} with
$m=2,p=4,\alpha=\frac{2}{3}$ we obtain 
\begin{gather*}
\begin{split}
\Vert \Psi \Vert_{L^{\infty}_{\mu}(\Sigma)}
\leq &
c \Vert \Psi \Vert^{\frac{1}{3}}_{L^{2}_{\mu}(\Sigma)}
(\Vert \nabla \Psi \Vert_{L^{4}_{\mu}(\Sigma)}
+
\Vert H \Psi \Vert_{L^{4}_{\mu}(\Sigma)})^{\frac{2}{3}} \\
\leq &
c \Vert \Psi \Vert^{\frac{1}{3}}_{L^{2}_{\mu}(\Sigma)}
(
\Vert \Psi \Vert^{\frac{1}{3}}_{L^{\infty}_{\mu}(\Sigma)}
\Vert \nabla^{2} \Psi \Vert^{\frac{1}{3}}_{L^{2}_{\mu}(\Sigma)}
+
\Vert H \Psi \Vert^{\frac{2}{3}}_{L^{4}_{\mu}(\Sigma)}
),
\end{split}
\end{gather*}
where proposition \ref{A6} for 
\begin{gather*}
\gamma=1 \in C^{\infty}_{0}(\Sigma),\; k=u_{1}=u_{2}=v_{1}=v_{2}=\Lambda=0
\text{ and } r=2,\,p \rightarrow\infty
\end{gather*}
was used in the last step.\\
Since 
$\Vert H \Psi  \Vert^{\frac{2}{3}}_{L^{4}_{\mu}(\Sigma)}
\leq 
\Vert \Psi \Vert^{\frac{1}{3}}_{L^{\infty}_{\mu}(\Sigma)}
\Vert\, \vert \Psi \vert^{\frac{1}{2}} 
\vert H \vert \,\Vert^{\frac{2}{3}}_{L^{4}_{\mu}(\Sigma)}$ we conclude
\begin{gather*}
\Vert \Psi \Vert^{4}_{L^{\infty}_{\mu}(\Sigma)}
\leq 
c \Vert \Psi \Vert^{2}_{L^{2}_{\mu}(\Sigma)}(\,\Vert \nabla^{2}\Psi \Vert^{2}_{L^{2}_{\mu}(\Sigma)}+\Vert \,\vert \Psi \vert^{2} \vert H \vert^{4}\Vert_{L^{1}_{\mu}(\Sigma)}\,).
\end{gather*}
Recalling (\ref{eq13}) we have for $k \geq 2$ and $c=c(s,k,\Lambda)>0$
\begin{gather*}
\begin{split}
\Vert \nabla^{2}\Psi \Vert^{2}_{L^{2}_{\mu}(\Sigma)}
= &
\int_{\Sigma} \vert \nabla^{2}(\Vert f \Vert^{k} \Phi \gamma^{\frac{s}{2}}) \vert^{2}d\mu\\
\leq &
c 
\hspace{-2pt}
\int_{[\gamma>0]} 
\hspace{-2pt} \vert\,
(\Vert f \Vert^{k-2}+\Vert f \Vert^{k-1}\vert A \vert) \vert \Phi \vert \gamma^{\frac{s}{2}}
+\Vert f \Vert^{k-1}\vert \nabla \Phi \vert \gamma^{\frac{s}{2}} \\
& \quad \quad \;
+\Vert f \Vert^{k-1} \vert \Phi \vert \gamma^{\frac{s-2}{2}} 
+
\Vert f \Vert^{k} \vert \nabla^{2} \Phi \vert \gamma^{\frac{s}{2}} \\
& \quad \quad \;
+\Vert f \Vert^{k} \vert \nabla \Phi \vert \gamma^{\frac{s-2}{2}}
+\Vert f \Vert^{k} \vert \Phi \vert(1+\vert A \vert \gamma)\gamma^{\frac{s-4}{2}}
\;\vert^{2}d\mu \\
\leq &
c \int_{\Sigma} \Vert f \Vert^{2k}\vert \nabla^{2} \Phi \vert^{2} \gamma^{s}d\mu
+
c\int_{\Sigma} (\Vert f \Vert^{2k-2}\gamma^{s} +\Vert f \Vert^{2k} \gamma^{s-2})\vert \nabla \Phi \vert^{2}d\mu \\
& +
c\int_{[\gamma>0]}[\;
\Vert f \Vert^{2k-4}\gamma^{s}
+
\Vert f \Vert^{2k-2}\gamma^{s}\vert A \vert^{2} \\
& \hspace{48pt}
+
\Vert f \Vert^{2k-2}\gamma^{s-2}
+
\Vert f \Vert^{2k}\gamma^{s-4}
+\Vert f \Vert^{2k}\gamma^{s-2}\vert A \vert^{2}\;]\vert \Phi \vert^{2}d\mu.
\end{split}
\end{gather*}
By integration by parts 
\begin{gather*}
\begin{split}
\int_{\Sigma} 
[\;
\Vert f \Vert  ^{2k-2} &
\gamma^{s} +\Vert f \Vert^{2k} \gamma^{s-2}
\;]
\vert \nabla \Phi \vert^{2}d\mu \\
\leq &
c(s,k,\Lambda)\int_{\Sigma}
[\;
\Vert f \Vert^{2k-3}\gamma^{s}
+
\Vert f \Vert^{2k-2}\gamma^{s-1}\\
& \hspace{100pt}
+
\Vert f\Vert^{2k-1}\gamma^{s-2}
+
\Vert f \Vert^{2k}\gamma^{s-3}
\;]
\vert \Phi \vert \vert \nabla \Phi \vert d\mu \\
& +
c\int_{\Sigma}
[\;
\Vert f \Vert^{2k-2} 
\gamma^{s} 
+
\Vert f \Vert^{2k} \gamma^{s-2}
\;]
\vert \Phi \vert \vert \nabla^{2} \Phi \vert d\mu \\
\leq &
\varepsilon \int_{\Sigma} 
[\;
\Vert f \Vert  ^{2k-2} 
\gamma^{s} +\Vert f \Vert^{2k} \gamma^{s-2}
\;]
\vert \nabla \Phi \vert^{2}d\mu
+
\int_{\Sigma} \Vert f \Vert^{2k}\vert \nabla^{2} \Phi \vert^{2} \gamma^{s}d\mu\\
& +
c(\varepsilon,k,s,\Lambda)
\int_{[\gamma>0]}
[\;
\Vert f \Vert^{2k-4}\gamma^{s}
+
\Vert f \Vert^{2k}\gamma^{s-4}
\;]\vert \Phi \vert^{2}d\mu.
\end{split}
\end{gather*}
Absorbing and plugging in we derive for 
$c=c(s,k,\Lambda)>0$
\begin{gather*}
\begin{split}
\Vert \nabla^{2}\Psi \Vert^{2}_{L^{2}_{\mu}(\Sigma)}
\leq &
c \int_{\Sigma} \Vert f \Vert^{2k}\vert \nabla^{2} \Phi \vert^{2} \gamma^{s}d\mu
+
c \int_{[\gamma>0]}
[\,
\Vert f \Vert^{2k}
+
\Vert f \Vert^{2k-4}\gamma^{4}
]
\vert \Phi \vert^{2} \gamma^{s-4}d\mu\\
& +
c\int_{\Sigma}
[\;
\Vert f \Vert^{2k}\gamma^{s-2}
+
\Vert f \Vert^{2k-2}\gamma^{s}
\;]
\vert A \vert^{2}\vert \Phi \vert^{2}d\mu.
\end{split}
\end{gather*}
Collecting terms we conclude
\begin{gather*}
\begin{split}
\Vert \;\Vert f \Vert^{k} \Phi \Vert^{4}_{L^{\infty}_{\mu}([\gamma=1])}
\leq &
\Vert \Psi \Vert^{4}_{L^{\infty}_{\mu}(\Sigma)} \\
\leq &
c\Vert \; \Vert f \Vert^{k} \Phi \gamma^{\frac{s}{2}} \Vert^{2}_{L^{2}_{\mu}(\Sigma)}\\
& \hspace{50pt}
(\;
\int_{\Sigma} \Vert f \Vert^{2k}\vert \nabla^{2} \Phi \vert^{2} \gamma^{s}d\mu\\
& \hspace{54pt}+
\int_{[\gamma>0]}
[\;
\Vert f \Vert^{2k}
+
\Vert f \Vert^{2k-4}\gamma^{4}
\;]
\vert \Phi \vert^{2}\gamma^{s-4}d\mu\\
& \hspace{54pt}+
\int_{\Sigma}
[\;
\Vert f \Vert^{2k}\gamma^{s-2}
+
\Vert f \Vert^{2k-2}\gamma^{s}
\;]
\vert A \vert^{2}\vert \Phi \vert^{2}d\mu \\
&  \hspace{54pt} +
\int_{\Sigma} \Vert f \Vert^{2k} \gamma^{s} \vert H \vert^{4}\vert \Phi \vert^{2}d\mu
\;) \\
\leq &
c\int_{[\gamma>0]} \Vert f \Vert^{2k} \vert \Phi \vert^{2}d\mu\,
(\;
\int_{\Sigma} \Vert f \Vert^{2k}\vert \nabla^{2} \Phi \vert^{2} \gamma^{s}d\mu\\
& \hspace{104pt}+
\int_{[\gamma>0]}
[\;
\Vert f \Vert^{2k}
+
\Vert f \Vert^{2k-4}
\;]
\vert \Phi \vert^{2}d\mu\\
&  \hspace{104pt} +
\int_{\Sigma} \Vert f \Vert^{2k}  \vert A \vert^{4}\vert \Phi \vert^{2}\gamma^{s}d\mu
\;).
\end{split}
\end{gather*}
The case $k=0$ follows analogously.
\end{proof}

\begin{proposition}  \label{prop5.2}
For $n,m,\Lambda,R,d>0$ and $s \geq 4$ there exist 
\begin{gather*}
\varepsilon_{3},\;c_{3}=c_{3}(m,s,\Lambda,R,d)>0
\end{gather*}
such that, if \;
$f:\Sigma  \rightarrow \mathbb{R}^{n}$ \; is a  closed immersed surface, 
$\gamma $ as in (\ref{eq13}) and 
\begin{gather*}
\begin{split}
&
\int_{[\gamma>0]} \vert A \vert^{2}d\mu \leq \varepsilon_{3},
\\
&
\Vert f \Vert_{L^{\infty}_{\mu}([\gamma>0])}\leq R,
\\
&
\Vert A \Vert_{W^{m,2}_{\mu}([\gamma>0])} \leq d,
\end{split} 
\end{gather*}
we have for $k=0,4$ 
\begin{gather*}
\begin{split}
\Vert \,\Vert & f  \Vert^{k}  \nabla^{m} A   \Vert^{4}_{L^{\infty}_{\mu}([\gamma=1])} 
\leq 
c_{3}(1+\int_{\Sigma} \Vert f \Vert^{2k} \vert \nabla^{m+2}A \vert^{2} \gamma^{s}d\mu).
\end{split}
\end{gather*}
\end{proposition}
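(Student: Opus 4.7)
The plan is to apply lemma \ref{lemma5.1} to $\Phi = \nabla^m A$, then bound each factor appearing on the right-hand side, using the Sobolev-type bounds from the $W^{m,2}$ hypothesis together with a Michael--Simon type argument (lemma \ref{A4}) to absorb the ``bad'' $|A|^4|\nabla^m A|^2$ term.

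First, apply lemma \ref{lemma5.1} with $\Phi = \nabla^m A$ and the given $k \in \{0,4\}$. This yields
\begin{gather*}
\Vert \Vert f \Vert^{k}\nabla^{m} A \Vert^{4}_{L^{\infty}_{\mu}([\gamma=1])} \leq c \int_{[\gamma>0]} \Vert f \Vert^{2k} \vert \nabla^{m} A \vert^{2} d\mu \\
\cdot\Bigl( \int_{\Sigma} \Vert f \Vert^{2k} \vert \nabla^{m+2} A \vert^{2} \gamma^{s} d\mu + \int_{\Sigma} \Vert f \Vert^{2k} \vert A \vert^{4} \vert \nabla^{m} A \vert^{2} \gamma^{s} d\mu + \int_{[\gamma>0]} [\Vert f \Vert^{2k} + k\Vert f \Vert^{2k-4}]\vert \nabla^{m} A \vert^{2} d\mu \Bigr).
\end{gather*}
The prefactor $\int_{[\gamma>0]} \Vert f \Vert^{2k} \vert \nabla^{m} A \vert^{2}\,d\mu$ is estimated directly by $R^{2k}d^{2}$ using $\Vert f \Vert_{L^{\infty}_{\mu}([\gamma>0])} \leq R$ and $\Vert A \Vert_{W^{m,2}_{\mu}([\gamma>0])} \leq d$. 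The last bracketed term is likewise controlled by $c(R,d)$ from the same two hypotheses.

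The essential step is to absorb $\int \Vert f \Vert^{2k} \vert A \vert^{4} \vert \nabla^{m} A \vert^{2} \gamma^{s} d\mu$. Following the pattern used in proposition \ref{6.1} to estimate $\int \Vert f \Vert^{8}\vert A\vert^{6}\gamma^{s} d\mu$, I would write the integrand as a square and apply lemma \ref{A4}:
\begin{gather*}
\int_{\Sigma} \Vert f \Vert^{2k}\vert A \vert^{4}\vert \nabla^{m} A \vert^{2}\gamma^{s} d\mu = \int_{\Sigma} \bigl(\Vert f \Vert^{k}\vert A \vert^{2}\vert \nabla^{m} A \vert \gamma^{s/2}\bigr)^{2} d\mu.
\end{gather*}
Expanding the derivatives of the test function and using $\Vert f\Vert\leq R$ and $0\leq \gamma \leq 1$, one obtains (as in the proof of proposition \ref{6.1}) a bound of the form
\begin{gather*}
c \int_{[\gamma>0]} \vert A \vert^{2} d\mu \cdot \Bigl( \varepsilon \int_{\Sigma} \Vert f\Vert^{2k}\vert \nabla^{m+2}A\vert^{2}\gamma^{s} d\mu + \int_{\Sigma} \Vert f\Vert^{2k}\vert A \vert^{4}\vert \nabla^{m} A \vert^{2}\gamma^{s}d\mu + c(R,d,s,\Lambda,k,m) \Bigr),
\end{gather*}
where the lower-order derivative terms produced by lemma \ref{A4} are controlled through the hypothesis $\Vert A \Vert_{W^{m,2}_{\mu}([\gamma>0])} \leq d$ and $\Vert f\Vert\leq R$. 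Choosing $\varepsilon_{3} = \varepsilon_{3}(n)$ small enough so that $c\int_{[\gamma>0]}\vert A\vert^{2}d\mu \leq c\varepsilon_{3} < 1/2$, absorption gives
\begin{gather*}
\int_{\Sigma} \Vert f \Vert^{2k}\vert A \vert^{4}\vert \nabla^{m} A\vert^{2} \gamma^{s} d\mu \leq \varepsilon \int_{\Sigma}\Vert f\Vert^{2k}\vert \nabla^{m+2}A\vert^{2}\gamma^{s}d\mu + c(m,s,\Lambda,R,d).
\end{gather*}

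Putting these pieces back into the inequality from lemma \ref{lemma5.1}, the prefactor is at most $cR^{2k}d^{2}$, the $|A|^{4}$-term contributes at most $\varepsilon \int \Vert f\Vert^{2k}|\nabla^{m+2}A|^{2}\gamma^{s} + c(m,s,\Lambda,R,d)$, and the remaining terms are bounded by constants. This yields the desired estimate
\begin{gather*}
\Vert \Vert f \Vert^{k}\nabla^{m} A \Vert^{4}_{L^{\infty}_{\mu}([\gamma=1])} \leq c_{3}\Bigl(1 + \int_{\Sigma}\Vert f\Vert^{2k}\vert \nabla^{m+2}A\vert^{2}\gamma^{s}d\mu\Bigr).
\end{gather*}
The main obstacle is the $|A|^{4}|\nabla^{m}A|^{2}$ term: one must carry out the Michael--Simon-type absorption carefully (as in proposition \ref{6.1}) and verify that each lower-order term produced by expanding the derivatives of $\Vert f\Vert^{k}|A|^{2}|\nabla^{m}A|\gamma^{s/2}$ is controlled by the $W^{m,2}$ hypothesis; the remaining steps are bookkeeping.
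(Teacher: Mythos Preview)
Your approach is essentially the paper's: apply lemma~\ref{lemma5.1} with $\Phi=\nabla^{m}A$, bound the prefactor and the last bracket by $c(R,d)$, and reduce everything to controlling $\int_{\Sigma}\Vert f\Vert^{2k}\vert A\vert^{4}\vert\nabla^{m}A\vert^{2}\gamma^{s}\,d\mu$ via lemma~\ref{A4} and the smallness of $\int_{[\gamma>0]}\vert A\vert^{2}$.

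One point in your sketch is not quite right and hides the only nontrivial step. When you apply lemma~\ref{A4} to $u=\Vert f\Vert^{k}\vert A\vert^{2}\vert\nabla^{m}A\vert\gamma^{s/2}$, the derivative $\nabla u$ produces, among others, the term $\Vert f\Vert^{k}\vert A\vert^{2}\vert\nabla^{m+1}A\vert\gamma^{s/2}$. After Cauchy--Schwarz this yields
\[
c\int_{[\gamma>0]}\vert A\vert^{2}\,d\mu\cdot\int_{\Sigma}\Vert f\Vert^{2k}\vert A\vert^{2}\vert\nabla^{m+1}A\vert^{2}\gamma^{s}\,d\mu,
\]
and $\vert\nabla^{m+1}A\vert^{2}$ is \emph{not} controlled by the $W^{m,2}$ hypothesis. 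So your sentence ``the lower-order derivative terms produced by lemma~\ref{A4} are controlled through the hypothesis $\Vert A\Vert_{W^{m,2}}\leq d$'' is inaccurate here, and you cannot jump in one step to $\varepsilon\int\Vert f\Vert^{2k}\vert\nabla^{m+2}A\vert^{2}\gamma^{s}$.

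The paper (and, in the $m=0$ case, proposition~\ref{6.1} that you cite) handles this by a second application of the same idea: apply lemma~\ref{A4} once more to $\int(\Vert f\Vert^{k}\vert A\vert\,\vert\nabla^{m+1}A\vert\gamma^{s/2})^{2}$, which now produces $\vert\nabla^{m+2}A\vert^{2}$ with a factor $\int\vert A\vert^{2}$ in front, together with genuinely lower-order terms (involving $\nabla^{m}A$, $\nabla^{m+1}A$ with reduced $\gamma$-weight, and $\nabla A$) that are then handled by corollary~\ref{A7} and the $W^{m,2}$ bound. After this two-step iteration the absorption goes through exactly as you describe. The paper also treats the four cases $m=0,\,k=0$; $m=0,\,k=4$ (referring back to \eqref{eq20}); $m\geq 1,\,k=4$; and $m\geq 1,\,k=0$ separately, since the bookkeeping differs slightly. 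Apart from this missing second iteration, your outline matches the paper's proof.
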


\begin{proof}
Considering lemma \ref{lemma5.1} with $\Phi=\nabla^{m} A$
it suffices to estimate
\begin{gather*}
\begin{split}
\int_{\Sigma} \Vert f \Vert & ^{2k}  \vert A  \vert^{4}  \vert \nabla^{m} A \vert^{2} \gamma^{s}d\mu 
\leq 
c_{3}
(
1
+ 
\int_{\Sigma} \Vert f \Vert^{2k} \vert \nabla^{m+2}A \vert^{2} \gamma^{s}d\mu).
\end{split}
\end{gather*}
For $m=k=0$ we have by lemma \ref{A4}
\begin{gather*}
\begin{split}
\int_{\Sigma} \vert A  \vert^{6}  & \gamma^{s}  d\mu 
=  
\int_{\Sigma} (\vert A \vert^{3} \gamma^{\frac{s}{2}})^{2}d\mu \\
\leq & 
c
[
\int_{\Sigma}
\vert A \vert^{2} \vert \nabla A \vert \gamma^{\frac{s}{2}}d\mu
+
s \Lambda
\int_{\Sigma}
\vert A \vert^{3} \gamma^{\frac{s-2}{2}} d\mu
 +
\int 
\vert A \vert^{4} \gamma^{\frac{s}{2}}d\mu
]^{2} \\
\leq &
c\int_{[\gamma>0]}\vert A \vert^{2} d\mu
[
\int_{\Sigma} \vert A \vert^{2}\vert \nabla A \vert^{2}\gamma^{s} d\mu 
+
s^{2}\Lambda^{2}\int_{\Sigma} \vert A \vert^{4}\gamma^{s-2}d\mu 
+
\int_{\Sigma} \vert A \vert^{6} \gamma^{s} d\mu 
] \\
\leq &
c
\int_{\Sigma}
\vert A \vert^{2}\vert \nabla A \vert^{2}\gamma^{s} d\mu +c(\varepsilon,c_{3})
+
c(\int_{[\gamma>0]} \vert A \vert^{2} d\mu +\varepsilon) 
\int_{\Sigma} \vert A \vert^{6} \gamma^{s}d\mu.
\end{split}
\end{gather*}
Absorbing yields
\begin{gather*}
\begin{split}
\int_{\Sigma} \vert A \vert^{6} \gamma^{s}d\mu 
\leq 
c \int_{\Sigma} \vert A \vert^{2} \vert \nabla A \vert^{2} \gamma^{s} d\mu
+c_{3}
\end{split}
\end{gather*}
Since $s \geq 4,$ i.e. $\frac{s}{2}\leq s-2,$ we have
\begin{gather*}
\begin{split}
\int_{\Sigma} \vert A \vert^{2} \vert \nabla A \vert^{2} \gamma^{s} d\mu
= &
\int_{\Sigma} (\vert A \vert \vert \nabla A \vert \gamma^{\frac{s}{2}})^{2}d\mu \\
\leq &
c
[
\int_{\Sigma} \vert \nabla A \vert^{2} \gamma^{\frac{s}{2}}d\mu
+
\int_{\Sigma} \vert A \vert \vert \nabla^{2} A \vert \gamma^{\frac{s}{2}}d\mu \\
& \quad  +
s \Lambda\int_{\Sigma} \vert A \vert \vert \nabla A \vert \gamma^{\frac{s-2}{2}}d\mu
+
\int_{\Sigma} \vert A \vert^{2}\vert \nabla A \vert \gamma^{\frac{s}{2}}d\mu
]^{2} \\
\leq &
c
\int_{\Sigma} \vert \nabla^{2} A \vert^{2} \gamma^{s} d\mu  
+
c_{3}
+
c(\int_{\Sigma}\vert \nabla A \vert^{2}\gamma^{\frac{s}{2}}d\mu)^{2}\\
& +
c\int_{[\gamma>0]}\vert A \vert^{2}d\mu
\int_{\Sigma} \vert A \vert^{2} \vert \nabla A \vert^{2} \gamma^{s} d\mu.
\end{split}
\end{gather*}
Therefore by absorption 
\begin{gather*}
\begin{split}
\int_{\Sigma} \vert A \vert^{2} \vert \nabla A \vert^{2} \gamma^{s} d\mu
\leq &
c
\int_{\Sigma} \vert \nabla^{2} A \vert^{2} \gamma^{s} d\mu
+c_{3}
+
c(\int_{\Sigma}\vert \nabla A \vert^{2}\gamma^{\frac{s}{2}}d\mu)^{2}\\
\leq &
c \int_{\Sigma} \vert \nabla^{2} A \vert^{2} \gamma^{s} d\mu
+c_{3},
\end{split}
\end{gather*}
where we used
\begin{gather*}
\begin{split}
(\int_{\Sigma} \vert \nabla A \vert^{2} \gamma^{\frac{s}{2}}d\mu)^{2}
\leq &
(
c\int_{\Sigma} \vert A \vert \vert \nabla^{2} A \vert \gamma^{\frac{s}{2}}d\mu
+
cs\Lambda\int_{\Sigma} \vert A \vert \vert \nabla A \vert \gamma^{\frac{s-2}{2}}d\mu 
)^{2} \\
\leq &
c\int_{\Sigma} \vert \nabla^{2} A \vert^{2} \gamma^{s} d\mu
+
c_{3} \int_{\Sigma} \vert \nabla A \vert^{2} \gamma^{s-2}d\mu\\
\leq &
c\int_{\Sigma} \vert \nabla^{2} A \vert^{2} \gamma^{s} d\mu
+
\varepsilon (\int_{\Sigma} \vert \nabla A \vert^{2} \gamma^{\frac{s}{2}}d\mu)^{2}
+
c(\varepsilon,c_{3})
\end{split}
\end{gather*}
by integration by parts, H\"older's inequality and $\frac{s}{2}\leq s-2.$\\
We conclude for $\int_{[\gamma>0]} \vert A \vert^{2} d\mu \leq \varepsilon_{3}$ sufficiently small
\begin{gather*}
\begin{split}
\int_{\Sigma} \vert A \vert^{6}\gamma^{s}
\leq &
c_{3}(1+\int_{\Sigma} \vert \nabla^{2} A \vert^{2} \gamma^{s}d\mu).
\end{split}
\end{gather*}
For $m=0,k=4$ we refer to (\ref{eq20}).\\
For arbitrary $m\geq 1$ and $k=4$ we estimate
\begin{gather*}
\begin{split}
\int_{\Sigma} \Vert f \Vert^{8}  \vert A \vert^{4} & \vert  \nabla^{m}A \vert^{2}\gamma^{s}d\mu
= 
\int_{\Sigma} (\Vert f \Vert^{4} \vert A \vert^{2} \vert \nabla^{m}A \vert\gamma^{\frac{s}{2}})^{2}d\mu \\
\leq &
c
[
\int_{\Sigma} \Vert f \Vert^{3} \vert A \vert^{2} \vert \nabla^{m}A \vert \gamma^{\frac{s}{2}}d\mu
+
\int_{\Sigma} \Vert f \Vert^{4} \vert A \vert \vert\nabla A \vert \vert \nabla^{m}A \vert \gamma^{\frac{s}{2}}d\mu \\
& \quad \quad 
+
\int_{\Sigma} 
\Vert f \Vert^{4} \vert A \vert^{2} \vert \nabla^{m+1}A \vert \gamma^{\frac{s}{2}}d\mu
+
s\Lambda 
\int_{\Sigma}
\Vert f \Vert^{4} \vert A \vert^{2} \vert \nabla^{m} A \vert \gamma^{\frac{s-2}{2}}d\mu \\
& \quad \quad
+
\int_{\Sigma} \Vert f \Vert^{4} \vert A \vert^{3} \vert \nabla^{m} A \vert \gamma^{\frac{s}{2}}d\mu
]^{2} \\
\leq &
c \int_{[\gamma>0]}\vert A \vert^{2}d\mu 
\int_{\Sigma} \Vert f \Vert^{8} \vert A \vert^{2} \vert \nabla^{m+1} A \vert^{2} \gamma^{s}d\mu
\\
& +
c(\int_{[\gamma>0]}\vert A \vert^{2}d\mu+\varepsilon)
\int_{\Sigma} \Vert f \Vert^{8}  \vert A \vert^{4} \vert  \nabla^{m}A \vert^{2}\gamma^{s}d\mu\\
& +
c(\varepsilon,c_{3}) 
(
(\int_{[\gamma>0]}\vert A \vert^{2}d\mu)^{2}
+
(\int_{[\gamma>0]}\vert \nabla A \vert^{2}d\mu)^{2}
) \\
& 
\hspace{100pt}
\int_{[\gamma>0]} [\Vert f \Vert^{8}+\Vert f \Vert^{4}\gamma^{4}] \vert \nabla^{m} A \vert^{2} \gamma^{s-4} d\mu,
\end{split}
\end{gather*}
i.e. by absorption
\begin{gather*}
\begin{split}
\int_{\Sigma} \Vert f \Vert^{8}  \vert A \vert^{4} \vert &  \nabla^{m}A \vert^{2}\gamma^{s}d\mu 
\leq 
c \int_{\Sigma} \Vert f \Vert^{8} \vert A \vert^{2} \vert \nabla^{m+1} A \vert^{2} \gamma^{s}d\mu
+
c_{3}.
\end{split}
\end{gather*}
Since we have
\begin{gather*}
\begin{split}
\int_{\Sigma} \Vert f \Vert^{8} \vert & A \vert^{2}  \vert  \nabla^{m+1} A \vert^{2} \gamma^{s}d\mu
= 
\int_{\Sigma} (\Vert f \Vert^{4} \vert A \vert \vert \nabla^{m+1} A \vert \gamma^{\frac{s}{2}})^{2}d\mu \\
\leq &
c
[
\int_{\Sigma} \Vert f \Vert^{3} \vert A \vert \vert \nabla^{m+1} A \vert \gamma^{\frac{s}{2}}d\mu 
+
\int_{\Sigma} \Vert f \Vert^{4} \vert \nabla A \vert \vert \nabla^{m+1} A \vert \gamma^{\frac{s}{2}} d\mu \\
& \quad \quad +
\int_{\Sigma} \Vert f \Vert^{4} \vert A \vert \vert \nabla^{m+2} A \vert \gamma^{\frac{s}{2}}d\mu 
+
s \Lambda
\hspace{-2.9pt}
\int_{\Sigma} \Vert f \Vert^{4} \vert A \vert \vert \nabla^{m+1} A \vert \gamma^{\frac{s-2}{2}} d\mu\\
& \quad \quad +
\int_{\Sigma} \Vert f \Vert^{4} \vert A \vert^{2} \vert \nabla^{m+1} A \vert \gamma^{\frac{s}{2}}d\mu
]^{2} \\
\leq &
c\int_{[\gamma>0]} \vert A \vert^{2}d\mu 
\int_{\Sigma} \Vert f \Vert^{8} \vert \nabla^{m+2} A \vert^{2} \gamma^{s} d\mu \\ 
& +
c
\int_{[\gamma>0]} \vert A \vert^{2}d\mu
\int_{\Sigma} \Vert f \Vert^{8} \vert A \vert^{2} \vert \nabla^{m+1} A \vert^{2} \gamma^{s} d\mu \\
& +
c_{3}
(
\int_{[\gamma>0]} \vert A \vert^{2} d\mu 
+
\int_{[\gamma>0]} \vert \nabla A \vert^{2} d\mu
) \\
& \hspace{60pt}
(\int_{\Sigma} \Vert f \Vert^{8} \vert \nabla^{m+1} A \vert^{2} \gamma^{s-2}d\mu 
+ \hspace{-0.5pt}
\int_{\Sigma} \Vert f \Vert^{6} \vert \nabla^{m+1} A \vert^{2} \gamma^{s}d\mu 
)
\end{split}
\end{gather*}
and by corollary \ref{A7} for $p=2,u=0,v=1$ and $p=2,u=1,v=0$ 
\begin{gather*}
\begin{split}
\int_{\Sigma} \Vert f \Vert^{8} \vert \nabla^{m+1}   
A  \vert^{2}   \gamma^{s-2}d\mu  
& +
\int_{\Sigma} \Vert f \Vert^{6} \vert \nabla^{m+1} A \vert^{2} \gamma^{s}d\mu \\
\leq & 
\varepsilon\int_{\Sigma} \Vert f \Vert^{8} \vert \nabla^{m+2} A \vert^{2} \gamma^{s}d\mu \\
& \hspace{-1pt} +
c(\varepsilon,s,\Lambda) \int_{[\gamma>0]}
[
\Vert f \Vert^{8}\gamma^{s-4}
+
\Vert f \Vert^{4} \gamma^{s}
]
\vert \nabla^{m} A \vert^{2} d\mu,
\end{split}
\end{gather*}
we conclude by absorption
\begin{gather*}
\begin{split}
\int_{\Sigma}
\Vert f \Vert^{8} \vert A \vert^{2} \vert & \nabla^{m+1}  A \vert^{2}  \gamma^{s}d\mu 
\leq 
c
\int_{\Sigma} 
\Vert f \Vert^{8} \vert \nabla^{m+2} A \vert^{2} \gamma^{s}d\mu 
+
c_{3}.
\end{split} 
\end{gather*}
Consequently for $\int_{[\gamma>0]} \vert A \vert^{2} d\mu \leq \varepsilon_{3}$ sufficiently small
\begin{gather*}
\begin{split}
\int_{\Sigma}
\Vert f \Vert^{8}  \vert A \vert^{4} \vert   \nabla^{m}A \vert^{2} & \gamma^{s}d\mu 
\leq   
c_{3}
(
1
+
\int_{\Sigma}
\Vert f \Vert^{8} \vert \nabla^{m+2} A \vert \gamma^{s}d\mu 
).
\end{split}
\end{gather*}
For arbitrary $m\geq 1 $ and $k=0$ we have
\begin{gather*}
\begin{split}
\int_{\Sigma} \vert A \vert^{4} \vert \nabla^{m} A &\vert^{2}  \gamma^{s} d\mu 
=
\int_{\Sigma} (\vert A \vert^{2} \vert \nabla^{m} A \vert \gamma^{\frac{s}{2}})^{2} d\mu \\
\leq &
c
[
\int_{\Sigma} \vert A \vert \vert \nabla A \vert \vert \nabla^{m} A \vert \gamma^{\frac{s}{2}} d\mu
+
\int_{\Sigma} \vert A \vert^{2} \vert \nabla^{m+1} A \vert \gamma^{\frac{s}{2}}d\mu \\
& \quad \quad+
s \Lambda \int_{\Sigma} \vert A \vert^{2} \vert \nabla^{m} A \vert \gamma^{\frac{s-2}{2}}d\mu
+
\int_{\Sigma} \vert A \vert^{3} \vert \nabla^{m} A \vert \gamma^{\frac{s}{2}}d\mu
]^{2} \\
\leq &
c \int_{[\gamma>0]} \vert A \vert^{2} d\mu
\int_{\Sigma} \vert A \vert^{2} \vert \nabla^{m+1}
A \vert^{2} \gamma^{s} d\mu\\
& +
c
(
\int_{[\gamma>0]} \vert A \vert^{2} d\mu 
+
\varepsilon
)
\int_{\Sigma} \vert A \vert^{4} \vert \nabla^{m} A \vert^{2}  \gamma^{s} d\mu \\
& +
c(\varepsilon,c_{3})
(
(\int_{[\gamma>0]} \vert A \vert^{2} d\mu)^{2}
+
(\int_{[\gamma>0]} \vert \nabla A \vert^{2} d\mu)^{2}
) 
\int_{[\gamma>0]} \vert \nabla^{m} A \vert^{2} d\mu,
\end{split}
\end{gather*}
i.e. by absorption
$\int_{\Sigma} \vert A \vert^{4} \vert \nabla^{m} A \vert^{2}  \gamma^{s} d\mu 
\leq 
c\int_{\Sigma} \vert A \vert^{2} \vert \nabla^{m+1} A \vert^{2} \gamma^{s} d\mu 
+
c_{3}.$
Since 
\begin{gather*}
\begin{split}
\int_{\Sigma} \vert A \vert^{2} & \vert  \nabla^{m+1} A \vert^{2}  \gamma^{s} d\mu 
= 
\int_{\Sigma} (\vert A \vert \vert \nabla^{m+1} A \vert \gamma^{\frac{s}{2}})^{2} d\mu \\
\leq &
c \hspace{-1pt}\int_{[\gamma>0]}\vert A \vert^{2}d\mu 
\int_{\Sigma} \vert \nabla^{m+2} A \vert^{2} \gamma^{s}d\mu
 +
c  \hspace{-1pt}\int_{[\gamma>0]}\vert A \vert^{2}d\mu
\int_{\Sigma} \vert A \vert^{2} \vert \nabla^{m+1} A \vert^{2}  \gamma^{s} d\mu \\
& +
c_{3}
(
\int_{[\gamma>0]} \vert A \vert^{2} d\mu 
+
\int_{[\gamma>0]} \vert \nabla A \vert^{2} d\mu
) 
\int_{\Sigma} \vert \nabla^{m+1} A \vert^{2} \gamma^{s-2}d\mu
\end{split}
\end{gather*}
and by corollary \ref{A7}
\begin{gather*}
\begin{split}
\int_{\Sigma} \vert \nabla^{m+1} A \vert^{2} \gamma^{s-2}d\mu
\leq &
\varepsilon\int_{\Sigma} \vert \nabla^{m+2} A \vert^{2} \gamma^{s} d\mu 
+
c(\varepsilon,c_{3})\int_{[\gamma>0]} \vert \nabla^{m}A \vert^{2}d\mu,
\end{split}
\end{gather*}
we obtain by absorption
\begin{gather*}
\begin{split}
\int_{\Sigma} \vert A \vert^{2} \vert \nabla^{m+1} A \vert^{2}  \gamma^{s} d\mu 
\leq &
c\int_{\Sigma} \vert \nabla^{m+2} A \vert^{2} \gamma^{s}d\mu 
+
c_{3}
\end{split}
\end{gather*}
and consequently for $\int_{[\gamma>0]} \vert A \vert^{2} d\mu \leq \varepsilon_{3}$
\begin{gather*}
\int_{\Sigma} \vert A \vert^{4} \vert \nabla^{m} A \vert^{2}  \gamma^{s} d\mu 
\leq 
c_{3}(1+\int_{\Sigma} \vert \nabla^{m+2} A \vert^{2} \gamma^{s}d\mu).
\end{gather*}
\end{proof}
\newpage
\section{The main results}
\begin{thm} \label{theoremI}
For $n,m,R,\tau>0$ and $\alpha \in \mathbb{R}_{> 0}^{m+2}$ 
there exist 
\begin{gather*}
\varepsilon=\varepsilon(n),\;
c=c(n,m,R,\alpha,\tau)>0 
\end{gather*}
such that, if \quad
$f:\Sigma \times [0,T)\longrightarrow \mathbb{R}^{n} \setminus 
\lbrace \mathbb{0} \rbrace, \quad 0<T\leq \tau,$ \\
is an inverse Willmore flow satisfying
\begin{gather*}
\begin{split}
&
\sup_{0 \leq t < T} \int_{B_{2\rho}(x_{0})}\vert A \vert^{2}d\mu \leq
\varepsilon, \\
&
\sup_{0\leq t <T} \rho^{-1}\Vert f \Vert_{L^{\infty}_{\mu}(B_{2\rho}(x_{0}))}\leq R,\\
&
\rho^{2i}\int_{B_{2\rho}(x_{0})} \vert \nabla^{i}A \vert^{2}d\mu \lfloor _{t=0} \;\leq
\alpha_{i},\quad i=1,\ldots,m+2
\end{split}
\end{gather*}
for some $x_{0} \in \mathbb{R}^{n}$ and $\rho>0$ , we have
\begin{gather*}
\sup_{0\leq t <T}\Vert \nabla^{m} A \Vert_{L^{\infty}_{\mu}(B_{\rho}(x_{0}))}
\leq \rho^{-(m+1)}c.
\end{gather*}
\end{thm}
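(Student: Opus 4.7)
By translation we may assume $x_{0}=0$. The inverse Willmore flow equation \eqref{eq1} is invariant under the parabolic scaling $\widetilde{f}(\widetilde{x},\widetilde{t})=\rho^{-1}f(\widetilde{x},\rho^{-4}\widetilde{t})$, under which $|\widetilde{A}|=\rho|A|$, $|\widetilde{\nabla}^{i}\widetilde{A}|=\rho^{1+i}|\nabla^{i}A|$ and $d\widetilde\mu=\rho^{-2}d\mu$. All hypotheses of the theorem translate into the analogous statements for $\widetilde{f}$ with $\rho=1$, and the conclusion $\|\nabla^{m}A\|_{L^{\infty}(B_{\rho})}\le\rho^{-(m+1)}c$ for $f$ is equivalent to $\|\widetilde\nabla^{m}\widetilde A\|_{L^{\infty}(B_{1})}\le c$ for $\widetilde f$ on $[0,\rho^{4}\tau)$. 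So it suffices to treat the case $\rho=1$, at the cost of letting $c$ depend on $\tau$.

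Fix a strictly decreasing sequence $2=r_{0}>r_{1}>\ldots>r_{m+3}=1$ and select cut-off functions $\widetilde\gamma_{j}\in C^{\infty}_{c}(\mathbb R^{n})$ ($j=0,1,\ldots,m+2$) with $0\le\widetilde\gamma_{j}\le 1$, $\widetilde\gamma_{j}\equiv 1$ on $B_{r_{j+1}}$, $\operatorname{supp}\widetilde\gamma_{j}\subset B_{r_{j}}$, and $\|\widetilde\gamma_{j}\|_{C^{2}}\le\Lambda=\Lambda(m)$; then $\widetilde\gamma_{j+1}\le\widetilde\gamma_{j}$. Put $\gamma_{j}=\widetilde\gamma_{j}\circ f$ as in \eqref{eq13} and choose a single $s\ge 2(m+2)+4$. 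The core of the argument is a finite induction on $j=0,1,\ldots,m+2$ establishing
\begin{gather*}
\sup_{0\le t<T}\int_{\Sigma}|\nabla^{j}A|^{2}\gamma_{j}^{s}\,d\mu+\int_{0}^{T}\!\!\int_{\Sigma}\|f\|^{8}|\nabla^{j+2}A|^{2}\gamma_{j}^{s}\,d\mu\,dt\le C_{j},\\
\sup_{0\le t<T}\|\nabla^{j-2}A\|_{L^{\infty}(\{\gamma_{j-1}=1\})}\le C_{j}\quad(j\ge 2),\qquad \int_{0}^{T}\|\,\|f\|^{4}\nabla^{j}A\|_{L^{\infty}(\{\gamma_{j+1}=1\})}^{4}\,dt\le C_{j},
\end{gather*}
with $C_{j}=C_{j}(n,m,R,\alpha,\tau)$. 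The base case $j=0$ is Proposition~\ref{6.1} (requiring $\varepsilon\le\varepsilon_{0}(n)$), the time-integrated $L^{\infty}$-bound coming from Proposition~\ref{prop5.2} applied with $m=0,\,k=4$ and the cutoff $\widetilde\gamma_{1}$ (using the just-obtained dissipation integral on the right). The inductive step $j-1\!\to\!j$ invokes Proposition~\ref{7.1}, \ref{8.1}, or \ref{9.1} (according to whether $j=1$, $j=2$, or $j\ge 3$) with cutoff $\widetilde\gamma_{j}$; the smallness $\int|A|^{2}d\mu\le\varepsilon_{j}$ and the bound $\|f\|_{L^{\infty}}\le R$ are immediate from the hypotheses, while the integrated $L^{2}$- and $L^{\infty}$-hypotheses required by those propositions at derivative orders $j-2,j-1,j,j+1$ are supplied by the inductive bounds at levels $\le j-1$ combined with pointwise-in-$t$ applications of Proposition~\ref{prop5.2}. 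The nesting $\widetilde\gamma_{j+1}\le\widetilde\gamma_{j}$ ensures that $L^{\infty}$-norms over $\{\gamma_{j'}=1\}$ dominate the spatial $L^{\infty}$-norms on the support of later cutoffs, and that $\gamma_{j}^{s}\le\gamma_{j-1}^{s}$, so the integrated dissipation terms carry across.

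Once the induction reaches $j=m+2$, we have in particular
\[
\sup_{0\le t<T}\int_{\Sigma}|\nabla^{m+2}A|^{2}\gamma_{m+2}^{s}\,d\mu\le C_{m+2}.
\]
One final application of Proposition~\ref{prop5.2} with its ``$m$'' equal to our $m$, with $k=0$, and with cutoff $\widetilde\gamma_{m+2}$ gives, for each $t\in[0,T)$,
\[
\|\nabla^{m}A\|_{L^{\infty}(\{\gamma_{m+2}=1\})}^{4}\le c_{3}\Bigl(1+\int_{\Sigma}|\nabla^{m+2}A|^{2}\gamma_{m+2}^{s}\,d\mu\Bigr)\le C,
\]
and since $\{\gamma_{m+2}=1\}\supset B_{r_{m+3}}=B_{1}=B_{\rho}$ the claim follows after undoing the rescaling.

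\textbf{Main obstacle.} The delicate point is not any single estimate but the verification that at each step of the induction all the hypotheses of the relevant evolution proposition (\ref{7.1}, \ref{8.1} or, especially, \ref{9.1}, whose hypotheses involve $W^{m-3,\infty}$-bounds on $A$, $W^{m-1,2}$-bounds, and four distinct integrated-in-time quantities) are already in hand. This requires that the inductive quantities be precisely the outputs of the previous propositions combined with Proposition~\ref{prop5.2}, and that the strictly nested sequence of cutoffs be fixed once and for all so that no localization shrinks to zero. The bookkeeping, while unpleasant, is mechanical once the inductive hypothesis is set up correctly.
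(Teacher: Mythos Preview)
Your approach is the paper's approach: rescale to $\rho=1$, fix a nested family of cutoffs supported in $B_{2}(x_{0})$, and bootstrap inductively via Propositions~\ref{6.1}, \ref{7.1}, \ref{8.1}, \ref{9.1} together with Proposition~\ref{prop5.2}. Two points need correction. First, the reduction ``by translation we may assume $x_{0}=0$'' is invalid: the flow \eqref{eq1} contains the factor $\|f\|^{8}$, which is the distance to the origin and is not translation-invariant, so $f-x_{0}$ is not an inverse Willmore flow. Simply keep $x_{0}$ arbitrary and center the cutoffs at $x_{0}$, as the paper does.

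Second, one cutoff per level is not enough to close the induction. The evolution proposition at level $j$ with cutoff $\gamma_{j}$ yields the \emph{weighted} bound $\sup_{t}\int|\nabla^{j}A|^{2}\gamma_{j}^{s}d\mu$, while Proposition~\ref{prop5.2} with cutoff $\gamma$ requires the \emph{unweighted} hypothesis $\|A\|_{W^{j,2}([\gamma>0])}$; thus $\gamma$ must satisfy $\operatorname{supp}\gamma\subset[\gamma_{j}=1]$. On the other hand, Proposition~\ref{prop5.2} outputs $L^{\infty}$-bounds only on $[\gamma=1]$, whereas the evolution proposition at level $j{+}1$ with cutoff $\gamma_{j+1}$ needs them on $[\gamma_{j+1}>0]$; thus $[\gamma=1]\supset\operatorname{supp}\gamma_{j+1}$. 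These two constraints force an \emph{intermediate} cutoff strictly between $\gamma_{j}$ and $\gamma_{j+1}$. In your scheme the third inductive quantity is stated on $\{\gamma_{j+1}=1\}$, which is a proper subset of $[\gamma_{j+1}>0]$ and therefore does not feed into the hypotheses of the next step. The paper handles this by taking roughly two cutoffs per level (evolution step with $\gamma_{2m-1}$, interpolation step with $\gamma_{2m}$), and varying $s$ with the level; your choice of a single large $s$ is fine, but you must double the number of cutoffs. With those two fixes the argument goes through exactly as you outline.
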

\begin{proof}
Rescaling, cf. Lemmata \ref{A2} and \ref{A3}, we may assume $\rho=1.$ \\
Define 
\begin{gather*}
\varepsilon(n)
:= 
\min \lbrace \varepsilon_{0}(n),\varepsilon_{1}(n),\varepsilon_{2}(n),\varepsilon_{3} \rbrace, 
\end{gather*}
cf.
propositions \ref{6.1}, \ref{7.1}, \ref{8.1} and \ref{prop5.2} and choose a sequence $(\tilde{\gamma}_{i}) \subset C^{2}_{0}(\mathbb{R}^{n}),$ which satisfies
\begin{enumerate}
\item
$1\geq \tilde{\gamma}_{0}\geq \tilde{\gamma}_{1}\geq \ldots \geq \tilde{\gamma}_{m}\geq \ldots \geq 0,$
\item
$B_{2}(x_{0})\supset supp(\tilde{\gamma}_{0}) \supset [\tilde{\gamma}_{0}=1] \supset supp(\tilde{\gamma}_{1})  \supset [\tilde{\gamma}_{1}=1] \supset \ldots \supset B_{1}(x_{0}),$
\item
$\Vert \tilde{\gamma}_{i} \Vert_{C^{2}(\mathbb{R}^{n})}\leq c_{i}$ for all $i \geq 0.$
\end{enumerate}
Let $\gamma_{i}:=\tilde{\gamma}_{i}\circ f.$
From proposition \ref{6.1} for $s=4, \gamma= \gamma_{0}$ we infer
\begin{gather*}
\int^{T}_{0}\int_{\Sigma}\Vert f \Vert^{8} \vert \nabla^{2} A \vert^{2} \gamma^{4}_{0}\,d\mu\,dt 
\leq  
c(n,R,\tau)
\end{gather*}
and hence by proposition \ref{prop5.2} for $m=0,\,k=s=4$ and $d=\varepsilon$
\begin{gather*}
\begin{split}
\int^{T}_{0}\Vert \; \Vert f \Vert^{4} A \Vert^{4}_{L^{\infty}_{\mu}([\gamma_{0}=1])}\,dt 
\leq &
c(n,R)(T+c(n,R,\tau))\\
\leq &
c(n,R,\tau).
\end{split}
\end{gather*}
Since  $[\gamma_{0}=1] \supset supp(\gamma_{1}),$ we conclude
\begin{gather*}
\begin{split}
&\int^{T}_{0}\int_{[\gamma_{1}>0]}\Vert f \Vert^{8} \vert \nabla^{2} A \vert^{2} \,d\mu\,dt
\leq 
c(n,R,\tau),
\\
&\int^{T}_{0}\Vert \; \Vert f \Vert^{4} A \Vert^{4}_{L^{\infty}_{\mu}([\gamma_{1}>0])}\,dt 
\leq 
c(n,R,\tau).
\end{split}
\end{gather*}
Next we have by proposition \ref{7.1} for $s=6,\,\gamma=\gamma_{1}$ and $d=c(n,R,\tau)$
\begin{gather*}
\begin{split}
\sup_{0 \leq t < T}\int_{\Sigma}\vert \nabla A \vert^{2} \gamma^{6}_{1}d\mu
& + 
\int^{T}_{0} 
\int_{\Sigma}
\Vert f \Vert^{8}\vert \nabla^{3} A \vert^{2}\gamma^{6}_{1}d\mu \;dt \\
\leq &
c(n,R,d,\tau)(1+\int_{\Sigma}\vert \nabla A \vert^{2}\gamma_{1}^{6} d\mu \lfloor_{t=0}) \\
\leq &
c(n,R,\alpha_{1},\tau).
\end{split}
\end{gather*}
By proposition \ref{prop5.2} for $m=1,\,k=4,\,s=6$ and
$d=c(n,R,\alpha_{1},\tau)$ and
\begin{gather*}
[\gamma_{1}=1] \supset supp(\gamma_{2}) \supset [\gamma_{2}=1] \supset supp(\gamma_{3})
\end{gather*} 
we derive
\begin{gather*}
\begin{split}
&
\sup_{0 \leq t < T}\int_{[\gamma_{3}>0]}\vert \nabla A \vert^{2}d\mu
\leq
c(n,R,\alpha_{1},\tau),\\
&
\int^{T}_{0} \int_{[\gamma_{3}>0]} \Vert f \Vert^{8} \vert \nabla^{k+2} A \vert^{2}\,d\mu\,dt
\leq 
c(n,R,\alpha_{1},\tau),\\
&
\int^{T}_{0} \Vert \; \Vert f \Vert^{4} \nabla^{k} A \Vert^{4}_{L^{\infty}_{\mu}([\gamma_{3}>0])}\,dt
\leq 
c(n,R,\alpha_{1},\tau)
\end{split}
\end{gather*}
for $k=0,1.$ \\
Hence by proposition \ref{8.1} for $s = 8,\,\gamma=\gamma_{3}$ and $d=c(n,R,\alpha_{1},\tau)$
\begin{gather*}
\begin{split}
\sup_{0 \leq t <T}\int_{\Sigma}\vert \nabla^{2} A \vert^{2} \gamma^{8}_{3}d\mu
& + 
\int^{T}_{0} 
\int_{\Sigma}
\Vert f \Vert^{8}\vert \nabla^{4} A \vert^{2}\gamma^{8}_{3}d\mu \;dt \\
\leq &
c(n,R,d,\tau)(1+\int_{\Sigma}\vert \nabla^{2} A \vert^{2} \gamma_{3}^{8}d\mu \lfloor_{t=0})\\
\leq &
c(n,R,\alpha_{1},\alpha_{2},\tau) .
\end{split}
\end{gather*}
By proposition \ref{prop5.2} for $m=0,2, \;k=0,4,\; s=8
,\;d=c(n,R,\alpha_{1},\alpha_{2},\tau)$ and,\newpage \hspace{-22pt}
since
\begin{center}
$[\gamma_{3}=1] \supset supp(\gamma_{4}),$ 
\end{center}
we obtain both
\begin{gather*}
\begin{split}
& \hspace{-50pt}
\sup_{0 \leq t <T} \Vert A \Vert_{L^{\infty}_{\mu}[(\gamma_{4}=1])} 
\leq 
c(n,R,\alpha_{1},\alpha_{2},\tau),\\
\end{split}
\end{gather*}
\begin{equation} \label{gg}
\int^{T}_{0} \Vert \;\Vert f \Vert^{4} \nabla^{2} A \Vert^{4}_{L^{\infty}_{\mu}([\gamma_{4}=1])} \,dt
\leq 
c(n,R,\alpha_{1},\alpha_{2},\tau)
.
\end{equation}
Therefore and  by $[\gamma_{4}=1] \supset supp(\gamma_{5})$ we may assume for 
\begin{gather*}
m \geq 3,\,\gamma=\gamma_{2m-1}
\end{gather*}
inductively
\begin{gather*}
\begin{split}
& 
\sup_{0 \leq t < T}\Vert A \Vert_{W^{m-3,\infty}_{\mu}([\gamma_{2m-1}>0])} 
\leq
d, \\
& 
\int^{T}_{0}\Vert\,\Vert f \Vert^{4} \nabla^{k} A \Vert^{4}_{L^{\infty}_{\mu}([\gamma_{2m-1}>0])}dt \leq
d
\quad \text{for} \quad k=m-2 , m-1,\\&
\sup_{0 \leq t <T} \Vert A \Vert_{W^{m-1,2}_{\mu}([\gamma_{2m-1}>0])}
\leq
d,\\
&
\int^{T}_{0}\int_{[\gamma_{2m-1}>0]}\Vert f \Vert^{8}\vert \nabla^{k} A \vert^{2} d\mu\; dt  \leq 
d
\quad\text{for} \quad k=m,m+1.
\end{split}
\end{gather*}
with $d=c(n,m,R,\alpha_{1},\ldots,\alpha_{m-1},\tau).$ \\
By proposition \ref{9.1} for $s=2m+4$ we conclude 
\begin{gather*}
\begin{split}
\sup_{0 \leq t <T}\int_{\Sigma}\vert \nabla^{m} A \vert^{2} \gamma^{2m+4}_{2m-1}d\mu
& + 
\int^{T}_{0} 
\int_{\Sigma}
\Vert f \Vert^{8}\vert \nabla^{m+2} A \vert^{2}\gamma^{2m+4}_{2m-1}d\mu \;dt \\
\leq & 
c(n,m,R,d,\tau)
(
1
+
\int_{\Sigma}\vert \nabla^{m} A \vert^{2} \gamma^{2m+4}_{2m-1} d\mu \lfloor_{t=0}
)\\
\leq &
c(n,m,R,\alpha_{1},\ldots,\alpha_{m},\tau).
\end{split}
\end{gather*}
Now the claim follows by induction over $m \geq 3$ from proposition \ref{prop5.2} and 
\begin{center}
$[\gamma_{2m-1}=1] \supset supp(\gamma_{2m})\supset [\gamma_{2m}=1]\supset
supp(\gamma_{2m+1})= supp(\gamma_{2(m+1)-1}).$ 
\end{center}
\end{proof}
Next we state a version of theorem \ref{thm1.1}.
\begin{thm} \label{thm2}
For \;$0<R,\alpha_{1},\alpha_{2}<\infty$ and $n \in \mathbb{N}$ there exist 
\begin{gather*}
\delta=\delta(n),\; k=k(n),
\;c=c(n,R,\alpha_{1},\alpha_{2})>0
\end{gather*}
such that, if
\quad
$f_{0}:\Sigma \longrightarrow \mathbb{R}^{n} \setminus \lbrace \mathbb{0} \rbrace$ 
\quad
is a closed immersed surface\\
satisfying
\begin{gather*}
\begin{split}
&
\sup_{x \in \mathbb{R}^{n}} \int_{B_{2\rho}(x)} \vert A \vert^{2}d\mu \leq\delta, \\
&
\sup_{x \in \mathbb{R}^{n}}\rho^{2i}\int_{B_{2\rho}(x)} \vert \nabla^{i} A \vert^{2}d\mu 
\leq 
\alpha_{i},\quad i=1,2\;, \\ 
& 
\rho^{-1}\Vert f_{0} \Vert_{L^{\infty}_{\mu}(\Sigma)} \leq R 
\end{split}
\end{gather*}
for some $\rho>0$, there exists an inverse Willmore flow
\begin{gather*}
f:\Sigma \times [0,T)\longrightarrow \mathbb{R}^{n} \setminus \lbrace \mathbb{0} \rbrace,\quad f(\cdot,0)=f_{0} 
\end{gather*}
with $T >\rho^{-4}c$. Moreover we have the estimates
\begin{gather*}
\begin{split}
&
\sup_{0 \leq t \leq\rho^{-4}c} \rho^{-1}\Vert f \Vert_{L^{\infty}_{\mu}(\Sigma)}
\leq 2R
\quad \text{and} \quad
\sup_{0 \leq t \leq \rho^{-4}c,\, x \in \mathbb{R}^{n}} \int_{B_{2\rho}(x)} \vert A \vert^{2} d\mu\leq k  \delta.
\end{split}
\end{gather*}
\end{thm}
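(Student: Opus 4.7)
First I would reduce to $\rho=1$ using the scale invariance of the flow (cf.\ Lemmata \ref{A2}, \ref{A3}), and invoke standard short-time existence for quasilinear parabolic fourth-order systems on closed surfaces to obtain a smooth maximal solution $f:\Sigma\times[0,T_{\max})\to\mathbb{R}^n\setminus\{0\}$. The argument then proceeds by continuity: set
\begin{gather*}
T^* := \sup\bigl\{\,t\in[0,T_{\max}):\ \Vert f(\cdot,s)\Vert_\infty \le 2R \ \text{ and }\ \sup_{x\in\mathbb{R}^n}\int_{B_2(x)}\vert A\vert^2 d\mu\lfloor_s \le k\delta \ \ \forall s\in[0,t]\,\bigr\}
\end{gather*}
for $k=k(n)$ to be fixed, and aim to show that on an initial interval $[0,c]$, $c=c(n,R,\alpha_1,\alpha_2)$, both conditions are \emph{strictly} satisfied; the resulting smoothness bounds then prevent blow-up, yielding $T_{\max}\ge c$ together with the two claimed estimates.

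For the concentration, I fix $x_0\in\mathbb{R}^n$, pick $\widetilde\gamma\in C_0^2(\mathbb{R}^n)$ supported in $B_2(x_0)$, equal to $1$ on $B_1(x_0)$, with $\Vert\widetilde\gamma\Vert_{C^2}\le\Lambda(n)$, and apply Proposition \ref{6.1} with $s=4$ and $\gamma=\widetilde\gamma\circ f$. Provided $k\delta\le\varepsilon_0(n)$, this gives on $[0,T^*)$ the estimate
\begin{gather*}
\int_{B_1(x_0)}\vert A\vert^2 d\mu(t) \le \delta + c_0(n)(R^8+R^4)\,t.
\end{gather*}
Covering every ambient ball $B_2(x)$ by a uniformly bounded number $N(n)$ of unit balls gives $\sup_{x}\int_{B_2(x)}\vert A\vert^2 d\mu(t)\le N(n)\bigl[\delta+c_0(R^8+R^4)t\bigr]$. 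Choosing $k:=4N(n)$ and restricting $\delta\le\varepsilon_0/(4N)$, the concentration condition is strictly improved for $t\le c_1(n)/(R^8+R^4)$.

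The main obstacle is controlling $\Vert f\Vert_\infty$, because the data $\alpha_1,\alpha_2$ are only enough to run the bootstrap chain of Propositions \ref{6.1}, \ref{7.1}, \ref{8.1} up through $m=2$; in particular one gets $\sup_t\Vert A\Vert_\infty\le c$ but \emph{no} sup-in-time pointwise bound on $\Vert\nabla^2 A\Vert_\infty$. However, applying Proposition \ref{prop5.2} at each intermediate step, this chain does yield the $L^4$-in-time bound
\begin{gather*}
\int_0^{T^*}\Vert\,\Vert f\Vert^4\nabla^k A\Vert_\infty^4\,dt \le c(n,R,\alpha_1,\alpha_2),\qquad k=0,1,2.
\end{gather*}
Writing the evolution equation as $\Vert\partial_t f\Vert_\infty \le c\,\Vert f\Vert_\infty^4\bigl(\Vert\,\Vert f\Vert^4\nabla^2 A\Vert_\infty + \Vert\,\Vert f\Vert^4 A\Vert_\infty\Vert A\Vert_\infty^2\bigr)$ and applying H\"older's inequality in time with $p=4,\,q=4/3$, I would estimate
\begin{gather*}
\int_0^t\Vert\partial_t f\Vert_\infty\,ds \le c(2R)^4\bigl(t^{3/4}+t\bigr)\cdot c(n,R,\alpha_1,\alpha_2) \le c'\,t^{3/4},
\end{gather*}
so that $\Vert f(\cdot,t)\Vert_\infty\le R+c'\,t^{3/4}<2R$ strictly for $t\le c_2(n,R,\alpha_1,\alpha_2)$.

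Combining both strict improvements on $[0,\min(c_1/(R^8+R^4),c_2)]$, the maximality of $T^*$ together with the parabolic continuation criterion (the smoothness bounds extend past any $T^*<T_{\max}$) forces $T^*\ge c:=\min(c_1/(R^8+R^4),c_2)$, yielding the claim after rescaling back. The genuine difficulty is the extension control in the third paragraph: because $\nabla^2 A$ is controlled only in $L^4$ in time, the naive Gronwall argument for $\Vert f\Vert_\infty$ fails, and one must absorb the factor $\Vert f\Vert^4$ onto $\nabla^2 A$ and borrow the power $t^{3/4}$ through H\"older. This margin is exactly what keeps $\Vert f\Vert_\infty$ from doubling before the bootstrap closes.
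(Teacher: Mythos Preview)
Your plan is essentially the paper's own argument, and the core steps---Proposition~\ref{6.1} for concentration control, the bootstrap chain \ref{6.1}$\to$\ref{7.1}$\to$\ref{8.1} interleaved with Proposition~\ref{prop5.2} to produce $\int_0^{T^*}\Vert\,\Vert f\Vert^4\nabla^2 A\Vert_\infty^4\,dt\le c(n,R,\alpha_1,\alpha_2)$, and then H\"older in time with exponent $3/4$ to keep $\Vert f\Vert_\infty$ below $2R$---are exactly what the paper does (see the case $t_0=t_b$ there). The paper organises the continuity argument as a trichotomy on $t_0=\min(T,\lambda,t_b)$ rather than a single $T^*$, but this is cosmetic.

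There is, however, a point you gloss over that the paper treats with care: the case $T^*=T_{\max}$. Your phrase ``the resulting smoothness bounds then prevent blow-up'' is not justified by the estimates you have actually derived, which only reach $\sup_t\Vert A\Vert_\infty$ and $L^4$-in-time control of $\Vert\nabla^2 A\Vert_\infty$; these are not enough to extend the flow. The paper closes this by invoking Theorem~\ref{theoremI} for \emph{every} $m$, with $\alpha_i:=\int_\Sigma\vert\nabla^i A\vert^2 d\mu\lfloor_{t=0}$ taken from the (smooth but unquantified) initial data---here one is only after a qualitative contradiction, not a bound in terms of $\alpha_1,\alpha_2$. From the resulting $\sup_t\Vert\nabla^m A\Vert_\infty<\infty$ for all $m$ one shows $f(t)\to f(T_{\max})$ in $C^\infty$ to a smooth immersion. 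A second point you omit entirely is specific to the \emph{inverse} Willmore flow: one must also verify $f(T_{\max}):\Sigma\to\mathbb{R}^n\setminus\{0\}$, i.e.\ that the limit avoids the origin. The paper does this by a short ODE argument: if $f(\sigma,t)\to 0$ along some sequence, integrating $\vert\partial_t f\vert\le c\Vert f\Vert^8(\vert\nabla^2 A\vert+\vert A\vert^3)$ forces $\Vert f(\sigma,\tau_n)\Vert^{-7}\to\infty$ while the right-hand side stays bounded, a contradiction. Without this step short-time existence does not restart and the continuation fails.
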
 

\begin{proof}
Rescaling we may assume $\rho=1$.
Let $T>0$ be maximal with respect to the existence of an inverse Willmore flow
\begin{gather*}
f:\Sigma \times [0,T)\longrightarrow \mathbb{R}^{n} \setminus \lbrace \mathbb{0} \rbrace,\quad f(\cdot,0)=f_{0} 
\end{gather*}
We clearly have for some $\Gamma=\Gamma(n)>1$
\begin{gather*}
\varepsilon(t)
:=
\sup_{x \in \mathbb{R}^{n}} \int_{B_{2}(x)}\vert A \vert^{2}d\mu
\leq \Gamma\sup_{x \in \mathbb{R}^{n}} \int_{B_{1}(x)}\vert A \vert^{2}d\mu.
\end{gather*}
Let 
$\delta := \frac{\varepsilon}{3\Gamma},$ cf. theorem \ref{7.1}, and
\begin{gather*}
t_{b}:=\sup \lbrace 0 \leq t<T \mid \sup_{0 \leq \tau <t}\Vert f \Vert_{L^{\infty}_{\mu}(\Sigma)} \leq 2R \rbrace.
\end{gather*} 
For arbitrary $\lambda>0$ we have by continuity
\begin{gather*}
t_{0}:=\sup \lbrace 0 \leq t < \min (T,\lambda,t_{b}) \mid
\varepsilon(\tau)\leq 3\Gamma \delta 
\text{ for all } 0 \leq \tau <t \rbrace >0.
\end{gather*}
If $t_{0}<\min(T,\lambda,t_{b})$, then we have by definition of $\delta$ and $t_{b}$
\begin{gather*}
\begin{split}
&
\varepsilon(t_{0})=3\Gamma\delta=\varepsilon,
\\
&
\sup_{0 \leq t <t_{0},\;x \in \mathbb{R}^{n}}\int_{B_{2}(x)} \vert A \vert^{2}d\mu \leq \varepsilon,
\\
&
\sup_{0 \leq t <t_{0}} \Vert f \Vert_{L^{\infty}_{\mu}(\Sigma)} \leq 2R.
\end{split}
\end{gather*}
Hence we obtain for suitable 
\begin{gather*}
\widetilde{\gamma} \in C^{2}_{0}(B_{2}(0)) \quad\text{with}\quad \mathbb{1}_{B_{1}(0)} \leq \widetilde{\gamma} \leq \mathbb{1}_{B_{2}(0)} \quad \text{and} \quad \Vert \widetilde{\gamma} \Vert_{C^{2}(\mathbb{R}^{n})}\leq \Lambda=\Lambda(n),
\end{gather*}
by proposition \ref{6.1} choosing $s=4$ and
$\widetilde{\gamma}_{x}:=\widetilde{\gamma}(\cdot-x) \circ f$ for $x \in \mathbb{R}^{n}$
\begin{gather*}
\begin{split}
\sup_{x \in \mathbb{R}^{n}} 
\int_{B_{1}(x)}\vert A \vert^{2}d\mu \lfloor_{t=t_{0}}
\leq & 
\sup_{x \in \mathbb{R}^{n}} \int_{B_{2}(x)}\vert A \vert^{2}d\mu \lfloor_{t=0}
+
c(n)((2R)^{8}+(2R)^{4})\,t_{0} \\
\leq &
\delta
+
c(n)(R^{8}+R^{4})\,\lambda
\leq
2 \delta,
\end{split}
\end{gather*}
defining
$\lambda:=\frac{\varepsilon}{3\Gamma c(n)(R^{8}+R^{4})}.$
Hence $\varepsilon(t_{0})\leq 2\Gamma\delta<\varepsilon,$ a contradiction,
and
\begin{gather*}
t_{0}=\min(T,
\frac{c(n)}{R^{8}+R^{4}},t_{b}).
\end{gather*}
We will show, that $t_{0}=\frac{c(n)}{R^{8}+R^{4}}$ or $t_{0}=t_{b}$ imply $T>c(n,R,\alpha_{1},\alpha_{2})$, whereas
$t_{0}=T$ leads to a contradiction, what proves the claim. \\
If $t_{0} = \frac{c(n)}{R^{8}+R^{4}}<T$, then 
$T > c(n,R)$.\\
If $t_{0}=t_{b}<T$, then we have
\begin{gather*}
\begin{split}
& 
\sup_{0 \leq t < t_{0},\; x \in \mathbb{R}^{n}} \int_{B_{2}(x)} \vert A \vert^{2}d\mu \leq \varepsilon,\\
&
\sup_{0 \leq t < t_{0}} \Vert f \Vert_{L^{\infty}_{\mu}(\Sigma)}\leq 2 R, \\
&
\sup_{x \in \mathbb{R}^{n}}\int_{B_{2}(x)} \vert \nabla^{i} A \vert^{2} d\mu \lfloor _{t=0}
\leq 
\alpha_{i},\;i=1,2\;,\\
&
\Vert f(\sigma,0) \Vert \leq R,\quad \Vert f(\sigma,t_{0}) \Vert=2  R
\end{split}
\end{gather*}
for some $\sigma \in \Sigma.$ 
For $t_{0}>1$, we have $T > 1.$ Otherwise we have 
$t_{0}\leq 1$ and infer from 
Theorem \ref{7.1} for $m=0,\,\rho=\tau=1$, taking (\ref{gg}) into account
\begin{gather*}
\begin{split}
\sup_{0 \leq t <t_{0}} \Vert A \Vert_{L^{\infty}_{\mu}(\Sigma)},\;
\int^{t_{0}}_{0} \Vert \;\Vert f \Vert^{4} \nabla^{2} A \Vert^{4}_{L^{\infty}_{\mu}(\Sigma)}\,dt
\leq 
c(n,R,\alpha_{1},\alpha_{2})
\end{split}
\end{gather*}
by covering $\overline{B_{2R}(0)}\subset \cup_{i \in N=N(n,R)}B_{1}(x_{i})$. It follows 
\begin{gather*}
\begin{split}
2R
= &
\Vert f(\sigma,t_{0}) \Vert
=
\Vert f(\sigma,0) +\int^{t_{0}}_{0}\partial_{t}f(\sigma,s)ds \Vert \\
\leq &
R
+
c\,\int^{t_{0}}_{0}
[\,\Vert f \Vert^{8}(\vert \nabla^{2} A \vert + \vert A \vert^{3})](\sigma,s)ds \\
\leq &
R
+
cR^{4}\int^{t_{0}}_{0} \Vert  \;\Vert f \Vert^{4} \nabla^{2} A \Vert_{L^{\infty}_{\mu}(\Sigma)}dt
+
cR^{8}\int^{t_{0}}_{0}\Vert A \Vert^{3}_{L^{\infty}_{\mu}(\Sigma)}dt \\
\leq &
R
+
cR^{4}t_{0}^{\frac{3}{4}}[\int^{t_{0}}_{0} \Vert  \;\Vert f \Vert^{4} \nabla^{2} A \Vert^{4}_{L^{\infty}_{\mu}(\Sigma)}dt]^{\frac{1}{4}}
+
c(n,R,\alpha_{1},\alpha_{2})t_{0}\\
\leq &
\frac{3}{2}R
+
c(n,R,\alpha_{1},\alpha_{2})t_{0},
\end{split}
\end{gather*}
what shows
\begin{gather*}
c(n,R,\alpha_{1},\alpha_{2})t_{0}
\geq R.
\end{gather*}
Hence 
\begin{gather*}
T
>
\min(1,\frac{R}{c(n,R,\alpha_{1},\alpha_{2})}) =:
c(n,R,\alpha_{1},\alpha_{2}).
\end{gather*}
If $t_{0}=T$, then we have
\begin{gather*}
\begin{split}
&
t_{0}\leq \frac{c(n)}{R^{8}+R^{4}}, \\
&
\sup_{0 \leq t <t_{0},\, x\in \mathbb{R}^{n}}\int_{B_{2}(x)}\vert A \vert^{2}d\mu \leq \varepsilon,
\\
&
\sup_{0 \leq t < t_{0}} \Vert f \Vert_{L^{\infty}_{\mu}(\Sigma)}\leq 2 R 
\end{split}
\end{gather*}
and hence by theorem \ref{theoremI} for 
$
\rho=1,\,\tau=\frac{c(n)}{R^{8}+R^{4}},\;
\alpha_{i}
:=
\int_{\Sigma} \vert \nabla^{i} A \vert^{2} d\mu \lfloor _{t=0}
$
\begin{gather}\label{timecorollary1}
\sup_{0 \leq t < t_{0}}\Vert \nabla^{m} A \Vert_{L^{\infty}_{\mu}(\Sigma)}
\leq
c(n,m,R,\alpha_{0},\ldots,\alpha_{m+2}) 
<\infty .
\end{gather}
From this we will draw several conclusions. \newpage
\begin{enumerate}
\item
For the metric we derive by (\ref{formula7}) and (\ref{eq1})
\begin{gather*}
\partial_{t}g 
=
-2\langle A,\partial_{t}f \rangle
=
\Vert f \Vert^{8} (\nabla^{2}A*A+A*A*A*A)
\end{gather*}
and hence
\begin{gather*}
\begin{split}
\sup_{0 \leq t < t_{0}} \Vert \partial_{t} g \Vert_{L^{\infty}_{\mu}(\Sigma)} 
< \infty.
\end{split}
\end{gather*}
Particularly the metrics $g(t), t\in [0,t_{0}),$ are equivalent and converge uniformly to a metric $g(t) \longrightarrow :g(t_{0})$ for $t \longrightarrow t_{0}$, cf. \cite{ref3}, Lemma 14.2.
\item
Similarly we obtain for the Christoffel symbols, cf. (\ref{formula9})
\begin{gather*}
\begin{split}
\partial_{t} \Gamma^{k}_{i,j}
= &
-g^{l,k}
(
\langle \nabla_{l}A_{i,j},\partial_{t}f \rangle
-
\langle A_{i,j},\nabla_{l} \partial_{t}f\rangle \\
& \hspace{36pt}+
\langle A_{i,l},\nabla_{j} \partial_{t}f \rangle
+
\langle A_{j,l},\nabla_{i} \partial_{t} f\rangle
),
\end{split}
\end{gather*}
what shows 
\begin{gather*}
\begin{split}
\partial_{t} \Gamma=\Vert f \Vert^{8}(P^{3}_{2}(A)+P^{1}_{4}(A))
+
\nabla \Vert f \Vert^{8} *(P^{2}_{1}(A)+P^{0}_{4}(A))
\end{split}
\end{gather*}
and therefore by virtue of corollary \ref{tabel}
\begin{equation} \label{timecorollary3}
\sup_{0 \leq t <t_{0}}\Vert \nabla^{m} \partial_{t} \Gamma \Vert_{L^{\infty}_{\mu}(\Sigma)} 
< \infty.
\end{equation}
\item
Turning to the coordinate derivatives we note for $T \in T(p,q)$ 
\begin{gather*}
\nabla^{m}T=\partial^{m}T+\sum^{m}_{l=1} \sum_{k_{1}+\ldots+k_{l}+k\leq m-1}\partial^{k_{1}}\Gamma*\ldots *\partial^{k_{l}}\Gamma *\partial^{k}T,
\end{gather*}
which is easily checked by induction. This shows
\begin{gather*}
\vert \partial^{m} T \vert 
\leq 
c(n,m)(1+\vert \Gamma \vert + \ldots + \vert \partial^{m-1}\Gamma \vert)^{m} 
(\vert \nabla^{m} T \vert + \vert \partial^{m-1} T \vert + \ldots + \vert T \vert)
\end{gather*}
and inductively
\begin{gather*}
\vert \partial^{m} T \vert 
\leq
c(n,m)
(1+\vert \Gamma \vert + \ldots + \vert \partial^{m-1}\Gamma \vert)^{\sum^{m}_{i=1}i}
(\vert \nabla^{m} T \vert + \ldots + \vert T \vert).
\end{gather*}
For $T=\partial_{t}\Gamma$ we conclude by 
$\partial^{m}\partial_{t}\Gamma=\partial_{t}\partial^{m}\Gamma$ and
(\ref{timecorollary3}) inductively 
\begin{equation} \label{timecorollary4}
\sup_{0 \leq t <t_{0}} \Vert \partial^{m}\partial_{t} \Gamma \Vert_{L^{\infty}_{\mu}(\Sigma)},\;
\sup_{0 \leq t <t_{0}} \Vert \partial^{m} \Gamma \Vert_{L^{\infty}_{\mu}(\Sigma)}
< \infty.
\end{equation} 
\item
For $k+l=m \geq 0$ we have
\begin{equation} \label{timecorollary5}
\sup_{0 \leq t <t_{0}}\Vert \partial^{k} \nabla^{l} A \Vert_{L^{\infty}_{\mu}(\Sigma)},\,
\sup_{0 \leq t <t_{0}} \Vert \partial^{m+1} f \Vert_{L^{\infty}_{\mu}(\Sigma)} \,
< \infty. 
\end{equation}
This holds obviously true for $m=0$ by (\ref{timecorollary1}).
Now for $k+l=m+1$ 
\begin{gather*}
\begin{split}
\partial^{k}\nabla^{l}A-\nabla^{m+1} A
= &
\partial^{k}\nabla^{l}A-\nabla^{k}\nabla^{l} A\\
= &
\sum^{k}_{i=1}\partial^{i-1}(\partial-\nabla)\nabla^{m+1-i}A.
\end{split}
\end{gather*}
Since for any normal valued $l$-linear form $\Phi$ along $f$
\begin{gather*}
\begin{split}
0
= &
\partial_{i}\langle \partial_{j} f , \Phi_{i_{1},\ldots,i_{l}} \rangle 
= 
\langle A_{i,j},\Phi_{i_{1},\ldots,i_{l}} \rangle
+
\langle \partial_{j}f,\partial_{i}\Phi_{i_{1},\ldots,i_{l}} \rangle
\end{split}
\end{gather*}
and hence 
\begin{gather*}
\begin{split}
\nabla_{i}\Phi_{i_{1},\ldots,i_{l}}
= &
\partial^{\perp}_{i}\Phi_{i_{1},\ldots,i_{l}}
-
\sum^{l}_{k=1}\Phi_{i_{1},\ldots,k,\ldots,i_{l}}\Gamma^{k}_{i,i_{k}} \\
= &
\partial_{i}\Phi_{i_{1},\ldots,i_{l}}
+
g^{k,j}\langle A_{i,j},\Phi_{i_{1},\ldots,i_{l}} \rangle
\partial_{k}f
-
\sum^{l}_{k=1}\Phi_{i_{1},\ldots,k,\ldots,i_{l}}\Gamma^{k}_{i,i_{k}},
\end{split}
\end{gather*}
we derive
\begin{gather*}
\partial^{k}\nabla^{l}A-\nabla^{m+1} A
= 
\sum^{k}_{i=1}\partial^{i-1}(\nabla^{m+1-i}A*(A*\partial f+\Gamma)).
\end{gather*}
By induction hypothesis and (\ref{timecorollary4})
\begin{gather*}
\partial^{k}\nabla^{l}A\text{ for }k+l\leq m ,\;\partial^{i}f\text{ for }1 \leq i \leq m+1 \text{ and }
\partial^{m}\Gamma \text{ for all } m\in \mathbb{N}
\end{gather*}
are bounded. This proves
\begin{gather} \label{nnn}
\sup_{0 \leq t <t_{0}}\Vert \partial^{k} \nabla^{l} A \Vert_{L^{\infty}_{\mu}(\Sigma)}
< \infty \quad\text{for} \quad k+l=m+1.
\end{gather}
By $\partial_{i,j}f=A_{i,j}+\Gamma^{k}_{i,j}\partial_{k}f$ we then have
\begin{gather*}
\begin{split}
\partial^{m+2}f
=
\partial^{m}A+\sum_{k+l=m}\partial^{k}\Gamma\cdot\partial^{l+1}f.
\end{split}
\end{gather*}
The induction hypothesis, (\ref{timecorollary4}) and (\ref{nnn}) yield
\begin{gather*}
\sup_{0 \leq t <t_{0}}\Vert \partial^{m+2} f \Vert_{L^{\infty}_{\mu}(\Sigma)}\,
< \infty.
\end{gather*}
This proves (\ref{timecorollary5}).
\end{enumerate}
From the evolution equation (\ref{eq1}) of the inverse Willmore flow
\begin{gather*}
\partial_{t} f=-\frac{\Vert f \Vert^{8}}{2}(\Delta H + Q(A^{0})H) 
\end{gather*}
and (\ref{timecorollary5}) we infer, since $\sup_{0 \leq t < t_{0}} \Vert f \Vert_{L^{\infty}_{\mu}(\Sigma)} \leq 2R$, that for all $m \geq 0$
\begin{gather*}
\sup_{0 \leq t < t_{0}} \Vert \partial^{m}f \Vert_{L^{\infty}_{\mu}(\Sigma)},\; 
\sup_{0 \leq t < t_{0}} \Vert \partial^{m} \partial_{t} f \Vert_{L^{\infty}_{\mu}(\Sigma)}
< \infty.
\end{gather*}
Therefore $f(t)\longrightarrow :f(t_{0})$ in $C^{m}(\Sigma,\mathbb{R}^{n})$ as $t \nearrow t_{0}$ for all $m \geq 0.$\\
We conclude by what was already proven above, see the first point,
\begin{gather*}
g(t_{0})\longleftarrow g(t)=g_{f(t)}\longrightarrow g_{f(t_{0})} \quad \text{uniformly}.
\end{gather*}
Thus $g_{f(t_{0})}$ is a metric and hence
$f(t_{0}):\Sigma \longrightarrow \mathbb{R}^{n}$  a smooth immersion. 
If $f(t_{0}):\Sigma \longrightarrow \mathbb{R}^{n}\setminus \lbrace \mathbb{0} \rbrace,$ we may extend the inverse Willmore flow by 
short time existence, what shows, that 
$t_{0}=T$ could not have been the 
maximal time of lifespan of $f.$ 
This contradicts the maximality of $T.$ \\
Consequently it remains to show, that
$f(t_{0}):\Sigma \longrightarrow \mathbb{R}^{n}\setminus \lbrace \mathbb{0} \rbrace.$ \\
Otherwise there exists by continuity $\sigma \in \Sigma,$ such that
\begin{gather*}
f(\sigma,t) \longrightarrow \mathbb{0} \text{ as }t \nearrow t_{0}.
\end{gather*}
We choose a sequence $\tau_{n}\nearrow t_{0}$ satisfying
\begin{gather*}
0 \longleftarrow \Vert f(\sigma,\tau_{n}) \Vert \geq \Vert f(\sigma,t) \Vert
\quad  \text{for all} \quad \tau_{n}\leq t <t_{0}.
\end{gather*}
By the evolution equation (\ref{eq1}) it follows for $\tau_{n}\leq t <t_{0}$
\begin{gather*}
\begin{split}
\Vert f(\sigma,t)-f(\sigma,\tau_{n}) \Vert 
\leq & 
\int^{t}_{\tau_{n}} [\,\frac{\Vert f\Vert^{8}}{2}
(
\vert \Delta H \vert
+
\vert Q(A^{0})H\vert)\,](\sigma,s)
ds \\
\leq &
c\,\Vert f(\sigma,\tau_{n})\Vert^{8}
\int^{t_{0}}_{\tau_{n}} 
(
\vert \nabla^{2}A \vert
+
\vert A \vert^{3}
)
(\sigma,s)ds.
\end{split}
\end{gather*}
Letting $t \nearrow t_{0}$ we conclude
\begin{gather*}
\infty \longleftarrow \Vert f(\sigma,\tau_{n}) \Vert^{-7}
\leq 
\int^{t_{0}}_{\tau_{n}} 
(
\vert \nabla^{2}A \vert
+
\vert A \vert^{3} 
)
(\sigma,s)ds.
\end{gather*}
This shows in contradiction to (\ref{timecorollary1})
\begin{gather*}
\sup_{0 \leq t <t_{0}} 
(
\Vert \nabla^{2} A \Vert_{L^{\infty}_{\mu}(\Sigma)}
+
\Vert  A \Vert_{L^{\infty}_{\mu}(\Sigma)}
)
= \infty.
\end{gather*}
The additional estimates in theorem \ref{thm2} follow from
$T>t_{0}=\min(\lambda,t_{b})$
recalling the definitions of $t_{b}$ and $t_{0}$ taking $k(n):=3\Gamma(n).$
\end{proof}

\begin{proof}[Proof of theorem \ref{thm1.1}]
Rescaling we may assume $\rho=1.$ Let $x \in \mathbb{R}^{n}.$ \\
Using a suitable cut-off function and integration by parts one obtains
\begin{gather*}
\int_{B_{1}(x)} \vert \nabla A \vert^{2} d\mu  
\leq 
c \int_{B_{2}(x)} \vert A \vert^{2} d\mu 
+
c\int_{B_{2}(x)} \vert \nabla^{2} A \vert^{2} d\mu
\end{gather*}
for some constant $c=c(n)>0$. \\
Covering $B_{2}(x)\subseteq \cup_{i \in \Gamma=\Gamma(n)}B_{1}(x_{i})$
we obtain passing to the suprema
\begin{gather*}
\sup_{x \in \mathbb{R}^{n}}\int_{B_{1}(x)} \vert \nabla A \vert^{2} d\mu  
\leq 
c \sup_{x \in \mathbb{R}^{n}}\int_{B_{1}(x)} \vert A \vert^{2} d\mu 
+
c\sup_{x \in \mathbb{R}^{n}}\int_{B_{1}(x)} \vert \nabla^{2} A \vert^{2} d\mu.
\end{gather*}
Apply theorem \ref{thm2} with 
$
\alpha_{2}=\alpha,\;\alpha_{1}=c\delta+c\alpha_{2},\,
\rho=\frac{1}{2}.
$
\end{proof}
\begin{proof} [Proof of theorem \ref{thm1.2}]  
The proof is, due to the stronger assumptions made, a simpler version of the one of theorem \ref{thm2}. 
Rescaling we may assume $\rho=1.$ \\
Again we take $\delta:=\frac{\varepsilon}{3\Gamma}$ and  have for some $\Gamma=\Gamma(n)>0$ and arbitrary  $\lambda>0$ 
\begin{gather*}
\begin{split}
&
\varepsilon(t):=\sup_{x \in \mathbb{R}^{n}} \int_{B_{2}(x)} \vert A \vert^{2}d\mu \leq \Gamma
\sup_{x \in \mathbb{R}^{n}} \int_{B_{1}(x)} \vert A \vert^{2}d\mu,\\
&
t_{0}:=\sup \lbrace 0 \leq t < \min(T,\lambda) \mid \varepsilon(\tau)\leq 3\Gamma \delta \;\text{for all}\;0 \leq \tau < t \rbrace>0.
\end{split}
\end{gather*}
Taking $\lambda:=\frac{c(n)}{R^{8}+R^{4}}$ as in the proof of theorem \ref{thm2} we derive analogously
\begin{gather*}
t_{0}=\min(T,\frac{c(n)}{R^{8}+R^{4}}),
\end{gather*}
since by assumption 
\begin{gather*}
\sup_{0 \leq t < \min \lbrace T,\frac{c(n)}{R^{8}+R^{4}} \rbrace} \Vert f \Vert_{L^{\infty}_{\mu}(\Sigma)}\leq R.
\end{gather*}
If $t_{0}=\frac{c(n)}{R^{8}+R^{4}},$ then $T>c(n)\frac{1}{R^{8}+R^{4}},$ which is the required result.\\
If $t_{0}=T$, then we have
\begin{gather*}
\begin{split}
&
t_{0}\leq \frac{c(n)}{R^{8}+R^{4}}, \\
&
\sup_{0 \leq t <t_{0},\, x\in \mathbb{R}^{n}}\int_{B_{2}(x)}\vert A \vert^{2}d\mu \leq \varepsilon,
\\
&
\sup_{0 \leq t < t_{0}} \Vert f \Vert_{L^{\infty}_{\mu}(\Sigma)}\leq 2 R.
\end{split}
\end{gather*}
and the contradiction follows as in the case $t_{0}=T$ in the proof of \ref{thm2}.\\
Clearly we may choose $k(n)=3\Gamma(n).$

\end{proof}

\newpage

\newpage

\section{Appendix}

\begin{lemma} \label{Gronwall's inequality}(Gronwall's inequality)\\
Let $u,\alpha \in C^{0}([0,T),\mathbb{R})$ and $\beta \in C^{0}([0,T),\mathbb{R}_{\geq 0}).$ Then we have
\begin{gather*}
u(t) 
\leq 
\alpha(t)
+
\int^{t}_{0}\beta(s)u(s)\,ds
\quad 
\Longrightarrow 
\quad 
u(t)
\leq 
\alpha(t) 
+
\int^{t}_{0}\alpha(s) \beta(s) e^{\int^{t}_{s}\beta(\sigma)\,d\sigma}ds.
\end{gather*}
\end{lemma}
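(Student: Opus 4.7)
The plan is to introduce the auxiliary function $v(t) := \int_0^t \beta(s) u(s)\,ds$ and turn the integral inequality for $u$ into a differential inequality for $v$. By the fundamental theorem of calculus $v$ is differentiable with $v'(t) = \beta(t) u(t)$, and the hypothesis together with $\beta \geq 0$ gives
\begin{gather*}
v'(t) = \beta(t) u(t) \leq \beta(t)\alpha(t) + \beta(t) v(t).
\end{gather*}

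Next I would use the standard integrating factor trick. Setting $B(t) := \int_0^t \beta(\sigma)\,d\sigma$, the above rearranges to $v'(t) - \beta(t) v(t) \leq \beta(t)\alpha(t)$, and multiplying by $e^{-B(t)}$ yields
\begin{gather*}
\frac{d}{dt}\bigl(e^{-B(t)} v(t)\bigr) \leq \beta(t)\alpha(t) e^{-B(t)}.
\end{gather*}
Integrating from $0$ to $t$ and using $v(0) = 0$ gives
\begin{gather*}
e^{-B(t)} v(t) \leq \int_0^t \beta(s)\alpha(s) e^{-B(s)}\,ds,
\end{gather*}
hence $v(t) \leq \int_0^t \alpha(s) \beta(s) e^{B(t)-B(s)}\,ds = \int_0^t \alpha(s)\beta(s) e^{\int_s^t \beta(\sigma)\,d\sigma}\,ds.$ Plugging this back into the original inequality $u(t) \leq \alpha(t) + v(t)$ yields the claim.

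There is no real obstacle here; the only mild care required is verifying that $v$ is $C^1$ (which follows from continuity of $\beta u$) so that the product-rule computation $\frac{d}{dt}(e^{-B}v) = e^{-B}(v' - \beta v)$ is legitimate, and noting that no sign condition on $\alpha$ or $u$ is used — only $\beta \geq 0$, which is exactly what makes the step $\beta u \leq \beta(\alpha + v)$ valid from the hypothesis.
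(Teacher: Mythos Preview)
Your proof is correct and is essentially the same as the paper's: the paper defines $y(t):=e^{-\int_0^t\beta(\sigma)\,d\sigma}\int_0^t\beta(s)u(s)\,ds$, which is precisely your $e^{-B(t)}v(t)$, and then carries out the identical differentiation, integration, and back-substitution. The only difference is cosmetic---you introduce $v$ first and then apply the integrating factor, whereas the paper bundles the two into a single auxiliary function from the start.
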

\begin{proof}
Consider $y(t):=e^{-\int^{t}_{0}\beta(\sigma)d\sigma}\int^{t}_{0}\beta(s)u(s)\,ds.$ By assumption
\begin{gather*}
\begin{split}
y'(t)
= &
-\beta(t) e^{-\int^{t}_{0}\beta(\sigma)d\sigma}\int^{t}_{0}\beta(s)u(s)\,ds
+
\beta(t)u(t) e^{-\int^{t}_{0}\beta(\sigma)d\sigma}\\
= &
[
u(t)
-
\int^{t}_{0}\beta(s)u(s)\,ds
\,]\,
\beta(t)e^{-\int^{t}_{0}\beta(\sigma)d\sigma}
\leq
\alpha(t)\beta(t)e^{-\int^{t}_{0}\beta(\sigma)d\sigma}.
\end{split}
\end{gather*}
Hence by integration 
\begin{gather*}
\begin{split}
e^{-\int^{t}_{0}\beta(\sigma)d\sigma}\int^{t}_{0}\beta(s)u(s)\,ds
=
y(t)-y(0)
\leq 
\int^{t}_{0} \alpha(s)\beta(s)e^{-\int^{s}_{0}\beta(\sigma)d\sigma}ds,
\end{split}
\end{gather*}
what proves the claim by multiplicating both sides with $e^{\int^{t}_{0}\beta(\sigma)d\sigma}.$
\end{proof}

Let in the following $f:\Sigma \longrightarrow \mathbb{R}^{n}$ be a smooth immersion.
\begin{lemma}
For $i \geq 1$ we have 
\begin{gather*}
\begin{split}
\nabla^{i}\Vert f \Vert^{2} 
= &
\sum_{2j+k=i-1}\langle f,\nabla f \rangle *P^{k}_{2j}(A)
+
\sum_{2n+m=i-2}\langle \nabla f , \nabla f \rangle *P^{m}_{2n}(A)\\
& +
\sum_{l+2q+p=i-2}\langle f , \nabla^{l} A \rangle *P^{p}_{2q}(A).
\end{split}
\end{gather*}
\end{lemma}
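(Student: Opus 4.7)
The natural approach is induction on $i\geq 1$. For $i=1$ the formula reduces to $\nabla\Vert f\Vert^{2}=2\langle f,\nabla f\rangle$, which is the first sum with $(j,k)=(0,0)$ and $P^{0}_{0}(A)=1$; the other two sums are empty since $i-2=-1$. For the inductive step I would apply $\nabla$ term by term to the representation for $\nabla^{i}\Vert f\Vert^{2}$, using the Leibniz rule for $*$ together with three algebraic ingredients: (i) $\nabla^{2}f=A$ (the Gauss formula, since the tangential part of $\partial_{i}\partial_{j}f$ is absorbed by the induced connection); (ii) normal-tangent orthogonality, giving $\langle\nabla f,\nabla^{l}A\rangle=0$ for every $l\geq 0$; and (iii) $\nabla P^{k}_{r}(A)=P^{k+1}_{r}(A)$, which is immediate from the definition of $P^{m}_{r}$.

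With these in hand, the generated terms fall neatly into the three required shapes. Differentiating a first-sum term $\langle f,\nabla f\rangle*P^{k}_{2j}(A)$ with $2j+k=i-1$ produces $\langle\nabla f,\nabla f\rangle*P^{k}_{2j}(A)$ (a second-sum term with $(n',m')=(j,k)$, so $2n'+m'=i-1$), $\langle f,A\rangle*P^{k}_{2j}(A)$ (a third-sum term with $(l',q',p')=(0,j,k)$, so $l'+2q'+p'=i-1$), and $\langle f,\nabla f\rangle*P^{k+1}_{2j}(A)$ (a first-sum term with $2j'+k'=i$). Differentiating a second-sum term $\langle\nabla f,\nabla f\rangle*P^{m}_{2n}(A)$ leaves only $\langle\nabla f,\nabla f\rangle*P^{m+1}_{2n}(A)$, because the two symmetric contributions $\langle\nabla^{2}f,\nabla f\rangle*\cdots=\langle A,\nabla f\rangle*\cdots$ vanish by (ii). Differentiating a third-sum term $\langle f,\nabla^{l}A\rangle*P^{p}_{2q}(A)$ similarly kills the $\langle\nabla f,\nabla^{l}A\rangle$ contribution and leaves $\langle f,\nabla^{l+1}A\rangle*P^{p}_{2q}(A)$ and $\langle f,\nabla^{l}A\rangle*P^{p+1}_{2q}(A)$, both again in the third sum with $l'+2q'+p'=i-1$.

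The main point to watch is precisely why the lower subscript of the $P^{\bullet}_{\bullet}(A)$-factor stays even. This is forced by (ii): every time a derivative lands on a $\nabla f$-factor, it either turns it into $\nabla^{2}f=A$ inside an inner product with another $\nabla f$ or $\nabla^{l}A$ (both normal-valued), in which case the term vanishes, or it produces $\langle f,A\rangle$, in which case the new $A$ is absorbed into the $\langle f,\nabla^{l}A\rangle$-slot of a third-sum term rather than into the $P$-factor. The $P$-factor itself only acquires additional derivatives, never additional copies of $A$. This is the one subtlety; everything else is bookkeeping of the arithmetic $2j+k$, $2n+m$, $l+2q+p$ under the single increment produced by each application of $\nabla$, which closes the induction.
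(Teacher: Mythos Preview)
Your induction and the treatment of the first two sums match the paper, but there is a genuine gap in how you differentiate a third-sum term $\langle f,\nabla^{l}A\rangle*P^{p}_{2q}(A)$. The point is that $f$ is $\mathbb{R}^{n}$-valued, not normal-valued, so when you apply $\nabla$ to the scalar $\langle f,\nabla^{l}A\rangle$ you must differentiate $\nabla^{l}A$ as an $\mathbb{R}^{n}$-valued tensor. Its full covariant derivative is not $\nabla^{l+1}A$: it is $\nabla^{l+1}A$ (the normal part) plus a tangential part $\partial^{T}(\nabla^{l}A)$, which for a normal-valued form $\Phi$ is $-g^{nm}\langle A_{\cdot,n},\Phi\rangle\,\partial_{m}f$. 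Consequently
\[
\nabla\langle f,\nabla^{l}A\rangle
=\langle \nabla f,\nabla^{l}A\rangle+\langle f,\nabla^{l+1}A\rangle+\langle f,\partial^{T}\nabla^{l}A\rangle,
\]
and only the first of the three vanishes by orthogonality. The extra term is computed in the paper as
\[
\langle f,\partial^{T}\nabla^{l}A\rangle=-g^{nm}\langle f,\partial_{m}f\rangle\langle A_{\cdot,n},\nabla^{l}A\rangle=\langle f,\nabla f\rangle*P^{l}_{2}(A),
\]
so that $\langle f,\partial^{T}\nabla^{l}A\rangle*P^{p}_{2q}(A)=\langle f,\nabla f\rangle*P^{p+l}_{2(q+1)}(A)$, a first-sum term with $2j'+k'=2(q+1)+(p+l)=i$.

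This omission also undermines your closing explanation of why the lower subscript of the $P$-factor stays even. Your claim that ``the $P$-factor itself only acquires additional derivatives, never additional copies of $A$'' is false: the tangential correction above feeds exactly two new copies of $A$ into the $P$-factor. Evenness survives precisely because this contribution raises the lower index by $2$, not because no new $A$'s appear. Once you insert this missing term and route it into the first sum, the bookkeeping closes exactly as in the paper.
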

\begin{proof}
The claim holds obviously true for $i=1.$ Inductively
\begin{gather*}
\begin{split}
\nabla^{i+1} \Vert f & \Vert^{2}
= 
\nabla \nabla^{i} \Vert f \Vert^{2} \\
= &
\sum_{2j+k=i-1} 
[
\langle \nabla f, \nabla f \rangle *P^{k}_{2j}(A)
+
\langle f , A \rangle * P^{k}_{2j}(A)
+
\langle f , \nabla f \rangle * P^{k+1}_{2j}(A)
] \\
& +
\sum_{2n+m=i-2} 
[
\nabla \langle \nabla f , \nabla f \rangle *P^{m}_{2n}(A)
+
\langle \nabla f , \nabla f \rangle *P^{m+1}_{2n}(A)
]\\
& +
\sum_{l+2q+p=i-2}
[
\langle \nabla f , \nabla^{l}A \rangle *P^{p}_{2q}(A)
+
\langle f , \partial^{T} \nabla^{l}A+\nabla^{l+1}A \rangle + P^{p}_{2q+1}\\
& \hspace{165pt}+
\langle f , \nabla^{l} A \rangle + P^{p+1}_{2q}(A)
],
\end{split}
\end{gather*}
where $\partial^{2}f=A+\Gamma \partial f$ was used. \\
Clearly
$\nabla \langle \nabla f , \nabla f \rangle=\nabla g=0$ and 
$\langle \nabla f, \nabla^{l}A \rangle=0.$ Hence
\begin{gather*}
\begin{split}
\nabla^{i+1}\Vert f \Vert^{2} 
= &
\sum_{2j+k=i}\langle f,\nabla f \rangle * P^{k}_{2j}(A)
+
\sum_{2n+m=i-1}\langle \nabla f , \nabla f \rangle * P^{m}_{2n}(A)\\
& +
\sum_{l+2q+p=i-1}\langle f , \nabla^{i} A \rangle * P^{p}_{2q}(A)
+
\sum_{l+2q+p=i-2}\langle f, \partial^{T} \nabla^{l}A\rangle * P^{p}_{2q}(A).
\end{split}
\end{gather*}
We have
\begin{gather*}
\begin{split}
\langle f , \partial^{T}_{k} \nabla^{l}A \rangle
= &
-\langle \partial_{k}f^{T},\nabla^{l}A \rangle 
=
-\langle \partial_{k}(g^{n,m}\langle f, \partial_{n}f \rangle \partial_{m}f)
, \nabla^{l}A \rangle \\
= &
-g^{n,m} \langle f, \partial_{n}f \rangle 
\langle \partial_{k}\partial_{m}f, \nabla^{l} A \rangle
=
-g^{n,m} \langle f, \partial_{n}f \rangle
\langle A_{k,m},\nabla^{l} A \rangle\\
= &
\langle f , \nabla f \rangle * P^{l}_{2}(A)
\end{split}
\end{gather*}
and therefore
\begin{gather*}
\begin{split}
\sum_{l+2q+p=i-2}\langle f, \partial^{T} \nabla^{l}A\rangle * P^{p}_{2q}(A)
= &
\sum_{l+2q+p=i-2} \langle f, \nabla f \rangle *
P^{p+l}_{2q+2}(A) \\
= &
\sum_{2j+k=i}\langle f, \nabla f \rangle * P^{k}_{2j}(A).
\end{split}
\end{gather*}
\end{proof}
Let $P^{l}_{k}(\vert \nabla^{\cdot} A \vert)$ denote any term of the type
$\Pi^{k}_{i=1}\vert \nabla^{l_{i}} A \vert$ with $\sum^{k}_{i=1}l_{i}=l$,

where we freely use $k=0 \Longrightarrow l=0$ and define $P^{0}_{0}(\vert \nabla^{\cdot}A \vert)=1.$
\begin{corollary} \label{derivatives1}
For $i \geq 1 $ we have 
\begin{gather*}
\vert \nabla^{i} \Vert f \Vert^{2} \vert
\leq 
\Vert f \Vert \sum_{l+k=i-1}P^{l}_{k}(\vert \nabla^{\cdot} A \vert)
+
\sum_{2n+m=i-2}P^{m}_{2n}(\vert \nabla^{\cdot} A \vert).
\end{gather*}
\end{corollary}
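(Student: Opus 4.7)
The plan is to read off the corollary directly from the preceding lemma by passing to absolute values. The lemma provides the explicit expansion
\begin{gather*}
\nabla^{i}\Vert f \Vert^{2}
=
\sum_{2j+k=i-1}\langle f,\nabla f \rangle *P^{k}_{2j}(A)
+
\sum_{2n+m=i-2}\langle \nabla f , \nabla f \rangle *P^{m}_{2n}(A)
+
\sum_{l+2q+p=i-2}\langle f , \nabla^{l} A \rangle *P^{p}_{2q}(A),
\end{gather*}
so everything reduces to bounding the three scalar factors $\langle f,\nabla f\rangle$, $\langle\nabla f,\nabla f\rangle$, $\langle f,\nabla^{l}A\rangle$ appearing in front of the $P$-terms.

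First I would record the pointwise bounds $|\langle f,\nabla f\rangle|\leq \Vert f\Vert\,|\nabla f|$ and $|\langle f,\nabla^{l}A\rangle|\leq \Vert f\Vert\,|\nabla^{l}A|$ by Cauchy--Schwarz, together with $|\langle\nabla f,\nabla f\rangle|=|g|\leq c$ and $|\nabla f|\leq c$ (which is a universal constant in dimension $2$, since $g^{i,j}\langle\partial_{i}f,\partial_{j}f\rangle=g^{i,j}g_{i,j}=2$). Because by definition the $*$-operation on normal/real multilinear forms depends bi-linearly and universally on its entries, taking norms in each $P^{k}_{2j}(A)$-factor produces exactly the scalar quantity $P^{k}_{2j}(|\nabla^{\cdot}A|)$ up to a universal constant that can be absorbed into the symbol $P$. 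The three summands of the lemma therefore transform into
\begin{gather*}
\Vert f \Vert \sum_{2j+k=i-1}P^{k}_{2j}(|\nabla^{\cdot}A|),
\qquad
\sum_{2n+m=i-2}P^{m}_{2n}(|\nabla^{\cdot}A|),
\qquad
\Vert f \Vert \sum_{l+2q+p=i-2}|\nabla^{l}A|\,P^{p}_{2q}(|\nabla^{\cdot}A|),
\end{gather*}
respectively.

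Finally, I would combine the first and the third contributions into a single sum of the desired form: in the third contribution the product $|\nabla^{l}A|\,P^{p}_{2q}(|\nabla^{\cdot}A|)$ is itself a $P^{l+p}_{2q+1}(|\nabla^{\cdot}A|)$-term whose index satisfies $(l+p)+(2q+1)=i-1$, so it is one of the summands with $k=2q+1$ odd in the first sum indexed by $l'+k'=i-1$. Absorbing it there yields
\begin{gather*}
|\nabla^{i}\Vert f\Vert^{2}|
\leq
\Vert f \Vert\sum_{l+k=i-1}P^{l}_{k}(|\nabla^{\cdot}A|)
+
\sum_{2n+m=i-2}P^{m}_{2n}(|\nabla^{\cdot}A|),
\end{gather*}
which is the claim. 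There is no real obstacle; the only point to be careful about is the bookkeeping of indices so that the odd-parity $P$-terms coming from the third sum of the lemma fit into the combined first sum of the corollary.
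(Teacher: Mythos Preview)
Your proof is correct and follows essentially the same route as the paper's own proof: you bound $|\nabla f|$ and $|\langle\nabla f,\nabla f\rangle|$ by the dimensional constant, and you merge the third sum of the lemma into the first by observing $|\nabla^{l}A|\,P^{p}_{2q}(|\nabla^{\cdot}A|)=P^{l+p}_{2q+1}(|\nabla^{\cdot}A|)$ with $(l+p)+(2q+1)=i-1$, which is exactly the paper's re-indexing $\sum_{l+2q+p=i-2}f*P^{p+l}_{2q+1}(A)=\sum_{l+k=i-1}f*P^{l}_{k}(A)$. Your version is simply more explicit about the Cauchy--Schwarz step and the parity bookkeeping.
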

\begin{proof}
Clearly $\vert \nabla f \vert^{2}=g^{i,j}\langle \partial_{i}f,\partial_{j}f \rangle=g^{i,j}g_{i,j}=\delta^{i}_{i}=dim(\Sigma)$
and
\begin{gather*}
\sum_{l+2q+p=i-2}\langle f , \nabla^{l} A \rangle * P^{p}_{2q}(A)
=
\sum_{l+2q+p=i-2} f*P^{p+l}_{2q+1}(A)
=
\sum_{l+k=i-1}f*P^{l}_{k}(A)
.
\end{gather*}
\end{proof}
From this one obtains
\begin{corollary} \label{tabel}
We have for $i \geq 1$ 
\begin{gather*}
\begin{split}
\vert \nabla^{i} \Vert f \Vert^{8} \vert
\leq &
\sum^{8}_{m=1}\Vert f \Vert^{8-m}\sum_{l+k=i-m}P^{l}_{k}(\vert \nabla^{\cdot} A \vert) \\
\leq & \; c(i) 
\Vert f \Vert^{7}
[\hspace{1pt}
\vert \nabla^{i-2} A \vert
+
\vert \nabla^{i-3} A \vert \vert A \vert
+
\vert \nabla^{i-4} A \vert(\vert \nabla A \vert + \vert A \vert^{2})
+
\ldots
]\\
& \hspace{4pt}+\hspace{3pt}
\Vert f \Vert^{6}
[ \hspace{54pt}
\vert \nabla^{i-3}A \vert
\hspace{16pt} + 
\vert \nabla^{i-4} A \vert \;\,\vert A \vert
\hspace{49pt} + \ldots
]\\
& \hspace{4pt} + \hspace{3pt}
\Vert f \Vert^{5}
[
\hspace{124pt} 
\vert \nabla^{i-4}A \vert
\hspace{70pt} + \ldots
]\\
& \hspace{4pt}+ \hspace{4pt}\ldots \;\; ,
\end{split},
\end{gather*}
where we freely use $k=0 \Longrightarrow l=0$ and define $P^{0}_{0}(\vert \nabla^{\cdot}A \vert)=1.$
\end{corollary}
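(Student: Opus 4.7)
The plan is to reduce Corollary \ref{tabel} to Corollary \ref{derivatives1} by writing $\|f\|^8=(\|f\|^2)^4$ and applying the generalized Leibniz rule for $\nabla^i$. Concretely, I would expand
\begin{gather*}
\nabla^{i}\Vert f \Vert^{8}
=
\sum_{j_{1}+j_{2}+j_{3}+j_{4}=i} \binom{i}{j_{1},j_{2},j_{3},j_{4}}
\nabla^{j_{1}}\Vert f \Vert^{2}\,*\,\nabla^{j_{2}}\Vert f \Vert^{2}\,*\,\nabla^{j_{3}}\Vert f \Vert^{2}\,*\,\nabla^{j_{4}}\Vert f \Vert^{2},
\end{gather*}
and then treat each factor separately: a factor with $j_{a}=0$ contributes $\Vert f\Vert^{2}$, while a factor with $j_{a}\geq 1$ is controlled by Corollary \ref{derivatives1}, which gives either the branch $\Vert f\Vert\cdot P^{l_{a}}_{k_{a}}(|\nabla^{\cdot}A|)$ with $l_{a}+k_{a}=j_{a}-1$, or the branch $P^{m_{a}}_{2n_{a}}(|\nabla^{\cdot}A|)$ with $m_{a}+2n_{a}=j_{a}-2$.

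Next I would collect powers of $\Vert f\Vert$. Let $p$ be the number of indices $j_{a}$ equal to $0$, and among the remaining $4-p$ indices let $q$ be the number that are handled by the first branch of Corollary \ref{derivatives1}. Each of these contributes one factor of $\Vert f\Vert$, so the total power is $\Vert f\Vert^{2p+q}$. Setting $m:=8-2p-q$, which ranges over $\{1,\dots,8\}$ as $p\in\{0,\dots,3\}$ and $q\in\{0,\dots,4-p\}$ vary, one reads off $\Vert f\Vert^{8-m}$ as desired. Bilinearity of $*$ and multiplicativity of $P^{\cdot}_{\cdot}$ allow the product of the remaining $P^{l_{a}}_{k_{a}}$'s and $P^{m_{a}}_{2n_{a}}$'s to be rewritten as a single $P^{L}_{K}(|\nabla^{\cdot}A|)$; the total index count is
\begin{gather*}
L+K = \sum_{\text{branch 1}}(j_{a}-1) + \sum_{\text{branch 2}}(j_{a}-2)
     = \Bigl(\sum_{j_{a}\geq 1}j_{a}\Bigr) - q - 2(4-p-q)
     = i - (8-2p-q) = i-m,
\end{gather*}
which is exactly the constraint in the statement. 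Summing over the finitely many combinatorial possibilities absorbs all multinomial constants into a constant $c(i)$.

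The explicit tabular form in the statement is then just a reorganization: write the double sum $\sum_{m}\Vert f\Vert^{8-m}\sum_{l+k=i-m}P^{l}_{k}(|\nabla^{\cdot}A|)$ row-by-row in the variable $m$, and inside each row list the admissible pairs $(l,k)$ with $l+k=i-m$ in increasing $k$. For instance, the $m=1$ row starts with $(l,k)=(i-2,0)$ (giving the single term $\vert\nabla^{i-2}A\vert$, i.e.\ $P^{i-2}_{1}$ has one $A$-factor so $k=1$, and one needs $l=i-2$), and so on; this matches the displayed table literally.

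The potential pitfall—and thus the only point requiring care—is simultaneously tracking the two bookkeepings (powers of $\Vert f\Vert$ and the $(l,k)$-index of the resulting $P^{l}_{k}$), together with the fact that each of the $4-p$ differentiated factors independently chooses between the two branches in Corollary \ref{derivatives1}. Once one records that a choice of branches contributes $\Vert f\Vert^{2p+q}$ in the $\Vert f\Vert$-budget and simultaneously decreases the derivative budget by $q+2(4-p-q)$, the identity $L+K=i-m$ with $m=8-2p-q$ falls out and the corollary is immediate. No analytic difficulty is expected, just careful combinatorics.
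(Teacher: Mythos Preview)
Your proposal is correct and is exactly the argument the paper leaves implicit (the paper simply writes ``From this one obtains'' after Corollary~\ref{derivatives1}, giving no further details). The only slip is a typo in your tabular example---you write $(l,k)=(i-2,0)$ but immediately correct yourself to $P^{i-2}_{1}$ with $k=1$; the core bookkeeping $L+K=i-m$ with $m=8-2p-q$ is sound.
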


We calculate the scaling behaviour of some geometric objects.
\begin{lemma} \label{A2}
Let $f:\Sigma \longrightarrow \mathbb{R}^{n}$ be a smooth immersion,\,$d:=dim(\Sigma).$ 
\begin{enumerate}
\item 
$g_{\rho f} = \rho^{2}g_{f} $ and $g^{-}_{\rho f} = \rho^{-2}g^{-}_{f} $ for the metric,
\item 
$\Gamma_{\rho f} = \Gamma_{f} $ for the Christoffel symbols,
\item 
$d\mu_{\rho f}=\rho^{d}d\mu_{f}$ for the area measure,
\item 
$\vert \nabla^{m}_{\rho f}A_{\rho f} \vert^{2}_{g_{_{\rho f}}} = \rho^{-2(m+1)} \vert \nabla^{m}_{f}A_{f} \vert^{2}_{g_{f}}  $ ,
\item 
$\sum_{(i,j,k) \in I(m),\, j<m+4} \nabla^{i}_{\rho f}\Vert \rho f \Vert^{8}* P^{j}_{k}(A_{\rho f})*\nabla^{m}_{\rho f}A_{\rho f} $

\quad \quad\quad$=\rho^{-2(m-1)}\sum_{(i,j,k) \in I(m),\, j<m+4} \nabla^{i}_{f}\Vert f \Vert^{8}* P^{j}_{k}(A_{f})*\nabla^{m}_{f}A_{f} $.
 \end{enumerate}
\end{lemma}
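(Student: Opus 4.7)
The plan is to verify each of the five statements by direct computation, relying on the simple observation that scaling $f \mapsto \rho f$ multiplies $df$ by $\rho$ while leaving the immersed tangent and normal \emph{subspaces} of $\mathbb{R}^n$ unchanged. First I would note $g_{\rho f}(X,Y)=\langle d(\rho f)X,d(\rho f)Y\rangle=\rho^{2}g_{f}(X,Y)$, whence $g^{-1}_{\rho f}=\rho^{-2}g^{-1}_{f}$, giving (1). From the local formula $\Gamma^{k}_{ij}=\tfrac{1}{2}g^{kl}(\partial_{i}g_{jl}+\partial_{j}g_{il}-\partial_{l}g_{ij})$ the factors $\rho^{2}$ and $\rho^{-2}$ cancel, yielding (2). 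Then (3) is immediate from $d\mu=\sqrt{\det g}\,dx$ and $\det(\rho^{2}g_{f})=\rho^{2d}\det g_{f}$.

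For (4) the key observation is that $P^{\perp}=\mathrm{id}-df\cdot g^{-1}\cdot df^{T}$ is invariant under $f\mapsto\rho f$ (the two $\rho$'s in $df$ compensate the $\rho^{-2}$ in $g^{-1}$), so the normal bundle, its induced inner product, and the normal derivative $D^{\perp}=P^{\perp}D$ are all unchanged. Combined with (2), this means the induced covariant derivative $\nabla$ on normal-valued tensors acts identically on $\rho f$ and $f$. Since $A_{\rho f}(X,Y)=(D_{X}d(\rho f)Y)^{\perp}=\rho A_{f}(X,Y)$, we obtain $\nabla^{m}_{\rho f}A_{\rho f}=\rho\,\nabla^{m}_{f}A_{f}$ as a $(0,m+2)$ normal-valued tensor. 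Because a $(0,p)$-tensor norm scales as $|T|^{2}_{g_{\rho f}}=\rho^{-2p}|T|^{2}_{g_{f}}$, we conclude with $p=m+2$ that
\[
|\nabla^{m}_{\rho f}A_{\rho f}|^{2}_{g_{\rho f}}=\rho^{-2(m+2)}\cdot\rho^{2}\,|\nabla^{m}_{f}A_{f}|^{2}_{g_{f}}=\rho^{-2(m+1)}|\nabla^{m}_{f}A_{f}|^{2}_{g_{f}}.
\]

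For (5), the main step is careful bookkeeping of the $*$-product, which is nothing but a sequence of tensor contractions. The ingredients scale as follows: $\|\rho f\|^{8}=\rho^{8}\|f\|^{8}$, and since $\nabla$ on a scalar is the ordinary $\partial$ corrected only by Christoffel symbols (invariant by (2)), $\nabla^{i}_{\rho f}\|\rho f\|^{8}=\rho^{8}\,\nabla^{i}_{f}\|f\|^{8}$ as a $(0,i)$ tensor; similarly $\nabla^{m}_{\rho f}A_{\rho f}=\rho\nabla^{m}_{f}A_{f}$ and $P^{j}_{k}(A_{\rho f})=\rho^{k}P^{j}_{k}(A_{f})$ (one $\rho$ per factor of $A$). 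The full expression is scalar-valued, obtained by contracting every tangential slot via $g^{-1}_{\rho f}=\rho^{-2}g^{-1}_{f}$; the total number of tangential indices is $i+(j+2k)+(m+2)$, producing $(i+j+2k+m+2)/2$ factors of $\rho^{-2}$. Normal contractions contribute nothing as the normal bundle metric is unchanged. The total exponent of $\rho$ is therefore
\[
8+k+1-(i+j+2k+m+2)=7-(i+j+k)-m=7-(m+5)-m=-2(m-1),
\]
using the constraint $i+j+k=m+5$ from $I(m)$, which is exactly the claimed scaling.

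The main obstacle is not any single deep step but the index bookkeeping in (5); once one has established that $P^{\perp}$, $D^{\perp}$ and $\Gamma$ are invariant under $f\mapsto\rho f$, everything reduces to counting how many tangential indices are being contracted by $g^{-1}$, and the constraint defining $I(m)$ conveniently packages the arithmetic into the clean exponent $-2(m-1)$.
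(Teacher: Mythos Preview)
Your proof is correct and follows essentially the same route as the paper. For (1)--(4) the arguments coincide almost verbatim; in (5) the paper distributes the metric factors among the individual factors (effectively replacing each $\nabla^{n_l}A$ by its norm-scaling $\rho^{-(n_l+1)}$ and $\nabla^{i}\Vert f\Vert^{8}$ by $\rho^{8-i}$), whereas you separate the raw tensor scaling $\rho^{8+k+1}$ from a global count of tangential contractions, but the arithmetic $7-(i+j+k)-m=-2(m-1)$ is identical.
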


\begin{proof}
 (1) is clear, (2) follows from (1) and 
 \begin{gather*}
  \Gamma^{k}_{i,j} = \frac{1}{2} g^{k,m}(\partial_{i}g_{j,m} + \partial_{j}g_{i,m} - \partial_{m}g_{i,j}).                               
 \end{gather*}
 (3) follows from 
 \begin{gather*}
  J(\rho f) = \sqrt{det(d(\rho f)^{T}\circ d(\rho f)})
  =\sqrt{det(\rho^{2}df^{T}df)}=\rho^{d} Jf.       
 \end{gather*}
To check (4) we derive 
$\nabla^{m}_{\rho f}A_{\rho f} = \rho\nabla^{m}_{f}A_{f}$ by $\nabla_{\rho f}=\nabla_{f},$ the scaling invariance of the covariant derivative and a simple induction argument. Hence 
\begin{gather*}
 \hspace{-320pt}
 \vert \nabla^{m}_{\rho f}A_{\rho f} \vert^{2}_{g_{\rho f}} \\
\begin{split}
 &=\sum_{i_{1},\ldots,i_{m+2} \in \lbrace 1,2 \rbrace} \langle (\nabla^{m}_{\rho f}A_{\rho f})(e^{\rho f}_{i_{1}},\ldots,e^{\rho f}_{i_{m+2}}) ,
 (\nabla^{m}_{\rho f}A_{\rho f})(e^{\rho f}_{i_{1}},\ldots,e^{\rho f}_{i_{m+2}}) \rangle \\
 &=g^{i_{1},i'_{1}}_{\rho f} \ldots g^{i_{m+2},i_{m+2}'}_{\rho f}
  \langle (\nabla^{m}_{\rho f}A_{\rho f})(\partial_{i_{1}},\ldots,\partial_{i_{m+2}}) ,
 (\nabla^{m}_{\rho f}A_{\rho f})(\partial_{i'_{1}},\ldots,\partial_{i'_{m+2}}) \rangle \\
&=\rho^{-2(m+2) +2}g^{i_{1},i'_{1}}_{f} \ldots g^{i_{m+2},i_{m+2}'}_{f} \\
  &\hspace{100pt}\langle (\nabla^{m}_{f}A_{f})(\partial_{i_{1}},\ldots,\partial_{i_{m+1}}) ,
 (\nabla^{m}_{f}A_{f})(\partial_{i'_{1}},\ldots,\partial_{i_{m+1}'}) \rangle \\
& =\rho^{-2(m+1)}  \vert \nabla^{m}_{f}A_{f} \vert^{2}_{g_{f}}.
\end{split}
\end{gather*}
Likewise we obtain for  $(i,j,k) \in I(m)$ recalling (\ref{I(m)=}),
\begin{gather*}
\begin{split}
\nabla^{i}_{\rho f}\Vert \rho f \Vert^{8}* &
 P^{j}_{k}(A_{\rho f})*\nabla^{m}_{\rho f}A_{\rho f} \\
= &
\nabla^{i}_{\rho f}\Vert \rho f \Vert^{8}*
\nabla^{n_{1}}_{\rho f}A_{\rho f}*\ldots*\nabla^{n_{k}}_{\rho f}
A_{\rho f}*\nabla^{m}_{\rho f}A_{\rho f} \\
= &
\rho^{-i} \nabla^{i}_{f} \rho^{8}\Vert f \Vert^{8}*
\rho^{-(n_{1}+1)}\nabla^{n_{1}}_{f}A_{f}*\ldots\\
& \hspace{136pt}
\ldots*\rho^{-(n_{k}+1)}\nabla^{n_{k}}_{\rho f}A_{\rho f}
*\rho^{-(m+1)}\nabla^{m}_{\rho f}A_{\rho f} \\
= &
\rho^{-i+8-(n_{1}+1)-\ldots-(n_{k}+1)-(m+1)}\nabla^{i}_{f}\Vert f \Vert^{8}* P^{j}_{k}(A_{f})*\nabla^{m}_{f}A_{f} \\
= &
\rho^{-2(m-1)}\nabla^{i}_{f}\Vert f \Vert^{8}*
2P^{j}_{k}(A_{f})*\nabla^{m}_{f}A_{f},
\end{split}
\end{gather*}
since $n_{1}+\ldots+n_{k}=j,\quad i+j+k=m+5.$ This proves (5).
\end{proof}
The Willmore functional is invariant under inversion for closed surfaces.
\begin{proposition} \label{invariance}
Let $f\in C^{2}(\Sigma,\mathbb{R}^{n} \setminus \lbrace \mathbb{0} \rbrace)$ be a closed immersed surface, 

$I:\mathbb{R}^{n} \setminus \lbrace \mathbb{0} \rbrace \longrightarrow \mathbb{R}^{n} \setminus \lbrace \mathbb{0} \rbrace:x \rightarrow \frac{x}{\Vert x \Vert^{2}}.$ 
Then we have
\begin{gather*}
 W(f):=\frac{1}{4}\int_{\Sigma}\vert H_{f} \vert^{2}d\mu_{f}
=\frac{1}{4}\int_{\Sigma}\vert H_{I_{\sharp}f} \vert^{2}d\mu_{I_{\sharp}f}
=W(I_{\sharp}f),
\end{gather*}

where $I_{\sharp}f(x):=I(f(x))$ for $x \in \Sigma.$
\end{proposition}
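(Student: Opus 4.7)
The plan is to reduce the statement to the pointwise conformal invariance of $|A^0|^2\,d\mu$, exploiting the fact that inversion is conformal (which is exactly what equation (\ref{eq0}) expresses).

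First, I would rewrite the Willmore integrand using formula (\ref{formula2}), which gives the identity $\tfrac14|H|^2 = \tfrac12|A^0|^2 + K$ pointwise. Therefore
\begin{gather*}
W(f) \;=\; \tfrac12\int_\Sigma |A^0_f|^2\,d\mu_f \;+\; \int_\Sigma K_f\,d\mu_f,
\end{gather*}
and similarly for $I_\sharp f$. Since $\Sigma$ is closed and is the same surface being immersed in both cases, the Gauss--Bonnet theorem gives $\int_\Sigma K_f\,d\mu_f = 2\pi\chi(\Sigma) = \int_\Sigma K_{I_\sharp f}\,d\mu_{I_\sharp f}$. Thus the proposition reduces to the pointwise identity
\begin{gather*}
|A^0_{I_\sharp f}|^2_{g_{I_\sharp f}}\,d\mu_{I_\sharp f} \;=\; |A^0_f|^2_{g_f}\,d\mu_f.
\end{gather*}

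Next I would extract the conformal structure of the inversion from (\ref{eq0}): the relation $dI(x)\cdot dI^T(x) = \|x\|^{-4}\,\mathrm{id}$ says exactly that $dI(x)$ is $\|x\|^{-2}$ times an orthogonal map, so inversion is a conformal diffeomorphism of $\mathbb{R}^n\setminus\{\mathbb{0}\}$ with conformal factor $\lambda := \|f\|^{-2}$ (pulled back along $f$). In particular
\begin{gather*}
g_{I_\sharp f} \;=\; \lambda^2\, g_f \;=\; \|f\|^{-4}g_f, \qquad d\mu_{I_\sharp f} \;=\; \lambda^2\,d\mu_f \;=\; \|f\|^{-4}d\mu_f,
\end{gather*}
the second of which has already been derived in the text just after (\ref{eq0}).

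The heart of the argument is the pointwise transformation law for the second fundamental form under a conformal change of ambient metric. Choosing a local orthonormal frame along $f$ and differentiating $I\circ f$ twice, a direct computation (using $dI(x) = \|x\|^{-2}(\mathrm{id}-2\,\hat x\otimes\hat x)$ with $\hat x = x/\|x\|$, so that $d^2 I$ contributes terms that are either tangential or proportional to the ambient metric) yields the classical identity
\begin{gather*}
A_{I_\sharp f}(X,Y) \;=\; \lambda\,\bigl(A_f(X,Y) \;-\; g_f(X,Y)\,(\nabla \log \lambda)^{\perp}\bigr).
\end{gather*}
Taking trace-free parts with respect to $g_{I_\sharp f}$ — using definition (\ref{formula12}) — the pure-trace correction drops out and leaves $A^0_{I_\sharp f} = \lambda\,A^0_f$ as $(0,2)$-tensors on $\Sigma$. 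Since the metric dual contracts with $g_{I_\sharp f}^{-1} = \lambda^{-2}g_f^{-1}$ twice, we get $|A^0_{I_\sharp f}|^2_{g_{I_\sharp f}} = \lambda^{-2}|A^0_f|^2_{g_f} = \|f\|^4\,|A^0_f|^2_{g_f}$. Multiplying by $d\mu_{I_\sharp f} = \|f\|^{-4}d\mu_f$ gives the desired pointwise invariance, and integration completes the proof.

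The main obstacle is the computation in the last paragraph: one must carefully separate the genuinely conformal piece of $A$ from the non-invariant gradient-of-conformal-factor piece, verify it lies entirely in the trace part $g\cdot(\cdot)$, and confirm the power counting $\lambda^{-2}\cdot\lambda^2 = 1$ between $|A^0|^2_{g_{I_\sharp f}}$ and $d\mu_{I_\sharp f}$. Everything else is bookkeeping around (\ref{eq0}), (\ref{formula2}) and Gauss--Bonnet.
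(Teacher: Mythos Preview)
Your argument is correct, but it is a genuinely different proof from the one in the paper. The paper performs a brute-force computation: it writes out $\partial_i\partial_j I_\sharp f$ and its tangential projection explicitly, arrives at the closed formula
\[
H_{I_\sharp f} \;=\; \|f\|^{2}H_f - 2\langle H_f,f\rangle f - 4f^{T_f} - 4f + 8\,\tfrac{\|f^{T_f}\|^{2}}{\|f\|^{2}}\,f,
\]
squares it to obtain $|H_{I_\sharp f}|^{2} = \bigl|\,\|f\|^{2}H_f + 4f^{\perp_f}\,\bigr|^{2}$, and then shows that the cross term $\int_\Sigma \|f\|^{-2}\langle H_f,f\rangle\,d\mu_f + 2\int_\Sigma \|f\|^{-4}|f^{\perp_f}|^{2}\,d\mu_f$ vanishes via the divergence theorem applied to $\|f\|^{-2}\langle \partial_j f,f\rangle$. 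No appeal to Gauss--Bonnet or to the conformal transformation law of $A$ is made.

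Your route via (\ref{formula2}) and Gauss--Bonnet reduces the question to the pointwise conformal invariance of $|A^0|^{2}\,d\mu$, which is the conceptually ``right'' reason the result holds and avoids the page of algebra. The trade-off is that you import Gauss--Bonnet and must carry out (or cite) the conformal transformation law for the second fundamental form, whereas the paper's proof is entirely self-contained within its own formulae. One small remark on your write-up: the displayed identity $A_{I_\sharp f}(X,Y)=\lambda\bigl(A_f(X,Y)-g_f(X,Y)(\nabla\log\lambda)^{\perp}\bigr)$ equates vectors living in different normal bundles (at $I(f(p))$ versus $f(p)$); strictly speaking the right-hand side should carry a $dI(f)$, which is exactly $\lambda$ times an orthogonal map by (\ref{eq0}). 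This does not affect your norm computation, so the argument goes through as stated.
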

\begin{proof}
For the metric we have
\begin{gather*}
\begin{split}
g^{I_{\sharp}f}_{i,j}
& =
\langle \partial_{i} \frac{f}{\Vert f \Vert^{2}},
\partial_{j} \frac{f}{\Vert f \Vert^{2}} \rangle
=
\langle \frac{\partial_{i} f}{\Vert f \Vert^{2}}
- 2\frac{\langle \partial_{i} f , f \rangle}{\Vert f \Vert^{4}}f,
\frac{\partial_{j} f}{\Vert f \Vert^{2}}
- 2\frac{\langle \partial_{j} f , f \rangle}{\Vert f \Vert^{4}}f \rangle \\
& =
\frac{1}{\Vert f \Vert^{4}} g^{f}_{i,j},
\end{split}
\end{gather*}
hence $g^{i,j}_{I_{\sharp}f}=\Vert f \Vert^{4}g^{i,j}_{f}.$ 
To calculate the mean curvature 
\begin{gather*}
H_{I_{\sharp}f}=g^{i,j}_{I_{\sharp}f}A^{I_{\sharp}f}_{i,j},
\quad
A^{I_{\sharp}f}_{i,j}
=
\partial^{\perp_{I_{\sharp}f}}_{i}\partial_{j}I_{\sharp}f
=\partial_{i}\partial_{j}I_{\sharp}f-P^{T_{I_{\sharp}f}}(\partial _{i}\partial_{j}I_{\sharp}f)
\end{gather*}
we derive
\begin{gather*}
\begin{split}
\partial_{i}\partial_{j}I_{\sharp}f
= & 
\partial_{i}(\frac{\partial_{j} f}{\Vert f \Vert^{2}}
- 2\frac{\langle \partial_{j} f , f \rangle}{\Vert f \Vert^{4}}f)\\
= &
\frac{\partial_{i }\partial_{j} f}{\Vert f \Vert^{2}}
- 2\frac{\langle \partial_{i} f , f \rangle}{\Vert f \Vert^{4}}\partial_{j}f
- 2\frac{\langle \partial_{j} f , f \rangle}{\Vert f \Vert^{4}}\partial_{i}f \\
& -
2\frac{\langle \partial_{i} \partial_{j} f , f \rangle}{\Vert f \Vert^{4}}f
- 2\frac{\langle \partial_{i} f , \partial_{j} f \rangle}{\Vert f \Vert^{4}}f
+ 8\frac
{\langle \partial_{i} f , f \rangle 
\langle \partial_{j} f , f \rangle}
{\Vert f \Vert^{6}}f,
\end{split}
\end{gather*}
and 
\begin{gather*}
\begin{split}
P^{T_{I_{\sharp}f}}(\partial_{i}\partial_{j} I_{\sharp}f)
= &
g^{k,l}_{I_{\sharp}f}\langle \partial_{i}\partial_{j} I_{\sharp}f,\partial_{k} \frac{f}{\Vert f \Vert^{2}} \rangle \partial_{l} \frac{f}{\Vert f \Vert^{2}} \\
= &
\Vert f \Vert^{4} g^{k,l}_{f} \langle \partial_{i}\partial_{j} I_{\sharp}f,
\frac{\partial_{k} f}{\Vert f \Vert^{2}}
- 2\frac{\langle \partial_{k} f , f \rangle}{\Vert f \Vert^{4}}f \rangle
(\frac{\partial_{l} f}{\Vert f \Vert^{2}}
- 2\frac{\langle \partial_{l} f , f \rangle}{\Vert f \Vert^{4}}f) \\
= &
g^{k,l}_{f}\langle \partial_{i} \partial_{j} I_{\sharp}f,\partial_{k}f \rangle \partial_{l} f \\
& -
\frac{2}{\Vert f \Vert^{2}}g^{k,l}_{f}
\langle \partial_{i} \partial_{j} I_{\sharp}f,\partial_{k}f \rangle 
\langle \partial_{l} f ,f \rangle f \\
& -
\frac{2}{\Vert f \Vert^{2}} g^{k,l}_{f}
\langle \partial_{i} \partial_{j} I_{\sharp}f, f \rangle \langle \partial_{k}f,f\rangle
\partial_{l}f \\
& +
\frac{4}{\Vert f \Vert^{4}}g^{k,l}_{f}
\langle \partial_{i}\partial_{j}I_{\sharp}f,f \rangle \langle \partial_{k}f,f \rangle
\langle \partial_{l} f ,f \rangle f.
\end{split}
\end{gather*}
For the several summands of the mean curvature we obtain
\begin{enumerate}
\item
\begin{gather*}
\begin{split}
g^{i,j}_{I_{\sharp}f}\partial_{i}\partial_{j}I_{\sharp}f 
= &
\Vert f \Vert^{2} g^{i,j}_{f}\partial_{i}\partial_{j}f
-4f^{T_{f}}
-2\langle g^{i,j}_{f}\partial_{i}\partial_{j}f,f \rangle f
-4f
+8\frac{\Vert f^{T_{f}} \Vert^{2}}{\Vert f \Vert^{2}}f,
\end{split}
\end{gather*}
\item
\begin{gather*}
\begin{split}
g^{i,j}_{I_{\sharp}f}g^{k,l}_{f} \langle & \partial_{i}\partial_{j}I_{\sharp}f, \partial_{k}f \rangle \partial_{l}f \\
= &
P^{T_{f}}
(
\Vert f \Vert^{2} g^{i,j}_{f}\partial_{i}\partial_{j}f
-4f^{T_{f}}
-2\langle g^{i,j}_{f}\partial_{i}\partial_{j}f,f \rangle f
-4f
+8\frac{\Vert f^{T_{f}} \Vert^{2}}{\Vert f \Vert^{2}}f
) \\
= &
\Vert f \Vert^{2} g^{i,j}_{f} \partial^{T_{f}}_{i}\partial_{j}f
-8f^{T_{f}}
-2\langle g^{i,j}_{f}\partial_{i}\partial_{j}f,f \rangle f^{T_{f}}
+ 8\frac{\Vert f^{T_{f}} \Vert^{2}}{\Vert f \Vert^{2}}f^{T_{f}},
\end{split}
\end{gather*}
\item
\begin{gather*}
\begin{split}
-\frac{2}{\Vert f   \Vert^{2}} & g^{i,j}_{I_{\sharp}f} 
g^{k,l}_{f} 
\langle \partial_{i} \partial_{j}  I_{\sharp}f , \partial_{k}f \rangle 
\langle \partial_{l} f ,f \rangle f \\
= &
-\frac{2}{\Vert f \Vert^{2}}\langle 
\Vert f \Vert^{2} g^{i,j}_{f}\partial_{i}\partial_{j}f
-4f^{T_{f}} \\
& \hspace{43pt} -
2\langle g^{i,j}_{f}\partial_{i}\partial_{j}f,f \rangle f
-4f
+8\frac{\Vert f^{T_{f}} \Vert^{2}}{\Vert f \Vert^{2}}f
,f^{T_{f}} \rangle f \\
= &
-2\langle g^{i,j}_{f}\partial_{i} \partial_{j}f,f^{T_{f}} \rangle f
+4\frac{\Vert f^{T_{f}} \Vert^{2}}{\Vert f \Vert^{2}}\langle g^{i,j}_{f}\partial_{i} \partial_{j}f,f \rangle  f \\
&
+16\frac{\Vert f^{T_{f}} \Vert^{2}}{\Vert f \Vert^{2}} f
\hspace{-2.5pt}
-16\frac{\Vert f^{T_{f}} \Vert^{4}}{\Vert f \Vert^{4}}f,
\end{split}
\end{gather*}
\item
\begin{gather*}
\begin{split}
-\frac{2}{\Vert f \Vert^{2}} g^{i,j}_{I_{\sharp}f}g^{k,l}_{f} &
\langle \partial_{i} \partial_{j} I_{\sharp}f, f \rangle \langle \partial_{k}f,f\rangle
\partial_{l}f \\
= &
-\frac{2}{\Vert f \Vert^{2}}
\langle 
\Vert f \Vert^{2} g^{i,j}_{f}\partial_{i}\partial_{j}f
-4f^{T_{f}} \\
& \hspace{43pt} -
2\langle g^{i,j}_{f}\partial_{i}\partial_{j}f,f \rangle f
-4f
+8\frac{\Vert f^{T_{f}} \Vert^{2}}{\Vert f \Vert^{2}}f,f \rangle f^{T_{f}} \\
= &
2\langle g^{i,j}_{f} \partial_{i} \partial_{j}f,f \rangle f^{T_{f}}
+8\frac{\Vert f^{T_{f}} \Vert^{2}}{ \Vert f \Vert^{2}}f^{T_{f}}
+8f^{T_{f}}
-16\frac{\Vert f^{T_{f}} \Vert^{2}}{\Vert f \Vert^{2}}f^{T_{f}},
\end{split}
\end{gather*}
\item
\begin{gather*}
\begin{split}
\frac{4}{\Vert f \Vert^{4}} g^{i,j}_{I_{\sharp}f}g^{k,l}_{f} &
\langle \partial_{i}\partial_{j}I_{\sharp}f,f \rangle \langle \partial_{k}f,f \rangle
\langle \partial_{l} f ,f \rangle f \\
= &
\frac{4}{\Vert f \Vert^{4}} \langle
\Vert f \Vert^{2} g^{i,j}_{f}\partial_{i}\partial_{j}f
-4f^{T_{f}} \\
& \hspace{29pt} -
2\langle g^{i,j}_{f}\partial_{i}\partial_{j}f,f \rangle f
-4f
+8\frac{\Vert f^{T_{f}} \Vert^{2}}{\Vert f \Vert^{2}}f,f\rangle
\Vert f^{T_{f}} \Vert^{2}f \\
= &
-4\frac{\Vert f^{T_{f}} \Vert^{2}}{\Vert f \Vert^{2}} \langle g^{i,j}_{f} \partial_{i}\partial_{j}f,f \rangle f
+16\frac{\Vert f^{T_{f}} \Vert^{4}}{\Vert f \Vert^{4}}f
-16\frac{\Vert f^{T_{f}} \Vert^{2}}{\Vert f \Vert^{2}}f.
\end{split}
\end{gather*}
\end{enumerate}
Collecting terms we conclude
\begin{gather*}
\begin{split}
H_{I_{\sharp}f}
= &
g^{i,j}_{I_{\sharp}f}A^{I_{\sharp}f}_{i,j}
= 
g^{i,j}_{I_{\sharp}f}\partial_{i}\partial_{j}I_{\sharp}f
-g^{i,j}_{I_{\sharp}f}P^{T_{I_{\sharp}f}}(\partial_{i}\partial_{j} I_{\sharp}f) \\
= &
\Vert f \Vert^{2} g^{i,j}_{f}\partial_{i}\partial_{j}f
-4f^{T_{f}}
-2\langle g^{i,j}_{f}\partial_{i}\partial_{j}f,f \rangle f
-4f
+8\frac{\Vert f^{T_{f}} \Vert^{2}}{\Vert f \Vert^{2}}f \\
& -
(\Vert f \Vert^{2} g^{i,j}_{f} \partial^{T_{f}}_{i}\partial_{j}f
-8f^{T_{f}}
-2\langle g^{i,j}_{f}\partial_{i}\partial_{j}f,f \rangle f^{T_{f}}
+ 8\frac{\Vert f^{T_{f}} \Vert^{2}}{\Vert f \Vert^{2}}f^{T_{f}})\\
& -(-2\langle g^{i,j}_{f}\partial_{i} \partial_{j}f,f^{T_{f}} \rangle f
+4\frac{\Vert f^{T_{f}} \Vert^{2}}{\Vert f \Vert^{2}}\langle g^{i,j}_{f}\partial_{i} \partial_{j}f,f \rangle  f \\
& \hspace{116pt}
+16\frac{\Vert f^{T_{f}} \Vert^{2}}{\Vert f \Vert^{2}} f
\hspace{-2.5pt}
-16\frac{\Vert f^{T_{f}} \Vert^{4}}{\Vert f \Vert^{4}}f) \\
& -
(2\langle g^{i,j}_{f} \partial_{i} \partial_{j}f,f \rangle f^{T_{f}}
+8\frac{\Vert f^{T_{f}} \Vert^{2}}{ \Vert f \Vert^{2}}f^{T_{f}}
+8f^{T_{f}}
-16\frac{\Vert f^{T_{f}} \Vert^{2}}{\Vert f \Vert^{2}}f^{T_{f}}) \\
& -
(-4\frac{\Vert f^{T_{f}} \Vert^{2}}{\Vert f \Vert^{2}} \langle g^{i,j}_{f} \partial_{i}\partial_{j}f,f \rangle f 
+16\frac{\Vert f^{T_{f}} \Vert^{4}}{\Vert f \Vert^{4}}f
-16\frac{\Vert f^{T_{f}} \Vert^{2}}{\Vert f \Vert^{2}}f) \\
= &
\Vert f \Vert^{2} H_{f} 
-2\langle g^{i,j}_{f}\partial_{i}\partial_{j}f,f^{\perp_{f}}\rangle f
-4f^{T_{f}}-4f+8\frac{\Vert f^{T_{f}} \Vert^{2}}{\Vert f \Vert^{2}}f \\
= &
\Vert f \Vert^{2} H_{f} 
-2\langle H_{f},f\rangle f
-4f^{T_{f}}-4f+8\frac{\Vert f^{T_{f}} \Vert^{2}}{\Vert f \Vert^{2}}f. \\
\end{split}
\end{gather*}
Hence we obtain for the squared absolute value 
\begin{gather*}
\begin{split}
\vert H_{I_{\sharp}f}\vert^{2}
= &
\Vert f \Vert^{4} \vert H_{f} \vert^{2}
-2\Vert f \Vert^{2} \langle H_{f},f \rangle^{2}
-4\Vert f \Vert^{2}\langle H_{f},f \rangle
+8\Vert f^{T_{f}}\Vert^{2} \langle H_{f},f \rangle \\
& -
2\Vert f \Vert^{2} \langle H_{f},f \rangle^{2}
+4\Vert f \Vert^{2}\langle H_{f},f \rangle^{2}
+8\Vert f^{T_{f}} \Vert^{2}\langle H_{f},f \rangle \\
&
+8\Vert f \Vert^{2}\langle H_{f},f \rangle 
-16\Vert f^{T_{f}} \Vert^{2} \langle H_{f},f \rangle \\
& +
8\Vert f^{T_{f}} \Vert^{2}\langle H_{f},f \rangle
+16 \Vert f^{T_{f}}\Vert^{2}
+16 \Vert f^{T_{f}}\Vert^{2}
-32 \frac{\Vert f^{T_{f}} \Vert^{4}}{\Vert f \Vert^{2}} \\
& 
-4\Vert f \Vert^{2}\langle H_{f},f \rangle
+8\Vert f \Vert^{2}\langle H_{f},f \rangle 
+16 \Vert f^{T_{f}} \Vert^{2}
+16 \Vert f \Vert^{2}
-32\Vert f^{T_{f}} \Vert^{2} \\
& +
8\Vert f^{T_{f}}\Vert^{2} \langle H_{f},f \rangle
-16\Vert f^{T_{f}} \Vert^{2} \langle H_{f},f \rangle
-32 \frac{\Vert f^{T_{f}} \Vert^{4}}{\Vert f \Vert^{2}} \\
&
-32\Vert f^{T_{f}} \Vert^{2}
+64\frac{\Vert f^{T_{f}}\Vert^{4}}{\Vert f\Vert^{2}} \\
= &
\Vert f \Vert^{4} \vert H_{f} \vert^{2}
+8 \Vert f \Vert^{2} \langle H_{f} ,f \rangle
-16 \Vert f^{T_{f}} \Vert^{2}
+16 \Vert f \Vert^{2} \\
= &
\vert \Vert f \Vert^{2} H_{f} + 4 f^{\perp_{f}} \vert^{2}.
\end{split}
\end{gather*}
By  (\ref{eq0})  we have
\begin{gather*}
\begin{split}
\int_{\Sigma} \vert H_{I_{\sharp}f} \vert^{2}d\mu_{I_{\sharp}f} 
= &
\int_{\Sigma} \Vert f \Vert^{-4} \vert \,\Vert f \Vert^{2}H_{f}+4f^{\perp_{f}} \vert^{2}d\mu_{f} \\
= &
\int_{\Sigma} \vert H_{f} \vert^{2} d\mu_{f}
+8 \int_{\Sigma}\frac{\langle H_{f},f \rangle}{\Vert f \Vert^{2}}d\mu_{f}
+16 \int_{\Sigma}\frac{\vert f^{\perp_{f}}\vert^{2}}{\Vert f \Vert^{4}}d\mu_{f}
\end{split}
\end{gather*}
and
\begin{gather*}
\begin{split}
0
= &
\int_{\Sigma} g^{i,j}_{f}\nabla_{i}(\Vert f \Vert^{-2} \langle \partial_{j}f,f \rangle)d\mu_{f} \\
= &
-2\int_{\Sigma}g^{i,j}_{f}\frac{\langle \partial_{i}f,f \rangle \langle \partial_{j}f,f \rangle}{\Vert f \Vert^{4}}d\mu_{f}
+\int_{\Sigma}g^{i,j}_{f}\frac{\langle\partial_{i}\partial_{j}f,f\rangle}{\Vert f \Vert^{2}}d\mu_{f} \\
& +
\int_{\Sigma}g^{i,j}_{f}\frac{\langle \partial_{i}f,\partial_{j}f \rangle}{\Vert f \Vert^{2}}d\mu_{f}
-\int_{\Sigma}g^{i,j}_{f}\Gamma^{k}_{i,j}\frac{\langle \partial_{k}f,f \rangle}{\Vert f \Vert^{2}}d\mu_{f} \\
= &
-2\int_{\Sigma}\frac{\Vert f^{T_{f}} \Vert^{2}}{\Vert f \Vert^{4}}d\mu_{f}
+2\int_{\Sigma}\frac{1}{\Vert f \Vert^{2}}d\mu_{f}
+\int_{\Sigma}g^{i,j}_{f}\frac{\langle \partial_{i}\partial_{j}f-\Gamma^{k}_{i,j}\partial_{k}f,f \rangle}{\Vert f \Vert^{2}}d\mu_{f} \\
= &
2 \int_{\Sigma} \frac{\vert f^{\perp_{f}}\vert^{2}}{\Vert f \Vert^{4}}d\mu_{f}
+\int_{\Sigma}\frac{\langle H_{f},f \rangle}{\Vert f \Vert^{2}}d\mu_{f},
\end{split}
\end{gather*}
which concludes the proof.
\end{proof}
\begin{lemma} \label{A3}
Let $f:\Sigma \longrightarrow \mathbb{R}^{n}$ be a smooth immersion. 

Then we have
 \begin{gather*}
  (\Delta H + Q(A^{0})H)_{\rho f} = \rho^{-3}(\Delta H + Q(A^{0})H)_{f}.
 \end{gather*}
\end{lemma}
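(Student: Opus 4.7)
The plan is to track the scaling factors of the individual ingredients $A$, $H$, $A^{0}$, and the connection $\nabla$ under $f \mapsto \rho f$, and then combine them. Because under rescaling the normal bundle is unchanged, the tangential projection is unchanged, and $\partial_{i}\partial_{j}(\rho f) = \rho\,\partial_{i}\partial_{j}f$, one obtains immediately $A_{\rho f} = \rho A_{f}$. Combining this with item (1) of Lemma \ref{A2}, namely $g^{i,j}_{\rho f} = \rho^{-2}g^{i,j}_{f}$, gives
\begin{gather*}
H_{\rho f} = g^{i,j}_{\rho f}\,A^{\rho f}_{i,j} = \rho^{-1}H_{f},\qquad A^{0}_{\rho f} = A_{\rho f} - \tfrac{1}{2}g_{\rho f}H_{\rho f} = \rho A^{0}_{f}.
\end{gather*}

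Next I would handle $Q(A^{0})H$. Writing the definition (\ref{formula13}) in a general frame as $Q(A^{0})\Phi = g^{i,i'}g^{j,j'}A^{0}_{i,j}\langle A^{0}_{i',j'},\Phi\rangle$, the two factors of $g^{-1}$ contribute $\rho^{-4}$, the two factors $A^{0}$ contribute $\rho^{2}$, and the factor $H$ contributes $\rho^{-1}$, so
\begin{gather*}
Q(A^{0}_{\rho f})H_{\rho f} = \rho^{-3}\,Q(A^{0}_{f})H_{f}.
\end{gather*}
For $\Delta H$ I would use that the normal connection $\nabla = D^{\perp}$ depends only on the immersion through its tangent plane, so it is literally unchanged by the scalar multiplication $f \mapsto \rho f$ when acting on normal-valued forms (this is the same observation underlying item (4) of Lemma \ref{A2}). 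Hence $\nabla_{\rho f}^{k}H_{\rho f} = \rho^{-1}\nabla_{f}^{k}H_{f}$ for every $k$, and with the usual formula $\Delta\Phi = g^{i,j}\nabla_{i}\nabla_{j}\Phi$ one gets $\Delta_{\rho f}H_{\rho f} = \rho^{-2}\cdot\rho^{-1}\,\Delta_{f}H_{f} = \rho^{-3}\Delta_{f}H_{f}$.

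Adding the two identities yields the claim. There is no real obstacle here; the only point that requires a small amount of care is making sure that the formula for $Q$, stated in (\ref{formula13}) using an orthonormal frame $\{e_{i}\}$, is rewritten in a manifestly covariant form so that the scaling of $g^{-1}$ can be read off (the orthonormal frame itself scales as $e_{i}^{\rho f} = \rho^{-1}e_{i}^{f}$, which gives the same answer but is slightly less transparent). Once this is in place the verification is a one-line count of powers of $\rho$.
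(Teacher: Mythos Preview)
Your proposal is correct and follows essentially the same approach as the paper: both compute $H_{\rho f}=\rho^{-1}H_{f}$ and $A^{0}_{\rho f}=\rho A^{0}_{f}$, then count powers of $\rho$ in $Q(A^{0})H$ via the two factors of $g^{-1}$, and obtain the scaling of $\Delta H$ from the scale invariance of the covariant derivative together with $g^{i,j}_{\rho f}=\rho^{-2}g^{i,j}_{f}$. The only cosmetic difference is that the paper writes out the Laplacian with the explicit Christoffel correction $g^{i,j}(\nabla_{\partial_i}\nabla_{\partial_j}-\nabla_{\nabla_{\partial_i}\partial_j})$, whereas you invoke the scale invariance of $\nabla$ directly (which is justified by items (2) and (4) of Lemma~\ref{A2}).
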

\begin{proof}
From lemma \ref{A2} we infer 
\begin{gather*}
\begin{split}
H_{\rho f}
&= A_{\rho f}(e^{\rho f}_{i} , e^{\rho f}_{i})
 =g^{i,j}_{\rho f}A_{\rho f}(\partial_{i} , \partial_{j})
 =\rho ^{-2} g^{i,j}_{f} \rho A_{f}(\partial_{i} , \partial_{j}) \\
&=\rho^{-1}H_{f}.
\end{split}
\end{gather*}
Therefore the Lapace-Beltrami of the mean curvature becomes
\begin{gather*}
\begin{split}
 (\Delta H)_{\rho f}
&=\rho^{-1} \Delta_{\rho f} (H_{f}) 
 =\rho^{-1}g^{i,j}_{\rho f} (\nabla^{\rho f}_{\partial_{i}} 
  \nabla^{\rho f}_{\partial_{j}} H_{f}  
                                             - \nabla^{\rho f}_{\nabla^{\rho f}_{\partial_{i}}\partial_{j}}H_{f} ) \\
&=\rho^{-3}g^{i,j}_{f} (\nabla^{f}_{\partial_{i}} \nabla^{f}_{\partial_{j}} H_{f}   
                                             - \nabla^{f}_{\nabla^{f}_{\partial_{i}}\partial_{j}}H_{f} ) \\
 &=\rho^{-3} (\Delta H)_{f}  .                                \end{split}
\end{gather*}
Next we derive for $Q(A^{0}),$ cf (\ref{formula13}), 
\begin{gather*}
 \begin{split}
 (Q(A^{0})H)_{\rho f} 
 &=  A^{0}_{\rho f}(e^{\rho f}_{i} , e^{\rho f}_{j}) \langle A^{0}_{\rho f}(e^{\rho f}_{i} , e^{\rho f}_{j}) , H_{\rho f} \rangle \\
 &=\rho^{-1} g^{k,l}_{\rho f}g^{n,m}_{\rho f} A^{0}_{\rho f}(\partial_{k}, \partial_{n}) \langle A^{0}_{\rho f}(\partial_{l}, \partial_{m}) , H_{f} \rangle \\
&=\rho^{-5}g^{k,l}_{f}g^{n,m}_{f}  A^{0}_{\rho f}(\partial_{k}, \partial_{n}) \langle A^{0}_{\rho f}(\partial_{n}, \partial_{m}) , H_{f} \rangle.
\end{split}
\end{gather*}
Since the tracefree part of the second fundamental form satisfies by (\ref{formula12})
\begin{gather*}
 \begin{split}
A^{0}_{\rho f}(\partial_{i} , \partial_{j}) 
= & 
A_{\rho f}(\partial_{i}  , \partial_{j}) - \frac{1}{2}g_{\rho f}( \partial_{i} , \partial_{j})H_{\rho f}\\
= &
\partial_{i}\partial_{j}(\rho f) - _{\rho f}\Gamma^{k}_{i,j}\partial_{k}(\rho f) - \frac{1}{2}\langle \partial_{i}(\rho f) , \partial_{j}(\rho f) \rangle \rho^{-1} H_{f}\\
= &
\rho(\partial_{i}\partial_{j}f - _{f}\Gamma^{k}_{i,j}\partial_{k}f -\frac{1}{2}g_{f}(\partial_{i} , \partial_{j})H_{f}) \\
= &
\rho A^{0}_{f}(\partial_{i} , \partial_{j}),
\end{split}
\end{gather*}
we conclude
\begin{gather*}
(Q(A^{0})H)_{\rho f} = \rho^{-3}g^{k,l}_{f}g^{n,m}_{f}  A^{0}_{ f}(\partial_{k}, \partial_{n}) \langle A^{0}_{f}(\partial_{l}, \partial_{m}) , H_{f} \rangle =\rho^{-3}(Q(A^{0})H)_{f}.
\end{gather*}
\end{proof}
We state two Sobolev-type inequalities.
\begin{lemma} \label{A4}
Let $f:\Sigma \rightarrow \mathbb{R}^{n}$ be a smooth, immersed surface.\\ Then we have for $u \in C^{1}_{0}(\Sigma)$ 
 \begin{gather*}
 ( \int_{\Sigma} u^{2} d\mu)^{\frac{1}{2}} \leq c \;( \int_{\Sigma}  \vert \nabla u \vert d\mu + \int_{\Sigma} \vert H \vert \vert u \vert d\mu ),
 \end{gather*}
 where $\mu=\mu_{f}$ and $c>0$.
\end{lemma}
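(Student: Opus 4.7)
This is the Michael--Simon--Sobolev inequality specialized to two-dimensional immersed surfaces, and the cleanest way to prove it is to follow the classical Michael--Simon (1973) strategy in two stages: first establish an isoperimetric inequality, then pass to general $u$ via the coarea formula.

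The plan is as follows. First, by replacing $u$ with $|u|$ (and noting $|\nabla|u|| \leq |\nabla u|$ a.e.), reduce to the case $u \geq 0$. The bulk of the work is then to prove the geometric isoperimetric inequality: for every open set $E \Subset \Sigma$ with smooth boundary,
\begin{equation*}
\mu(E)^{1/2} \;\leq\; c\Bigl(\mathcal{H}^{1}(\partial E) + \int_{E} |H|\, d\mu\Bigr).
\end{equation*}
The standard route is to test the first variation formula $\int_{\Sigma} \mathrm{div}_{\Sigma} X\, d\mu = -\int_{\Sigma} \langle H, X\rangle\, d\mu$ against radial cut-off vector fields $X(y) = \varphi(|y-x_{0}|)(y-x_{0})$ centered at each point $x_{0}\in\Sigma$. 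This produces a monotonicity-type inequality for $\rho\mapsto \rho^{-2}\mu(B_{\rho}(x_{0})\cap\Sigma)$ with an error controlled by $\int |H|\, d\mu$, which in dimension two yields a lower density bound of the form $\mu(B_{\rho}(x_{0})\cap E) \geq c\rho^{2}$ at typical points of $E$, provided $\rho$ is compared with $\mathcal{H}^{1}(\partial E) + \int_{E}|H|\, d\mu$. A Vitali covering argument then converts this pointwise density bound into the desired isoperimetric estimate.

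Second, apply the coarea formula to $u \geq 0$:
\begin{equation*}
\int_{\Sigma} u^{2}\, d\mu \;=\; 2\int_{0}^{\infty} s\,\mu(\{u > s\})\, ds,\qquad \int_{\Sigma} |\nabla u|\, d\mu \;=\; \int_{0}^{\infty} \mathcal{H}^{1}(\{u = s\})\, ds.
\end{equation*}
Applying the isoperimetric inequality to the super-level sets $E_{s} = \{u > s\}$ bounds $\mu(E_{s})^{1/2}$ by $\mathcal{H}^{1}(\{u=s\}) + \int_{E_{s}}|H|\, d\mu$. Integrating in $s$ and using Cavalieri/layer-cake to recognize $\int_{0}^{\infty}\int_{E_{s}}|H|\, d\mu\, ds = \int_{\Sigma} u|H|\, d\mu$, together with an $L^{2}$-Hardy-type trick $\bigl(\int u^{2}\bigr)^{1/2} \leq 2\int_{0}^{\infty}\mu(\{u>s\})^{1/2}\, ds$ (which follows from the Minkowski inequality applied to $u = \int_{0}^{\infty}\chi_{\{u>s\}}\, ds$), yields the stated Sobolev estimate.

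The main obstacle is the isoperimetric inequality, since it is the only step using non-trivial geometry of the immersion; the coarea/layer-cake reduction is formal. Fortunately in the two-dimensional case the monotonicity argument is particularly clean (no Moser iteration is needed, unlike the higher-dimensional $L^{p}$ versions), and the result is standard; one may alternatively just cite Simon's \emph{Lectures on Geometric Measure Theory} or the original Michael--Simon paper for this step.
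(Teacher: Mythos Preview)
Your proposal is correct; the paper itself does not prove this lemma but simply writes ``Cf.\ \cite{ref2}'', i.e.\ cites the original Michael--Simon paper. Your sketch is precisely the outline of that cited argument (monotonicity formula $\Rightarrow$ isoperimetric inequality $\Rightarrow$ Sobolev via coarea/layer-cake), so you have supplied what the paper chose to leave as a reference.
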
 
\begin{proof}
Cf. \cite{ref2}.
\end{proof}
\begin{lemma} \label{A5}
Let $f:\Sigma \rightarrow \mathbb{R}^{n}$ be a smooth, immersed surface. Furthermore 
\begin{center}
$2<p\leq \infty,0 \leq m\leq \infty$ and $0<\alpha \leq 1$
with $\frac{1}{\alpha}=(\frac{1}{2}-\frac{1}{p})\,m+1.$             \end{center}
Then we have for $u \in C^{1}_{0}(\Sigma)$
\begin{gather*}
\Vert u \Vert_{L^{\infty}_{\mu}(\Sigma)}\leq c \Vert u \Vert^{1-\alpha}_{L^{m}_{\mu}(\Sigma)}(\Vert \nabla u \Vert_{L^{p}_{\mu}(\Sigma)}+\Vert H u \Vert_{L^{p}_{\mu}(\Sigma)})^{\alpha},
\end{gather*}
where $c=c(n,m,p)>0.$
\end{lemma}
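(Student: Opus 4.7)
The plan is to establish the inequality via a Moser iteration scheme based on Lemma \ref{A4}. By an approximation argument (replace $|u|$ by $\sqrt{u^2 + \varepsilon}$ and let $\varepsilon \to 0$) one may assume $u \in C_0^\infty(\Sigma)$ is nonnegative. Applying Lemma \ref{A4} to the test function $v = u^\beta$ for any $\beta \geq 1$ yields
$\bigl(\int_\Sigma u^{2\beta} d\mu\bigr)^{1/2} \leq c\beta \int_\Sigma u^{\beta-1}|\nabla u|\, d\mu + c\int_\Sigma |H| u^\beta d\mu$. H\"older's inequality with conjugate exponents $p$ and $p' = p/(p-1)$ then bounds both integrals on the right in terms of $\Vert \nabla u \Vert_{L^p}$, $\Vert Hu \Vert_{L^p}$ and $\Vert u \Vert_{L^{(\beta-1)p'}}$, producing the basic recursion
$\Vert u \Vert_{L^{2\beta}}^{\beta} \leq c\beta\, (\Vert \nabla u \Vert_{L^p} + \Vert Hu \Vert_{L^p})\, \Vert u \Vert_{L^{(\beta-1)p'}}^{\beta-1}$.

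Next I would set up the Moser iteration with exponents $q_0 := m$ and $q_{k+1} := 2 + 2q_k/p'$, taking $\beta_{k+1} := 1 + q_k/p'$ in the $k$-th step so as to obtain an estimate for $\Vert u \Vert_{L^{q_{k+1}}}$ in terms of $\Vert u \Vert_{L^{q_k}}$. Because $p > 2$, the ratio $q_{k+1}/q_k$ tends to $2(p-1)/p > 1$, so $(q_k)$ grows geometrically and $q_k \to \infty$; moreover the recursion $q_{k+1} + c = \kappa(q_k + c)$ with $\kappa = 2(p-1)/p$, $c = p/(2-p)$ makes the asymptotics fully explicit. Iterating the basic recursion and multiplying yields an inequality of the form
$\Vert u \Vert_{L^{q_K}} \leq C_K \,(\Vert \nabla u \Vert_{L^p} + \Vert Hu \Vert_{L^p})^{A_K}\, \Vert u \Vert_{L^m}^{B_K}$,
where $A_K + B_K = 1$ at every stage (this is forced by the scaling $u \mapsto tu$). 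Letting $K \to \infty$, one checks that the prefactor $C_K$ remains uniformly bounded (the relevant infinite products $\prod_k \beta_k^{1/\beta_k}$ and $\prod_k q_{k-1}/(q_{k-1}+p')$ converge because $q_k \sim \kappa^k$), that $A_K \to \alpha$, $B_K \to 1-\alpha$ for some $\alpha \in (0,1]$, and that the left-hand side converges to $\Vert u \Vert_{L^\infty}$.

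The main obstacle is identifying the limit exponent $\alpha$ with the value $1/(1 + m(\tfrac12 - \tfrac1p))$ demanded by the lemma. Rather than computing the telescoping products explicitly, I would pin this down by a scaling argument: rescaling the metric $g \mapsto \lambda^2 g$ (equivalently $f \mapsto \lambda f$, cf.\ Lemma \ref{A2}) transforms $d\mu \mapsto \lambda^2 d\mu$, $|\nabla u| \mapsto \lambda^{-1}|\nabla u|$ and $|H| \mapsto \lambda^{-1}|H|$, so the two sides of the proposed inequality scale as $\lambda^0$ and $\lambda^{(1-\alpha)\cdot 2/m + \alpha(-1+2/p)}$ respectively. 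Equating these forces precisely $(1-\alpha)/\alpha = m(\tfrac12 - \tfrac1p)$, i.e.\ the stated relation. Since the Moser iteration is scaling-covariant and yields a non-degenerate $\alpha$, this scaling identity determines it uniquely. Finally, the endpoint cases are immediate: $m = 0$ corresponds to the pure Sobolev embedding $W^{1,p} \hookrightarrow L^\infty$ in dimension $2$ with $p > 2$ (one initializes the iteration without an $L^m$ factor, possible because $q_0$ may be chosen as any finite number greater than $2$), and $m = \infty$ gives the trivial bound $\Vert u \Vert_{L^\infty} \leq c\Vert u \Vert_{L^\infty}$.
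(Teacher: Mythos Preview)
The paper does not actually prove this lemma: its entire proof reads ``Cf.\ Theorem 5.6 in \cite{ref1}'', i.e.\ it simply quotes the result from Kuwert--Sch\"atzle. Your Moser iteration from the Michael--Simon inequality (Lemma \ref{A4}) is the standard route to such multiplicative Sobolev inequalities on immersed surfaces and is essentially how the cited reference establishes it, so your approach is both correct and in line with the intended argument.

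Two minor remarks. First, the fixed point of the recursion $q_{k+1}=2+(2/p')q_k$ is $q^*=2p/(2-p)$, not $p/(2-p)$; this does not affect the argument. Second, your treatment of the endpoint $m=0$ as ``pure $W^{1,p}\hookrightarrow L^\infty$'' is not quite right on a closed surface: for $u\equiv 1$ on a round sphere of radius $R$ one has $\Vert u\Vert_\infty=1$ while $\Vert\nabla u\Vert_p+\Vert Hu\Vert_p\sim R^{2/p-1}\to 0$ as $R\to\infty$ when $p>2$, so no universal constant can work. This is a defect of the stated range $0\le m$ rather than of your method; the paper only invokes the lemma with $m=2$, $p=4$, where your iteration applies cleanly.
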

\begin{proof}
Cf. Theorem 5.6 in \cite{ref1}.
\end{proof}
\newpage
We prove some localized interpolation inequalities.\\
Let  $f:\Sigma \rightarrow \mathbb{R}^{n}$ be a smooth immersion, $d=dim(\Sigma)$, $\Phi $ a $l$-linear form along $f$ and $\gamma \in C^{1}_{0}(\Sigma)$ with
$0 \leq \gamma\leq 1,\,\Vert \nabla \gamma \Vert_{L^{\infty}_{\mu}(\Sigma)} \leq \Lambda.$
\begin{proposition} \label{A6}
Let 
$1 \leq p,q,r <\infty,\;\frac{1}{p}+\frac{1}{q}=\frac{1}{r},
\;\alpha + \beta=\eta+\theta=1.$ 
For 
\begin{center}
$k \in \lbrace 0,\mathbb{R}_{\geq \eta p,\theta q}\rbrace,\,s \geq \alpha p,\beta q$  
and
$-\frac{s}{q}\leq u_{1},u_{2} \leq \frac{s}{p},
\;-\frac{k}{q}\leq v_{1},v_{2} \leq \frac{k}{p}$                    \end{center}
we have
\begin{gather*}
\begin{split}
(\int_{\Sigma}\Vert f \Vert & ^{k}  \vert \nabla \Phi  \vert^{2r}\gamma^{s}d\mu)^{\frac{1}{r}}\\
\leq & 
ck(\int_{[\gamma>0]}\Vert f \Vert^{k-\eta p}\vert \Phi \vert^{p}\gamma^{s-u_{1}p}d\mu)^{\frac{1}{p}}
(\int_{[\gamma>0]}\Vert f \Vert^{k-\theta q}\vert \nabla \Phi \vert^{q}\gamma^{s+u_{1}q}d\mu)^{\frac{1}{q}}\\
&+
c(\int_{[\gamma>0]}\Vert f \Vert^{k-v_{2} p}\vert \Phi \vert^{p}\gamma^{s-u_{2}p}d\mu)^{\frac{1}{p}}
(\int_{[\gamma>0]}\Vert f \Vert^{k+v_{2}q}\vert \nabla^{2} \Phi \vert^{q} \gamma^{s+u_{2}q}d\mu)^{\frac{1}{q}}\\
&+
c s \Lambda(\int_{[\gamma>0]}\Vert f \Vert^{k-v_{1}p}\vert \Phi \vert^{p}\gamma^{s-\alpha p}d\mu)^{\frac{1}{p}}
(\int_{[\gamma>0]}\Vert f \Vert^{k+v_{1}q}\vert \nabla \Phi \vert^{q}\gamma^{s-\beta q}d\mu)^{\frac{1}{q}},
\end{split}
\end{gather*}
where $0\cdot \infty:=0,\,c=c(r,d)$ and $\Vert \nabla \gamma \Vert_{L^{\infty}_{\mu}(\Sigma)}\leq \Lambda.$
 \end{proposition}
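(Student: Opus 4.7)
The plan is integration by parts, followed by a three-factor H\"older inequality, followed by absorption. Set
\[
I := \int_{\Sigma} \Vert f \Vert^{k} \vert \nabla\Phi\vert^{2r}\gamma^{s}\,d\mu,
\]
which is finite because $\mathrm{supp}(\gamma)$ is compact and $f,\Phi$ smooth. With weight $\omega := \Vert f \Vert^{k}\vert\nabla\Phi\vert^{2r-2}\gamma^{s}$, rewrite $I=\int_{\Sigma}\omega\, g^{ij}\langle\nabla_{i}\Phi,\nabla_{j}\Phi\rangle\,d\mu$ and apply the divergence theorem to the compactly supported vector field $X^{j} := \omega\, g^{ij}\langle \nabla_{i}\Phi,\Phi\rangle$ to obtain
\[
I \;=\; -\int_{\Sigma}\omega\,\langle\Phi,\Delta\Phi\rangle\,d\mu\;-\;\int_{\Sigma} g^{ij}(\nabla_{j}\omega)\langle \nabla_{i}\Phi,\Phi\rangle\,d\mu.
\]
Using $\vert\nabla \Vert f \Vert^{k}\vert \leq k\Vert f \Vert^{k-1}$, $\vert\nabla \vert\nabla\Phi\vert^{2r-2}\vert \leq 2(r-1)\vert\nabla\Phi\vert^{2r-3}\vert\nabla^{2}\Phi\vert$ and $\vert\nabla\gamma\vert\leq\Lambda$, the triangle inequality yields
\[
I \;\leq\; c(r)\,T_{1}\;+\;c\,k\,T_{2}\;+\;c\,s\Lambda\,T_{3},
\]
where $T_{1}:=\int\vert\Phi\vert\vert\nabla^{2}\Phi\vert\Vert f\Vert^{k}\vert\nabla\Phi\vert^{2r-2}\gamma^{s}d\mu$, $T_{2}:=\int\vert\Phi\vert\Vert f\Vert^{k-1}\vert\nabla\Phi\vert^{2r-1}\gamma^{s}d\mu$, and $T_{3}:=\int\vert\Phi\vert\Vert f\Vert^{k}\vert\nabla\Phi\vert^{2r-1}\gamma^{s-1}d\mu$.

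To each $T_{i}$ I would apply H\"older's inequality with the three-factor exponents $(p,q,\tfrac{r}{r-1})$, distributing the weights $\Vert f \Vert$ and $\gamma$ among the three factors. For $T_{2}$, the split is
\[
\bigl(\vert\Phi\vert \Vert f \Vert^{\tfrac{k}{p}-\eta}\gamma^{\tfrac{s}{p}-u_{1}}\bigr)\cdot\bigl(\vert\nabla\Phi\vert \Vert f \Vert^{\tfrac{k}{q}-\theta}\gamma^{\tfrac{s}{q}+u_{1}}\bigr)\cdot\bigl(\vert\nabla\Phi\vert^{2r-2}\Vert f \Vert^{\tfrac{k(r-1)}{r}}\gamma^{\tfrac{s(r-1)}{r}}\bigr);
\]
here $\eta+\theta=1$ makes the three $\Vert f \Vert$-exponents sum to $k-1$, the $u_{1}$-summands cancel so that the $\gamma$-exponents sum to $s$, and the third H\"older factor is exactly $I^{(r-1)/r}$. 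The splittings for $T_{1}$ (with $v_{2},u_{2}$) and for $T_{3}$ (with $v_{1},\alpha,\beta$, where $\alpha+\beta=1$ absorbs the missing power of $\gamma$) are entirely analogous. Collecting the three estimates yields
\[
I \;\leq\; c\bigl(A_{2}+k\,A_{1}+s\Lambda\,A_{3}\bigr)\,I^{(r-1)/r},
\]
with $A_{1},A_{2},A_{3}$ the three $L^{p}\!\cdot\!L^{q}$ products on the right-hand side of the claim; dividing by $I^{(r-1)/r}$ (which is harmless since $I<\infty$, and the bound is trivial when $I=0$) and raising to $1/r$ finishes the argument. The degenerate case $r=1$ reduces to plain two-factor H\"older since $I^{(r-1)/r}\equiv 1$ and no absorption is needed.

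The main obstacle is the exponent bookkeeping. One must check that, in each of the three H\"older splittings, the six weights (three on $\Vert f \Vert$, three on $\gamma$) are nonnegative and sum correctly to $k$ or $k-1$ and to $s$ or $s-1$ respectively. The constraints $-\tfrac{s}{q}\leq u_{i}\leq\tfrac{s}{p}$ and $-\tfrac{k}{q}\leq v_{i}\leq\tfrac{k}{p}$, together with $\eta+\theta=\alpha+\beta=1$, are exactly what enforce this. The hypothesis $k\in\{0\}\cup[\max(\eta p,\theta q),\infty)$ is the sharp range needed for the $T_{2}$-splitting: for $k\geq\max(\eta p,\theta q)$ both $k-\eta p$ and $k-\theta q$ are nonnegative, while for $k=0$ the prefactor annihilates $T_{2}$ and the range condition on $v_{i}$ forces $v_{1}=v_{2}=0$ so that all $\Vert f \Vert$-weights in $T_{1},T_{3}$ disappear (with the convention $0\cdot\infty=0$ stated in the proposition). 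Once these algebraic conditions are verified, no further analytic difficulty remains.
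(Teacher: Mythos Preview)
Your proposal is correct and follows essentially the same route as the paper: integration by parts produces the three terms $T_1,T_2,T_3$, a three-factor H\"older inequality with exponents $(p,q,\tfrac{r}{r-1})$ pulls out the common factor $I^{(r-1)/r}$, and dividing through gives the stated bound. One cosmetic slip: after dividing $I\le C\cdot I^{(r-1)/r}$ by $I^{(r-1)/r}$ you already have $I^{1/r}\le C$, so no further ``raising to $1/r$'' is needed.
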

\begin{proof}
Let $k\geq \eta p,\theta q.$ By integration by parts and H\"older's inequality 
\begin{gather*}
\begin{split}
\int_{\Sigma}\Vert f \Vert & ^{k} 
\vert \nabla \Phi \vert^{2r}\gamma^{s}d\mu \\
\leq &
ck\int_{\Sigma}\Vert f \Vert^{k-1}\vert \Phi \vert \vert \nabla \Phi \vert^{2r-1}\gamma^{s}d\mu
+
c\int_{\Sigma}\Vert f \Vert^{k}\vert \Phi \vert\vert \nabla \Phi \vert^{2r-2}\vert \nabla^{2}\Phi \vert\gamma^{s}d\mu \\
& +
c s \Lambda \int_{\Sigma}\Vert f \Vert^{k}\vert \Phi \vert \vert \nabla \Phi \vert^{2r-1}\gamma^{s-1}d\mu\\
=&
ck\int_{\Sigma}\Vert f \Vert^{\frac{k}{p}-\eta}\vert \Phi \vert \gamma^{\frac{s}{p}-u_{1}}
\Vert f \Vert^{k\frac{r-1}{r}}\vert \nabla \Phi \vert^{2r-2}\gamma^{s\frac{r-1}{r}}
\Vert f \Vert^{\frac{k}{q}-\theta}\vert \nabla \Phi \vert \gamma^{\frac{s}{q}+u_{1}}d\mu\\
&+
c\int_{\Sigma}\Vert f \Vert^{\frac{k}{p}-v_{2}}\vert \Phi \vert\gamma^{\frac{s}{p}-u_{2}}
\Vert f \Vert^{k\frac{r-1}{r}}\vert \nabla \Phi \vert^{2r-2}\gamma^{s\frac{r-1}{r}}
\Vert f \Vert^{\frac{k}{q}+v_{2}}\vert \nabla^{2}\Phi \vert\gamma^{\frac{s}{q}+u_{2}}d\mu\\
&+
c s \Lambda\int_{\Sigma}\Vert f \Vert^{\frac{k}{p}-v_{1}}\vert \Phi \vert \gamma^{\frac{s}{p}-\alpha}
\Vert f \Vert^{k\frac{r-1}{r}}\vert \nabla \Phi \vert^{2r-2}\gamma^{s\frac{r-1}{r}}
\Vert f \Vert^{\frac{k}{q}+v_{1}}\vert \nabla \Phi \vert \gamma^{\frac{s}{q}-\beta}d\mu\\
\leq &
ck(\int_{[\gamma>0]}\Vert f \Vert^{k-\eta p}\vert \Phi \vert^{p}\gamma^{s-u_{1}p}d\mu)^{\frac{1}{p}}\\
&\hspace{51pt}
(\int_{\Sigma}\Vert f \Vert^{k} \vert \nabla \Phi \vert^{2r}\gamma^{s}d\mu)^{\frac{r-1}{r}}
(\int_{[\gamma>0]}\Vert f \Vert^{k-\theta q}\vert \nabla \Phi \vert^{q}\gamma^{s+u_{1}q}d\mu)^{\frac{1}{q}}\\
&+
c(\int_{[\gamma>0]}\Vert f \Vert^{k-v_{2} p}\vert \Phi \vert^{p}\gamma^{s-u_{2}p}d\mu)^{\frac{1}{p}}\\
& \hspace{51pt}
(\int_{\Sigma}\Vert f \Vert^{k} \vert \nabla \Phi \vert^{2r}\gamma^{s}d\mu)^{\frac{r-1}{r}}
(\int_{[\gamma>0]}\Vert f \Vert^{k+v_{2} q}\vert \nabla^{2} \Phi \vert^{q}\gamma^{s+u_{2}q}d\mu)^{\frac{1}{q}}\\
&+
c s \Lambda(\int_{[\gamma>0]}\Vert f \Vert^{k-v_{1}p}\vert \Phi \vert^{p}\gamma^{s-\alpha p}d\mu)^{\frac{1}{p}}\\
& \hspace{51pt}
(\int_{\Sigma}\Vert f \Vert^{k} \vert \nabla \Phi \vert^{2r}\gamma^{s}d\mu)^{\frac{r-1}{r}}
(\int_{[\gamma>0]}\Vert f \Vert^{k+v_{1}q}\vert \nabla \Phi \vert^{q} \gamma^{s-\beta q}d\mu)^{\frac{1}{q}}.
\end{split}
\end{gather*}
The case $k=0$ follows analogously.
\end{proof}
\newpage
\begin{corollary} \label{A7}
Let $s \geq p \geq 2,\,k \in \lbrace 0,\mathbb{R}_{\geq p} \rbrace$ 
,
$
-\frac{k}{p}\leq u \leq \frac{k}{p},
\;
-\frac{s}{p}\leq v \leq \frac{s}{p}.
$ \\
Then we have
\begin{gather*}
\begin{split}
\int_{\Sigma} \Vert  f & \Vert^{k}  \vert   \nabla \Phi  
\vert^{p}\gamma^{s}d\mu \\
\leq &
\varepsilon \int_{[\gamma>0]} \Vert f \Vert^{k+up}\vert \nabla^{2} \Phi \vert^{p}
\gamma^{s+vp} d\mu \\
& \hspace{-3pt}+
c(\varepsilon,d,p) 
\hspace{-3pt}
\int_{[\gamma>0]}
[\,
k^{p} \Vert f \Vert^{k-p}\gamma^{s}
+
\Vert f \Vert^{k-up} \gamma^{s-vp}
+
\hspace{-2pt}
s^{p}\Lambda^{p} \Vert f \Vert^{k}\gamma^{s-p}
]
\vert \Phi \vert^{p} d\mu.
\end{split}
\end{gather*} 
\end{corollary}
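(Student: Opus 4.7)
The plan is to reduce the statement to Proposition \ref{A6} with $q=p$, $r=p/2\geq 1$, followed by an application of Young's inequality to absorb the $|\nabla\Phi|^{p}$-terms that reappear on the right hand side. Specifically, I would choose
\begin{gather*}
\alpha=\eta=1,\quad \beta=\theta=0,\quad u_{1}=v_{1}=0,\quad u_{2}=v,\quad v_{2}=u,
\end{gather*}
and first check that these satisfy the admissibility conditions of Proposition \ref{A6} under the hypotheses of the corollary: $\alpha+\beta=\eta+\theta=1$ is immediate; $s\geq \alpha p=p$ and $\beta q=0$ follow from $s\geq p$; $k\in\{0\}\cup [\eta p,\theta q]=\{0\}\cup [p,\infty)$ is our assumption; and the ranges $-s/p\leq v\leq s/p$, $-k/p\leq u\leq k/p$ are exactly the assumptions on $u,v$ in the statement.

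With these choices, writing $L:=\int_{\Sigma}\Vert f\Vert^{k}\vert\nabla\Phi\vert^{p}\gamma^{s}d\mu$, Proposition \ref{A6} yields
\begin{gather*}
L^{2/p}\leq ck\, I_{1}^{1/p}L^{1/p}+c\, I_{2}^{1/p}J_{2}^{1/p}+cs\Lambda\, I_{3}^{1/p}L^{1/p},
\end{gather*}
where
\begin{gather*}
I_{1}=\int_{[\gamma>0]}\Vert f\Vert^{k-p}\vert\Phi\vert^{p}\gamma^{s}d\mu,\quad I_{3}=\int_{[\gamma>0]}\Vert f\Vert^{k}\vert\Phi\vert^{p}\gamma^{s-p}d\mu,\\
I_{2}=\int_{[\gamma>0]}\Vert f\Vert^{k-up}\vert\Phi\vert^{p}\gamma^{s-vp}d\mu,\quad J_{2}=\int_{[\gamma>0]}\Vert f\Vert^{k+up}\vert\nabla^{2}\Phi\vert^{p}\gamma^{s+vp}d\mu.
\end{gather*}
Raising to the power $p/2$ and using the elementary inequality $(x+y+z)^{p/2}\leq 3^{p/2-1}(x^{p/2}+y^{p/2}+z^{p/2})$ (which for $p=2$ is an equality), this gives
\begin{gather*}
L\leq c(d,p)\,\bigl[\,k^{p/2}I_{1}^{1/2}L^{1/2}+I_{2}^{1/2}J_{2}^{1/2}+s^{p/2}\Lambda^{p/2}I_{3}^{1/2}L^{1/2}\,\bigr].
\end{gather*}

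Now apply Young's inequality $ab\leq \delta a^{2}+(4\delta)^{-1}b^{2}$ to each of the three summands: to the first and third with $a=L^{1/2}$ and $\delta$ small enough that the $L$-terms can be absorbed into the left hand side, producing constants $c\, k^{p}I_{1}$ and $c\, s^{p}\Lambda^{p}I_{3}$ respectively; and to the middle term with $a=J_{2}^{1/2}$, $b=I_{2}^{1/2}$, choosing the Young parameter so that $J_{2}$ carries the prescribed small constant $\varepsilon$ while $I_{2}$ carries $c(\varepsilon,d,p)$. Absorbing the two $\frac{1}{4}L$-contributions into $L$ on the left and rearranging gives exactly the claimed inequality.

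The main point requiring care is the absorption step: one must verify that the $|\nabla\Phi|^{p}$-integrals appearing inside $J_{1},J_{3}$ of Proposition \ref{A6} are identically $L$ (which is why the parameter choices $\theta=0,\,u_{1}=0$ and $v_{1}=0,\,\beta=0$ are forced), so that they can indeed be absorbed; everything else is a routine application of Young's inequality and the admissibility check for Proposition \ref{A6}.
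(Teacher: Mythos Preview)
Your proposal is correct and follows essentially the same route as the paper: apply Proposition \ref{A6} with $p=q=2r$, $\alpha=\eta=1$, $\beta=\theta=0$, $u_{1}=v_{1}=0$, $u_{2}=v$, $v_{2}=u$, raise the resulting inequality to the power $p/2$ using convexity, then use Young's inequality and absorb the reappearing $L$-terms. The only cosmetic difference is that the paper uses a single $\varepsilon$ for both the $L$- and $J_{2}$-terms before absorption and then renames $(1-\varepsilon)^{-1}\varepsilon$ to $\varepsilon$, whereas you split the Young parameters from the start; both are fine.
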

\begin{proof}
Applying proposition \ref{A6} with
\begin{gather*}
p=q=2r,\,\alpha=\eta=1,\,\beta=\theta=0,\,u_{1}=v_{1}=0  
\end{gather*}
we obtain for
$
-\frac{k}{q}\leq u:=v_{2} \leq \frac{k}{p},
\;
-\frac{s}{q}\leq v:=u_{2} \leq \frac{s}{p}
$
, since $s>0,$
\begin{gather*}
\begin{split}
(\int_{\Sigma}\Vert f \Vert^{k} & \vert \nabla \Phi \vert^{p}\gamma^{s}d\mu)^{\frac{2}{p}} \\
\leq &
ck(\int_{[\gamma>0]}\Vert f \Vert^{k-p}\vert \Phi \vert^{p}\gamma^{s}d\mu)^{\frac{1}{p}}
(\int_{\Sigma}\Vert f \Vert^{k}\vert \nabla \Phi \vert^{p} \gamma^{s}d\mu)^{\frac{1}{p}}\\
& +
c(\int_{[\gamma>0]} \Vert f \Vert^{k-up}\vert \Phi \vert^{p}\gamma^{s-vp}d\mu)^{\frac{1}{p}}
(\int_{[\gamma>0]}\Vert f \Vert^{k+up} 
\vert \nabla^{2} \Phi \vert^{p}\gamma^{s+vp}d\mu)^{\frac{1}{p}}\\
& +
cs \Lambda (\int_{[\gamma>0]}\Vert f \Vert^{k}\vert \Phi \vert^{p} \gamma^{s-p}d\mu)^{\frac{1}{p}}
(\int_{\Sigma}\Vert f \Vert^{k}\vert \nabla \Phi \vert^{p} \gamma^{s}d\mu)^{\frac{1}{p}}.
\end{split}
\end{gather*}
Monotonicity and convexity of $t \rightarrow t^{\frac{p}{2}},\,t \geq 0,$ and Young's inequality
yield
\begin{gather*}
\begin{split}
\int_{\Sigma}\Vert f & \Vert^{k}  \vert \nabla \Phi \vert^{p}\gamma^{s}d\mu \\
\leq &
\varepsilon \int_{\Sigma}\Vert f \Vert^{k}
\vert \nabla \Phi \vert^{p} \gamma^{s}d\mu
+
\varepsilon \int_{[\gamma>0]}\Vert f \Vert^{k+up}
\vert \nabla^{2} \Phi \vert^{p}\gamma^{s+vp}d\mu \\
& +
c(\varepsilon,n,p) \int_{[\gamma>0]}
[\,
k^{p}\Vert f \Vert^{k-p}\gamma^{s}
+
\Vert f \Vert^{k-up}\gamma^{s-vp}
+
\Lambda^{p} \Vert f \Vert^{k}\gamma^{s-p}
\,]
\vert \Phi \vert^{p}d\mu.
\end{split}
\end{gather*}
Absorption and replacing $(1-\varepsilon)^{-1}\varepsilon$ by $\varepsilon$
prove the claim.
\end{proof}
\newpage

\end{document}